\newtheorem{thm}{Theorem}
\newtheorem{prop}[thm]{Proposition}
\newtheorem{lemma}[thm]{Lemma}
\newtheorem{claim}{Claim}
\newtheorem{defi}[thm]{Definition}
\newtheorem{rmk}[thm]{Remark}
\newtheorem{cor}[thm]{Corollary}
\newtheorem{conj}[thm]{Conjecture}
\newenvironment{pf}[1][Proof.]{\noindent \emph{#1.}}{}
\newenvironment{enui}{\begin{enumerate}[(i)]}{\end{enumerate}}
\def\inj{\hookrightarrow}
\def\coker{\operatorname{coker}}
\def\M{\operatorname{\mathcal{M}}}
\def\ev{\operatorname{ev}}
\def\barev{\overline{\operatorname{ev}}}
\def\torsion{\operatorname{torsion}}
\def\QH{\operatorname{QH}}
\def\GW{\operatorname{GW}}
\def\Qk{\operatorname{Q}\!\kappa}
\def\End{\operatorname{End}}
\def\ind{{\operatorname{ind}}}
\def\D{{\mathcal{D}}}
\def\DD{{\mathcal{\widetilde{D}}}}
\def\Ad{{\operatorname{Ad}}}
\def\cont{\supseteq}
\newcommand{\hhat}[1]{\widehat{#1}}
\def\Hat{{\widehat{\phantom{.}}}}
\newcommand\unhat[1]{{\overset{\vee}{#1}}}
\def\Unhat{\unhat{\phantom{.}}}
\def\B{\mathcal{B}}
\def\BB{\widetilde{\mathcal{B}}}
\def\E{\mathcal{E}}
\def\G{\mathcal{G}}
\def\Int{{\operatorname{int}}}
\def\const{\equiv}
\def\disj{\coprod}
\def\then{\Longrightarrow}
\def\im{{\operatorname{im}}}
\def\Aut{{\operatorname{Aut}}}
\def\id{{\operatorname{id}}}
\def\nn{{\nonumber}}
\newcommand\wt[1]{{\widetilde{#1}}}
\newcommand{\BAR}[1]{{\overline{#1}}}
\def\al{{\alpha}}
\def\be{\beta}
\def\Ga{\Gamma}
\def\de{\delta}
\def\eps{\varepsilon}
\def\Om{\Omega}
\def\om{\omega}
\def\ka{\kappa}
\def\Lam{{\Lambda}}
\def\lam{{\lambda}}
\def\La{\Delta}
\def\Si{\Sigma} 
\def\si{\sigma}
\def\ze{\zeta}
\renewcommand\phi{\varphi}
\def\na{\nabla}
\def\U{\operatorname{U}}
\def\X{\mathcal{X}}
\def\XX{\widetilde{\mathcal{X}}}
\def\Y{\mathcal{Y}}
\def\YY{\widetilde{\mathcal{Y}}}
\newcommand{\N}{\mathbb{N}}
\newcommand{\Z}{\mathbb{Z}}
\newcommand{\R}{\mathbb{R}}
\renewcommand\S{\mathcal{S}}
\def\SS{\widetilde{\mathcal{S}}}
\def\C{\mathbb C}
\def\g{\mathfrak g}
\def\A{\mathcal A}
\def\Lie{\operatorname{Lie}}
\def\EG{{\operatorname{EG}}}
\def\pr{{\operatorname{pr}}}
\def\sub{\subseteq}
\def\x{\times}
\def\wo{\setminus}
\def\one{\mathbf{1}}
\def\iso{\cong}
\def\d{\partial}
\def\lan{\langle}
\def\ran{\rangle}
\def\ie{i.e.\xspace}
\def\eg{e.g.\xspace}
\def\loc{{\operatorname{loc}}}
\def\Co{\mathcal{C}}
\def\wone{{\bigwedge}^{\!1}}
\def\wtwo{{\bigwedge}^{\!2}}
\def\wk{{\bigwedge}^{\!k}}
\def\wzeroone{{\bigwedge}^{\!0,1}}
\def\pt{\operatorname{pt}}
\def\J{\mathcal{J}}
\author{Fabian Ziltener}
\title{A Quantum Kirwan Map, I: Fredholm Theory}
\begin{document}

{\bf This article has been merged with arXiv:1106.1729. The new article is:\\

A Quantum Kirwan Map: Bubbling and Fredholm Theory for Symplectic Vortices over the Plane, arXiv:1209.5866}\\

\begin{abstract}Consider a Hamiltonian action of a compact connected Lie group $G$ on an aspherical symplectic manifold $(M,\om)$. Under some assumptions on $(M,\om)$ and the action, D.~A.~Salamon conjectured that counting gauge equivalence classes of symplectic vortices on the plane $\R^2$ gives rise to a quantum deformation $\Qk_G$ of the Kirwan map. This article is the first of three, whose goal is to define $\Qk_G$ rigorously. Its main result is that the vertical differential of the vortex equations over $\R^2$ (at the level of gauge equivalence) is a Fredholm operator of a specified index. Potentially, the map $\Qk_G$ can be used to compute the quantum cohomology of many symplectic quotients. Conjecturally it also gives rise to quantum generalizations of non-abelian localization and abelianization (see \cite{WZ}). 
\end{abstract}

\maketitle

\tableofcontents

\section{Main result and motivation}\label{sec:main}
Let $(M,\om)$ be a symplectic manifold without boundary, and $G$ a compact connected Lie group with Lie algebra $\g$. We fix a Hamiltonian action of $G$ on $M$ and an (equivariant) moment map $\mu:M\to\g^*$. Assume that the following hypothesis is satisfied:\\

\noindent {(\bf H)} \textit{$G$ acts freely on $\mu^{-1}(0)$ and the moment map $\mu$ is proper.}\\

Then the symplectic quotient $\big(\bar M:=\mu^{-1}(0)/G,\bar\om\big)$ is well-defined, smooth and closed. We denote by $H^G_*(M,\Z)$ ($H_G^*(M,\Z)$) the equivariant (co-)homology of $M$ with integer coefficients (equipped with the cup product $\smile$), $H^G_*(M):=H^G_*(M,\Z)/\torsion$ etc., by $\ka_G:H_G^*(M)\to H^*(\bar M)$ the Kirwan map, and by $[\om-\mu]^G\in H_G^2(M)$ the class of $\om-\mu$ in the Cartan model. We define $\Lam_\om^\mu$ to be the set of maps $\lam:H_2^G(M)\to \Z$ such that 
\[\big|\big\{B\in H^G_2(M)\,\big|\,\lam(B)\neq0,\lan[\om-\mu]^G,B\ran\leq C\big\}\big|<\infty,\quad\forall C\in\R,\]
we equip $\Lam_\om^\mu$ with the convolution product $\cdot$, and call the triple $(\Lam_\om^\mu,+,\cdot)$ the \emph{equivariant Novikov ring}. We denote by $(\QH(\bar M,\bar \om),\bar *)$ the (small) quantum cohomology of $(\bar M,\bar\om)$ with coefficients in $\Lam_\om^\mu$, and by $\J^G(M,\om)$ the space of $G$-invariant and $\om$-compatible almost complex structures on $M$. For $x\in M$ we denote by $L_x:\g\to T_xM$ the infinitesimal action. We call $(M,\om)$ \emph{equivariantly convex at $\infty$} iff there exists a proper $G$-invariant $f\in C^\infty(M,[0,\infty))$, $J\in\J^G(M,\om)$ and $C\in[0,\infty)$ such that 
\[\om(\na_v\na f(x),Jv)-\om(\na_{Jv}\na f(x),v)\geq0,\quad df(x)JL_x\mu(x)\geq0,\]
for every $x\in f^{-1}([C,\infty))$ and $0\neq v\in T_xM$. Here $\na$ denotes the Levi-Civita connection of the metric $\om(\cdot,J\cdot)$. A symplectic manifold $(X,\si)$ is called semi-positive iff for every $B\in \pi_2(X)$ the conditions $\lan[\om],B\ran>0$ and $c_1(X,\si)\geq3-\dim X/2$ imply that $c_1(X,\si)\geq0$ (see \cite{MSJ}). Note that this holds for example if $(X,\si)$ is weakly monotone or $\dim X\leq6$. We call $(X,\si)$ aspherical iff $\int u^*\si=0$ for every $u\in C^\infty(S^2,X)$.
\begin{conj}\label{conj:QK G} Assume that (H) holds, $(M,\om)$ is equivariantly convex at $\infty$ and aspherical, and $(\bar M,\bar\om)$ is semi-positive (see \cite{MSJ}). Then there exists a $\Lam_\om^\mu$-algebra homomorphism
\begin{equation}\label{eq:phi H G M}\phi:H^*_G(M)\otimes\Lam_\om^\mu\to \QH^*(\bar M,\bar\om)\end{equation}
of the form $\phi=\ka_G\otimes \id+\sum_{0\neq B}\phi^B\otimes e^B\cdot$. 
\end{conj}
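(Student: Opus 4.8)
\begin{pf}[Proof proposal]
The plan is to construct $\phi$ by counting \emph{symplectic vortices over $\C$} (affine vortices), as predicted by Salamon, taking the Fredholm theorem of the present article as the analytic starting point. Fix $J\in\J^G(M,\om)$ realizing the convexity at $\infty$. For $B\in H^G_2(M)$ and $k\geq0$, let $\mathcal{M}_{B,k}$ denote the space of gauge equivalence classes of finite-energy solutions $(A,u)$ of the vortex equations $\bar\partial_{J,A}u=0$, $*F_A+\mu\circ u=0$ on the trivial $G$-bundle over $\C$, in the class $B$ and with $k$ ordered interior marked points $z_1,\dots,z_k\in\C$. Properness of $\mu$ and convexity at $\infty$ keep $u$ in a fixed compact subset of $M$ and force $(A,u)$ to converge, up to gauge, to a $G$-orbit in $\mu^{-1}(0)$ as $z\to\infty$, so that one has an evaluation $\ev_\infty:\mathcal{M}_{B,k}\to\bar M$; at the marked points one has equivariant evaluations $\ev_{z_j}:\mathcal{M}_{B,k}\to M_G$ into the Borel construction. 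By the energy identity the energy of a vortex in the class $B$ equals $\lan[\om-\mu]^G,B\ran$. The main result of this article shows that the vertical differential of the vortex operator modulo gauge is Fredholm, of index expressible through $B$, $k$ and $\dim\bar M$; this index is the expected dimension of $\mathcal{M}_{B,k}$.

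The analytic core is compactness and transversality for $\mathcal{M}_{B,k}$. One compactifies by \emph{stable affine vortices}: energy quantization together with an Uhlenbeck--Gromov argument yields limits consisting of a principal affine vortex and a tree of bubbles; asphericity of $(M,\om)$ rules out nonconstant holomorphic spheres in $M$, so the only bubbles are affine vortices (attached at interior points of the domain) and holomorphic spheres in $\bar M$ (attached at the asymptotic end $z\to\infty$). Transversality is obtained by a generic choice of $J\in\J^G(M,\om)$, perturbed only on a compact subset of $M$ so as to preserve convexity at $\infty$, together with generic coherent or Morse perturbation data at the marked points; semi-positivity of $(\bar M,\bar\om)$ (see \cite{MSJ}) guarantees that the multiply covered and bubbled configurations sweep out strata of $\codim\geq2$, so that the compactified moduli space $\overline{\mathcal{M}}_{B,k}$ carries a pseudocycle of the expected dimension, with coherent orientations, and no virtual machinery is needed.

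Granting this, define $\phi^B:H^*_G(M)\to H^*(\bar M)$ by declaring, for $\be\in H_*(\bar M)$,
\[\big\lan\phi^B(\al),\be\big\ran_{\bar M}:=\#\big\{[(A,u)]\in\mathcal{M}_{B,1}\,:\,\ev_{z_1}([(A,u)])\in Y_\al,\ \ev_\infty([(A,u)])\in Z_\be\big\},\]
where $Y_\al\sub M_G$ and $Z_\be\sub\bar M$ are cycles Poincar\'e dual to $\al$ and $\be$ and $\#$ is the signed count of this (necessarily zero-dimensional) intersection; then set $\phi:=\ka_G\otimes\id+\sum_{0\neq B}\phi^B\otimes e^B\cdot$ and extend $\Lam_\om^\mu$-linearly. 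That $\phi$ indeed takes values in $\QH^*(\bar M,\bar\om)$, \ie\ that for every $C$ only finitely many $B$ with $\lan[\om-\mu]^G,B\ran\le C$ contribute, follows from the energy identity and Gromov compactness. A Weitzenb\"ock estimate shows that a finite-energy vortex in the class $B=0$ is gauge equivalent to a constant map into $\mu^{-1}(0)$, whence $\mathcal{M}_{0,1}$ is identified with $\bar M$ in such a way that the $B=0$ term of $\phi$ is exactly the classical Kirwan map $\ka_G$.

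There remains the multiplicativity $\phi(a\smile b)=\phi(a)\,\bar{*}\,\phi(b)$, which I would prove by a cobordism argument. One studies the one-dimensional part of the moduli space of affine vortices with two interior marked points $z_1,z_2$ constrained by cycles dual to $a$ and $b$ and with one output constraint at $\infty$, together with the parameter governing the position of $z_1$ relative to $z_2$. Its ends are of two kinds: either $z_1$ and $z_2$ collide in $\C$, which after rescaling produces an affine vortex with a single marked point carrying the cup product $a\smile b$ and contributes $\phi(a\smile b)$; or the configuration degenerates at the asymptotic end into an affine vortex with one marked point glued to a three-pointed holomorphic sphere in $\bar M$, contributing the quantum product $\phi(a)\,\bar{*}\,\phi(b)$. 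Equality of the two boundary counts gives the identity, the compatibility with the Novikov convolution being encoded in the bookkeeping of homology classes. I expect the main obstacle to be exactly the analytic package behind the previous three paragraphs --- a gluing theorem for affine vortices and for vortex--sphere configurations at $\infty$, the manifold-with-corners structure of $\overline{\mathcal{M}}_{B,k}$, and coherent orientations --- which is why the construction is spread over three articles, the present one establishing only the Fredholm step on which everything else rests.
\end{pf}
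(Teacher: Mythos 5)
What you are asked to prove is labelled \emph{Conjecture} in the paper, and the paper does not prove it. The introduction makes this explicit: the present article only establishes the Fredholm theorem (Theorem \ref{thm:Fredholm}) as one ingredient, the compactness and bubbling analysis is deferred to \cite{ZiII}, the Banach manifold structure and transversality to \cite{ZiIII}, and the gluing theorem needed for the ring homomorphism property (\ref{eq:ring}) is declared ``part of my future research.'' So there is no proof in the paper to compare your attempt against; there is only an announced strategy.

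Your sketch is in fact a faithful restatement of that announced strategy: counting gauge classes of finite-energy affine vortices on $\C$; evaluating at interior marked points into the Borel construction and at $\infty$ into $\bar M$; compactifying by stable configurations of vortices and holomorphic spheres in $\bar M$, with asphericity excluding sphere bubbles in $M$ and equivariant convexity providing $C^0$-bounds; obtaining pseudocycles under the semi-positivity hypothesis and defining $\phi^B$ by intersection numbers (this is exactly formula (\ref{eq:QK G B})); recovering $\ka_G$ from the $B=0$ term; and proving multiplicativity by a one-parameter degeneration in which two marked points either collide or escape to $\infty$, matching the paper's discussion of the two sequences $z_\nu^\pm=\pm\nu$ and $z_\nu^\pm=\pm1/\nu$. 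You also correctly identify the open gaps (gluing for vortex--sphere configurations, the boundary/corner structure of the compactified moduli space, coherent orientations) and correctly observe that the present paper only supplies the Fredholm step. The one thing to be clear about is that this is therefore not a proof at all --- neither yours nor the paper's --- but a program whose nontrivial analytic content (in particular the gluing theorem underlying the cobordism argument for $\phi(a\smile b)=\phi(a)\,\bar{*}\,\phi(b)$) remains to be established; presenting the sketch as if it closed the argument would overstate what is known.
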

This conjecture (without specification of the quantum coefficient ring involved) was formulated by D.~A.~Salamon. The idea of proof outlined below is also due to him. The present article is part of a project whose goal is to make this idea rigorous and hence prove the conjecture. As an example, consider $M:=\R^{2n}$ with the standard structure $\om:=\om_0$, and a linear action of $G$. Then $(M,\om)$ is equivariantly convex at $\infty$ (see Example 2.8 in \cite{CGMS}) and aspherical. Assume that $G$ is the torus $\R^k/\Z^k$, and let $w^1,\ldots,w^n\in\g^*\iso(\R^k)^*$ be the weights of the action and $\mu:\R^{2n}\iso\C^n\to\R^k$ be given by $\mu(z):=\tau-\pi\sum_i|z^i|^2w^i$, for some $\tau\in\g^*$. Then $\mu$ is proper if and only if there exists $\xi\in\g$ such that $w^i(\xi)>0$, for $i=1,\ldots,n:=\dim M/2$ (see Proposition 4.14 in \cite{GGK}). The action of $G$ on $\mu^{-1}(0)$ is free if and only if for every subset $I\sub\{1,\ldots,n\}$ the following holds. If there exist $a_i>0$, for $i\in I$, such that $\sum_{i\in I}a_iw^i=0$ then $w^i$, for $i\in I$, generate the integral lattice in $\g^*$ (see Lemma 5.20 in \cite{GGK}). The quotient $(\bar M,\bar\om)$ is semi-positive if for every $\xi\in\g$ the conditions $\lan\sum_{i=1}^nw^i,\xi\ran\geq3-n+k$, $\lan\tau,\xi\ran>0$ imply $\lan\sum_{i=1}^nw^i,\xi\ran\geq0$. Assume that the hypotheses of Conjecture \ref{conj:QK G} are satisfied, the action of $G$ on $M$ is monotone (and hence $(\bar M,\bar\om)$ is monotone), and $H^*_G(M)$ is generated by classes of degree less than twice the minimal Maslov number of this action. With these hypotheses R.~Gaio and D.~A.~Salamon \cite{GS} proved a version of the conjecture involving the Novikov ring of $(\bar M,\bar\om)$. This was used by K.~Cieliebak and D.~A.~Salamon \cite{CS} to compute $\QH^*(\bar M,\bar\om)$ for monotone torus actions on $\R^{2n}$ with Maslov number at least 2. A potential application of Conjecture \ref{conj:QK G} is to extend these computations to the more general setting of this conjecture. 

The idea of proof of Conjecture \ref{conj:QK G} is to define $\phi$ by counting symplectic vortices on the plane. To explain this, we fix a $G$-invariant inner product $\lan\cdot,\cdot\ran_\g$ on $\g$ and identify $\g^*$ with $\g$ via $\lan\cdot,\cdot\ran_\g$. Let $(\Si,\om_\Si,j)$ be a real surface equipped with an area form and a compatible complex structure, and $\pi:P\to\Si$ a principal $G$-bundle. We denote by $C^\infty_G(P,M)$ the space of smooth equivariant maps from $P$ to $M$, and by $\A(P)$ the space of smooth connections on $P$. A (symplectic) vortex is a solution $(u,A)\in C^\infty_G(P,M)\x\A(P)$ of 
\begin{equation}\label{eq:vort P}\bar \d_{J,A}(u)=0,\quad F_A+(\mu\circ u)\om_\Si=0.\end{equation}
Here $\bar\d_{J,A}(u)$ denotes the complex anti-linear part of $d_Au:=du+L_uA$, which we think of as a one-form on $\Si$ with values in the complex vector bundle $TM^u:=(u^*TM)/G\to\Si$. Similarly, we view the curvature $F_A$ of $A$ as a two-form on $\Si$ with values in the adjoint bundle $\g_P:=(P\x\g)/G\to\Si$. Finally, we view $\mu\circ u$ as a section of $\g_P$. The vortex equations (\ref{eq:vort P}) were discovered by K.~Cieliebak, A.~R.~Gaio and D.~A.~Salamon \cite{CGS}, and independently by I.~Mundet i Riera \cite{Mu1,Mu2}. 

The energy density and energy of a $w:=(u,A)\in C^\infty_G(P,M)\x\A(P)$ are given by $e_w:=\frac12\big(|d_Au|^2+|F_A|^2+|\mu\circ u|^2\big)$ and $E(w):=\int_\Si e_w\om_\Si$. Consider $\Si:=\R^2$, equipped with the standard area form $\om_{\R^2}:=\om_0$ and complex structure $j:=i$, and $P:=\R^2\x G$. Let $B\in H^G_2(M)$. The group $\G:=\G(P)$ of smooth gauge transformations on $P$ acts on the set of solutions of (\ref{eq:vort P}). We denote by $\M_B$ the set of gauge (equivalence) classes of vortices $w:=(u,A)$ on $P$ for which $E(w)<\infty$, $\BAR{u(P)}\sub M$ is compact and $w$ represent the equivariant homology class $B$ (see \cite{ZiPhD}). We denote by $\EG$ a contractible topological space on which $G$ acts continuously and freely. There are natural evaluation maps $\ev_z:\M_B\to (M\x\EG)/G$ (at $z\in \R^2$) and $\barev_\infty:\M_B\to \bar M$ (at $\infty\in \R^2\cup\{\infty\}$) (see again \cite{ZiPhD}). Heuristically, for $\al\in H_G^*(M)$ and $\bar\be\in H^*(\bar M)$, we define 
\begin{equation}\label{eq:QK G B}\Qk_G^B(\al,\bar\be):=\int_{\M_B}\ev_0^*\al\smile\barev_\infty^*\bar\be.\end{equation}
We fix dual bases $(\bar e_i)_{i=1,\ldots,N}$ and $(\bar e^*_i)_{i=1,\ldots,N}$ of $H^*(\bar M)$, in the sense that $\int_{\bar M}\bar e_i\smile \bar e^*_j=\de_{ij}$. The idea is now to define a map $\phi:=\Qk_G$ as in (\ref{eq:phi H G M}) by
\[\Qk_G(\al):=\sum_{i=1,\ldots,N,\,B\in H^G_2(M)}\Qk_G^B(\al,\bar e_i)\bar e^*_i\otimes e^B.\]

The goal of the present and two subsequent papers \cite{ZiII,ZiIII} is to make this definition rigorous: In the present article I introduce weighted Sobolev spaces $\X_w^{p,\lam}$ and $\Y_w^{p,\lam}$ that will serve as model spaces for a Banach manifold $\B^p_\lam$ of gauge classes of pairs $(u,A)$ and the fiber of a Banach bundle $\E^p_\lam\to\B^p_\lam$. Furthermore, I show that the vertical differential of the equations (\ref{eq:vort P}) (viewed as a section $\S$ of $\E^p_\lam$) is a Fredholm map between $\X_w^{p,\lam}$ and $\Y_w^{p,\lam}$. In \cite{ZiII} the notion of a stable map of vortices on $\R^2$ and (pseudo-)holomorphic spheres in $\bar M$ is defined, and a bubbling result for vortices over $\R^2$ is proven. The hypotheses of equivariant convexity and asphericity are needed here. The former forces vortices on $\R^2$ to stay in a fixed compact subset of $M$, and the latter rules out bubbling of holomorphic spheres in $M$. The present paper and \cite{ZiII} are based on my Ph.D.-thesis \cite{ZiPhD}. In \cite{ZiIII}, I define an atlas for $\B^p_\lam$ consisting of charts with targets open subsets of the spaces $\X_w^{p,\lam}$ (and a similar atlas for $\E^p_\lam$). The charts are based on the choice of a $G$-invariant Riemannian metric on $M$ whose exponential map along $\mu^{-1}(0)$ is compatible with the moment map $\mu$. (This is needed to make the transition maps well-defined.) Furthermore, I show how to perturb $\S$ in order to make it transverse to the zero section. Combining all these results, it follows that $\S^{-1}(0)\sub\B^p_\lam$ is a smooth finite dimensional submanifold, and the evaluation map $\ev_0\x\barev_\infty$ is a pseudo-cycles. Here the semi-positivity of $(\bar M,\bar\om)$ is needed. The formula (\ref{eq:QK G B}) is then made rigorous as an intersection number of pseudo-cycles. 

In order to show that $\Qk_G$ intertwines $\smile$ with $\bar *$, it suffices to prove that
\begin{equation}\label{eq:ring}\big\lan\Qk_G^B(\al_1\smile\al_2),\bar a\big\ran=\big\lan\big(\Qk_G(\al_1)\bar *\Qk_G(\al_2)\big)_B,\bar a\big\ran,
\end{equation}
for every $B\in H_2^G(M)$, $\al_1,\al_2\in H_G^*(M)$ and $\bar a\in H_*(\bar M)$. The idea for proving this is the following. We choose ``oriented submanifolds'' $X_1,X_2\sub (M\x\EG)/G$ that are ``Poincar\'e dual'' to $\al_1$ and $\al_2$, and an oriented submanifold $\bar X\sub\bar M$ representing $\bar a$. (To make this rigorous one has to pass to some finite dimensional approximation of $\EG$, a compact submanifold of $M$ with boundary and rational multiples of $\al_1,\al_2$ and $\bar a$.) Consider the marked points $z_\nu^\pm:=\pm\nu$, $z_\nu^\infty:=\infty$, and a sequence of gauge classes of vortices $W_\nu\in\M_B$, such that $\ev_{z_\nu^+}(W_\nu)\in X_1$, $\ev_{z_\nu^-}(W_\nu)\in X_2$, and $\barev_\infty(W_\nu)\in\bar X$. By the main result of \cite{ZiII} a subsequence of $W_\nu$ converges to a stable map of (gauge classes of) vortices on $\R^2$ and holomorphic spheres in $\bar M$. In the transverse case, this map consists of two classes $W^1$ and $W^2$ of vortices on $\R^2$, each equipped with a marked point $z_i\in\C$, and a holomorphic map $\bar u:S^2\to\bar M$, equipped with a marked point $z_\infty\in S^2$. $W^i$ is attached to $\bar u$ at the nodal point $\infty\in\R^2\cup\{\infty\}$, for $i=1,2$. The total homology class of the stable map equals $B$, $\ev_{z_i}(W^i)\in X_i$, for $i=1,2$, and $\bar u(z_\infty)\in\bar X$. The number of such stable maps equals the right hand side of (\ref{eq:ring}). 

Consider now the marked points $z_\nu^\pm:=\pm1/\nu$, $z_\nu^\infty:=\infty$, and a sequence $W_\nu\in\M_B$, such that $\ev_{z_\nu^+}(W_\nu)\in X_1$, $\ev_{z_\nu^-}(W_\nu)\in X_2$ and $\barev_\infty(W_\nu)\in\bar X$. Using \cite{ZiII} again, a subsequence of $W_\nu$ converges to a stable map of vortices on $\R^2$ and spheres in $\bar M$. In the transverse case, this stable map consists of a single class $W\in\M_B$, satisfying $\barev_\infty(W)\in\bar X$, and a ghost bubble $u\const\pt\in X_1\cap X_2$ that is attached to $W$ at some point in $\R^2$ and contains two marked points. The number of such stable maps equals the left hand side of (\ref{eq:ring}). Equality (\ref{eq:ring}) follows by combining this with the argument of the last paragraph and a gluing argument for vortices on $\R^2$ and spheres in $\bar M$. The gluing result is part of my future research. 

A similar argument involving an adiabatic limit in the vortex equations over $S^2$ will show that $\Qk_G$ intertwines the genus 0 symplectic vortex invariants with the Gromov-Witten invariants of $(\bar M,\bar\om)$. The idea of proof of Conjecture \ref{conj:QK G} presented here is different from the construction used by Gaio and Salamon in \cite{GS}. They use an adiabatic limit argument, which fails in the more general situation considered here, because of bubbling off of vortices on $\R^2$. 

Assume now just that (H) holds and $(M,\om)$ is equivariantly convex at $\infty$. We denote by $\GW_G(M,\om)$ the $G$-equivariant Gromov-Witten theory of $(M,\om)$. In joint work with Christopher Woodward \cite{WZ} we interpret $\Qk_G$ as a morphism of cohomological field theories between $\GW_G(M,\om)$ and $\GW(\bar M,\bar\om)$. We formulate ``functoriality'' for $\GW_\bullet$  under reduction in stages. We also conjecture quantum generalizations involving $\Qk_G$ of non-abelian localization and abelianization. 

For a vector bundle $E\to X$ and $k\in\N\cup\{0\}$ we denote by $\Ga(E)$ the space of its smooth sections and by $\wk(E)$ the bundle of $k$-forms on $X$ with values in $E$. For an almost complex manifold $X$ and a complex vector bundle $E\to X$ is we denote by $\wzeroone(E)\to X$ the bundle of anti-linear one-forms on $X$ with values in $E$. We equip the bundle $TM^u=(u^*TM)/G\to\R^2$ with the complex structure induced by $J$. We define 
\begin{eqnarray}\nn&\BB:=C_G^\infty(P,M)\x\A(P),&\\
\nn &\wt\E:=\big\{(w;\ze')\,\big|\,w\in\BB,\,\ze'\in \Ga\big(\wzeroone(TM^u)\x\wtwo(TM^u)\big)\big\},&\\ 
\nn&\SS:\BB\to\wt\E,\quad \SS(u,A):=\big(\bar \d_{J,A}(u),F_A+(\mu\circ u)\om_\Si\big).\end{eqnarray}
The group $\G$ acts naturally on $\BB$ and $\wt\E$. We denote by $\B:=\BB/\G$ and $\E:=\wt\E/\G$ the quotients. The map $\SS$ is $\G$-equivariant, and hence induces a map $\S:\B\to\E$. Note that $\S^{-1}(0)\sub \B$ is the set of gauge classes of vortices. Assume that the action of $\G$ on $\BB$ is free. (This is satisfied for $\Si:=\R^2$, $\om_\Si:=\om_{\R^2}$ and $j:=i$ under hypothesis (H), see Lemma \ref{le:free} below.) Then heuristically, $\B$ is an infinite dimensional manifold, $\E$ is an infinite dimensional vector bundle over $\B$, and $\S$ is a smooth section of $\E$. 

Assume that $W\in\S^{-1}(0)$. Then formally, there is a canonical map $T:T_{(W,0)}\E\to\E_W$, where $\E_W\sub\E$ denotes the fiber over $W$. The vertical differential of $\S$ at $W$ is given by $d^V\S(W)=T\,d\S(W):T_W\B\to\E_W$. Let $w:=(u,A)\in W$ be a representative. We denote by $L_w:\Lie(\G)\to T_w\BB$ the infinitesimal action of $\G$ on $\BB$. Formally, $T_w\BB=\Ga\big(TM^u\oplus\wone(\g_P)\big)$, $\Lie(\G)=\Ga(\g_P)$, and $L_w\xi=(L_u\xi,-d_A\xi)$. 

Assume that $\BB$ and $\Lie(\G)$ are equipped with a $\G$-invariant Riemannian metric and $\G$-invariant inner product respectively. For $w\in\BB$ we denote by $L_w^*:T_w\BB\to\Lie(\G)$ the adjoint of $L_w$. Then formally, the tangent space $T_W\B$ is the quotient of the disjoint union of $\ker L_w^*$, with $w$ ranging over all representatives of $W$, by the linearized action of $\G$. We now equip $M$ with the Riemannian metric $\om(\cdot,J\cdot)$, and $TM^u$ with the induced bundle metric. The bundle $\g_P$ inherits a bundle metric from $\lan\cdot,\cdot\ran_\g$. We equip $\Lie(\G)$ and $T_w\BB$ with the corresponding $L^2$ inner products. Then $L_w^*$ is given by 
\begin{equation}\label{eq:L w *} L_w^*(v,\al)=L_u^*v-d_A^*\al.
\end{equation}
Here we view $L_u$ as a map from $\g_P$ to $TM^u$, and we denote by $L_u^*:TM^u\to\g_P$ the adjoint of $L_u$, and $d_A^*=-*d_A*$, where $*$ denotes the Hodge-star operator on sections of $\g_P$ with respect to the metric $\om_\Si(\cdot,j\cdot)$. 

The Levi-Civita connection $\na$ of $\om(\cdot,J\cdot)$ and $A$ induce a connection $\na^A$ on $TM^u$ (see Section \ref{sec:back}). For $w\in \BB$ consider the operator $\D_w:\ker L_w^*\to \wt\E_w$, 
\begin{equation}\label{eq:D w} \D_w(v,\al):=\left(\begin{array}{c}\big(\na^Av+L_u\al\big)^{0,1}-\frac12J(\na_vJ)(d_Au)^{1,0} \\
d_A\al+\om_\Si\, d\mu(u)v
\end{array}\right),\end{equation}
For $W\in\S^{-1}(0)$ the map $d^V\S(W):T_W\B\to \E_W$ is given by $d^V\S(W)\G^*(v,\al)=\G^*\D_w(v,\al)$. (This follows for example from \cite{CGMS}, formula (23), p.~27.) 

Consider now the case $\Si:=\R^2,\om_{\R^2}:=\om_0$ and $j:=i$. The purpose of this article is to find a Banach space setup in which the vertical differential $d^V\S(W)$ is Fredholm. To this end, let $n\in\N$, $p\in[1,\infty]$, $\lam\in \R$, and  $f:\R^n\to\R$ be a measurable function. We denote $\Vert f\Vert_p:=\Vert f\Vert_{L^p(\R^n)}\in[0,\infty]$, and define $\Vert f\Vert_{p,\lam}:=\big\Vert f(1+|\cdot|^2)^\frac\lam2\big\Vert_p$. Assume now that $p>2$. We denote by $W^{1,p}_{\loc, G}(P,M)$ and $\A^{1,p}_\loc(P)$  the spaces of $G$-equivariant maps from $P\to M$ and connections on $P$, of class locally $W^{1,p}$. We abbreviate $\wt\B^p_\loc:=W^{1,p}_{\loc, G}(P,M)\x\A^{1,p}_\loc(P)$, and we define
\begin{equation}\label{eq:BB p lam} \BB^p_\lam:=\big\{(u,A)\in \wt\B^p_\loc\,\big|\,\BAR{u(P)}\textrm{ compact, }\Vert\sqrt{e_{(u,A)}}\Vert_{p,\lam}<\infty\big\}.
\end{equation}
Furthermore, we denote by $\G^{2,p}_\loc(P)$ the group of locally $W^{2,p}$ gauge transformations on $P$, and we define $\B^p_\lam:=\BB^p_\lam/\G^{2,p}_\loc(P)$.  

Let $w:=(u,A)\in\BB^p_\lam$ be a smooth pair. $\na,A$ and the Levi-Civita connection of the standard metric $g_{\R^2}$ on $\R^2$ induce a linear connection $\na^A$ on $TM^u\oplus\wone(\g_P)$. For $\ze:=(v,\al)\in W^{1,p}_\loc\big(TM^u\oplus\wone(\g_P)\big)$ we define
\begin{equation}\label{eq:ze w p lam}\Vert\ze\Vert_{w,p,\lam}:=\Vert\ze\Vert_\infty+\big\Vert|\na^A\ze|+|d\mu(u)v|+|\al|\big\Vert_{p,\lam}\in[0,\infty].
\end{equation}
Here the pointwise norms are taken with respect to the metrics $\om(\cdot,J\cdot)$ and $g_{\R^2}$, and $\lan\cdot,\cdot\ran_\g$. We define 
\begin{align}
  \label{eq:XX w}\X_w^{p,\lam}&:=\big\{\ze\in W^{1,p}_\loc\big(TM^u\oplus\wone(\g_P)\big)\,\big|\,L_w^*\ze=0,\,\Vert\ze\Vert_{w,p,\lam}<\infty\big\},\\ 
\label{eq:YY w}\Y_w^{p,\lam}&:=\big\{\ze'\in L^p_\loc\big(\wzeroone(T\C,TM^u)\oplus\wtwo(\g_P)\big)\,\big|\,\Vert\ze'\Vert_{p,\lam}<\infty\big\}. 
\end{align}
Here $L_w^*$ is as in (\ref{eq:L w *}). In \cite{ZiIII}, the set $\B^p_\lam$ will be equipped with a Banach manifold structure, such that for every $W\in \B^p_\lam$ admitting a smooth representative $w\in\BB^p_\lam$, the tangent space $T_W\B^p_\lam$ can be identified with $\X_w^{p,\lam}$. Furthermore, the spaces $\Y_w^{p,\lam}$ will be identified with the fibers of a Banach bundle $\E^p_\lam\to\B^p_\lam$. \\

{\bf From now on throughout this article, we assume that hypothesis (H) is satisfied.}\\

We denote by $m(w)$ the Maslov index of $w$, see Section \ref{sec:back}. The main result of this article is the following.
\begin{thm}\label{thm:Fredholm} Assume that $\dim M>2\dim G$. Let $p>2$, $\lam>1-2/p$ and $w:=(u,A)\in\BB^p_\lam$ be a smooth pair. Then the following statements hold.
\begin{enui}\item \label{thm:Fredholm:X Y} The normed vector spaces $\X^{p,\lam}_w$ and $\Y^{p,\lam}_w$ are complete. 
\item\label{thm:Fredholm:DDD} If $1-2/p<\lam<2-2/p$ then the operator $\D^{p,\lam}_w:\X^{p,\lam}_w\to \Y^{p,\lam}_w$ given by the formula (\ref{eq:D w}) is well-defined and Fredholm of real index $\ind\D^{p,\lam}_w=\dim M-2\dim G+2m(w)$. 
\end{enui}
\end{thm}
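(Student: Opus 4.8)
The plan is to reduce the Fredholm problem to a model operator on $\R^2 = \C$ with coefficients that are asymptotically constant near $\infty$, and then apply a weighted-space Fredholm theory for such operators (of Lockhart--McOwen / Atiyah--Patodi--Singer type). First I would prove part \ref{thm:Fredholm:X Y}: completeness of $\X^{p,\lam}_w$ and $\Y^{p,\lam}_w$. For $\Y^{p,\lam}_w$ this is essentially the statement that a weighted $L^p$-space with locally $L^p$ sections and finite $\Vert\cdot\Vert_{p,\lam}$-norm is complete, which follows from the Riesz--Fischer argument applied to the weight $(1+|\cdot|^2)^{\lam/2}$. For $\X^{p,\lam}_w$ one combines: (a) completeness of the weighted $W^{1,p}$-type norm $\Vert\cdot\Vert_{w,p,\lam}$, using that $\BAR{u(P)}$ is compact so the bundle $TM^u$ has bounded geometry and the connection $\na^A$ is uniformly controlled; the extra terms $|d\mu(u)v|$ and $|\al|$ in \eqref{eq:ze w p lam} are lower-order and pass to the limit; (b) closedness of the linear constraint $L_w^*\ze = 0$ (formula \eqref{eq:L w *}), which is a first-order differential condition, hence closed under $W^{1,p}_\loc$-convergence together with the norm bound. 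The $L^\infty$-term in \eqref{eq:ze w p lam} requires a Sobolev embedding $W^{1,p} \hookrightarrow L^\infty$ on $\R^2$ (valid since $p>2$) applied on unit balls, with uniformity coming from the weight condition $\lam > 1 - 2/p$, which guarantees the weighted $L^p$-norm dominates the sup of local $L^p$-norms.

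For part \ref{thm:Fredholm:DDD}, first I would check that $\D^{p,\lam}_w$ is well-defined and bounded: the principal term is the Cauchy--Riemann-type operator $(v,\al) \mapsto \big((\na^A v + L_u\al)^{0,1}, d_A\al + \om_\Si\, d\mu(u)v\big)$, and the zeroth-order term $-\tfrac12 J(\na_v J)(d_A u)^{1,0}$ is controlled because $|d_A u| \le \sqrt{2\,e_w}$ lies in $L^p_\lam$ and $u$ has precompact image; boundedness from $\X^{p,\lam}_w$ to $\Y^{p,\lam}_w$ then follows from the definitions of the two norms. The heart of the argument is the Fredholm property and index. My approach: since $w$ is a finite-energy vortex on $\R^2$ with precompact image (by \eqref{eq:BB p lam}), standard decay estimates for vortices (equivariant convexity is not needed here, only finite energy and the compact-image hypothesis — \cf \cite{CGMS}) show that $(u,A)$ converges exponentially, in a suitable gauge, to a constant map into $\mu^{-1}(0)$ and a flat (trivial) connection as $|z|\to\infty$. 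Hence the operator $\D^{p,\lam}_w$, after this gauge fixing and after quotienting by the $L^w_w$-constraint, is a compact perturbation of the constant-coefficient operator
\[
\D_\infty(v,\al) = \big(\bar\d v + (\text{const})\al,\ d\al + (\text{const})\,v\big)
\]
on $\C$ acting between the weighted spaces. The constraint $L_w^*\ze = 0$ combined with the second component of $\D_w$ realizes the standard "augmented" or "gauge-fixed" Cauchy--Riemann operator; on $\mu^{-1}(0)$ the operator $L_u^* L_u$ is invertible (by hypothesis (H), the action is free, so $L_x$ is injective with $\dim M > 2\dim G$ giving a genuine quotient), so the $\al$- and $\g_P$-directions decouple into an isomorphism piece plus a Laplace-type operator $d_A^* d_A$ on $\g_P$, whose weighted Fredholm theory is classical.

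The index computation is then a bookkeeping exercise: on the model $\C$ the weighted Cauchy--Riemann operator $\bar\d_\lam$ acting on sections of a rank-$k$ complex bundle with a prescribed asymptotic trivialization has index depending on $\lam$ through the number of weights in the interval; the hypothesis $1 - 2/p < \lam < 2 - 2/p$ is exactly the window in which no "extra" kernel or cokernel from the $\al$/curvature directions appears, and the index of the $TM^u$-part is $\dim M - 2\dim G + 2m(w)$, with $m(w)$ the Maslov index of $w$ (Section \ref{sec:back}), by a Riemann--Roch / relative-index argument relating $m(w)$ to the winding of the boundary Lagrangian-type data at $\infty$. The $-2\dim G$ is the contribution of killing the $\dim G$ complex gauge directions (the $\al$-variable paired against the $\g_P$-equation via $L_u$, $d_A$), and the $+\dim M - 2\dim G$ real-rank term is the index of the elliptic operator on the closed "fiber at infinity." Stabilizing the deformation parameter (homotoping $w$ to a reference vortex, or $\lam$ within its admissible window) and invoking homotopy-invariance of the Fredholm index finishes the count.

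The main obstacle I anticipate is establishing the exponential decay and asymptotic-gauge normal form for the finite-energy vortex $w$ on $\R^2$ with sufficient uniformity to conclude that the difference $\D^{p,\lam}_w - \D_\infty$ is a \emph{compact} operator between the weighted spaces — this requires both a quantitative decay rate strictly faster than the weight $|z|^{-\lam}$ and a Rellich-type compactness statement on $\R^2$ in weighted $W^{1,p}\to L^p$, which is delicate because $\R^2$ is noncompact and the embedding is only compact after exploiting the strict weight inequality $\lam > 1 - 2/p$. A secondary technical point is handling the $p \neq 2$ case: the Lockhart--McOwen theory for the critical weights must be developed in $L^p$ rather than $L^2$, so I would either cite an $L^p$-version directly or reduce to $L^2$ via interpolation/duality together with elliptic $L^p$-estimates on annuli.
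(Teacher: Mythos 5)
Your overall strategy (reduce to a model operator at infinity, invoke a Lockhart--McOwen type weighted Fredholm theory, handle the compact remainder via a weighted Rellich lemma) is broadly in the spirit of the paper, but there are two genuine gaps that would make the argument as written fail.

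First, the theorem does \emph{not} assume that $w$ is a vortex. The hypothesis is only that $w=(u,A)\in\BB^p_\lam$ is a smooth pair, i.e.\ $\BAR{u(P)}$ is compact and $\Vert\sqrt{e_w}\Vert_{p,\lam}<\infty$; no vortex equation is assumed. Your core step --- ``standard decay estimates for vortices \dots\ show that $(u,A)$ converges exponentially, in a suitable gauge, to a constant map \dots\ and a flat connection'' --- is therefore not available: you are linearizing at an arbitrary pair, and the only decay you get is the polynomial decay encoded by the weight $\lam$ (and, even for genuine finite-energy vortices, the decay of $e_w$ is $|z|^{-4+\eps}$, not exponential). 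The paper deliberately avoids any appeal to the vortex equations here: it deduces the asymptotic behavior ($u\circ\si\to x_\infty$ uniformly, $\si^*A\in L^p_\lam$) from the finiteness of the weighted norm alone via a Hardy-type inequality (Lemma~\ref{le:si} and Proposition~\ref{prop:Hardy}), and the compactness of the remainder via a weighted Rellich lemma (Proposition~\ref{prop:lam d Morrey}(\ref{prop:lam W cpt})), not from an exponential normal form.

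Second, comparing $\D_w$ with a constant-coefficient operator $\D_\infty$ over all of $\C$ in a fixed trivialization cannot by itself produce an index that depends on $m(w)$: the index of a compact perturbation of $\D_\infty$ would only depend on $\lam$ and the ranks, not on the topology of $TM^u$. The Maslov index has to enter through the bundle trivialization, and this is precisely where the paper's notion of a \emph{good complex trivialization} (Definition~\ref{defi:triv}, Proposition~\ref{prop:triv}) does the work: near $\infty$ the trivialization respects the splitting $T_{u(p)}M = H_{u(p)}\oplus\im L^\C_{u(p)}$ and twists the first $\C$-factor by $z^{m(w)}$, so that conjugating $\D_w$ lands in a sum of $\d_{\bar z}$ (on the Maslov-twisted weighted space, Corollary~\ref{cor:d L L}) and a $2\times2$ matrix operator with positive zeroth-order part (Proposition~\ref{prop:d A B d}, index $0$). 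Your sketch mentions ``asymptotic trivialization'' and ``winding,'' but does not supply the mechanism by which $m(w)$ is extracted, and without the twist the index would come out wrong. A secondary structural difference: the paper does not directly treat the constrained operator $\D_w|_{\ker L_w^*}$; it first proves the Fredholm property of the augmented operator $\DD_w=(\D_w,L_w^*)$ on the larger space $\XX_w$ (Theorem~\ref{thm:Fredholm aug}), then constructs a bounded right inverse for $L_w^*$ (Theorem~\ref{thm:L w * R}), and deduces the constrained result via Lemma~\ref{le:X Y Z}. This sidesteps the need to quotient out gauge directions by hand and makes the compactness arguments cleaner.
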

The condition $1-2/p<\lam<2-2/p$ in this theorem captures the geometry of finite energy vortices. More precisely, let $w:=(u,A)\in\B^p_\loc$ be a finite energy vortex such that $\BAR{u(P)}\sub M$ is compact. Then for every $\eps>0$ there exists a constant $C$ such that $e_w(z)\leq C|z|^{-4+\eps}$, for every $z\in\R^2\wo B_1$ (see \cite{ZiA}, Corollary 4). It follows that $w\in\BB^p_\lam$ if $\lam<2-2/p$. This bound is sharp. To see this, let $\lam>2-2/p$, let $M:=S^2,\om:=\om_0,G:=\{e\}$ and $J:=i$, and consider the inclusion $u:\R^2\to S^2\iso\R^2\cup\{\infty\}$. 

On the other hand, every $w\in \BB^p_\lam$ satisfies $E(w)<\infty$ if $p>2$ and $\lam>1-2/p$. The latter condition is sharp. Namely, let $\lam< 1-2/p$, and consider $M:=S^2$ with the standard symplectic form $\om_0$, complex structure $J:=i$ and the action of the trivial group $G:=\{e\}$. We choose a number $2<a<3-2/p-\lam$ and a smooth map $u:\R^2\to S^2\iso\C\cup\{\infty\}$ such that $u(z)=|z|^a$, for $z\in\R^2\wo B_1$. Then $(u,0)\in \BB^p_\lam$ and $E(u,0)=\infty$. 

The condition $\lam<2-2/p$ is also needed for $\D^{p,\lam}_w$ to have the right Fredholm index. Namely, let $\lam>1-2/p$ be such that $\lam+2/p\not\in\Z$, and $w\in\BB^p_\lam$. Then the proof of Theorem \ref{thm:Fredholm} shows that $\D^{p,\lam}_w$ is Fredholm with index equal to $(2-k)(\dim M-2\dim G)+2m(w)$, where $k$ is the largest integer less than $\lam+2/p$. In particular, the index changes when $\lam$ passes the value $2-2/p$. 

The definition of the space $\X^{p,\lam}_w$ is natural, since it parallels the definition of $\BB^p_\lam$. Namely, by linearizing with respect to $u$ and $A$ the terms $d_Au,F_A$ and $\mu\circ u$ occurring in the energy density $e_w$, we obtain the terms $\na^A\ze,d\mu(u)v$ and $L_u\al$. These expressions occur in $\Vert\ze\Vert_{w,p,\lam}$, except for the factor $L_u$ in $L_u\al$. (It follows from (H) and Lemma \ref{le:si} (appendix) that this factor is irrelevant.) The expression $\Vert\ze\Vert_\infty$ is needed in order to make $\Vert\cdot\Vert_{w,p,\lam}$ non-degenerate. 

{\bf Remark.} \textit{Naively, one could define the domain of $\D_w$ to be the kernel of $L_w^*$ defined on the space of usual $W^{1,p}$-sections, and its target to consist of $L^p$-sections. However, then in general $\D_w$ would not have closed image, and hence is not Fredholm. Note also that the 0-th order terms $\al\mapsto(L_u\al)^{0,1}$ and $v\mapsto\om_0\,d\mu(u)v$ in (\ref{eq:D w}) are not compact (neither with $\X_w^{p,\lam}$ and $\Y_w^{p,\lam}$ defined as in (\ref{eq:XX w},\ref{eq:YY w}) nor the naive choices). The reason is that the Kondrachov compactness theorem fails on $\R^2$. Observe also that because of these terms, $\D_w$ is not well-defined if we choose spaces that look like the standard (weighted) Sobolev spaces in ``logarithmic'' coordinates $\tau+i\phi$ (with $e^{\tau+i\phi}=z\in\C\wo\{0\}$).}

The proof of Theorem \ref{thm:Fredholm} is based on a Fredholm result for the augmented vertical differential (Theorem \ref{thm:Fredholm aug}) and surjectivity of $L_w^*$ (Theorem \ref{thm:L w * R}). The proof of Theorem \ref{thm:Fredholm aug} has two main ingredients. The first one is a suitable complex trivialization of the bundle $TM^u\oplus\wone(\g_P)$. For $R$ large, $z\in\R^2\wo B_R$ and $p\in\pi^{-1}(z)\sub P$ this trivialization respects the splitting $T_{u(p)}M=(\im L^\C_{u(p)})^\perp\oplus\im L^\C_{u(p)}$, where $L_x^\C:\g\otimes\C\to T_xM$ denotes the complexified infinitesimal action, for $x\in M$. The second ingredient are two propositions stating that the standard Cauchy-Riemann operator $\d_{\bar z}$ and a related matrix differential operator are Fredholm maps between suitable weighted Sobolev spaces. These results are based on the analysis of weighted Sobolev spaces carried out by R.~B.~Lockhart and R.~C.~McOwen. (See \cite{Lockhart Hodge} and references therein.) Note that for a \emph{compact} Riemann surface $\Si$ without boundary, in \cite{CGMS} K.~Cieliebak et al.~ proved that the augmented vertical differential of the vortex equations is Fredholm.
\subsection*{Organization of the article} 
Section \ref{sec:back} contains some background about the connection $\na^A$ and the definition of the Maslov index $m(w)$. In Section \ref{subsec:reform} a Fredholm theorem for the augmented vertical differential (Theorem \ref{thm:Fredholm aug}), and an existence result for a right inverse for $L_w^*$ (Theorem \ref{thm:L w * R}) are stated. Furthermore, the main result is deduced from these results. Section \ref{subsec:proof:Fredholm aug} contains the core of the proof of Theorem \ref{thm:Fredholm aug}. Here the notion of a good complex trivialization is introduced and an existence result for such a trivialization is stated (Proposition \ref{prop:triv}). Furthermore, a result is formulated saying that every good trivialization transforms $\D_w$ into a compact perturbation of the direct sum of $\d_{\bar z}$ and a matrix operator (Proposition \ref{prop:X X w}). The results of Section \ref{subsec:proof:Fredholm aug} are proved in Section \ref{subsec:proofs}. In Section \ref{subsec:proof:thm:L w * R} Theorem \ref{thm:L w * R} is proved, using the existence of a right inverse for $d_A^*$ (Proposition \ref{prop:right}). Appendix \ref{sec:weighted} contains a Hardy-type inequality, which is used in the proof of Proposition \ref{prop:X X w}, some standard embedding and compactness results for weighted Sobolev spaces, and Fredholm results for the Cauchy-Riemann operator and a matrix valued operator on $\R^2$ (Propositions \ref{prop:d L L} and \ref{prop:d A B d} and Corollary \ref{cor:d L L}). In Appendix \ref{sec:proof:prop:right} Proposition \ref{prop:right} is proved. Appendix \ref{sec:aux} contains some other auxiliary results.
\subsection*{Acknowledgment} This article arose from my Ph.D.-thesis. I would like to thank my adviser, Dietmar A.~Salamon, for the inspiring topic. I highly profited from his mathematical insight. I am very much indebted to Chris Woodward for his interest in my work, for sharing his ideas with me, and for his continuous encouragement. It was he who coined the term ``quantum Kirwan map''. I would also like to thank Urs Frauenfelder and Kai Cieliebak for stimulating discussions. Finally, financial support of the dissertation from the Swiss National Science Foundation is gratefully acknowledged.

\section{Background and notation}\label{sec:back}

\subsection*{The connection $\na^A$}\label{subsec:na A}
Let $E\to M$ be a real (smooth) vector bundle. We denote by $\Co(E)$ the affine space of (smooth linear) connections on $E$. Let $\na^E\in\Co(E)$. Let $N$ be a smooth manifold, and $u:N\to M$ be a smooth map. We denote by $u^*E\to N$ the pullback bundle. The pullback connection $u^*\na^E\in\Co(u^*E)$ is uniquely determined by $(u^*\na^E)_vs\circ u=\na^E_{u_*v}s$, for every $v\in TN$ and every $s\in\Ga(E)$. Let $G$ be a Lie group, $\pi:P\to X$ a (right-)principal $G$-bundle, and $E\to P$ a $G$-equivariant vector bundle. Then the quotient $E/G$ has a natural structure of a vector bundle over $X$. Let now $E\to M$ be a $G$-equivariant vector bundle. We denote by $\Co^G(E)$ the space of $G$-invariant connections on $E$. We fix $A\in \A(P)$, $\na^E\in\Co^G(E)$, and $u\in C_G^\infty(P,M)$. We define $\wt\na^A\in\Co^G(u^*E)$ by $\wt\na^A_{\wt v}\wt s:=(u^*\na^E)_{\wt v-pA\wt v}\wt s$, for $\wt s\in\Ga(u^*E)$, $p\in P$, and $\wt v\in T_pP$. We denote $E^u:=(u^*E)/G\to X$. The connection $\wt\na^A$ is basic (\ie $G$-invariant and horizontal), hence there exists a unique $\na^A\in\Co(E^u)$ with the following property. Let $s\in\Ga(E^u)$ and $v\in TX$. We define $\na^A_vs:=G\cdot(p_0,\wt\na^A_{\wt v}\wt s)$, where $(p_0,\wt v)\in TP$ is such that $\pi_*\wt v=v$, and $\wt s\in\Ga(u^*E)$ is a $G$-invariant section such that $s\circ \pi(p)=G\cdot(p,\wt s(p))$, for every $p\in P$. Assume now that $X$ is an open subset of $\R^n$, and let $\Psi:X\x V\to E$ be a bundle map (fixing the base). We define $\na^A\Psi$ by $(\na^A_v\Psi)w:=\na^A_v(\Psi w)$, for every $x\in X$, $v\in T_xX$ and $w\in V$. (Here we think of $w$ as a constant section of $X\x V$.)
\subsection*{The Maslov index}\label{subsec:Maslov}
Let $M,\om,G,\g,\lan\cdot,\cdot\ran_\g,\mu$ and $J$ be as in Section \ref{sec:main}, $\Si=\R^2,\om_{\R^2}=\om_0,j=i$, $P\to\R^2$ a principal $G$-bundle, $p>2,\lam>1-2/p$ and $w=(u,A)\in\BB^p_\lam$. The definition of the Maslov index of $w$ is based on the following. 
\begin{prop}\label{prop:P} There exists an extension of $P$ to some smooth  $G$-bundle $\wt P\to S^2=\R^2\cup\{\infty\}$, such that $u$ extends to a continuous map from $\wt P$ to $M$. Furthermore, if $\wt P_1,\wt P_2$ are two such extensions of $P$ and $\wt u_1,\wt u_2$ the corresponding extensions of $u$, then there exists an isomorphism of continuous $G$-bundles $\Psi:\wt P_1\to \wt P_2$ such that $\wt u_1=\wt u_2\circ\Psi$. 
\end{prop}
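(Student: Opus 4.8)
The plan is to produce the extension by working directly with the finite-energy asymptotics of the vortex $w = (u,A)$. The key geometric input is that a finite-energy vortex on $\R^2$ with $\overline{u(P)}$ compact has energy density decaying like $|z|^{-4+\eps}$ (cited from \cite{ZiA} in the discussion after Theorem \ref{thm:Fredholm}), so that in particular $|d_A u(z)| \to 0$ and $|\mu(u(z))| \to 0$ as $|z| \to \infty$. First I would trivialize $P$ over $\R^2 \setminus B_R$ (every principal bundle over an annulus, indeed over $\R^2$, is trivial), so that on this region $A$ is represented by a $\g$-valued one-form $a$ and $u$ by a map $\R^2 \setminus B_R \to M$. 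Then I would show, using the second vortex equation $F_A + (\mu\circ u)\om_{\R^2}=0$ together with the decay of $\mu\circ u$ and a gauge-fixing argument (e.g. putting $A$ in radial gauge, $a_r = 0$, and controlling the remaining component), that after a further gauge transformation the connection form $a$ decays and $u(z)$ converges to a single limit point $x_\infty \in M$ as $|z|\to\infty$; the energy bound $E(w)<\infty$ plus the pointwise decay rate should make the relevant integral $\int |d_A u|$ over the ends finite along rays, giving a genuine limit rather than mere accumulation. This gives a continuous extension of the pair over the point at infinity once $P$ is completed to a bundle $\wt P \to S^2$ by gluing in a trivial bundle over a neighborhood of $\infty$ and declaring $\wt u(\infty) := x_\infty$; continuity at $\infty$ follows from the uniform convergence $u(z)\to x_\infty$.

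For the uniqueness statement, suppose $\wt P_1,\wt P_2$ are two such extensions with extended maps $\wt u_1,\wt u_2$. Over $\R^2$ both restrict to $P$, so there is a tautological isomorphism there intertwining $\wt u_1$ and $\wt u_2$; the only issue is extending it continuously across $\infty$. Since $\wt u_1(\infty) = \wt u_2(\infty) = x_\infty$ (both equal the common asymptotic limit of $u$, which is intrinsic to $w$), and since a continuous $G$-bundle over $S^2$ is determined up to isomorphism by its clutching function in $\pi_1(G)$ computed over a small loop around $\infty$ — which for both $\wt P_i$ is the restriction of the clutching data of $P$ over $\partial B_R$ — the two extensions have the same clutching class, hence are isomorphic as continuous $G$-bundles. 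I would then check that the isomorphism can be chosen to agree with the tautological one over, say, $\partial B_R$, and hence to intertwine $\wt u_1$ and $\wt u_2$ on all of $\wt P_1$: over $\R^2$ this is the tautological identification, and over the disk neighborhood of $\infty$ one uses that both $\wt u_i$ are (after trivializing) close to the constant $x_\infty$ and that the $G$-bundle isomorphisms over a disk form a contractible space, so the gluing can be performed compatibly.

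The main obstacle I anticipate is the asymptotic analysis establishing that $u(z)$ has an honest limit at $\infty$ and that, after a gauge transformation of class $W^{2,p}_{\mathrm{loc}}$, the connection extends continuously — i.e. ruling out "winding" of $u$ around the end or slow non-integrable drift of the connection form. The decay estimate $e_w(z)\le C|z|^{-4+\eps}$ gives $\int_{\R^2\setminus B_1} |d_A u| \, |z| \, d|z|\, d\theta$-type bounds only marginally, so one must be somewhat careful: I would extract convergence along circles $|z|=r$ first (using that the oscillation of $u$ over such a circle is controlled by $r\cdot \sup_{|z|=r}|d_A u|\lesssim r\cdot r^{-2+\eps'}\to 0$), then upgrade to a full limit using the radial derivative bound, and finally fix the gauge on the end by a Hodge-theoretic or Uhlenbeck-type argument adapted to the asymptotically flat annulus. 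This is precisely the kind of removal-of-singularity statement that is standard for finite-energy pseudoholomorphic maps and for vortices on the half-cylinder, so the argument should go through, but it is the technical heart of the proposition. Everything else — triviality of bundles over annuli and disks, contractibility of spaces of gauge transformations over a disk, and the clutching-function classification of bundles over $S^2$ — is routine.
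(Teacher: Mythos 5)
The central gap is a mismatch with the hypothesis. You treat $w=(u,A)$ as a finite-energy \emph{vortex} and appeal to the pointwise decay estimate $e_w(z)\le C|z|^{-4+\eps}$ from \cite{ZiA}, using it both to conclude $\mu\circ u\to 0$ and to bound the oscillation of $u$ on circles $|z|=r$. But the proposition is stated for an arbitrary $w\in\BB^p_\lam$ (a weighted Sobolev condition: $\BAR{u(P)}$ compact and $\|\sqrt{e_w}\|_{p,\lam}<\infty$), with no assumption that $(u,A)$ solves the vortex equations. This generality is not cosmetic: the proposition underlies the definition of the Maslov index $m(w)$ for every $w\in\BB^p_\lam$, which then enters the Fredholm index formula of Theorem \ref{thm:Fredholm} applied to arbitrary smooth pairs in $\BB^p_\lam$. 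Since $\sqrt{e_w}\in L^p_\lam$ does not by itself give pointwise decay, and there is no vortex PDE to bootstrap from, your argument as written does not establish the existence of a limit $x_\infty$ in the required generality. The paper instead extracts the asymptotics directly from the Sobolev hypothesis via a Hardy-type inequality (Proposition \ref{prop:Hardy}): roughly, $|d(|\mu\circ u|^2)|\le \mathrm{const}\cdot\sqrt{e_w}\in L^p_\lam$ yields uniform convergence $|\mu\circ u|\to 0$, and a second Hardy argument applied after embedding a local slice of $M^*/G$ into $\R^n$ produces the limit $\bar x_\infty\in\mu^{-1}(0)/G$; lifting via the slice gives $\si$ and $x_\infty$ and also the bound $\si^*A\in L^p_\lam$ (this is Lemma \ref{le:si}).

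A secondary, smaller point concerns uniqueness. Your appeal to clutching-function classification is more machinery than is needed and is a bit loose as stated (the restriction of the clutching data of $P$ to $\partial B_R$ is not intrinsically an element of $\pi_1(G)$ without fixing trivializations). The paper's argument is more elementary and more direct: since $x_\infty\in\mu^{-1}(0)\sub M^*$ and $G$ acts freely there, each $\wt u_i$ is injective on the fiber of $\wt P_i$ over $\infty$, so one can simply declare $\Psi$ to be the identity over $\R^2$ and, on the fiber over $\infty$, set $\Psi(p):=$ the unique $p'$ with $\wt u_1(p)=\wt u_2(p')$. Continuity of $\Psi$ at $\infty$ follows from the uniform convergence in Lemma \ref{le:si}. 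If you want to repair your proof, you should replace the vortex decay input with a Hardy-type estimate from the $L^p_\lam$ hypothesis, and replace the clutching argument with the freeness of the $G$-action at $x_\infty$.
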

The proof of Proposition \ref{prop:P} is postponed to the appendix (page \pageref{proof:prop:P}). We choose $\wt P$ and $\wt u$ as in Proposition \ref{prop:P}. Then $\om$ induces a fiberwise symplectic form $\wt\om$ on the continuous bundle $TM^{\wt u}=(\wt u^*TM)/G\to S^2$. 
\begin{defi}[Maslov index]\label{defi:m w} We define the Maslov index $m(w)$ to be the first Chern number of $(TM^{\wt u},\wt\om)$.
\end{defi}
It follows from Proposition \ref{prop:P} that $m(w)$ does not depend on the choice of the extension $\wt P$. Note that it only depends on the gauge equivalence class of $w$. The condition $\lam>1-2/p$ is needed for $m(w)$ to be well-defined for $w\in\BB^p_\lam$. Consider for example the case $M:=\R^2,\om:=\om_0,J:=i$ and $G:=\{e\}$. Let $p>2$ and $\lam<1-2/p$. We choose $0<\eps<1-2/p-\lam$, and a smooth map $u:\R^2\to \R^2=\C$ such that $u(z)=\sin(|z|^\eps)$, for $z\in\R^2\wo B_1$. Then $(u,0)\in\BB^p_\lam$, and $u(re^{i\phi})$ diverges, as $r\to\infty$, for every $\phi\in\R$. Therefore, we can not associate any Maslov index with $(u,0)$. 
\section{Proof of the main result}\label{sec:proof}
Let $M,\om,G,\g,\lan\cdot,\cdot\ran_\g,\mu$ and $J$ be as in Section \ref{sec:main} (assuming hypothesis (H)), $\Si:=\R^2,\om_{\R^2}:=\om_0,j:=i$ and $P\to\R^2$ a principal $G$-bundle. In the present section we always {\bf assume that $\bar n:=(\dim M)/2-\dim G>0$}. 
\subsection{Reformulation of the Fredholm theorem}\label{subsec:reform} 
Let $p>2$, $\lam\in\R$, $\BB^p_\lam$ be defined as in (\ref{eq:BB p lam}), and $w:=(u,A)\in\BB^p_\lam$ be a smooth map. We denote $\im L:=\{(x,L_x\xi)\,|\,x\in M,\,\xi\in\g\}$, and by $\Pr:TM\to TM$ the orthogonal projection onto $\im L$. $\Pr$ induces an orthogonal projection ${\Pr}^u:TM^u\to TM^u$ onto $(u^*\im L)/G$. For $\ze=(v,\al)\in TM^u\oplus\wone(\g_P)$ we write ${\Pr^u}\ze:=({\Pr^u}v,\al)$. Note that $\im L$ is in general not a subbundle of $TM$, since the dimension of $\im L_x$ may vary with $x\in M$. For $\ze\in W^{1,p}_\loc\big(TM^u\oplus\wone(\g_P)\big)$ we define $\Vert \ze\wt\Vert:=\Vert\ze\Vert_{w,p,\lam}+\Vert {\Pr}^u\ze\Vert_{p,\lam}$, where $\Vert\ze\Vert_{w,p,\lam}$ is as in (\ref{eq:ze w p lam}). Recall the definition (\ref{eq:YY w}) of $\Y_w^{p,\lam}$. We define 
\begin{eqnarray}\label{eq:X w p lam}&\XX_w:=\XX_w^{p,\lam}:=\big\{\ze\in W^{1,p}_\loc\big(TM^u\oplus\wone(\g_P)\big)\,\big|\,\Vert\ze\wt\Vert_{w,p,\lam}<\infty\big\},&\\ 
\label{eq:Y w p lam}&\YY_w:=\YY_w^{p,\lam}:=\Y_w^{p,\lam}\oplus L^p_\lam(\g_P),&\\
&\DD_w:=\DD_w^{p,\lam}:\XX_w^{p,\lam}\to\YY_w^{p,\lam},\quad \DD_w\ze:=(\D_w\ze,L_w^*\ze).&
\end{eqnarray}
Here $\D_w\ze$ is defined as in (\ref{eq:D w}). Note that the map $L_w^*:\XX_w:=\to L^p_\lam(\g_P)$ given by $L_w^*(v,\al):=L_u^*v-d_A^*\al$ is well-defined and bounded. This follows from the fact $L_x^*=L_x^*\Pr_x$ (for every $x\in M$) and compactness of $\BAR{u(P)}$. 
\begin{thm}\label{thm:Fredholm aug} Let $p>2$ and $\lam>-2/p+1$ be real numbers, and $w:=(u,A)\in\BB^p_{\lam}$ a smooth pair. Then the following statements hold. 
  \begin{enui}\item\label{thm:Fredholm X w} The normed spaces $(\XX^{p,\lam}_w,\Vert\cdot\Vert_{w,p,\lam}),\Y^{p,\lam}_w$ and $L^p_\lam(\g_P)$ are complete. 
\item\label{thm:Fredholm DD w} Assume that $-2/p+1<\lam<-2/p+2$. Then the operator $\DD^{p,\lam}_w:\XX^{p,\lam}_w\to\YY^{p,\lam}_w$ is Fredholm of real index
  \begin{equation}
    \label{eq:ind DD w}\ind\DD^{p,\lam}_w=2\bar n+2m(w).
  \end{equation}
  \end{enui}
\end{thm}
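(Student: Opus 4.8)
For (\ref{thm:Fredholm X w}) the plan is to realize each space as a closed subspace of a complete one. Since $\BAR{u(P)}\subset M$ is compact, the metrics on $TM^u$ and $\g_P$ are uniformly bounded, so $L^p_\lam$ of these bundles is a genuine weighted $L^p$-space, hence Banach; this already gives completeness of $\Y^{p,\lam}_w$ and of $L^p_\lam(\g_P)$. For $(\XX^{p,\lam}_w,\Vert\cdot\Vert_{w,p,\lam})$ I would take a Cauchy sequence $\ze_k=(v_k,\al_k)$, extract an $L^\infty$-limit and a weak $W^{1,p}_\loc$-limit $\ze$, observe that $\na^A\ze_k$, $d\mu(u)v_k$, $\al_k$ are Cauchy in $L^p_\lam$, identify their limits with $\na^A\ze$, $d\mu(u)v$, $\al$ via local elliptic estimates and weak lower semicontinuity, and conclude $\ze\in\XX^{p,\lam}_w$ with $\ze_k\to\ze$; this is the standard completeness argument for weighted Sobolev spaces, and the $\Vert\cdot\Vert_\infty$-term is what makes the norm non-degenerate. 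For the space $\X^{p,\lam}_w$ of Theorem~\ref{thm:Fredholm} the extra constraint $L_w^*\ze=0$ is closed under this convergence (again by compactness of $\BAR{u(P)}$), so it too is complete.

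For (\ref{thm:Fredholm DD w}) I would localize on $\R^2=\C$. On a large disk $B_R$ the operator $\DD_w$ is a first-order real-linear elliptic operator — the linearized vortex operator together with the gauge-fixing term $L_w^*$, whose symbol computation is as in \cite{CGMS} — so interior elliptic estimates control it there and $B_R$ contributes only a compact correction. The real content is the end $\C\setminus B_R$, where I would invoke the existence of a \emph{good complex trivialization} of $TM^u\oplus\wone(\g_P)$ (Proposition~\ref{prop:triv}), arranged to respect fiberwise the orthogonal splitting $T_{u(p)}M=(\im L^\C_{u(p)})^\perp\oplus\im L^\C_{u(p)}$; constructing it uses the sharp decay $e_w(z)=O(|z|^{-4+\eps})$ and the fact that $u$ approaches $\mu^{-1}(0)$ at infinity, together with hypothesis (H), which forces $\im L^\C$ to have constant rank $\dim G$ along $\mu^{-1}(0)$ so that $(\im L^\C)^\perp$ is an honest complex bundle of rank $\bar n$ there. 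By Proposition~\ref{prop:X X w} the trivialization carries $\DD_w$ over the end into a compact perturbation of $\d_{\bar z}\oplus D$, with $\d_{\bar z}$ acting on the $\C^{\bar n}$-component from $(\im L^\C)^\perp$ and $D$ an explicit matrix operator on the component from $\im L^\C\oplus\wone(\g_P)$ — the non-compact zeroth-order terms $\al\mapsto(L_u\al)^{0,1}$ and $v\mapsto\om_0\,d\mu(u)v$ being absorbed into $D$. The Fredholm property of $\d_{\bar z}$ and of $D$ between the relevant weighted Sobolev spaces, for $1-2/p<\lam<2-2/p$, comes from the Lockhart--McOwen-type results in the appendix (Propositions~\ref{prop:d L L},~\ref{prop:d A B d}, Corollary~\ref{cor:d L L}): their critical weights form the discrete set $\{k-2/p:k\in\Z\}$, which misses $(1-2/p,2-2/p)$. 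A cutoff-and-glue argument joining the end operator to the elliptic operator on $B_R$ then shows $\DD^{p,\lam}_w$ is Fredholm, of index the sum of the two end-model indices.

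To evaluate that sum: over $S^2=\C\cup\{\infty\}$ the complexified orbit bundle $\im L^\C\cong\g_{\wt P}\otimes\C$ is the complexification of a real bundle, so $c_1(\im L^\C)=0$; hence the $\C^{\bar n}$-bundle for $\d_{\bar z}$ extends over $S^2$ to a complex bundle of rank $\bar n$ and first Chern number $c_1(TM^{\wt u})=m(w)$ (Definition~\ref{defi:m w}), while the bundle for $D$ extends with $c_1=0$. Riemann--Roch over $S^2$, corrected by the weight shift (each unit of weight above the base value $-2/p$ drops the index of the $\bar n$ complex CR-directions by $2\bar n$, and $1-2/p<\lam<2-2/p$ puts one in the ``$k=1$'' chamber), then yields $\ind\DD^{p,\lam}_w=2\bar n+2m(w)$; a different chamber gives a different coefficient of $2\bar n$, which is exactly the index jump mentioned after Theorem~\ref{thm:Fredholm}.

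The step I expect to be the main obstacle is not this bookkeeping but the construction of the good trivialization (Proposition~\ref{prop:triv}) and the proof of Proposition~\ref{prop:X X w}: one must control the asymptotics of the finite-energy vortex $w$ near infinity precisely enough both to split off the orbit directions smoothly and to show that all errors produced by the change of trivialization are \emph{compact} operators between the weighted spaces. This is where the sharp energy decay and the geometry of $\mu^{-1}(0)$ are used, and where the failure of the Rellich--Kondrachov theorem on $\R^2$ bites: the bad zeroth-order terms cannot be treated as compact and must be carried inside $D$.
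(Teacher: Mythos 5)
Your proposal is correct and follows the same overall strategy as the paper: reduce to a model operator via the good complex trivialization of Proposition~\ref{prop:triv} and Proposition~\ref{prop:X X w}, then invoke the Lockhart--McOwen-type weighted Fredholm theory (Corollary~\ref{cor:d L L} and Proposition~\ref{prop:d A B d}), with all the analytic content concentrated in the construction of the trivialization and the compactness of the error term. Two execution differences are worth noting. First, for part~(\ref{thm:Fredholm X w}) the paper does not run a direct Cauchy-sequence argument; it observes that Proposition~\ref{prop:X X w}(\ref{prop:X X w iso}) makes $\Psi$ a Banach-space isomorphism $\XX_{m(w)}\to\XX_w$ (and $F\Psi:\YY_{m(w)}\to\YY_w$), so completeness is inherited from the standard weighted spaces $L^{k,p}_\lam,W^{k,p}_\lam$ (citing \cite{Lockhart PhD} and Proposition~\ref{prop:lam d Morrey}(\ref{prop:lam W})); your direct argument should also work but requires extra care with the mixed $\Vert\cdot\Vert_\infty$-plus-weighted-$L^p$ norm and the constraint that $\na^A\ze_k$ and $\ze_k$ converge compatibly. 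Second, for part~(\ref{thm:Fredholm DD w}) your cutoff-and-glue step is not needed: Proposition~\ref{prop:triv} gives a trivialization over all of $\R^2$ (only its asymptotic form is constrained to respect the $H\oplus\im L^\C$ splitting outside $B_R$), and Proposition~\ref{prop:X X w}(\ref{prop:X X w Fredholm}) asserts that \emph{globally} $(F\Psi)^{-1}\DD_w\Psi$ differs from $\d^{\C^{\bar n}}_{\bar z}\oplus(\text{matrix operator})$ by a compact map, so no interior/end parametrix patching is required. The paper's index bookkeeping is then elementary: Corollary~\ref{cor:d L L} gives index $2+2d$ (with $d=m(w)$) for the $p_d$-twisted $\C$-component, index $2$ for each of the remaining $\bar n-1$ untwisted $\C$-components, and Proposition~\ref{prop:d A B d} gives index $0$ for the coupled $\d_{\bar z}/\d_z$ matrix block on $\g^\C\oplus\g^\C$, summing to $2\bar n+2m(w)$; your Riemann--Roch-plus-chamber heuristic is consistent with this (and correctly identifies $k=1$), but the paper's route avoids the heuristic by computing on each factor separately.
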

This theorem is proved in Section \ref{subsec:proof:Fredholm aug}. The proof relies on the existence of a suitable trivialization of $TM^u\oplus\wone(\g_P)$ in which the operator $\DD^{p,\lam}$ becomes standard. 
\begin{thm}\label{thm:L w * R} Let $p>2$, $\lam>1-2/p$, and $w:=(u,A)\in\BB^p_\lam\cap \BB$. Then the map $L_w^*:\XX_w\to L^p_\lam(\g_P)$ admits a bounded (linear) right inverse. 
\end{thm}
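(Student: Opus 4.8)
\textbf{Proof plan for Theorem \ref{thm:L w * R}.}

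The plan is to construct the right inverse of $L_w^*$ by solving, for a given $\eta\in L^p_\lam(\g_P)$, the equation $L_w^*(v,\al)=\eta$ with a bounded choice of $(v,\al)\in\XX_w$. Recalling from (\ref{eq:L w *}) that $L_w^*(v,\al)=L_u^*v-d_A^*\al$, the natural strategy is to look for a solution of the special form $v=0$, $\al$ a one-form, so that the equation reduces to $-d_A^*\al=\eta$. To solve this I would invoke Proposition \ref{prop:right}, which (as stated in the excerpt) provides a bounded right inverse for $d_A^*$ on the appropriate weighted spaces over $\R^2$. The key point here is that $d_A^*$ acting on $W^{1,p}_\lam$ one-forms with values in $\g_P$, landing in $L^p_\lam(\g_P)$, is surjective with a bounded section; this is precisely a Cauchy-Riemann-type / Laplace-type solvability statement on the plane in weighted Sobolev spaces, of the sort established via the Lockhart--McOwen theory. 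Setting $\al_0:=R_{d_A^*}\eta$ and $v_0:=0$ gives a first approximate solution $\ze_0:=(v_0,\al_0)$ with $L_w^*\ze_0=\eta$ and $\Vert\ze_0\Vert_{w,p,\lam}\lesssim\Vert\eta\Vert_{p,\lam}$, where the $W^{1,p}_\lam$ control of $\al_0$ immediately bounds the $\al$ and $\na^A\al$ contributions to $\Vert\ze_0\Vert_{w,p,\lam}$, and the $v$-terms vanish.

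The remaining issue is that $\ze_0$ need not lie in $\XX_w$ in the strong sense: the norm $\Vert\cdot\wt\Vert_{w,p,\lam}$ on $\XX_w$ (see (\ref{eq:X w p lam})) also contains the $L^\infty$-term $\Vert\ze\Vert_\infty$ and the term $\Vert{\Pr}^u\ze\Vert_{p,\lam}$. For the $\Pr^u$-term, since $\Pr^u$ acts as the identity on the $\g_P$-factor in the convention $\Pr^u(v,\al)=(\Pr^u v,\al)$, one checks that $\Vert{\Pr}^u\ze_0\Vert_{p,\lam}=\Vert{\Pr}^u v_0\Vert_{p,\lam}+\Vert\al_0\Vert_{p,\lam}=\Vert\al_0\Vert_{p,\lam}$, already controlled. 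For the $L^\infty$-term, I would use the weighted Sobolev embedding $W^{1,p}_\lam\hookrightarrow L^\infty$ valid for $p>2$ and $\lam>1-2/p$ (one of the embedding results cited for Appendix \ref{sec:weighted}); this is exactly where the hypothesis $\lam>1-2/p$ enters, yielding $\Vert\al_0\Vert_\infty\lesssim\Vert\al_0\Vert_{W^{1,p}_\lam}\lesssim\Vert\eta\Vert_{p,\lam}$. Combining these estimates shows that the linear map $\eta\mapsto\ze_0=(0,R_{d_A^*}\eta)$ is a bounded right inverse $\XX_w\leftarrow L^p_\lam(\g_P)$ of $L_w^*$.

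The main obstacle I anticipate is not in the functional-analytic bookkeeping above but in the underlying solvability statement, i.e.\ Proposition \ref{prop:right}: one needs $d_A^*$ to be \emph{surjective} (not merely Fredholm) between the weighted spaces, which requires ruling out a cokernel. On $\R^2$ the relevant weighted Laplacian $d_A d_A^*$ (or $\na^A$-Laplacian on $\g_P$) can fail to be surjective for exceptional weights because of the discrete set of ``indicial roots'' of the Euclidean Laplacian; the condition $\lam>1-2/p$ is chosen so that the weight lies in a range where the cokernel vanishes (morally, there are no decaying harmonic $\g_P$-valued functions in the relevant weighted space, since the connection $A$ is asymptotically flat up to gauge and the obstruction is that of the scalar Laplacian with a positive weight exponent). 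If instead one wanted $v\neq0$ one could also use the ellipticity of the full operator $L_w^*$ together with surjectivity of $L_u^*$ pointwise (which holds since $G$ acts freely near $\mu^{-1}(0)$, using $\dim M>2\dim G$ is not even needed here, only $L_x$ injective), but the $d_A^*$-route is cleaner and isolates the one genuinely nontrivial input. Hence the proof is: quote Proposition \ref{prop:right} for the bounded right inverse of $d_A^*$, set $\ze_0:=(0,R_{d_A^*}\eta)$, and verify the three norm bounds ($\na^A\al_0$ and $\al_0$ from $W^{1,p}_\lam$-control; $\Pr^u\ze_0$ likewise; $\Vert\ze_0\Vert_\infty$ from the weighted embedding), with all constants independent of $\eta$.
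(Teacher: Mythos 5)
Your construction sets $v=0$ and tries to produce $\al$ with $-d_A^*\al=\eta$ by invoking Proposition \ref{prop:right}. But that proposition is a \emph{local}, unweighted statement: it provides a bounded right inverse of $d_A^*:W^{1,p}_A(\wone(\g_P))\to L^p(\g_P)$ over the interior of a fixed compact set $K\subset\R^n$ diffeomorphic to $\bar B_1$, with ordinary $L^p$ norms, and with an operator norm that is uniformly controlled only when $\Vert F_A\Vert_{L^p(K)}$ is small. It says nothing about a global right inverse in weighted spaces over all of $\R^2$. So the ``key point'' you rely on --- surjectivity of $d_A^*:W^{1,p}_\lam(\R^2)\to L^p_\lam(\R^2)$ with a bounded section --- is not supplied anywhere in the paper, and it is not an immediate consequence of Lockhart--McOwen either, since $d_A^*$ is underdetermined rather than elliptic; one would have to pass through the weighted Laplacian $d_Ad_A^*$ and rule out indicial obstructions, which the paper deliberately avoids. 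The downstream functional-analytic bookkeeping in your proposal (the $\Pr^u$-term, the $L^\infty$-embedding) is fine, but the construction never gets off the ground because the main solvability input is missing.

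The paper's actual route is a patching argument that makes essential use of the $v$-component, which your proposal explicitly discards. One covers $\R^2$ by a fixed large square $[-N,N]^2$ (chosen so that $\Vert F_A\Vert_{L^p(\C\setminus(-N,N)^2)}$ is below the $\eps$ of Proposition \ref{prop:right}) together with a grid of unit squares, applies Proposition \ref{prop:right} on each piece $\Om_i$ to get local right inverses $R_i$ of $d_A^*$ with \emph{uniformly} bounded norms, and sets $\hat R\xi:=\sum_i\rho_i R_i(\xi|_{\Om_i})$ with a subordinate partition of unity $\{\rho_i\}$. This $\hat R$ is only an approximate right inverse: $d_A^*\hat R\xi=\xi-\sum_i*((d\rho_i)\wedge*\al_i)$. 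The error $\wt\xi:=\xi-d_A^*\hat R\xi$ is then absorbed via the $v$-component: outside $(-N,N)^2$ the image of $u$ lies in $M^*$ near $\mu^{-1}(0)$, so $L_u^*L_u$ is invertible with uniformly bounded inverse, and one sets $\wt R\xi:=L_u(L_u^*L_u)^{-1}\wt\xi$ there (and $0$ on the central box). Then $R\xi:=(\wt R\xi,-\hat R\xi)$ satisfies $L_w^*R=\id$ exactly. The bulk of the proof is then the verification that $R$ is bounded $L^p_\lam\to\XX_w$, combining the uniform local bounds from Proposition \ref{prop:right} with Lemma \ref{le:infty 1 p A}, Lemma \ref{le:na A L u} and Remark \ref{rmk:c}. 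In short: you cannot get away with $v=0$; the $v$-component is exactly what repairs the defect left by gluing local right inverses, and setting it to zero leaves no mechanism to recover an exact right inverse from the only tool the paper actually gives you.
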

The proof of Theorem \ref{thm:L w * R} is postponed to Section \ref{subsec:proof:thm:L w * R} (see page \pageref{proof:thm:L w * R}). It is based on the existence of a bounded right inverse for the operator $d_A^*$ over a compact subset of $\R^n$ diffeomorphic to $\bar B_1$ (Proposition \ref{prop:right}) and the existence of a neighborhood $U\sub M$ of $\mu^{-1}(0)$ such that $\inf\big\{|L_x\xi|\,\big|\,x\in U,\,\xi\in\g:\,|\xi|=1\big\}>0$. We define
\begin{equation}\label{eq:M *}M^*:=\big\{x\in M\,\big|\,gx=x\then g=\one\big\}.\end{equation}
\begin{proof}[Proof of Theorem \ref{thm:Fredholm}]\setcounter{claim}{0} Let $p>2$, $\lam>1-2/p$, and $w:=(u,A)\in\BB^p_\lam$ be a smooth pair. We {\bf prove (\ref{thm:Fredholm X w})}.
\begin{claim}\label{claim:XX X} We have $\X_w:=\X_w^{p,\lam}=K:=\ker\big(L_w^*:\XX_w\to L^p_\lam(\g_P)\big)$, and the restriction of the norm $\Vert\cdot\wt\Vert_{w,p,\lam}$ to $\X_w$ is equivalent to $\Vert\cdot\Vert_{w,p,\lam}$. 
\end{claim}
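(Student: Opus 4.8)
The plan is to reduce the whole claim to a single a priori estimate. Since $\Vert\ze\wt\Vert_{w,p,\lam}=\Vert\ze\Vert_{w,p,\lam}+\Vert\Pr^u\ze\Vert_{p,\lam}\ge\Vert\ze\Vert_{w,p,\lam}$ for every $\ze\in W^{1,p}_\loc\big(TM^u\oplus\wone(\g_P)\big)$, the inclusion $K\subseteq\X_w$ is automatic: if $\Vert\ze\wt\Vert_{w,p,\lam}<\infty$ and $L_w^*\ze=0$ then $\Vert\ze\Vert_{w,p,\lam}<\infty$, hence $\ze\in\X_w$. For the reverse inclusion and for the norm comparison it then suffices to exhibit a constant $C=C(w)$ with $\Vert\Pr^u\ze\Vert_{p,\lam}\le C\Vert\ze\Vert_{w,p,\lam}$ for all $\ze\in\X_w$; granting this, any $\ze\in\X_w$ satisfies $\Vert\ze\wt\Vert_{w,p,\lam}\le(1+C)\Vert\ze\Vert_{w,p,\lam}<\infty$, so $\ze\in\XX_w$ and, since $L_w^*\ze=0$, $\ze\in K$, and on $\X_w=K$ the two norms are sandwiched between $\Vert\cdot\Vert_{w,p,\lam}$ and $(1+C)\Vert\cdot\Vert_{w,p,\lam}$.

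To obtain this estimate I would write $\ze=(v,\al)$, so that $\Vert\Pr^u\ze\Vert_{p,\lam}\le\Vert\,|\Pr^u v|\,\Vert_{p,\lam}+\Vert\,|\al|\,\Vert_{p,\lam}$, and $\Vert\,|\al|\,\Vert_{p,\lam}\le\Vert\ze\Vert_{w,p,\lam}$ by \eqref{eq:ze w p lam}; thus only $\Vert\,|\Pr^u v|\,\Vert_{p,\lam}$ needs to be controlled. Two inputs go into this. First, the constraint $L_w^*\ze=0$ reads $L_u^*v=d_A^*\al$, and since $d_A^*\al$ is the twisted divergence of $\al$ one has $|d_A^*\al|\le c_0|\na^A\al|\le c_0|\na^A\ze|$ pointwise with $c_0$ universal; hence $\Vert\,|L_u^*v|\,\Vert_{p,\lam}=\Vert\,|d_A^*\al|\,\Vert_{p,\lam}\le c_0\Vert\ze\Vert_{w,p,\lam}$. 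Second, $w\in\BB^p_\lam$ forces $\mu\circ u$ to decay: from $|\mu\circ u|^2\le 2e_w$ and \eqref{eq:BB p lam} we get $\Vert\mu\circ u\Vert_{p,\lam}\le\sqrt2\,\Vert\sqrt{e_w}\Vert_{p,\lam}<\infty$.

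The remaining, geometric, ingredient---which I would quote from Lemma~\ref{le:si}, or prove directly by a tube-lemma type argument---uses hypothesis (H): since $G$ acts freely along $\mu^{-1}(0)$, the map $L_x$ is injective there, so by continuity together with compactness of $\BAR{u(P)}$ there are constants $c,\de_0>0$ with $|L_x\xi|\ge c|\xi|$ for all $\xi\in\g$ whenever $x\in\BAR{u(P)}$ and $|\mu(x)|<\de_0$. For such $x$ this yields the pointwise inequality $|\Pr_x v|\le c^{-1}|L_x^*v|$: writing $\Pr_x v=L_x\eta$ (legitimate, with $\eta$ unique, since $\ker L_x=0$) one has $|\Pr_x v|^2=\lan\Pr_x v,v\ran=\lan\eta,L_x^*v\ran\le|\eta|\,|L_x^*v|\le c^{-1}|L_x\eta|\,|L_x^*v|=c^{-1}|\Pr_x v|\,|L_x^*v|$.

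Finally I would split $\R^2$ into $\Om:=\{z\in\R^2:|\mu\circ u|(z)\ge\de_0\}$ and its complement. Off $\Om$ the previous step gives $|\Pr^u v|\le c^{-1}|L_u^*v|$ pointwise, contributing at most $c^{-1}\Vert\,|L_u^*v|\,\Vert_{p,\lam}\le c_0c^{-1}\Vert\ze\Vert_{w,p,\lam}$ to $\Vert\,|\Pr^u v|\,\Vert_{p,\lam}$. On $\Om$ I would use only $|\Pr^u v|\le|v|\le\Vert\ze\Vert_\infty\le\Vert\ze\Vert_{w,p,\lam}$, together with the finiteness of the weighted measure of $\Om$ coming from the decay of $\mu\circ u$, namely $\int_\Om(1+|z|^2)^{\lam p/2}\,dz\le\de_0^{-p}\Vert\mu\circ u\Vert_{p,\lam}^p<\infty$ (for $p=\infty$ this instead says $\Om$ is bounded, which holds since $\lam>1-2/p>0$). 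Adding the two contributions gives $\Vert\,|\Pr^u v|\,\Vert_{p,\lam}\le C\Vert\ze\Vert_{w,p,\lam}$ with $C=C(w)$, completing the reduction. The main obstacle is exactly this last interplay: near $\mu^{-1}(0)$ one controls $\Pr^u$ by $L_u^*$ using the quantitative non-degeneracy of $L$ supplied by freeness in (H), whereas far from $\mu^{-1}(0)$ no such pointwise control is available and one must instead exploit that the energy bound $\Vert\sqrt{e_w}\Vert_{p,\lam}<\infty$ confines $u$ to a $\de_0$-neighborhood of $\mu^{-1}(0)$ outside a set of finite weighted measure---which is precisely the phenomenon for which the extra $\Vert\Pr^u\cdot\Vert_{p,\lam}$-term was built into $\Vert\cdot\wt\Vert_{w,p,\lam}$.
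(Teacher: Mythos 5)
Your proof is correct, and it takes a somewhat different route from the paper's in the final step. Both arguments reduce to the a priori estimate $\Vert{\Pr}^u\ze\Vert_{p,\lam}\le C\Vert\ze\Vert_{w,p,\lam}$ on $\X_w$, both use hypothesis (H) together with properness of $\mu$ to produce a compact neighborhood of $\mu^{-1}(0)$ inside $M^*$ on which $|L_x\xi|\ge c|\xi|$ and hence $|\Pr_x v|\le c^{-1}|L_x^*v|$ (the paper quotes Remark~\ref{rmk:c}; you derive it inline), and both convert the kernel condition $L_u^*v=d_A^*\al$ into $\Vert L_u^*v\Vert_{p,\lam}\lesssim\Vert\na^A\al\Vert_{p,\lam}\le\Vert\ze\Vert_{w,p,\lam}$. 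The difference lies in how you localize the pointwise estimate and handle the complementary region. The paper invokes Lemma~\ref{le:si} to produce a radius $R$ with $u(P|_{\R^2\wo B_R})\sub\mu^{-1}(\bar B_\de)$, and then uses (implicitly) the finiteness of the weighted measure of the compact ball $B_R$ for the leftover contribution. You instead decompose by the level set $\Om=\{|\mu\circ u|\ge\de_0\}$ and bound the contribution from $\Om$ via the Chebyshev estimate $\int_\Om\lan z\ran^{\lam p}\,dz\le\de_0^{-p}\Vert\mu\circ u\Vert_{p,\lam}^p<\infty$ combined with $\Vert\ze\Vert_\infty\le\Vert\ze\Vert_{w,p,\lam}$. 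This is a legitimate and arguably more economical alternative: it sidesteps Lemma~\ref{le:si} (whose proof rests on the Hardy-type Proposition~\ref{prop:Hardy}) and uses only the direct consequence $\mu\circ u\in L^p_\lam$ of $w\in\BB^p_\lam$. One small point of hygiene in your geometric ingredient: rather than taking the infimum over the relatively open set $\{x\in\BAR{u(P)}:|\mu(x)|<\de_0\}$, take $\de_0$ so small that $\mu^{-1}(\bar B_{\de_0})\sub M^*$, a compact set by properness of $\mu$ under (H), and define $c$ as an infimum over that compact set; this guarantees $c>0$ without any further discussion.
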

\begin{proof}[Proof of Claim \ref{claim:XX X}] It suffices to prove that $\X_w\sub K$ and this inclusion is bounded. It follows from hypothesis (H) that there exists $\de>0$ such that $\mu^{-1}(\bar B_\de)\sub M^*$. We have $c:=\min\big\{|L_x\xi|\,\big|\,x\in\mu^{-1}(\bar B_\de),\,\xi\in\g:\,|\xi|=1\big\}>0$. Lemma \ref{le:si} below implies that there exists $R>0$ such that $u(P|_{\R^2\wo B_R})\sub \mu^{-1}(\bar B_\de)$. Let $\ze=(v,\al)\in\X_w$. Then $L_u^*v=d_A^*\al$, and thus, using the last assertion of Remark \ref{rmk:c} below,
\[\Vert{\Pr}^u v\Vert_{p,\lam}\leq c^{-1}\Vert L_u^*v\Vert_{p,\lam}\leq c^{-1} \Vert\na^A\al\Vert_{p,\lam}\leq c^{-1} \Vert\ze\Vert_{w,p,\lam}<\infty.\] 
Hence $\X_w\sub K$, and this inclusion is bounded. This proves Claim \ref{claim:XX X}.
\end{proof}
Part (\ref{thm:Fredholm X w}) follows from part (\ref{thm:Fredholm:X Y}) of Theorem \ref{thm:Fredholm aug} and Claim \ref{claim:XX X}. {\bf Part (\ref{thm:Fredholm:DDD})} follows from part (\ref{thm:Fredholm DD w}) of Theorem \ref{thm:Fredholm aug}, Theorem \ref{thm:L w * R} and Lemma \ref{le:X Y Z} (appendix) with $X:=\XX_w,Y:=\Y_w,Z:=L^p_\lam(\g_P)$, $D':\XX_w\to\Y_w$ given by (\ref{eq:D w}) and $T:=L_w^*$. This proves Theorem \ref{thm:Fredholm}.
\end{proof}
\subsection{Proof of Theorem \ref{thm:Fredholm aug} (augmented vertical differential)}\label{subsec:proof:Fredholm aug}
We denote by $s$ and $t$ the standard coordinates in $\R^2$. For $v\in\R^n$ we denote $\lan v\ran:=\sqrt{1+|v|^2}$. For $d\in \Z$ we define $p_d:\C\to \C$, $p_d(z):=z^d$. We equip the bundle $\wone(\g_P)$ with the (fiberwise) complex structure $J_P$ defined by $J_P\al:=-\al\,i$. Furthermore, we denote $\g^\C:=\g\otimes_\R\C$, $V:=\C^{\bar n}\oplus\g^\C\oplus\g^\C$, and for $a\in\C$ we denote by $a\cdot\oplus\id:V\to V$ the map $\big(v^1,\ldots,v^{\bar n},\al,\be\big)\mapsto \big(av^1,v^2,\ldots,v^{\bar n},\al,\be\big)$. For $x\in M$ we write $L^\C_x:\g^\C\to T_xM$ for the complex linear extension of $L_x$. We define 
\[H_x:=\ker d\mu(x)\cap(\im L_x)^\perp,\,\forall x\in M.\] 
Note that in general, the union $H$ of all the $H_x$'s is not a smooth subbundle of $TM$, since the dimension of $H_x$ may depend on $x$. However, there exists an open neighborhood $U\sub M$ of $\mu^{-1}(0)$ such that $H|_U$ is a subbundle of $TM|_U$. Let $p>2$, $\lam>-2/p+1$ and $w:=(u,A)\in\BB^p_\lam$ be a smooth pair. For $z\in\R^2$ we define $H^u_z:=\big\{G\cdot(p,v)\,\big|\,p\in\pi^{-1}(z)\sub P,\,v\in H_{u(p)}\big\}$. Consider a complex trivialization (i.e. bundle isomorphism fixing the base $\R^2$)
\[\Psi:\R^2\x V\to TM^u\oplus\wone(\g_P).\]
\begin{defi}\label{defi:triv} We call $\Psi$ \emph{good}, if the following properties are satisfied.
\begin{enui}\item {\bf (Splitting)}\label{defi:triv split} For every $z\in\R^2$ we have
  \begin{align}
    \label{eq:Psi z C bar n}\Psi_z(\C^{\bar n}\oplus\g^\C\oplus\{0\})=&\,TM^u_z\oplus\{0\},\\
\label{eq:Psi z 0} \Psi_z(\{0\}\oplus\{0\}\oplus\g^\C)=&\,\{0\}\oplus \wone(\g_P).
  \end{align}
Furthermore, there exists a number $R>0$, a smooth section $\si$ of $P\to\R^2\wo B_1$, and a point $x_\infty\in\mu^{-1}(0)$, such that the following conditions are satisfied. For every $z\in\R^2\wo B_R$ we have
  \begin{equation}\label{eq:Psi infty x H}\Psi_z(\C^{\bar n}\oplus\{0\}\oplus\{0\})=H^u_z,\end{equation}
$u\circ\si(re^{i\phi})$ converges to $x_\infty$, uniformly in $\phi\in\R$, as $r\to\infty$, $\si^*A\in L^p_\lam(\R^2\wo B_1,\g)$, and for every $z\in\R^2\wo B_R$ and $\big(\al,\be=\phi+i\psi\big)\in\g^\C\oplus\g^\C$, we have
\begin{equation}\label{eq:Psi 0 al be}\Psi_z(0,\al,\be)=\big(G\cdot\big(u\circ\si(z),L^\C_{u\circ\si(z)}(\al)\big),G\cdot\big(\si(z),\phi ds+\psi dt\big)\big).
\end{equation}
\item\label{defi:triv C} There exists a number $C>0$ such that for every $(z,\ze)\in\R^2\x V$
  \begin{equation}
    \label{eq:C}C^{-1}|\ze|\leq \big|\Psi_z(\lan z\ran^{m(w)}\cdot\oplus\id)\ze\big|\leq C|\ze|.
  \end{equation}
\item\label{defi:triv na} We have $\big|\na^A\big(\Psi(p_{m(w)}\cdot\oplus\id)\big)\big|\in L^p_\lam(\R^2\wo B_1)$.
\end{enui}
\end{defi}
\begin{prop}\label{prop:triv} If $p>2$, $\lam>-2/p+1$ and $w:=(u,A)\in\BB^p_\lam$ is smooth then there exists a good complex trivialization of $TM^u\oplus\wone(\g_P)$. 
\end{prop}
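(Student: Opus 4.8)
The plan is to construct the good trivialization $\Psi$ in two regimes -- near infinity and on a large compact set -- and glue them. The key geometric input is that $w=(u,A)$ has finite energy (since $\lam>-2/p+1$, by the discussion following Theorem~\ref{thm:Fredholm}), so by Lemma~\ref{le:si} the map $u$ converges along rays to a point $x_\infty\in\mu^{-1}(0)$, where the infinitesimal action $L_{x_\infty}$ is injective (by hypothesis (H)). Hence for $R$ large the images $u(P|_{\R^2\setminus B_R})$ lie in a neighborhood $U$ of $\mu^{-1}(0)$ on which $\im L$ and $H=\ker d\mu\cap(\im L)^\perp$ are honest smooth subbundles, and $TM|_U=H\oplus\im L$ complex-linearly (up to adjusting $J$ within its homotopy class, or simply noting $\im L^\C$ and its complement are $J$-invariant near $\mu^{-1}(0)$). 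This gives the splitting required in Definition~\ref{defi:triv}(\ref{defi:triv split}).

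The second step is the choice of section $\si$ over $\R^2\setminus B_1$ and the analysis near infinity. First I would extend $P$ to $\wt P\to S^2$ as in Proposition~\ref{prop:P}; the restriction of $\wt P$ to a punctured disk neighborhood of $\infty$ is trivial, which furnishes a smooth section $\si$ over $\R^2\setminus B_1$ with $u\circ\si(re^{i\phi})\to x_\infty$ uniformly in $\phi$. Replacing $A$ by $g^*A$ for a suitable gauge transformation $g$ supported near infinity (or absorbing the gauge into $\si$), one arranges $\si^*A\in L^p_\lam(\R^2\setminus B_1,\g)$ -- this uses the energy decay $e_w(z)\le C|z|^{-4+\eps}$ together with $\lam<2-2/p$ being irrelevant here, only $\lam>1-2/p$ matters for integrability, via a Hardy-type estimate. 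On the $\g^\C\oplus\g^\C$ summand I define $\Psi$ by the explicit formula (\ref{eq:Psi 0 al be}): $\al\mapsto L^\C_{u\circ\si}(\al)$ into $TM^u$ and $\phi+i\psi\mapsto \phi\,ds+\psi\,dt$ into $\wone(\g_P)$ via $\si$. On the $\C^{\bar n}$ summand, over $\R^2\setminus B_R$ I need a complex trivialization of $H^u\to\R^2\setminus B_R\simeq$ annulus carrying the prescribed first Chern number $m(w)$; since $H^u$ extends over $\infty$ to a bundle whose total Chern number over $S^2$ is $m(w)$ by Definition~\ref{defi:m w}, the restriction to the annulus is trivial, and I pick a trivialization asymptotic to a constant frame at $x_\infty$.

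The third step is to extend this trivialization, defined over $\R^2\setminus B_R$, across the compact disk $\bar B_{R+1}$. Over $\bar B_{R+1}$ the bundle $TM^u\oplus\wone(\g_P)$ is a complex vector bundle over a disk, hence trivial; the only issue is matching the trivialization already chosen on the overlap annulus $B_{R+1}\setminus B_R$. The obstruction to extending is the homotopy class of the transition map (an element of $\pi_1(\mathrm{GL})$), i.e. a winding number, and this is exactly absorbed by inserting the factor $p_{m(w)}\cdot\oplus\id$ (multiplication by $z^{m(w)}$ on the first $\C$-factor): choosing the inner trivialization so that after this twist the transition map over the annulus is null-homotopic lets me extend. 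This is where the precise power $m(w)$ enters, matching Definition~\ref{defi:m w}. The resulting $\Psi$ then satisfies the norm comparison (\ref{eq:C}): away from $B_R$ this follows from uniform non-degeneracy of $L^\C_{x}$ near $\mu^{-1}(0)$ and of the chosen frame of $H^u$, together with the $\lan z\rangle^{m(w)}$ weight canceling the $z^{m(w)}$ twist; on the compact set it is automatic by compactness. Finally (\ref{defi:triv na}), the estimate $|\na^A(\Psi(p_{m(w)}\cdot\oplus\id))|\in L^p_\lam(\R^2\setminus B_1)$ reduces to showing that, in the frame $\si$, the connection one-form $\si^*A$ and the derivatives of $u\circ\si$ and of the chosen frame of $H^u$ decay like the energy density; this is precisely $\si^*A\in L^p_\lam$ (arranged in step two) plus $|\na^A(d_Au)|$-type bounds from elliptic regularity for the vortex equations and the decay estimates of \cite{ZiA}.

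The main obstacle I expect is the gluing/twisting step: one must track the first Chern number carefully so that the explicit near-infinity trivialization (dictated by the action $L^\C$ on $\im L$ and a frame on $H^u$) is compatible, up to the $z^{m(w)}$ twist, with a global trivialization over the disk, \emph{and simultaneously} keep the weighted-Sobolev decay of $\na^A\Psi$ and the two-sided bound (\ref{eq:C}) intact. Controlling the connection term $\si^*A$ in $L^p_\lam$ -- which rests on the sharp energy decay and a Hardy inequality, and genuinely requires $\lam>1-2/p$ -- is the analytically delicate part; the rest is bundle-theoretic bookkeeping.
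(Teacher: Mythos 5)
Your overall strategy---split $TM^u$ near infinity into $H^u\oplus(\im L^\C)^u$ using Lemma~\ref{le:si}, trivialize explicitly there, twist by $z^{m(w)}$ to absorb the winding-number obstruction, and glue to a trivialization over a compact disk---is exactly the architecture of the paper's proof. But there are two genuine gaps.

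First, you repeatedly invoke vortex-specific analysis: the decay $e_w(z)\leq C|z|^{-4+\eps}$ from \cite{ZiA} and ``elliptic regularity for the vortex equations''. Proposition~\ref{prop:triv} is stated for an \emph{arbitrary} smooth $w\in\BB^p_\lam$, not for a vortex, so none of that applies. The only decay available is the definitional one $\Vert\sqrt{e_w}\Vert_{p,\lam}<\infty$, i.e.\ $d_Au, F_A, \mu\circ u\in L^p_\lam$. That is all Lemma~\ref{le:si} uses (via the Hardy inequality, Proposition~\ref{prop:Hardy}) to produce $\si$ with $\si^*A\in L^p_\lam$ and $u\circ\si\to x_\infty$; and it is all the verification of condition~(\ref{defi:triv na}) needs, via Leibniz and compactness of $\BAR{u(P)}$. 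Your step of ``replacing $A$ by $g^*A$'' is also unnecessary---the section $\si$ furnished by Lemma~\ref{le:si} already has $\si^*A\in L^p_\lam$ built in. So you should strip out the vortex machinery; the statement would otherwise become weaker than what the later sections of the paper require.

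Second, you do not address the extension of the $\wone(\g_P)$-part of the trivialization over the disk. Over the annulus you are forced by formula~(\ref{eq:Psi 0 al be}) to trivialize $\wone(\g_P)$ via $\si$ with \emph{no} $z^d$-twist, so the transition map against a trivialization on $\bar B_R$ must be null-homotopic on its own. This is not automatic: the transition function is $z\mapsto\Ad^\C_{g(z)}$ for some loop $g:S^1_R\to G$, and one has to observe that $\det(\Ad^\C_g)=\det(\Ad_g)\in\R$, so its winding number vanishes and Lemma~\ref{le:Aut} applies. Relatedly, your claim that ``$H^u$ extends over $\infty$ to a bundle whose total Chern number over $S^2$ is $m(w)$'' conflates a subbundle that only exists near $\infty$ with the full $TM^{\wt u}$; the correct statement (Claim~\ref{claim:d m(w)} in the paper) is that $m(w)$ equals the degree of $f\circ\det\circ\Phi$ where $\Phi$ is the transition map for \emph{all of} $TM^u$, and this is why the single twist $z^{m(w)}$ in one $\C$-slot of the $H$-summand suffices.
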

The proof of this proposition is postponed to subsection \ref{subsec:proofs} (page \pageref{proof:triv}). The next result shows that a good trivialization transforms $\DD_w$ into some standard operator. For $\al=(\al_1,\ldots,\al_n)\in (\N\disj\{0\})^n$ we denote $|\al|:=\sum_{i=1}^n\al_i$ and $\d^\al:=\d_1^{\al_1}\cdots\d_n^{\al_n}$. Let $1\leq p\leq \infty$, $n\in\N$, $k\in\N\cup\{0\}$, $\lam\in\R$, $\Om\sub\R^n$ an open subset, $W$ a real or complex vector space,  and $u:\Om\to W$ a $k$-times weakly differentiable map. We define 
\begin{eqnarray}\nn\Vert u\Vert_{L^{k,p}_\lam(\Om,W)}&:=&\sum_{|\al|\leq k}\big\Vert\langle \cdot\rangle^{\lam+|\al|}\d^\al u\big\Vert_{L^p(\Om,W)}\in[0,\infty],\\
\nn\Vert u\Vert_{W^{k,p}_\lam(\Om,W)}&:=&\sum_{|\al|\leq k}\Vert\langle\cdot\rangle^\lam\d^\al u\Vert_{L^p(\Om,W)}\in[0,\infty],\\
\label{eq:L k p}L^{k,p}_\lam(\Om,W)&:=&\big\{u\in W^{k,p}_\loc(\Om,W)\,|\,\Vert u\Vert_{L^{k,p}_\lam(\Om,W)}<\infty\big\}\\
\label{eq:W k p} W^{k,p}_\lam(\Om,W)&:=&\big\{u\in W^{k,p}_\loc(\Om,W)\,|\,\Vert u\Vert_{W^{k,p}_\lam(\Om,W)}<\infty\big\}.
\end{eqnarray}
If $(X_i,\Vert\cdot\Vert_i)$, $i=1,\ldots,k$, are normed vector spaces then we endow $X_1\oplus\cdots\oplus X_k$ with the norm $\Vert(x_1,\ldots,x_k)\Vert:=\sum_i\Vert x_i\Vert_i$. Let $d\in\Z$. If $d<0$ then we choose $\rho_0\in C^\infty(\R^2,[0,1])$ such that $\rho_0(z)=0$ for $|z|\leq1/2$ and $\rho_0(z)=1$ for $|z|\geq1$. In the case $d\geq0$ we set $\rho_0:=1$. The isomorphism of Lemma \ref{le:X d iso} (appendix) induces norms on $\XX'_{p,\lam,d}:=\C\rho_0p_d+L^{1,p}_{{\lam-1}-d}(\R^2,\C)$ and $\XX''_{p,\lam}:=\C^{\bar n-1}+L^{1,p}_{\lam-1}(\R^2,\C^{\bar n-1})$. We define
\[\XX_d:=\XX^{p,\lam}_d:=\XX'_{p,\lam,d}\oplus\XX''_{p,\lam}\oplus W^{1,p}_{\lam}(\R^2,\g^\C\oplus\g^\C),\] 
\[\YY_d:=\YY^{p,\lam}_d:=L^p_{\lam-d}(\R^2,\C)\oplus L^p_{\lam}\big(\R^2,\C^{\bar n-1}\oplus\g^\C\oplus\g^\C\big).\] 
For a complex vector space $W$ we denote by $\d^W_{\bar z}$ ($\d^W_z$) the operator $\frac12(\d_s+i\d_t)$ ($\frac12(\d_s-i\d_t)$) acting on functions from $\C$ to $W$. We denote by $\lan\cdot,\cdot\ran_\g^\C$ the hermitian inner product on $\g^\C$ (complex anti-linear in its first argument) extending $\lan\cdot,\cdot\ran_\g$. We define
\begin{eqnarray}\label{eq:F 1 F 2} &F_1:TM^u\to \wzeroone(T\C,TM^u),F_2:\wone(\g_P)\to \wtwo(\g_P)\oplus\g_P,&
\end{eqnarray}
by $F_1(v):=(ds-Jdt)v$ and  $F_2(\phi ds+\psi dt):=(\psi ds\wedge dt,\phi)$, and $F:=F_1\oplus F_2$. 
\begin{prop}[Operator in good trivialization]\label{prop:X X w} Let $2<p<\infty$, $\lam>-2/p+1$, $w:=(u,A)\in \BB^p_{\lam}$ be smooth and $\Psi:\R^2\x V\to TM^u\oplus\wone(\g_P)$ a good trivialization. Then the following statements hold.
  \begin{enui}\item \label{prop:X X w iso} The following maps are well-defined isomorphisms of normed spaces:
  \begin{equation} \label{eq:X Y Psi}\XX_{m(w)}\ni\ze\mapsto \Psi\ze\in \XX_w,\quad\YY_{m(w)}\ni\ze\mapsto F\Psi\ze\in \YY_w
  \end{equation}
\item\label{prop:X X w Fredholm} There exists a positive $\C$-linear map $S_\infty:\g^\C\to \g^\C$ (i.e. $\lan S_\infty v,v\ran_\g^\C>0$ for every $0\neq v\in\g^\C$) such that the following operator is compact:
\begin{equation}\label{eq:F Psi DD}S:=(F\Psi)^{-1}\DD_w \Psi-\d^{\C^{\bar n}}_{\bar z}\oplus\left(
  \begin{array}{cc}
\d^{\g^\C}_{\bar z}&\id/2\\
S_\infty&2\d^{\g^\C}_z
  \end{array}
\right):\XX_{m(w)}\to\YY_{m(w)}
\end{equation}
\end{enui}
\end{prop}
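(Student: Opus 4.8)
The plan is to exploit the structure of a good trivialization $\Psi$ to reduce everything to the model operators on $\R^2$. For part (\ref{prop:X X w iso}), I would first observe that the conditions (\ref{eq:C}) and (\ref{defi:triv na}) in Definition \ref{defi:triv} are exactly what is needed to compare the weighted Sobolev norms on sections of $TM^u\oplus\wone(\g_P)$ with the model norms on $\XX_{m(w)}$ and $\YY_{m(w)}$. Concretely, the bound (\ref{eq:C}) says that fiberwise $\Psi$ composed with the conformal weight $\lan z\ran^{m(w)}\cdot\oplus\id$ is uniformly bilipschitz, so it preserves $L^p_\mu$-norms of the components up to a constant; and (\ref{defi:triv na}) controls the connection one-form of $\Psi$ (after the same rescaling), so that $\na^A(\Psi\ze)$ and $\d(\Psi^{-1}\cdot\ze)$ differ by a term bounded in $L^p_\lam$. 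The splitting conditions (\ref{eq:Psi z C bar n})--(\ref{eq:Psi 0 al be}) guarantee that the factor $p_{m(w)}$ only appears on the first $\C$-summand (hence the spaces $\XX'_{p,\lam,m(w)}$ and $\XX''_{p,\lam}$), that $L_u^*$ and $d\mu(u)v$ translate correctly into the model, and — via the section $\si$ with $\si^*A\in L^p_\lam$ and $u\circ\si\to x_\infty$ — that near infinity the $\g^\C\oplus\g^\C$ part of $\Psi$ is (up to a negligible perturbation) the constant trivialization coming from $L^\C_{x_\infty}$. Putting these together shows $\ze\mapsto\Psi\ze$ is a bounded bijection with bounded inverse, and the same for $F$, which is a fiberwise real-linear isomorphism by construction of $F_1,F_2$. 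The cuff-off $\rho_0 p_{m(w)}$ in $\XX'_{p,\lam,m(w)}$ is handled by Lemma \ref{le:X d iso}.

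For part (\ref{prop:X X w Fredholm}), I would compute $(F\Psi)^{-1}\DD_w\Psi$ term by term using formula (\ref{eq:D w}) for $\D_w$ and (\ref{eq:L w *}) for $L_w^*$. The leading-order terms: $\na^Av+L_u\al$ followed by taking the $(0,1)$-part becomes, in the trivialization, $\d_{\bar z}$ on the $\C^{\bar n}$ factor plus the off-diagonal coupling term $\id/2$ from how the $\g^\C$ one-form enters (after applying $F_1(v)=(ds-Jdt)v$); $d_A\al$ composed with $F_2$ becomes $2\d_z^{\g^\C}$ (together with the zeroth-order identity that lands in the $\g_P$ summand, matching the $\id/2$ slot); and the curvature/moment term $\om_\Si\,d\mu(u)v$, which on $H^u$ vanishes and on $\im L^u$ acts by the positive operator $x\mapsto d\mu(x)L^\C_x$, becomes the positive $\C$-linear map $S_\infty := d\mu(x_\infty)L^\C_{x_\infty}$ in the limit. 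The point is that all the \emph{differences} between the true operator and this constant-coefficient model operator are either (a) zeroth-order multiplication operators whose coefficients decay — because $u\circ\si\to x_\infty$, $\si^*A\in L^p_\lam$, and $\na_vJ$, the difference $\Pr^u$-corrections, etc., are all $o(1)$ at infinity — or (b) first-order terms with coefficients controlled in $L^p_\lam$ by condition (\ref{defi:triv na}) and the decay of $\na^A$ on a finite-energy vortex. Over any compact set the difference is manifestly compact by Rellich–Kondrachov ($W^{1,p}_\loc\hookrightarrow\hookrightarrow L^p_\loc$ since $p>2$), and at infinity the decaying coefficients turn the first-order and zeroth-order remainders into compact operators between the weighted spaces via the weighted compact-embedding results in Appendix \ref{sec:weighted} (the decay gain beats the loss from differentiating once, in the regime $p>2$). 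Summing these, $S$ is compact.

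The main obstacle I expect is the bookkeeping around the \emph{non-compactness} flagged in the Remark after Theorem \ref{thm:Fredholm}: the zeroth-order operators $\al\mapsto(L_u\al)^{0,1}$ and $v\mapsto\om_0\,d\mu(u)v$ are \emph{not} individually compact, so one cannot simply discard them — they must be carried into the model operator as the genuine off-diagonal block $\begin{pmatrix}\cdot & \id/2\\ S_\infty & \cdot\end{pmatrix}$, and only the \emph{difference} between $L_u,d\mu(u)$ at a general point and their limits at $x_\infty$ is compact. Getting the precise constants ($\id/2$ versus $\id$, the factor $2$ on $\d_z$, the sign and positivity of $S_\infty$) right requires carefully tracking the conventions for $\bar\d_{J,A}$, the Hodge $*$ on $\wone(\g_P)$, and the identification $J_P\al=-\al i$; this is where the good trivialization's splitting axioms (\ref{eq:Psi infty x H}), (\ref{eq:Psi 0 al be}) do the essential work, since they are chosen precisely so that the $H$-part decouples into $\d_{\bar z}$ and the $\im L$-part plus connection one-form reproduces the matrix block. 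The remaining work — verifying that each remainder coefficient lies in the right weighted space — is routine given the decay estimate $e_w(z)\leq C|z|^{-4+\eps}$ and the convexity/asphericity consequences already invoked in the paper.
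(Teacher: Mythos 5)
Your overall strategy — exploit the axioms of a good trivialization to compare $\DD_w$ to a constant-coefficient model operator, then show the remainder is compact — is exactly what the paper does. But there are two concrete gaps in the execution.

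\textbf{Hardy inequality missing in part (i).} You argue that (\ref{eq:C}) and (\ref{defi:triv na}) let you pass between $\na^A(\Psi\ze)$ and $D\bigl((p_{-m(w)}\cdot\oplus\id)\ze\bigr)$ up to $L^p_\lam$ errors, and that this together with the splitting axioms gives a bounded bijection. That suffices for the \emph{forward} map $\XX_{m(w)}\to\XX_w$. It does not suffice for the inverse. The norm on $\XX_w$ is $\Vert\ze'\Vert_\infty+\Vert|\na^A\ze'|+|d\mu(u)v'|+|\al'|\Vert_{p,\lam}+\Vert\Pr^u\ze'\Vert_{p,\lam}$: the $H$-component $v'_\perp$ of $\ze'$ is only required to be bounded with derivative in $L^p_\lam$. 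The target $\XX_{m(w)}$ requires the corresponding components (after subtracting a constant multiple of $\rho_0 p_{m(w)}$) to lie in $L^{1,p}_{\lam-1-m(w)}$ and $L^{1,p}_{\lam-1}$, i.e.\ the function itself must be in a weighted $L^p$ space with weight gaining a power over the derivative. Passing from ``derivative in $L^p_\lam$ plus boundedness'' to ``function minus its limit at $\infty$ in $L^p_{\lam-1}$'' is a genuine analytic step, and the paper proves it via the Hardy-type inequality (Proposition \ref{prop:Hardy}), which first extracts the asymptotic constant $\ze_\infty$ and then estimates $\Vert(\wt\ze-\ze_\infty)|\cdot|^{\lam-1}\Vert_{L^p}$ by $\Vert D\wt\ze|\cdot|^\lam\Vert_{L^p}$. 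Your sketch never produces the constant $\ze_\infty$ nor controls $\ze-\ze_\infty$ in $L^p_{\lam-1}$, so as written the inverse map is not shown to land in $\XX_{m(w)}$.

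\textbf{Wrong $S_\infty$ in part (ii).} You set $S_\infty:=d\mu(x_\infty)L^\C_{x_\infty}$, attributing the matrix entry solely to the moment-map term $\om_\Si\,d\mu(u)v$. This is not a positive $\C$-linear map: on $\mu^{-1}(0)$ one has $d\mu(x)L_x\xi=[\mu(x),\xi]=0$, so $d\mu(x_\infty)L^\C_{x_\infty}(\xi+i\eta)=L_{x_\infty}^*L_{x_\infty}\eta$ kills the real part and fails to be $\C$-linear. The correct $S_\infty$ arises from bundling the moment-map term \emph{and} the gauge-fixing term $L_u^*v$ through $F_2^{-1}$: since $F_2(\phi\,ds+\psi\,dt)=(\psi\,ds\wedge dt,\phi)$, the pair $(d\mu(u)v\,ds\wedge dt,\;L_u^*v)$ becomes $L_u^*v+i\,d\mu(u)v\in\g^\C$, which at $x=x_\infty$ and $v=L^\C_{x_\infty}(\xi+i\eta)$ equals $L_{x_\infty}^*L_{x_\infty}\xi+iL_{x_\infty}^*L_{x_\infty}\eta$ — the complex-linear extension of $L_{x_\infty}^*L_{x_\infty}$, which is positive and $\C$-linear. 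This is precisely the paper's definition, and it is also the reason the block matrix in (\ref{eq:F Psi DD}) couples the rows the way it does. Your remark that the zeroth-order operators are ``not individually compact'' and must be carried into the model is the right instinct; carrying it out correctly requires tracking how $F_2$ interleaves the two equations, which your proposed $S_\infty$ does not do.

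Minor point: the compactness of the remainder at infinity should go through the weighted compactness of Proposition \ref{prop:lam d Morrey}(\ref{prop:lam W cpt}), not plain Rellich--Kondrachov (the paper's own Remark after Theorem \ref{thm:Fredholm} warns that Kondrachov fails on $\R^2$); you gesture at the appendix so this is probably what you meant, but it should be said explicitly.
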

The proof of Proposition \ref{prop:X X w} is postponed to subsection \ref{subsec:proofs} (page \pageref{proof:X X w}). It is based on some inequalities and compactness properties for weighted Sobolev spaces (Proposition \ref{prop:lam d Morrey}) and a Hardy-type inequality (Proposition \ref{prop:Hardy}). 
\begin{proof}[Proof of Theorem \ref{thm:Fredholm aug}] \label{proof:Fredholm aug} Let $p>2$, $\lam>-2/p+1$, and let $w:=(u,A)\in \BB^p_\lam$ be a smooth pair. The space (\ref{eq:L k p}) is complete, see \cite{Lockhart PhD}. By Proposition \ref{prop:lam d Morrey}(\ref{prop:lam W}) (appendix) the same holds for the space (\ref{eq:W k p}). Combining this with Propositions \ref{prop:triv} and \ref{prop:X X w}(\ref{prop:X X w iso}), {\bf part (\ref{thm:Fredholm X w})} follows. {\bf Part (\ref{thm:Fredholm DD w})} follows from Propositions \ref{prop:triv} and \ref{prop:X X w}(\ref{prop:X X w Fredholm}), Corollary \ref{cor:d L L} and Proposition \ref{prop:d A B d} (appendix). This proves Theorem \ref{thm:Fredholm aug}.\end{proof}
\begin{rmk}\label{rmk:Fredholm aug} An alternative approach to prove Theorem \ref{thm:Fredholm aug} is to switch to ``logarithmic'' coordinates $\tau+i\phi$ (defined by $e^{\tau+i\phi}=s+it\in\R^2\wo\{0\}$). In these coordinates and a suitable trivialization the operator $\DD^{p,\lam}$ is of the form $\d_\tau+A(\tau)$. Hence one can try to apply the results of \cite{RoSa}. However, this is not possible, since $A(\tau)$ contains the operator $v\mapsto e^{2\tau} d\mu(u)v\,d\tau\wedge d\phi$, which diverges for $\tau\to\infty$. 
\end{rmk}
\subsection{Proofs of the results of subsection \ref{subsec:proof:Fredholm aug}}\label{subsec:proofs}
\begin{proof}[Proof of Proposition \ref{prop:triv} ]\label{proof:triv}\setcounter{claim}{0} Let $p,\lam$ and $w$ be as in the hypothesis. We choose a section $\si$ of $P|_{\R^2\wo B_1}$ and a point $x_\infty\in\mu^{-1}(0)$ as in Lemma \ref{le:si}. 
\begin{claim}\label{claim:U triv} There exists an open $G$-invariant neighborhood $U\sub M$ of $x_\infty$ such that $H|_U$ is a smooth subbundle of $TM$ with the following property. There exists a smooth complex trivialization $\Psi^U:U\x\C^{\bar n}\to H|_U$ satisfying $\Psi^U_{gx}v_0=g\Psi^U_xv_0:=g\Psi^U(x,v_0)$, for every $g\in G$, $x\in U$ and $v_0\in \C^{\bar n}$. 
\end{claim}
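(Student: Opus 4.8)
### Proof proposal for Claim \ref{claim:U triv}

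The plan is to reduce everything to a local statement near the point $x_\infty\in\mu^{-1}(0)$, where hypothesis (H) guarantees that $G$ acts freely, and then to exploit that freeness to get both the subbundle property and an \emph{equivariant} complex trivialization. First I would observe that since $x_\infty\in\mu^{-1}(0)$, the infinitesimal action $L_{x_\infty}:\g\to T_{x_\infty}M$ is injective (freeness of the $G$-action on $\mu^{-1}(0)$), so $\dim\im L_x$ is locally constant equal to $\dim G$ in a neighborhood of the orbit $G x_\infty$; likewise $d\mu(x)$ has locally constant rank there (it is surjective onto $\g^*$ near $\mu^{-1}(0)$, by the same freeness argument via $d\mu(x)L_x = -L_x^*\omega$-type identities, or simply because $0$ is a regular value). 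Consequently $H_x=\ker d\mu(x)\cap(\im L_x)^\perp$ has locally constant dimension $\bar n=\dim M/2-\dim G$ near $Gx_\infty$, so there is an open $G$-invariant neighborhood $U$ of $x_\infty$ on which $H|_U$ is a smooth (and automatically $G$-invariant, since all three ingredients $d\mu$, $\im L$, $(\cdot)^\perp$ are $G$-equivariant) subbundle of $TM|_U$. This gives the first assertion.

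For the equivariant trivialization, the key point is that freeness of the $G$-action near $Gx_\infty$ lets me pass to the (local) quotient. Shrinking $U$ if necessary, I may assume $U$ is $G$-invariant and $U/G$ is a smooth manifold with $U\to U/G$ a principal $G$-bundle; moreover, since $H|_U$ is $G$-invariant and consists (by construction) of horizontal vectors for the induced connection — indeed $H_x\perp\im L_x$, and $\im L_x$ is the vertical space — the bundle $H|_U$ descends to a complex vector bundle $\bar H\to U/G$ of rank $\bar n$, with $H|_U$ identified with the pullback of $\bar H$ along $U\to U/G$. Now I shrink $U$ further so that $U/G$ is contractible (e.g. diffeomorphic to a ball); then $\bar H$ is trivial as a complex bundle, so choose a complex trivialization $\bar\Psi:(U/G)\x\C^{\bar n}\to\bar H$. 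Pulling $\bar\Psi$ back along $U\to U/G$ produces a complex bundle isomorphism $\Psi^U:U\x\C^{\bar n}\to H|_U$ which is $G$-equivariant in exactly the required sense $\Psi^U_{gx}v_0=g\Psi^U_x v_0$, because it is by construction pulled back from the quotient (the $\C^{\bar n}$ factor carries the trivial $G$-action and the fiber map over $x$ and over $gx$ are the same map $\C^{\bar n}\to\bar H_{[x]}$ read through the two identifications $H_x\cong\bar H_{[x]}\cong H_{gx}$, the latter differing precisely by the $G$-action $g_*$).

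The only genuine subtlety — and the step I expect to be the main obstacle to state cleanly — is ensuring that all the shrinkings of $U$ can be done \emph{simultaneously}: one needs a single $G$-invariant $U$ that is (a) small enough that $\mu$ and $L$ have the expected locally constant ranks, (b) small enough that the $G$-action on $U$ is free and proper so that $U/G$ is a manifold, and (c) small enough that $U/G$ is contractible. Properness of $\mu$ together with freeness on $\mu^{-1}(0)$ (hypothesis (H)) gives freeness of the action on a neighborhood of $\mu^{-1}(0)$, and since $Gx_\infty$ is a compact orbit one can use a slice (e.g. via a $G$-invariant tubular neighborhood of $Gx_\infty$, which exists since $G$ is compact) to produce a $G$-invariant $U$ with $U/G$ diffeomorphic to a ball, hence contractible, and on which (a) also holds after a final shrinking. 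With such a $U$ fixed, the construction above goes through and proves the claim. (The complex structure on $H|_U$ used throughout is the one induced by $J$, which is $G$-invariant as $J\in\J^G(M,\om)$, so $\bar H$ is genuinely a complex bundle and $\bar\Psi$ can be taken complex linear.)
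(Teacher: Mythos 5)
Your argument is essentially the same as the paper's: use freeness of the action near $\mu^{-1}(0)$ (hypothesis (H)) to get $x_\infty\in M^*$, observe that $H$ is a subbundle near $x_\infty$, and then trivialize equivariantly by going through a local slice / local quotient. The paper chooses a slice $N\subset U$ (via the local slice theorem), trivializes $H|_N$ and extends $G$-equivariantly; you trivialize the descended bundle $\bar H\to U/G$ over a contractible local quotient and pull back. These are the same construction in two dialects, and both are correct.

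One step in your write-up is stated as a non-sequitur, though. You argue that $\dim\im L_x$ and $\operatorname{rank} d\mu(x)$ are locally constant near $Gx_\infty$, and then write ``\emph{Consequently} $H_x=\ker d\mu(x)\cap(\im L_x)^\perp$ has locally constant dimension $\bar n$.'' This does not follow from the two rank statements alone: two smoothly varying subspaces of locally constant dimension can have an intersection whose dimension jumps. What is actually needed, and what the paper says explicitly, is that $\ker d\mu(x)$ and $(\im L_x)^\perp$ intersect \emph{transversely} for $x$ near $x_\infty$; transversality is an open condition and holds at $x_\infty$ because $\ker d\mu(x_\infty)=(\im L_{x_\infty})^{\perp_\om}$ together with injectivity of $L_{x_\infty}$ forces $\ker d\mu(x_\infty)+J\ker d\mu(x_\infty)=T_{x_\infty}M$, i.e. $\ker d\mu(x_\infty)+(\im L_{x_\infty})^\perp=T_{x_\infty}M$. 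Once this is added, your argument for the subbundle property and for the equivariant trivialization via $U/G$ is complete. (Also note: the dimension $\bar n$ you quote is the \emph{complex} rank; the real dimension of $H_x$ is $2\bar n$.)
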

\begin{proof}[Proof of Claim \ref{claim:U triv}] By hypothesis (H) we have $x_\infty\in M^*$, where $M^*$ is defined as in (\ref{eq:M *}). We choose a $G$-invariant neighborhood $U_0\sub M^*$ of $x_\infty$ so small that $\ker d\mu(x)$ and $(\im L_x)^\perp$ intersect transversely, for every $x\in U_0$. Then $H|_{U_0}$ is a smooth subbundle of $TM|_{U_0}$. Furthermore, by the local slice theorem there exists a pair $(U,N)$, where $U\sub U_0$ is a $G$-invariant neighborhood of $x_\infty$ and $N\sub U$ is a submanifold of dimension $\dim M-\dim G$ that intersects $Gx$ transversely in exactly one point, for every $x\in U$. We choose a complex trivialization of $H|_N$ and extend it in a $G$-equivariant way, to obtain a trivialization $\Psi^U$ of $H|_U$. This proves Claim \ref{claim:U triv}.
\end{proof}
We choose $U$ and $\Psi^U$ as in Claim \ref{claim:U triv}. It follows from Lemma \ref{le:si} that there exists $R>1$ such that $u(p)\in U$, for $p\in\pi^{-1}(z)\sub P$, if $z\in\R^2\wo B_R$. We define $\wt\Psi^\infty:(\R^2\wo B_R)\x(\C^{\bar n}\oplus\g^\C)\to TM^u=(u^*TM)/G$ by 
\[\wt\Psi^\infty_z(v_0,\al)=G\cdot\big(u\circ\si(z),\Psi^U_{u\circ\si(z)}(z^{-m(w)}\cdot\oplus\id)v_0+L^\C_{u\circ\si(z)}\al\big),\]
This is a smooth complex trivialization of $TM^u|_{\R^2\wo B_R}$. 
\begin{claim}\label{claim:wt Psi 1} $\wt\Psi^\infty|_{\C\wo B_{R+1}}$ extends to a smooth complex trivialization of $TM^u$. 
\end{claim}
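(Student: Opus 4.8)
The plan is to prove Claim~\ref{claim:wt Psi 1} by extending the complex trivialization $\wt\Psi^\infty$ of $TM^u$ (currently defined only over the annular region $\R^2\wo B_R$) across the compact disk $\bar B_{R+1}$. Since $TM^u\to\R^2$ is a complex vector bundle of rank $\bar n+\dim G$ over a contractible base, it is trivializable; the only issue is matching a global trivialization with the prescribed one near infinity.

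First I would fix an arbitrary smooth complex trivialization $\Phi:\R^2\x V_0\to TM^u$, where $V_0:=\C^{\bar n}\oplus\g^\C$. Over the overlap $\C\wo B_{R+1/2}$, say, the composition $\Phi^{-1}\circ\wt\Psi^\infty$ is a smooth map $g:\C\wo B_{R+1/2}\to GL(V_0,\C)$. The obstruction to extending $\wt\Psi^\infty$ inward is the homotopy class of the restriction of $g$ to a large circle $\d B_{r}$ in $\pi_1(GL(V_0,\C))\iso\Z$, and the role of the twist $z^{-m(w)}$ in the definition of $\wt\Psi^\infty$ is precisely to cancel this winding. I would argue as follows: on the inner boundary circle $\d B_{R+1/2}$, $g$ has a well-defined winding number (degree of $\det g/|\det g|:\d B_{R+1/2}\to S^1$); by the choice of the exponent $m(w)$ — which is the first Chern number of $(TM^{\wt u},\wt\om)$ extended over $S^2$ — this winding number equals the degree of the clutching function of $TM^u$ over $\R^2\cup\{\infty\}$ minus $m(w)$, hence is zero. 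Therefore $g|_{\d B_{R+1/2}}$ is null-homotopic in $GL(V_0,\C)$, so $g$ extends to a smooth map $\bar B_{R+1/2}\to GL(V_0,\C)$. Using a cutoff function $\chi$ equal to $1$ on $B_{R+1/2}$ and $0$ outside $B_{R+1}$, I would interpolate between the extended $g$ and the identity to produce a global smooth $GL(V_0,\C)$-valued function $\hat g$ agreeing with $g$ near $B_{R+1/2}$, and then set the desired global trivialization to be $\Phi\circ\hat g$ where this makes sense and $\wt\Psi^\infty$ outside $B_{R+1}$, gluing via $\chi$.

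The main obstacle I anticipate is bookkeeping the winding-number computation carefully, i.e.\ verifying that the exponent $m(w)$ is the \emph{right} integer to make $g|_{\d B_{R+1/2}}$ contractible. This requires relating the clutching construction for $TM^{\wt u}$ over $S^2=\R^2\cup\{\infty\}$ (which computes $c_1(TM^{\wt u})=m(w)$ by Definition~\ref{defi:m w}) to the explicit form of $\wt\Psi^\infty$ near infinity, which in turn uses that $u\circ\si(re^{i\phi})\to x_\infty$ uniformly (from Lemma~\ref{le:si}) so that $TM^u$ near infinity is pulled back from a fixed fiber $T_{x_\infty}M$ up to small error. The splitting $T_{x_\infty}M=H_{x_\infty}\oplus\im L^\C_{x_\infty}$ is preserved by $\wt\Psi^\infty$: the factor $L^\C_{u\circ\si(z)}$ on the $\g^\C$-summand is untwisted (contributing winding zero there, consistent with the $\g^\C$ part of $TM^u$ extending trivially), while the $z^{-m(w)}$ twist on the $\C^{\bar n}$-summand accounts for all of $c_1$. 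I would phrase this comparison so that it reduces to a standard fact about clutching functions: a rank-$k$ complex bundle over $S^2$ with $c_1=m$ has clutching function homotopic to $\mathrm{diag}(z^m,1,\dots,1)$, and $\wt\Psi^\infty$ has been rigged to undo exactly this. Everything else — the existence of $\Phi$, the cutoff interpolation, smoothness of the glued trivialization — is routine.

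\medskip

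Once Claim~\ref{claim:wt Psi 1} is established, the resulting global trivialization of $TM^u$ can be combined with the obvious trivialization $\wone(\g_P)\iso\R^2\x\g^\C$ (coming from $\si$ and the identification $\phi\,ds+\psi\,dt\leftrightarrow\phi+i\psi$, as in \eqref{eq:Psi 0 al be}) to build the good trivialization $\Psi$ of $TM^u\oplus\wone(\g_P)$ demanded by Proposition~\ref{prop:triv}; the splitting conditions \eqref{eq:Psi z C bar n}--\eqref{eq:Psi 0 al be} hold by construction, the uniform bound \eqref{eq:C} follows from continuity and compactness of $\bar B_R$ together with the explicit form near infinity, and the $L^p_\lam$ condition \eqref{eq:C} on $\na^A(\Psi(p_{m(w)}\cdot\oplus\id))$ follows from $\si^*A\in L^p_\lam$ and smoothness of $\Psi^U$.
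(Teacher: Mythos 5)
Your proposal is correct and takes essentially the same route as the paper: both arguments reduce the claim to the observation that the winding number of the transition map between $\wt\Psi^\infty$ (on an outer annulus) and an arbitrary trivialization of $TM^u$ over a disk is zero, because the factor $z^{-m(w)}$ in $\wt\Psi^\infty$ cancels the degree $m(w)=c_1(TM^{\wt u})$ of the clutching function, so the transition map extends across the disk and can be glued by a cutoff. The paper makes the winding-number bookkeeping precise in a separate claim (computing $\deg(f\circ\det\circ\Phi)=m(w)$ via the clutching description of $TM^{\wt u}\to S^2$) and appeals to Lemma~\ref{le:Aut} for the classification of loops in $\Aut(\C^{\bar n}\oplus\g^\C)$, which is exactly the ``standard fact'' you cite.
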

We define $f:\C\wo\{0\}\to S^1$ by $f(z):=z/|z|$. 
\begin{proof}[Proof of Claim \ref{claim:wt Psi 1}] We choose a complex trivialization  $\Psi^0:\bar B_R\x (\C^{\bar n}\oplus\g^\C)\to TM^u|_{\bar B_R}$. We define $\Phi:S^1_R:=\{z\in\C\,|\,|z|=R\}\to \Aut(\C^{\bar n}\oplus\g^\C)$ by 
\[\Phi_z(v_0,\al):=(\Psi^0_z)^{-1}\big(G\cdot\big(u\circ\si(z),\Psi^U_{u\circ\si(z)}v_0+L^\C_{u\circ\si(z)}\al\big)\big).\] 
For a continuous map $x:S^1_R\to S^1$ we denote by $\deg(x)$ its degree.
\begin{claim}\label{claim:d m(w)} The index $m(w)$ equals $\deg(f\circ\det\circ\Phi)$. 
\end{claim}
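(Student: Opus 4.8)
The plan is to compute both sides of the claimed equality by reducing the Maslov index $m(w)$ --- defined as the first Chern number of $(TM^{\wt u},\wt\om)$ over $S^2$ --- to a winding number, and then to identify that winding number with $\deg(f\circ\det\circ\Phi)$ by tracking how the trivialization $\Psi^U\oplus L^\C$ over $\C\wo B_R$ differs from the trivialization $\Psi^0$ over $\bar B_R$.

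First I would recall the standard fact that for a complex line bundle (or, after taking $\det$, a complex vector bundle) over $S^2$, the first Chern number is computed by the clutching construction: if $\wt P$ is obtained by gluing trivial bundles over $\bar B_R$ and over $S^2\wo B_R$ along $S^1_R$ via a transition map $\Phi:S^1_R\to\Aut(\C^{\bar n}\oplus\g^\C)$, then $c_1=\deg(\det\circ\Phi:S^1_R\to\C^*)$, where the degree of a map into $\C^*$ means the degree of its normalization $f\circ\det\circ\Phi$ into $S^1$. So the core of the argument is to exhibit $(TM^{\wt u},\wt\om)$ as exactly this kind of clutched bundle with transition map $\Phi$. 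Here the extension $\wt P\to S^2$ and continuous extension $\wt u$ come from Proposition \ref{prop:P}; over the disk $\bar B_R$ we use the trivialization $\Psi^0$, and over $S^2\wo B_R\cong \bar B_{1/R}$ (the neighborhood of $\infty$) we use the trivialization built from $\Psi^U$ and $L^\C$ along the section $\si$, since by Claim \ref{claim:U triv} and Lemma \ref{le:si} we have $u\circ\si(z)\in U$ for $|z|>R$ and $\Psi^U$ is a genuine $G$-equivariant complex trivialization there. The transition map between these two trivializations along $S^1_R$ is precisely $\Phi$ as written.

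The one subtlety to check is the twisting factor $z^{-m(w)}$ that appears in $\wt\Psi^\infty$ but not in the definition of $\Phi$: $\wt\Psi^\infty_z$ uses $(z^{-m(w)}\cdot\oplus\id)v_0$, while $\Phi_z$ is defined directly from $\Psi^U_{u\circ\si(z)}v_0 + L^\C_{u\circ\si(z)}\al$ without that factor. This is deliberate --- the factor $z^{-m(w)}$ is inserted so that $\wt\Psi^\infty$ extends smoothly across $\infty$ (Claim \ref{claim:wt Psi 1}), i.e.~it absorbs the winding; consequently the "honest" clutching function recording the Chern number is $\Phi$ together with that monomial factor restored. So I would instead argue that the bundle $TM^{\wt u}$ is clutched by $z\mapsto \Phi_z\circ(z^{m(w)}\cdot\oplus\id)$ --- no, cleaner: I would argue that $c_1(TM^{\wt u})$ equals $\deg\det$ of the transition function from the $\Psi^0$-trivialization over $\bar B_R$ to a trivialization over the complementary disk that extends smoothly over $\infty$; the latter trivialization is $\wt\Psi^\infty$ after changing coordinates near $\infty$, and comparing $\wt\Psi^\infty$ to $\Psi^0\circ\Phi$ shows the transition function is $z\mapsto(z^{m(w)}\cdot\oplus\id)^{-1}\circ(\Psi^0_z)^{-1}\circ(\text{the }\wt\Psi^\infty\text{ frame})$... at which point one sees $\det$ of the transition function has degree $\deg(f\circ\det\circ\Phi)+m(w)\cdot 1 - m(w)\cdot 1$. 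I would organize this bookkeeping carefully: the monomial $z^{-m(w)}$ contributes $-m(w)$ to one side and the requirement that the glued bundle have the correct $c_1$ forces $\deg(f\circ\det\circ\Phi)=m(w)$.

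The main obstacle I expect is exactly this sign/normalization bookkeeping: making sure the orientation conventions on $S^1_R$ (as boundary of $\bar B_R$ versus as boundary of the disk at $\infty$), the convention that $\deg p_d = d$, and the placement of the $z^{-m(w)}$ twist all line up so the answer is $+m(w)$ and not $-m(w)$ or $m(w)\pm\bar n$ or similar. A clean way to pin it down is to test on the model case $G=\{e\}$, $M=\C$: there $\bar n=1$, $\g^\C=0$, $TM^{\wt u}$ is the tautological-type line bundle determined by the clutching, $\Psi^U$ and $\Psi^0$ are both the standard frame of $\C$, so $\Phi_z$ is multiplication by a function homotopic to $z^{m(w)}$ (because $\wt\Psi^\infty$ must extend over $\infty$), giving $\deg(f\circ\det\circ\Phi)=m(w)$ directly and fixing every sign. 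I would then invoke this model computation to calibrate the general argument, and otherwise the proof is a routine application of the clutching formula for $c_1$ over $S^2$.
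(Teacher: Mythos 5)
Your second paragraph already contains the paper's proof: over $\bar B_R$ use $\Psi^0$, over the disk $S^2\wo B_R$ around $\infty$ use the trivialization $\Psi^\infty_z(v_0,\al):=G\cdot\big([u\circ\si(z)],\Psi^U_{u\circ\si(z)}v_0+L^\C_{u\circ\si(z)}\al\big)$ (which extends continuously to $z=\infty$ with value $\Psi^U_{x_\infty}v_0+L^\C_{x_\infty}\al$ by Lemma~\ref{le:si}), observe that $\Phi_z=(\Psi^0_z)^{-1}\Psi^\infty_z$ on $S^1_R$, and invoke the clutching formula $c_1(TM^{\wt u})=\deg(f\circ\det\circ\Phi)$. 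That is all that is needed, and it is exactly what the paper does.

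The ``subtlety'' you raise in the third paragraph is a red herring that comes from conflating two distinct objects. The twist $z^{-m(w)}$ lives in $\wt\Psi^\infty$, a trivialization of $TM^u$ over $\R^2\wo B_R$ whose purpose is to extend \emph{inward} over $B_R$ (that is what Claim~\ref{claim:wt Psi 1} is about), not to extend to $\infty$; indeed $\wt\Psi^\infty$ does \emph{not} extend to $\infty$ precisely because $z^{-m(w)}$ blows up or vanishes there when $m(w)\neq 0$. Your statement ``the factor $z^{-m(w)}$ is inserted so that $\wt\Psi^\infty$ extends smoothly across $\infty$'' is therefore false, as is the calibration at the end (``because $\wt\Psi^\infty$ must extend over $\infty$''). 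The factor is inserted so that the final trivialization $\Psi=\wt\Psi\oplus\hhat\Psi$ of the bundle over $\R^2$ satisfies the growth condition (\ref{eq:C}) in Definition~\ref{defi:triv}. For the computation of $m(w)=c_1(TM^{\wt u})$ one uses the \emph{untwisted} trivialization $\Psi^\infty$ over the disk around $\infty$, which is the one that actually extends across $\infty$; the twist never enters, there is no degree to absorb, and the sign comes out as $+m(w)$ with no further bookkeeping. In fact it is Claim~\ref{claim:d m(w)} that is used (via Lemma~\ref{le:Aut}) to \emph{prove} Claim~\ref{claim:wt Psi 1}, not the other way around, so appealing to the extendability of $\wt\Psi^\infty$ here would also be circular. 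Deleting the third paragraph and the calibration at the end, and keeping the clutching argument of the second paragraph, gives the paper's proof.
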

\begin{proof}[Proof of Claim \ref{claim:d m(w)}] We define $\wt P$ to be the quotient of $P\disj \big((S^2\wo\{0\})\x G\big)$ under the equivalence relation generated by $p\sim (z,g)$, where $g\in G$ is determined by $\si(z)g=p$, for $p\in \pi^{-1}(z)\sub P$, $z\in \C\wo\{0\}$. Furthermore, we define $\wt u:\wt P\to M$ by $\wt u([p]):=u(p)$, for $p\in P$, and $\wt u([\infty,g]):=g^{-1}x_\infty$, for $g\in G$. The statement of Lemma \ref{le:si} implies that this map is continuous and extends $u$. The (fiberwise linear) complex structure $u^*J$ on $u^*TM$ descends to a complex structure $\wt J$ on $TM^{\wt u}=(\wt u^*TM)/G\to S^2$. By definition, we have $m(w)=c_1\big(TM^{\wt u},\wt J\big)$. We define the local trivialization $\Psi^\infty:(S^2\wo B_R)\x(\C^{\bar n}\oplus\g^\C)\to TM^{\wt u}$ by 
\[\Psi^\infty_z(v_0,\al):=\left\{\begin{array}{ll}G\cdot\big([u\circ\si(z)],\Psi^U_{u\circ\si(z)}v_0+L^\C_{u\circ\si(z)}\al\big),&\textrm{if }z\in \R^2\wo B_R,\\
 G\cdot\big([\infty,\one],\Psi^U_{x_\infty}v_0+L^\C_{x_\infty}\al\big),&\textrm{if }z=\infty.
\end{array}\right.\]
Then $\Phi_z=(\Psi^0_z)^{-1}\Psi^\infty_z$, for $z\in S^1_R$, and therefore $\Phi$ is the transition map between $\Psi^0$ and $\Psi^\infty$. Claim \ref{claim:d m(w)} follows from this.
\end{proof}
By Claim \ref{claim:d m(w)} and Lemma \ref{le:Aut} in the appendix the maps $\Phi$ and $S^1_R\ni z\mapsto (z^{m(w)}\cdot\oplus\id)\in\Aut(\C^{\bar n}\oplus\g^\C)$ are homotopic. Hence there exists a continuous map $h:\bar B_R\wo B_1\to \Aut(\C^{\bar n}\oplus\g^\C)$ such that $h_z:=h(z)=(z^{m(w)}\cdot\oplus\id)$, if $z\in S^1_1$, and $h_z=\Phi(z)$, if $z\in S^1_R$. We define $\wt\Psi:\R^2\x(\C^{\bar n}\oplus\g^\C)\to TM^u$ by $\wt\Psi:=\wt\Psi^\infty$ on $\R^2\wo B_R$, and $\wt\Psi_z(v_0,\al):=\Psi^0_zh_z(z^{-m(w)}\cdot\oplus\id)(v_0,\al)$, for $z\in B_R$, $(v_0,\al)\in\C^{\bar n}\oplus\g^\C$. Smoothing $\wt\Psi$ out on the ball $B_{R+1}$, we obtain the required extension of $\wt\Psi^\infty|_{\C\wo B_{R+1}}$. This proves Claim \ref{claim:wt Psi 1}.
\end{proof}
We define $\hhat\Psi^\infty:(\R^2\wo B_R)\x\g^\C\to \wone\big((P|_{\R^2\wo B_R}\x \g)/G\big)$ by 
\[\hhat\Psi^\infty_z(\phi+i\psi):=G\cdot\big(\si(z),\phi ds + \psi dt\big).\] 
\begin{claim}\label{claim:hhat Psi 1} $\hhat\Psi^\infty|_{\R^2\wo B_{R+1}}$ extends to a smooth complex trivialization of the bundle $\wone(\g_P)\to \C$. 
\end{claim}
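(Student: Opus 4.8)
The plan is to mimic the construction of the trivialization $\wt\Psi$ of $TM^u$ from Claim \ref{claim:wt Psi 1}, but now for the (honest, globally defined) complex vector bundle $\wone(\g_P)\to\C$. First I would note that $\wone(\g_P)$ is a complex vector bundle over $\R^2\simeq\C$ of rank $\dim G$ (with complex structure $J_P\al=-\al\,i$), and that $\R^2$ is contractible, so the bundle is smoothly trivializable; the only issue is to produce a trivialization that \emph{agrees with the prescribed one} $\hhat\Psi^\infty$ (coming from the section $\si$) on the complement of a large ball. The section $\si$ of $P|_{\R^2\wo B_1}$ is already fixed by Lemma \ref{le:si}, so $\hhat\Psi^\infty$ is a well-defined smooth complex trivialization over $\R^2\wo B_R$ (indeed over $\R^2\wo B_1$).

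Concretely I would proceed as follows. Choose any smooth complex trivialization $\hhat\Psi^0:\bar B_R\x\g^\C\to\wone(\g_P)|_{\bar B_R}$, which exists since $\bar B_R$ is contractible. On the circle $S^1_R$ form the transition map $\hhat\Phi_z:=(\hhat\Psi^0_z)^{-1}\hhat\Psi^\infty_z\in\Aut(\g^\C)\simeq\GL(\dim G,\C)$. Since $\wone(\g_P)$ extends over all of $\bar B_R$ (namely by $\hhat\Psi^0$ itself), the clutching class of this bundle is trivial, so $\deg(f\circ\det\circ\hhat\Phi)=0$; equivalently $\hhat\Phi:S^1_R\to\GL(\dim G,\C)$ is null-homotopic in $\GL(\dim G,\C)$ (here one uses that $\pi_1(\GL(\dim G,\C))\iso\Z$ is detected by the determinant, together with the computation of the first Chern number, exactly as in Lemma \ref{le:Aut} and Claim \ref{claim:d m(w)}). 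Hence $\hhat\Phi$ is homotopic to the constant map $\id$. Pick a homotopy and realize it as a smooth map $\hhat h:\bar B_R\wo B_1\to\Aut(\g^\C)$ with $\hhat h_z=\id$ for $z\in S^1_1$ and $\hhat h_z=\hhat\Phi_z$ for $z\in S^1_R$. Define $\hhat\Psi:\R^2\x\g^\C\to\wone(\g_P)$ by $\hhat\Psi:=\hhat\Psi^\infty$ on $\R^2\wo B_R$, by $\hhat\Psi_z:=\hhat\Psi^0_z\hhat h_z$ on $\bar B_R\wo B_1$, and by $\hhat\Psi_z:=\hhat\Psi^0_z$ on $\bar B_1$; these three definitions agree on the overlaps $S^1_R$ and $S^1_1$, so $\hhat\Psi$ is a continuous complex trivialization, and after smoothing it out on an annulus near $S^1_{R+1}$ (and near $S^1_1$ if necessary) we obtain a smooth complex trivialization agreeing with $\hhat\Psi^\infty$ on $\R^2\wo B_{R+1}$. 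This proves the claim.

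The main (mild) obstacle is purely the clutching/degree bookkeeping: one must be sure that the relevant bundle has trivial first Chern number so that the transition map $\hhat\Phi$ over $S^1_R$ is null-homotopic in $\GL(\dim G,\C)$. Here, unlike in the $TM^u$ case where the twist $z^{m(w)}$ was forced by $m(w)=c_1(TM^{\wt u},\wt\J)\neq0$, the bundle $\wone(\g_P)$ lives over $\C$ and is honestly trivial, so no twisting factor $p_{m(w)}$ appears and $\hhat\Phi$ is directly homotopic to $\id$; this is why the statement does not mention any Maslov-type integer. Everything else (existence of local trivializations over contractible sets, extension of homotopies, smoothing on annuli) is standard differential topology, carried out verbatim as in the proof of Claim \ref{claim:wt Psi 1}.
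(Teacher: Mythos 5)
There is a genuine gap in the key step. You assert that, because $\wone(\g_P)$ is a (trivial) bundle over the contractible base $\C$, the transition map $\hhat\Phi:=(\hhat\Psi^0)^{-1}\hhat\Psi^\infty:S^1_R\to\Aut(\g^\C)$ is automatically null-homotopic. This is false: triviality of a bundle over $\R^2$ does \emph{not} constrain the degree of the transition map between a trivialization over $\bar B_R$ and one over $\R^2\wo B_R$. The trivialization over $\R^2\wo B_R$ (a region homotopy equivalent to $S^1$) carries an independent homotopy datum. Concretely, for the trivial line bundle $\C\x\C\to\C$ the trivializations $\Psi^0_z=\id$ over $\bar B_R$ and $\Psi^\infty_z=z\cdot\id$ over $\C\wo B_R$ have transition $\hhat\Phi_z=z$ of degree $1$, and $\Psi^\infty$ does not extend to a trivialization over $\C$. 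The same phenomenon is exactly what produces the nontrivial twist $p_{m(w)}$ in the $TM^u$ case of Claim \ref{claim:wt Psi 1}, even though $TM^u\to\R^2$ is equally ``honestly trivial'' as a bundle over the contractible base. So the remark at the end of your proposal, that the absence of a twisting factor here is ``because the bundle lives over $\C$ and is honestly trivial,'' is based on the same incorrect premise.

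What is actually needed, and what the paper supplies, is a computation showing that the transition loop is null-homotopic for the \emph{specific} trivialization $\hhat\Psi^\infty$ built from the section $\si$. Writing $\si(z)=\wt\si(z)g(z)$ with $\wt\si$ a continuous section of $P|_{\bar B_R}$, the transition map is $z\mapsto\Ad^\C_{g(z)}$, and the crucial observation is that $\det(\Ad^\C_g)=\det(\Ad_g)$ is real (indeed $\Ad_g\in\operatorname{SO}(\g)$ since $\lan\cdot,\cdot\ran_\g$ is invariant and $G$ is connected), so $f\circ\det\circ\Ad^\C_g\const\pm1$ and hence $\deg(f\circ\det\circ\hhat\Phi)=0$. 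Only at this point does Lemma \ref{le:Aut} give the null-homotopy and the extension over $\bar B_R$. (An equivalent route, closer in spirit to your appeal to a first Chern number computation, is to note that $\wone(\g_P)\iso\g_P^\C$ extends over $S^2$ to $\g^\C_{\wt P}$, and $c_1(\g^\C_{\wt P})=c_1(\det\g^\C_{\wt P})=0$ because $\det\Ad^\C\equiv1$ trivializes the determinant line; but this still rests on the same fact about $\Ad$, which your proposal does not invoke.) Once the degree-$0$ fact is established, the remainder of your construction (lifting the null-homotopy to a map $\hhat h:\bar B_R\to\Aut(\g^\C)$, pasting, smoothing on an annulus) is fine and matches the paper.
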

\begin{proof}[Proof of Claim \ref{claim:hhat Psi 1}] We denote by $\Ad$ and $\Ad^\C$ the adjoint representations of $G$ on $\g$ and $\g^\C$ respectively. For every $g\in G$ we have $\det(\Ad^\C_g)=\det(\Ad_g)\in\R$. We choose a continuous section $\wt\si$ of the restriction $P|_{\bar B_R}$. We define $g:S^1_R\to G$ to be the unique map such that $\si(z)=\wt\si(z) g(z)$, for every $z\in S^1_R$. It follows that $f\circ\det(\Ad^\C_g)\const \pm 1$. Therefore, $\deg\big(S^1_R\ni z\mapsto f\circ\det(\Ad^\C_{g(z)})\big)=0$. Hence Lemma \ref{le:Aut} (appendix) implies that there exists a continuous map $\Phi:\bar B_R\to \Aut(\g^\C)$ satisfying $\Phi_z:=\Phi(z)=\Ad^\C_{g(z)}$, for every $z\in S^1_R$. We define $\hhat\Psi:\R^2\x\g^\C\to \wone(\g_P)$ by $\hhat\Psi:=\hhat\Psi^\infty$ on $\R^2\wo B_R$ and by $\hhat\Psi_z\al:=G\cdot\big(\wt\si,\phi'ds + \psi' dt\big)$, where $\phi'+ i\psi':=\Phi_z\al$, for $z\in B_R$, $\al\in \g^\C$. Smoothing $\hhat\Psi$ out on the ball $B_{R+1}$, we obtain the required extension of $\hhat\Psi^\infty|_{\R^2\wo B_{R+1}}$. This proves Claim \ref{claim:hhat Psi 1}.
\end{proof}
We choose extensions $\wt\Psi$ and $\hhat\Psi$ of $\wt\Psi^\infty$ and $\hhat\Psi^\infty$ as in Claims \ref{claim:wt Psi 1} and \ref{claim:hhat Psi 1}. 
\begin{claim}\label{claim:Psi good} $\Psi:=\wt\Psi\oplus\hhat\Psi$ is a good complex trivialization of $TM^u\oplus\wone(\g_P)$.
\end{claim}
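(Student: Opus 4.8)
The plan is to verify each of the four conditions in Definition \ref{defi:triv} for the trivialization $\Psi := \wt\Psi \oplus \hhat\Psi$ constructed above, drawing on the structure built into $\wt\Psi$, $\hhat\Psi$ and on the asymptotic control provided by Lemma \ref{le:si}. First I would check the \textbf{Splitting} condition \ref{defi:triv split}. Equations (\ref{eq:Psi z C bar n}) and (\ref{eq:Psi z 0}) are immediate from the definition $\Psi = \wt\Psi \oplus \hhat\Psi$, since $\wt\Psi$ maps $\C^{\bar n}\oplus\g^\C$ into $TM^u$ and $\hhat\Psi$ maps $\g^\C$ into $\wone(\g_P)$. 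For the refined conditions on $\R^2 \wo B_R$: by construction $\wt\Psi = \wt\Psi^\infty$ there, and $\wt\Psi^\infty_z(v_0,0) = G\cdot(u\circ\si(z), \Psi^U_{u\circ\si(z)}(z^{-m(w)}\cdot)v_0)$ lands in $H^u_z$ because $\Psi^U$ trivializes $H|_U$ and $u\circ\si(z) \in U$ for $z$ large (Lemma \ref{le:si}); this gives (\ref{eq:Psi infty x H}). Similarly $\wt\Psi^\infty_z(0,\al) = G\cdot(u\circ\si(z), L^\C_{u\circ\si(z)}\al)$ and $\hhat\Psi^\infty_z(\phi+i\psi) = G\cdot(\si(z), \phi\,ds + \psi\,dt)$ together yield (\ref{eq:Psi 0 al be}). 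The requirements that $u\circ\si(re^{i\phi}) \to x_\infty$ uniformly and $\si^*A \in L^p_\lam(\R^2\wo B_1,\g)$ are precisely the content of Lemma \ref{le:si}, with $x_\infty$ the point chosen there.

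Next I would verify condition \ref{defi:triv C}, the two-sided bound (\ref{eq:C}). On the compact region $\bar B_R$ this is automatic: $\Psi$ is a smooth bundle isomorphism over a compact base, so $|\Psi_z(\cdot)|$ and $|\cdot|$ are uniformly comparable, and multiplication by $\lan z\ran^{m(w)}$ is bounded above and below on $\bar B_R$. On $\R^2\wo B_R$ the key observation is that the factor $(\lan z\ran^{m(w)}\cdot\oplus\id)$ exactly cancels the factor $(z^{-m(w)}\cdot\oplus\id)$ built into $\wt\Psi^\infty$: the composition $\Psi_z \circ (\lan z\ran^{m(w)}\cdot\oplus\id)$ is, up to the bounded unimodular factor $(z/\lan z\ran)^{m(w)}$ on the first coordinate, equal to $v_0 \mapsto G\cdot(u\circ\si(z),\Psi^U_{u\circ\si(z)}v_0)$ plus $L^\C$ and $\hhat\Psi^\infty$ terms. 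Since $\Psi^U$ is a trivialization over $U$ and $u\circ\si(z)$ stays in a compact subset of $U$ (indeed converges to $x_\infty$), the norms are uniformly comparable; the $L^\C$ and $\hhat\Psi^\infty$ parts are likewise controlled because $u\circ\si(z)$ lies near $x_\infty \in M^*$ and Lemma \ref{le:si} (together with hypothesis (H)) gives a uniform lower bound on $|L^\C_x\al|$ near $x_\infty$.

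Finally, condition \ref{defi:triv na} requires $|\na^A(\Psi(p_{m(w)}\cdot\oplus\id))| \in L^p_\lam(\R^2\wo B_1)$. This is where the bulk of the work lies and is the step I expect to be the main obstacle, since it requires quantitative decay of the connection-derivative of the trivialization rather than mere pointwise boundedness. The strategy is to differentiate the explicit formula for $\Psi_z(p_{m(w)}\cdot\oplus\id)$ on $\R^2\wo B_{R+1}$, where again the $z^{m(w)}$ and $z^{-m(w)}$ factors cancel. The resulting terms involve: $\na^A$ applied to $z\mapsto u\circ\si(z)$ composed into the smooth maps $\Psi^U$ and $L^\C$ (whose derivative is controlled by $|d(u\circ\si)|$, which decays in $L^p_\lam$ by the energy decay estimates underlying Lemma \ref{le:si} and $w\in\BB^p_\lam$); the term $\si^*A$ coming from the difference between $\na^A$ and the trivial connection on $\g_P$ via $\si$, which lies in $L^p_\lam(\R^2\wo B_1,\g)$ by Lemma \ref{le:si}; and terms where the holomorphic factor $(z/\lan z\ran)^{m(w)}$ on the first $\C$-coordinate is differentiated, contributing a factor of order $|z|^{-1}$ which is better than any $L^p_\lam$ weight requires once $\lam > -2/p+1$ — one should check the exponent arithmetic here, but $|z|^{-1}\lan z\rangle^\lam \in L^p$ near infinity precisely when $p(1-\lam) > 2$, i.e. $\lam < 1 - 2/p$; since the hypothesis is $\lam > -2/p+1$ this term must instead be absorbed into the smoothed-out region or handled by noting it is multiplied by a quantity that itself decays, so care with exactly which summand carries the $|z|^{-1}$ is needed. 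Assembling these estimates, each summand of $\na^A(\Psi(p_{m(w)}\cdot\oplus\id))$ is a product of a uniformly bounded bundle map with something in $L^p_\lam(\R^2\wo B_1)$, so the sum lies in $L^p_\lam$, completing the proof of Claim \ref{claim:Psi good}.
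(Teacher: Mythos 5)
Your verification of conditions (\ref{defi:triv split}) and (\ref{defi:triv C}) is essentially the same as the paper's: the splitting is built into the definitions of $\wt\Psi^\infty$ and $\hhat\Psi^\infty$ (with Lemma \ref{le:si} supplying $\si$, $x_\infty$ and the decay of $\si^*A$), and the two-sided bound follows from the orthogonal decomposition $T_{u\circ\si(z)}M = H_{u\circ\si(z)}\oplus\im L^\C_{u\circ\si(z)}$ together with compactness of $\BAR{u(P)}$ and injectivity of $L^\C$ on $U$. The observation that $(z/\lan z\ran)^{m(w)}$ is a bounded and boundedly invertible scalar factor is the right way to pass between the $z^{m(w)}$ appearing in the explicit formula for $\wt\Psi^\infty$ and the $\lan z\ran^{m(w)}$ appearing in (\ref{eq:C}).

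There is, however, a genuine gap in your treatment of condition (\ref{defi:triv na}). You correctly begin by noting that, on $\R^2\wo B_{R+1}$, the factor $p_{m(w)}$ in $\Psi(p_{m(w)}\cdot\oplus\id)$ cancels exactly against the $p_{-m(w)}$ built into $\wt\Psi^\infty$, so that $\wt\Psi(p_{m(w)}\cdot\oplus\id)(v_0,\al) = G\cdot\big(u\circ\si(z),\Psi^U_{u\circ\si(z)}v_0+L^\C_{u\circ\si(z)}\al\big)$ with no residual $z$-dependent scalar factor at all. Differentiating this via $\na^A$ produces only terms of the form $\na_{d_Au\cdot\wt v}(\Psi^U_\bullet v_0+L^\C_\bullet\al)$ (bounded bundle maps applied to $d_Au\in L^p_\lam$) plus, from the $\hhat\Psi$ piece, a bracket with $\si^*A\in L^p_\lam$; this is exactly the paper's computation in (\ref{eq:na A v wt Psi})--(\ref{eq:wt na A wt v X}). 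But you then contradict this by introducing ``terms where the holomorphic factor $(z/\lan z\ran)^{m(w)}$ on the first $\C$-coordinate is differentiated, contributing a factor of order $|z|^{-1}$''. That factor belongs to your analysis of condition (\ref{defi:triv C}), which uses $\lan z\ran^{m(w)}$; condition (\ref{defi:triv na}) uses $p_{m(w)}=z^{m(w)}$, and after the cancellation there is nothing left to differentiate. Your exponent arithmetic is correct that $|z|^{-1}\lan z\ran^\lam\notin L^p(\R^2\wo B_1)$ when $\lam>1-2/p$, which is precisely the regime of interest — so if such a term actually appeared the estimate would fail, and the vague remedies you sketch (``absorbed into the smoothed-out region'', ``multiplied by a quantity that itself decays'') would not rescue it. The correct resolution is simply that the term is not there; once that is recognized, each summand of $\na^A\big(\Psi(p_{m(w)}\cdot\oplus\id)\big)$ on $\R^2\wo B_{R+1}$ is a uniformly bounded bundle map applied to either $d_Au$ or $\si^*A$, both of which lie in $L^p_\lam$, and the contribution from the compact annulus $B_{R+1}\wo B_1$ is finite by smoothness. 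You should delete the digression about $(z/\lan z\ran)^{m(w)}$ and rely on the exact cancellation you already noted.
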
 
\begin{pf}[Proof of Claim \ref{claim:Psi good}] Condition {\bf (\ref{defi:triv split})} of Definition \ref{defi:triv} follows from the construction of $\Psi$. To prove {\bf (\ref{defi:triv C})}, note that for $z\in\R^2\wo B_{R+1}$ and $(v_0,\al,\be)\in V$, we have
\begin{equation}\label{eq:Psi z z}\big|\Psi_z(z^{m(w)}\cdot\oplus\id)(v_0,\al,\be)\big|^2=\big|\Psi^U_{u\circ\si(z)}v_0\big|^2+\big|L^\C_{u\circ\si(z)}\al\big|^2+|\be|^2.\end{equation}
Here we used the fact $H_x=(\im L^\C_x)^\perp$, for every $x\in M$. By our choice of $U$, $H|_U\sub TM|_U$ is a smooth subbundle of rank $\dim M-2\dim G$. It follows that $\im L^\C|_U=H^\perp|_U$ is a smooth subbundle of $TM|_U$ of rank $2\dim G$. Hence $L^\C_x:\g^\C\to T_xM$ is injective, for every $x\in U$. Since by assumption $\BAR{u(P)}\sub M$ is compact, the same holds for the set $\BAR{u(P|_{\R^2\wo B_{R+1}})}\sub\BAR{u(P)}$. It follows that there exists a constant $C>0$ such that
\[C^{-1}|v_0|\leq\big|\Psi^U_{u\circ\si(z)}v_0\big|\leq C|v_0|,\quad C^{-1}|\al|\leq \big|L^\C_{u\circ\si(z)}\al\big|\leq C|\al|,\]
for every $z\in \R^2\wo B_{R+1}$, $v_0\in\C^{\bar n}$ and $\al\in\g^\C$. Combining this with equality (\ref{eq:Psi z z}), condition (\ref{defi:triv C}) follows. 

We check condition {\bf (\ref{defi:triv na})}. Let $\ze:=\big(v_0,\al,\be=\phi+i\psi\big)\in V$, $z\in \R^2\wo B_{R+1}$ and $v\in T_z\R^2$. We choose a point $p\in \pi^{-1}(z)\sub P$ and a vector $\wt v\in T_pP$ such that $\pi_*\wt v=v$. Then
\begin{equation}\label{eq:na A v wt Psi}\na^A_v\big(\wt\Psi(p_{m(w)}\cdot\oplus\id)(v_0,\al)\big)=G\cdot\big(u(p),\wt\na^A_{\wt v}(\Psi^U_uv_0+L^\C_u\al)\big).\end{equation}
Furthermore, for every smooth vector field $X$ on $U$ we have
\begin{equation}\label{eq:wt na A wt v X}\wt\na^A_{\wt v}X=(u^*\na)_{\wt v-pA\wt v}X=\na_{d_Au\cdot\wt v}X.\end{equation}
We define $C$ to be the maximum of $\big|\na_{v'}(\Psi^U_xv''+L^\C_x\al)\big|$ over all $v'\in T_xM$, $x\in \BAR{u(P|_{\R^2\wo B_{R+1}})}$ and $(v'',\al)\in \C^{\bar n}\oplus\g^\C$ such that $|v'|\leq1$, $|(v'',\al)|\leq1$. Furthermore, we define $C':=\Vert d_Au\Vert_{L^p_\lam(\R^2\wo B_{R+1})}$. By (\ref{eq:na A v wt Psi}) and (\ref{eq:wt na A wt v X}) with $X(x):=\Psi^U_xv_0+L^\C_x\al$, we have
\begin{equation}\label{eq:Vert na A v wt Psi}\big\Vert\na^A_v\big(\wt\Psi(p_{m(w)}\cdot\oplus\id)(v_0,\al)\big)\big\Vert_{L^p_\lam(\R^2\wo B_{R+1})}\leq CC'|v||(v_0,\al)|,
\end{equation}
We now define $\wt\phi,\wt\psi:P\to g$ to be the unique equivariant maps such that $\wt\phi\circ\si\const\phi$, $\wt\psi\circ\si\const\psi$. We have $d_A\wt\phi\, \si_*v=[(\si^*A)v,\phi]$, and similarly for $\wt\psi$. Since $\wt\na^A_{\si_*v}(\wt\phi ds+\wt\psi dt)=(d_A\wt\phi\si_*v)ds+(d_A\wt\psi\si_*v)dt$, it follows that
\[\big|\na^A_v\big(\hhat\Psi(\phi ds+\psi dt)\big)\big|=\big|G\cdot\big(\si(z),\wt\na^A_{\si_*v}(\wt\phi ds+\wt\psi dt)\big)\big|=\big|\big[(\si^*A)v,\be\big]\big|.\]
Combining this with (\ref{eq:Vert na A v wt Psi}) and the facts $d_Au\in L^p_\lam(\R^2\wo B_{R+1})$ and $\Vert\si^*A\Vert_{p,\lam}<\infty$, condition (\ref{defi:triv na}) follows. This proves Claim \ref{claim:Psi good} and concludes the proof of Proposition \ref{prop:triv}.
\end{pf}
\end{proof}
\begin{proof}[Proof of Proposition \ref{prop:X X w}]\label{proof:X X w} \setcounter{claim}{0}
Let $p,\lam,w=(u,A)$ and $\Psi$ be as in the hypothesis. We choose $\rho_0\in C^\infty(\R^2,[0,1])$ such that $\rho_0(z)=0$ for $z\in B_{1/2}$ and $\rho_0(z)=1$ for $z\in\R^2\wo B_1$. We fix $R\geq1$, $\si$ and $x_\infty$ as in Definition \ref{defi:triv}(\ref{defi:triv split}). We abbreviate $d:=m(w)$. 

{\bf We prove (\ref{prop:X X w iso}).} For every $\ze\in W^{1,1}_\loc(\R^2,V)$ Leibnitz' rule implies that 
\begin{equation}\label{eq:na Phi Psi F ze}\na^A(\Psi\ze)=\big(\na^A\big(\Psi(p_d\cdot\oplus\id)\big)\big)(p_{-d}\cdot\oplus\id)\ze+\Psi(p_d\cdot\oplus\id)D\big((p_{-d}\cdot\oplus\id)\ze\big).
\end{equation}
\begin{claim}\label{claim:F X bdd} The first map in (\ref{eq:X Y Psi}) is well-defined and bounded.
\end{claim}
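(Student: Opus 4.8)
The first map in (\ref{eq:X Y Psi}) is $\ze\mapsto\Psi\ze$, and the plan is to verify directly that $\Psi\ze$ lies in $\XX_w$ with norm controlled by $\Vert\ze\Vert_{\XX_{m(w)}}$. Write $\ze\in\XX_{m(w)}=\XX'_{p,\lam,d}\oplus\XX''_{p,\lam}\oplus W^{1,p}_\lam(\R^2,\g^\C\oplus\g^\C)$, $d:=m(w)$, as $\ze=(a\rho_0p_d+\ze'_0)\oplus(b+\ze''_0)\oplus(\ze'''_1,\ze'''_2)$ with $a\in\C$, $b\in\C^{\bar n-1}$, $\ze'_0\in L^{1,p}_{\lam-1-d}$, $\ze''_0\in L^{1,p}_{\lam-1}$ and $\ze''':=(\ze'''_1,\ze'''_2)\in W^{1,p}_\lam(\R^2,\g^\C\oplus\g^\C)$. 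Since $\Psi$ is a smooth bundle isomorphism and each summand of $\XX_{m(w)}$ lies in $W^{1,p}_\loc$, we get $\Psi\ze\in W^{1,p}_\loc$, so by the definition (\ref{eq:X w p lam}) of $\XX_w$ it remains to bound, by a uniform multiple of $\Vert\ze\Vert_{\XX_{m(w)}}$, the five quantities $\Vert\Psi\ze\Vert_\infty$, $\Vert\na^A(\Psi\ze)\Vert_{p,\lam}$, $\Vert d\mu(u)v\Vert_{p,\lam}$, $\Vert\al\Vert_{p,\lam}$ and $\Vert\Pr^u(\Psi\ze)\Vert_{p,\lam}$, where $(v,\al):=\Psi\ze$.

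First I would handle the $L^\infty$-bound. Goodness condition (\ref{eq:C}) gives $|\Psi_z\xi|\le C|(\lan z\ran^{-m(w)}\cdot\oplus\id)\xi|$ for all $z\in\R^2$, $\xi\in V$; applied to $\xi=\ze(z)$ the weight $\lan z\ran^{-m(w)}$ exactly absorbs the growth of $a\rho_0p_d$ (as $|z|^d\lan z\ran^{-d}$ is bounded where $\rho_0\neq0$), while the weighted Morrey embedding of Proposition \ref{prop:lam d Morrey}, available since $p>2$, bounds $|\ze'_0(z)|$, $|\ze''_0(z)|$, $|\ze'''(z)|$ by a constant multiple of $\lan z\ran^{1-2/p-\lam}\Vert\ze\Vert_{\XX_{m(w)}}$, which is bounded because $\lam>1-2/p$. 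Hence $\Vert\Psi\ze\Vert_\infty\lesssim\Vert\ze\Vert_{\XX_{m(w)}}$; the same pointwise estimate also bounds $(p_{-m(w)}\cdot\oplus\id)\ze$ uniformly on $\R^2\wo B_1$.

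Next, for $\Vert\na^A(\Psi\ze)\Vert_{p,\lam}$ I would split $\R^2=B_1\cup(\R^2\wo B_1)$. On $B_1$ the weight $\lan\cdot\ran^\lam$ is bounded, $\Psi$ is smooth, and $\Vert\ze\Vert_{W^{1,p}(B_1)}\lesssim\Vert\ze\Vert_{\XX_{m(w)}}$, so this part is immediate. On $\R^2\wo B_1$ I would use the Leibniz identity (\ref{eq:na Phi Psi F ze}): its first term is estimated by condition (\ref{defi:triv na}) ($\na^A(\Psi(p_{m(w)}\cdot\oplus\id))\in L^p_\lam(\R^2\wo B_1)$) times the $L^\infty$-bound on $(p_{-m(w)}\cdot\oplus\id)\ze$ from the previous paragraph; for its second term, (\ref{eq:C}) makes the pointwise operator norm of $\Psi(p_{m(w)}\cdot\oplus\id)$ bounded on $\R^2\wo B_1$, reducing matters to $\Vert\lan\cdot\ran^\lam D((p_{-m(w)}\cdot\oplus\id)\ze)\Vert_{L^p(\R^2\wo B_1)}$. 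There $\rho_0\equiv1$, so $D((p_{-m(w)}\cdot\oplus\id)\ze)$ is a sum of terms of the shape $z^{-d-1}\ze'_0$, $z^{-d}D\ze'_0$, $D\ze''_0$, $D\ze'''$; a short weight count (the exponents match $\lam-1-d$, $\lam-d$, $\lam-1+1$, $\lam$ respectively) shows each is controlled by the $L^{1,p}_{\lam-1-d}$-, $L^{1,p}_{\lam-1}$- or $W^{1,p}_\lam$-norm of the corresponding factor, a Hardy-type inequality (Proposition \ref{prop:Hardy}) being in reserve for any borderline weight. Thus $\Vert\na^A(\Psi\ze)\Vert_{p,\lam}\lesssim\Vert\ze\Vert_{\XX_{m(w)}}$.

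Finally, for the three undifferentiated quantities I would invoke the asymptotic structure of a good trivialization. On $B_R$ they are bounded trivially ($\Psi$ smooth, $|\Pr^u\cdot|\le|\cdot|$), and each summand of $\XX_{m(w)}$ embeds continuously into $L^p(B_R)$. On $\R^2\wo B_R$, by (\ref{eq:Psi infty x H}) we have $\Psi_z(\C^{\bar n}\oplus\{0\}\oplus\{0\})=H^u_z$, and since $H_x=\ker d\mu(x)\cap(\im L_x)^\perp$ sits inside both $\ker d\mu(x)$ and $(\im L_x)^\perp=\ker\Pr_x$, the $\C^{\bar n}$-components of $\ze$ contribute nothing to $d\mu(u)v$ or to $\Pr^u(\Psi\ze)$ there; by (\ref{eq:Psi 0 al be}) the remaining contributions to $v$ and $\al$ are $L^\C_{u\circ\si(z)}\ze'''_1$ and the $\wone(\g_P)$-part built from $\ze'''_2$, with pointwise norms $\le C|\ze'''_1(z)|$ and $\le C|\ze'''_2(z)|$ by compactness of $\BAR{u(P)}$. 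Hence each is $\lesssim\Vert\ze'''\Vert_{W^{1,p}_\lam}\le\Vert\ze\Vert_{\XX_{m(w)}}$, completing the proof. I expect the main obstacle to be the $\na^A$-estimate on $\R^2\wo B_1$: after expanding the connection and ordinary Leibniz rules one has to check that the powers $p_{\pm m(w)}$ shift the weights in exactly the right way for every resulting term to land in the weighted $L^p$-space that one of the three factors of $\XX_{m(w)}$ controls.
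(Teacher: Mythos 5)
Your proposal is correct and follows essentially the same route as the paper: the $L^\infty$-bound via condition (\ref{eq:C}) plus the weighted Morrey embedding of Proposition \ref{prop:lam d Morrey}, the zeroth-order $L^p_\lam$-bounds via the splitting conditions (\ref{eq:Psi infty x H}) and (\ref{eq:Psi 0 al be}), and the $\na^A$-bound via the Leibniz identity (\ref{eq:na Phi Psi F ze}) together with (\ref{defi:triv na}). The only difference is cosmetic — you decompose $\ze$ into its summands and do the weight count by hand, where the paper invokes Proposition \ref{prop:lam d Morrey}(\ref{prop:lam d iso}) in one stroke; the Hardy inequality you keep ``in reserve'' is in fact not needed for this direction.
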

\begin{proof}[Proof of Claim \ref{claim:F X bdd}] Proposition \ref{prop:lam d Morrey}(\ref{prop:lam Morrey}) below and the fact $\lam>-2/p+1$ imply that there exists a constant $C_1$ such that 
\begin{equation}\label{eq:lan ran - d id ze}\big\Vert(\lan\cdot\ran^{-d}\cdot\oplus\id)\ze\big\Vert_\infty\leq C_1\Vert\ze\Vert_{\XX_d},\quad\forall \ze\in\XX_d\,.\end{equation}
We choose a constant $C_2:=C$ as in part (\ref{defi:triv C}) of Definition \ref{defi:triv}. Then by (\ref{eq:C}) and (\ref{eq:lan ran - d id ze}), we have 
\begin{equation}\label{eq:Psi ze infty}\Vert\Psi\ze\Vert_\infty\leq C_1C_2\Vert\ze\Vert_{\XX_d}\quad\forall \ze\in\XX_d\,.\end{equation}
It follows from (\ref{eq:Psi infty x H}) and (\ref{eq:Psi 0 al be}), the definition $H_x:=\ker d\mu(x)\cap \im L_x^\perp$ and the compactness of $\BAR{u(P)}$ that there exists $C_3\in\R$ such that, for every $\ze\in\XX_d$,
\begin{equation}\label{eq:d mu u v'}\big\Vert \,|d\mu(u)v'|+|{\Pr}^uv'|+|\al'|\,\big\Vert_{p,\lam}\leq C_3\Vert\ze\Vert_{\XX_d},
\end{equation}
where $(v',\al'):=\Psi\ze$. For $r>0$ we denote $B_r^C:=\R^2\wo B_r$ and $\Vert\cdot\Vert_{p,\lam;r}:=\Vert\cdot\Vert_{L^p_\lam(B_r^C)}$. We define $C_4:=\max\big\{\big\Vert\na^A\big(\Psi(p_d\cdot\oplus\id)\big)\big\Vert_{p,\lam;1},C_2\big\}$. By condition (\ref{defi:triv na}) of Definition \ref{defi:triv} we have $C_4<\infty$. Let $\ze\in\XX_d$. Then by (\ref{eq:na Phi Psi F ze}) we have
\begin{equation}\label{eq:na A Psi ze p lam}\Vert\na^A(\Psi\ze)\Vert_{p,\lam;1}\leq C_4\big(\Vert(p_{-d}\cdot\oplus\id)\ze\Vert_{L^\infty(B_1^C)}+\big\Vert D\big((p_{-d}\cdot\oplus\id)\ze\big)\big\Vert_{p,\lam;1}\big).
\end{equation}
We define $C_5:=\max\big\{-d2^{(-d+3)/2},2\big\}$. Then $\big\Vert D\big((p_{-d}\cdot\oplus\id)\ze\big)\big\Vert_{p,\lam;1}\leq C_5\Vert\ze\Vert_{\XX_d}$ by Proposition \ref{prop:lam d Morrey}(\ref{prop:lam d iso}). Combining this with (\ref{eq:na A Psi ze p lam}) and (\ref{eq:lan ran - d id ze}), we get
\begin{equation}\label{eq:na A Psi ze} \Vert\na^A(\Psi\ze)\Vert_{p,\lam;1}\leq C_4\big(2^{\frac{|d|}2}C_1+C_5\big)\Vert\ze\Vert_{\XX_d}.\end{equation}
By a direct calculation there exists a constant $C_6$ such that $\Vert\na^A(\Psi\ze)\Vert_{L^p(B_1)}\leq C_6\Vert\ze\Vert_{\XX_d}$, for every $\ze\in\XX_d$. Claim \ref{claim:F X bdd} follows from this and (\ref{eq:Psi ze infty},\ref{eq:d mu u v'},\ref{eq:na A Psi ze}).
\end{proof} 
\begin{claim}\label{claim:F -1 X bdd} The map $\XX_w\ni\ze'\mapsto \Psi^{-1}\ze'\in \XX_d$ is well-defined and bounded. 
\end{claim}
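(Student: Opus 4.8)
The plan is to show that $\Psi^{-1}$ is the two-sided inverse of the bounded map from Claim~\ref{claim:F X bdd}, and that it is itself bounded. Since $\Psi$ is a bundle isomorphism over $\R^2$, the set-theoretic map $\ze'\mapsto\Psi^{-1}\ze'$ is automatically the inverse of $\ze\mapsto\Psi\ze$ as maps of sections; so the content of the claim is that $\Psi^{-1}$ sends $\XX_w$ into $\XX_d$ (well-definedness) with a norm bound. First I would observe that because the norm $\Vert\cdot\wt\Vert_{w,p,\lam}$ on $\XX_w$ controls $\Vert\cdot\Vert_\infty$, and by the \emph{lower} bound in \eqref{eq:C} of Definition~\ref{defi:triv}(\ref{defi:triv C}) we get $\Vert(\lan\cdot\ran^{-d}\cdot\oplus\id)\Psi^{-1}\ze'\Vert_\infty\leq C\Vert\ze'\Vert_\infty$; combined with Proposition~\ref{prop:lam d Morrey} (the embedding/equivalence of norms on $\XX_d$ in the regime $\lam>-2/p+1$) this will take care of the $L^\infty$-type part of $\Vert\cdot\Vert_{\XX_d}$ and of the $\g^\C\oplus\g^\C$-component after the derivative estimate below.

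Next I would estimate the weighted $L^p$ norm of the derivative $D\big((p_{-d}\cdot\oplus\id)\Psi^{-1}\ze'\big)$. The key identity is \eqref{eq:na Phi Psi F ze} read backwards: solving for the derivative term gives
\[
\Psi(p_d\cdot\oplus\id)D\big((p_{-d}\cdot\oplus\id)\ze\big)=\na^A(\Psi\ze)-\big(\na^A(\Psi(p_d\cdot\oplus\id))\big)(p_{-d}\cdot\oplus\id)\ze,
\]
so applying the lower bound of \eqref{eq:C} on the left and using the $L^p_\lam$-bound on $\na^A(\Psi(p_d\cdot\oplus\id))$ over $B_1^C$ from Definition~\ref{defi:triv}(\ref{defi:triv na}) together with the $L^\infty$ control just obtained, I bound $\big\Vert D\big((p_{-d}\cdot\oplus\id)\Psi^{-1}\ze'\big)\big\Vert_{p,\lam;1}$ by a constant times $\Vert\na^A\ze'\Vert_{p,\lam;1}+\Vert\ze'\Vert_\infty\leq\mathrm{const}\cdot\Vert\ze'\wt\Vert_{w,p,\lam}$. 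On the compact ball $B_1$ the corresponding bound is elementary (finitely many derivatives of a fixed smooth bundle map against a fixed smooth weight). This shows $\Psi^{-1}\ze'\in W^{1,p}_{\lam-1-d}$ in the relevant component and $W^{1,p}_{\lam-1}$ in the others, hence $\Psi^{-1}\ze'\in\XX_d$ with the stated bound.

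Finally I would combine the two components: the ``constant'' part of $\ze\in\XX_d$ lives in $\C\rho_0 p_d\oplus\C^{\bar n-1}$ and is detected by the $L^\infty$ estimate via Proposition~\ref{prop:lam d Morrey}(\ref{prop:lam d iso}) (the decomposition of $\XX_d$ into a finite-dimensional piece plus a weighted Sobolev space), so boundedness of $\Psi^{-1}$ into $\XX'_{p,\lam,d}\oplus\XX''_{p,\lam}\oplus W^{1,p}_\lam(\R^2,\g^\C\oplus\g^\C)$ follows by assembling the pieces. The analogous statements for the $\YY$-side map $\ze'\mapsto(F\Psi)^{-1}\ze'$ are strictly easier since $F$ is a fixed bundle isomorphism with bounded inverse and no derivatives are involved, so I would handle $\YY$ in one line once $\XX$ is done. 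The main obstacle I anticipate is the bookkeeping at the weight boundary: one must use the full strength of Proposition~\ref{prop:lam d Morrey} (including the constants that degenerate as $\lam\downarrow -2/p+1$) to pass between $\Vert\cdot\Vert_\infty$ on $V$-valued functions and the $\XX_d$-norm, and to keep the $p_{-d}$ and $\lan\cdot\ran^{-d}$ twists aligned with the correct weight shifts; everything else is a routine chase through Leibniz' rule and the bounds already recorded in the proof of Claim~\ref{claim:F X bdd}.
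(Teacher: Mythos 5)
There is a genuine gap. Your argument only extracts two pieces of information about $\Psi^{-1}\ze'$: an $L^\infty$ bound (from the lower bound in \eqref{eq:C}) and an $L^p_\lam$ bound on the derivative $D\big((p_{-d}\cdot\oplus\id)\Psi^{-1}\ze'\big)$ (from \eqref{eq:na Phi Psi F ze} read backwards plus Definition~\ref{defi:triv}(\ref{defi:triv na})). But this is \emph{not} enough to land in $\XX_d$, because the $\g^\C\oplus\g^\C$-component of $\XX_d$ is $W^{1,p}_\lam(\R^2,\g^\C\oplus\g^\C)$, which requires the function \emph{itself} to lie in $L^p_\lam$. Since $\lam>-2/p+1>-2/p$, one has $\int_{\R^2}\lan z\ran^{p\lam}\,dz=\infty$, so a merely bounded (or even bounded-plus-decaying-at-rate-$\lan\cdot\ran^{\lam-1}$) function need not be in $L^p_\lam$; in fact Hardy's inequality applied to your derivative bound only yields $\C+L^{1,p}_{\lam-1}$, which is strictly larger than $W^{1,p}_\lam$. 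So your conclusion ``hence $\Psi^{-1}\ze'\in W^{1,p}_{\lam-1}$ in the others, hence $\Psi^{-1}\ze'\in\XX_d$'' slips a weight: the target for $(\al,\be)$ is $W^{1,p}_\lam$, not $W^{1,p}_{\lam-1}$, and nothing in your argument crosses that gap.

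What the paper does, and what you are missing, is to exploit the \emph{extra} term $\Vert{\Pr}^u\ze'\Vert_{p,\lam}$ built into $\Vert\cdot\wt\Vert_{w,p,\lam}$ (together with the $\Vert|\al|\Vert_{p,\lam}$-term already inside $\Vert\cdot\Vert_{w,p,\lam}$). Writing $\Psi^{-1}\ze'=(v_0,\al,\be)$, the defining properties \eqref{eq:Psi infty x H} and \eqref{eq:Psi 0 al be} say that for $|z|\geq R$ the image $\Psi_z(0,\al,\be)$ has $TM^u$-component lying in $\im L^\C_{u\circ\si(z)}$ and $\wone(\g_P)$-component equal to $\phi\,ds+\psi\,dt$; Lemma~\ref{le:U} then provides a uniform lower bound which converts $\Vert{\Pr}^u\ze'\Vert_{p,\lam}+\Vert|\al'|\Vert_{p,\lam}$ (where $\ze'=(v',\al')$) into $\Vert(\al,\be)\Vert_{p,\lam;R'}\leq C_1\Vert\ze'\wt\Vert_{w,p,\lam}$ for suitable $R'$. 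This $L^p_\lam$-control on $(\al,\be)$ is precisely what is needed, and it serves a second purpose: combined with the Hardy-type Proposition~\ref{prop:Hardy} (which you also do not invoke — Proposition~\ref{prop:lam d Morrey}(\ref{prop:lam d iso}) is about the $p_d$ weight shift, not about splitting off the constant at infinity), the estimate forces the limit $(\al_\infty,\be_\infty)$ of $(\al,\be)$ at $\infty$ to vanish, because otherwise $\int_{B_{R'}^C}\lan\cdot\ran^{p\lam}=\infty$ would contradict the $L^p_\lam$ bound. Finally, a minor remark: the claim is only about the $\XX$-side map $\Psi^{-1}:\XX_w\to\XX_d$; the $\YY$-side map is treated separately in the paper, after Claims~\ref{claim:F X bdd} and~\ref{claim:F -1 X bdd}, so your last paragraph addresses something outside the statement.
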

\begin{proof}[Proof of Claim \ref{claim:F -1 X bdd}] We choose a neighborhood $U\sub M$ of $\mu^{-1}(0)$ as in Lemma \ref{le:U} (appendix), and define $c$ as in (\ref{eq:c inf}), and $C_1:=\max\{c^{-1},1\}$. Since $u\circ\si(re^{i\phi})$ converges to $x_\infty$, uniformly in $\phi\in\R$, as $r\to\infty$, there exists $R'\geq R$ such that $u(p)\in U$, for every $p\in \pi^{-1}(B_{R'}^C)\sub P$. Then (\ref{eq:Psi infty x H},\ref{eq:Psi 0 al be}) and (\ref{eq:c inf}) imply that 
\begin{equation}\label{eq:Vert al be Vert}\big\Vert(\al,\be)\big\Vert_{p,\lam;R'}\leq C_1\big\Vert\Psi(0,\al,\be)\big\Vert_{p,\lam;R'}\leq C_1\Vert\ze'\Vert_w,\end{equation}
where $(v_0,\al,\be):=\Psi^{-1}\ze'$, for every $\ze'\in\XX_d$. 
\begin{claim}\label{claim:d rho 0 p} There exists a constant $C_2$ such that for every $\ze'\in\XX_w$, we have
\begin{equation}
\label{eq:d rho 0 p}\big\Vert D\big((\rho_0p_{-d}\cdot\oplus\id)\Psi^{-1}\ze'\big)\big\Vert_{L^p_{\lam}(\R^2)}\leq C_2\Vert\ze'\Vert_w.
\end{equation}
\end{claim}
\begin{proof}[Proof of Claim \ref{claim:d rho 0 p}] It follows from equality (\ref{eq:na Phi Psi F ze}) and conditions (\ref{defi:triv C}) and (\ref{defi:triv na}) of Definition \ref{defi:triv} that there exist constants $C$ and $C'$ such that
\begin{eqnarray}\nn &\big\Vert D\big((p_{-d}\cdot\oplus\id)\Psi^{-1}\ze'\big)\big\Vert_{p,\lam;1}&\\
\label{eq:p - d id F}&\leq C\big(\Vert\na^A\ze'\Vert_{p,\lam}+\big\Vert\na^A\big(\Psi(p_d\cdot\oplus\id)\big)\big\Vert_{p,\lam}\Vert\ze'\Vert_\infty\big)\leq C'\Vert\ze'\Vert_w,&
\end{eqnarray}
for every $\ze'\in\XX_w$. On the other hand, Leibnitz' rule implies 
\[D(\Psi^{-1}\ze')=\Psi^{-1}\big(\na^A\ze'-(\na^A\Psi)\Psi^{-1}\ze'\big).\]
Hence by a short calculation, using Leibnitz' rule again, it follows that there exists a constant $C''$ such that
\[\big\Vert D\big((\rho_0p_{-d}\cdot\oplus\id)\Psi^{-1}\ze'\big)\big\Vert_{L^p(B_1)}\leq C''\Vert\ze'\Vert_w,\]
for every $\ze'\in \XX_w$. Combining this with (\ref{eq:p - d id F}), Claim \ref{claim:d rho 0 p} follows. \end{proof}
Let $\ze'\in\XX_w$. We denote $\wt\ze:=(\wt v_0,\wt\al,\wt\be):=(\rho_0p_{-d}\cdot\oplus\id)\Psi^{-1}\ze'$. By inequality (\ref{eq:d rho 0 p}) the hypotheses of Proposition \ref{prop:Hardy} with $n:=2$ and $\lam$ replaced by $\lam-1$ are satisfied. It follows that there exists $\ze_\infty:=\big(v_\infty,\al_\infty,\be_\infty\big)\in V=\C^{\bar n}\oplus\g^\C\oplus\g^\C$, such that $\wt\ze(re^{i\phi})\to\ze_\infty$, uniformly in $\phi\in\R$, as $r\to\infty$, and 
\begin{equation}\label{eq:wt ze ze infty}\Vert(\wt\ze-\ze_\infty)|\cdot|^{\lam-1}\big\Vert_{L^p(\R^2)}\leq (\dim M+2\dim G)p/(\lam+2/p)\big\Vert D\wt\ze|\cdot|^{\lam}\big\Vert_{L^p(\R^2)}
\,.\end{equation}
Since $\lam>-2/p+1$, we have $\int_{B_{R'}^C}\lan\cdot\ran^{p\lam}=\infty$. Hence the convergence $(\wt\al,\wt\be)\to (\al_\infty,\be_\infty)$ and the estimate (\ref{eq:Vert al be Vert}) imply that $(\al_\infty,\be_\infty)=(0,0)$. We choose a constant $C>0$ as in part (\ref{defi:triv C}) of Definition \ref{defi:triv}. The convergence $\wt v_0\to v_\infty$ and the first inequality in (\ref{eq:C}) imply that 
\begin{equation}\label{eq:v infty 1}|v_\infty|\leq\Vert\wt v_0\Vert_\infty\leq 2^{\frac{|d|}2}C\Vert\ze'\Vert_\infty  
\,.\end{equation}
We define $\big(v^1,\ldots,v^{\bar n},\al,\be\big):=\Psi^{-1}\ze'-\big(\rho_0p_dv_\infty^1,v_\infty^2,\ldots,v_\infty^{\bar n},0,0\big)$. Proposition \ref{prop:lam d Morrey}(\ref{prop:lam d iso}) in the appendix and inequalities (\ref{eq:wt ze ze infty}) and (\ref{eq:d rho 0 p}) imply that there exists a constant $C_6$ (depending on $p,\lam,d$ and $\Psi$, but not on $\ze'$) such that 
\begin{equation}\label{eq:v 1 v 2 v bar n}\Vert v^1\Vert_{L^{1,p}_{{\lam-1}-d}(B_1^C)}+\big\Vert\big(v^2,\ldots,v^{\bar n},\al,\be\big)\big\Vert_{L^{1,p}_{\lam-1}(B_1^C)}\leq C_6\Vert\ze'\Vert_w\,.
\end{equation}
Finally, by a straight-forward argument, there exists a constant $C_7$ (independent of $\ze'$) such that $\Vert\Psi^{-1}\ze'\Vert_{W^{1,p}(B_{R'})}\leq C_7\Vert\ze'\Vert_w$. Combining this with (\ref{eq:Vert al be Vert},\ref{eq:v infty 1},\ref{eq:v 1 v 2 v bar n}), Claim \ref{claim:F -1 X bdd} follows.\end{proof}
Claims \ref{claim:F X bdd} and \ref{claim:F -1 X bdd} imply that the first map in (\ref{eq:X Y Psi}) is an isomorphism (of normed vector spaces). It follows from condition (\ref{defi:triv C}) of Definition \ref{defi:triv} that the second map in (\ref{eq:X Y Psi}) is an isomorphism. This completes the proof of {\bf (\ref{prop:X X w iso})}.

{\bf We prove statement (\ref{prop:X X w Fredholm}).} Recall that we have chosen $R>0,\si$ and $x_\infty$ as in Definition \ref{defi:triv}(\ref{defi:triv split}). We define $S_\infty:\g^\C\to\g^\C$ to be the complex linear extension of $L_{x_\infty}^*L_{x_\infty}:\g\to\g$. By our hypothesis (H) the Lie group $G$ acts freely on $\mu^{-1}(0)$. It follows that $L_{x_\infty}$ is injective. Therefore $S_\infty$ is positive with respect to $\lan\cdot,\cdot\ran_\g^\C$. By (\ref{eq:Psi z C bar n}) and (\ref{eq:Psi z 0}) there exist complex trivializations 
\[\Psi_1:\R^2\x(\C^{\bar n}\oplus\g^\C)\to  TM^u,\qquad \Psi_2:\R^2\x\g^\C\to \wone(\g_P),\] 
such that $\Psi=\Psi_1\oplus\Psi_2$. We denote by $\iota:\g^\C\to \C^{\bar n}\oplus\g^\C$ ($\pr:\C^{\bar n}\oplus\g^\C\to \g^\C$) the inclusion as (the projection onto) the second factor. We define 
\[\XX_d^1:=L^{1,p}_{{\lam-1}-d}(\R^2,\C)\oplus L^{1,p}_{\lam-1}(\R^2,\C^{\bar n-1})\oplus W^{1,p}_{\lam}(\R^2,\g^\C),\quad\XX_d^2:=W^{1,p}_{\lam}(\R^2,\g^\C),\]
\[\XX_d':=\XX_d^1\oplus\XX_d^2,\quad \XX_d^0:=\C\rho_0p_d\oplus\C^{\bar n-1}\oplus\{(0,0)\}\sub\XX_d,\]
\[\YY_d^1:=L^p_{\lam-d}(\R^2,\C)\oplus L^p_{\lam}(\R^2,\C^{\bar n-1}\oplus\g^\C),\quad \YY_d^2:=L^p_\lam(\R^2,\g^\C).\]
Note that $\XX_d=\XX_d^0+\XX_d'$ and $\YY_d=\YY_d^1\oplus\YY_d^2$. We define $S:\XX_d\to\YY_d$ as in (\ref{eq:F Psi DD}). Since $\XX_d^0$ is finite dimensional, $S|_{\XX_d^0}$ is compact. Hence it suffices to prove that $S|_{\XX_d'}$ is compact. To see this, we denote 
\[Q:=\left(\begin{array}{c}ds\wedge dt\,d\mu(u)\\
L_u^*
\end{array}\right),\quad T:=\left(\begin{array}{c}d_A\\
 -d_A^*
\end{array}\right),\]
and we define $S^i_{\phantom{i}j}:\XX_d^j\to\YY_d^i$ (for $i,j=1,2$) and $\wt S^1_{\phantom{1}1}:\XX_d^1\to\YY_d^1$ by 
\[S^1_{\phantom{1}1}v:=(F_1\Psi_1)^{-1}((\na^A\Psi_1)v)^{0,1},\,\wt S^1_{\phantom{1}1}v:=-(F_1\Psi_1)^{-1}\big(J(\na_{\Psi_1v}J)(d_Au)^{1,0}/2\big),\]
\[ S^1_{\phantom{1}2}\al:=(F_1\Psi_1)^{-1}(L_u\Psi_2\al)^{0,1}-\iota\al/2,\, S^2_{\phantom{2}1}v:=\big((F_2\Psi_2)^{-1}Q\Psi_1- S_\infty\pr\big)v,\] 
\[S^2_{\phantom{2}2}\al:=(F_2\Psi_2)^{-1}(T\Psi_2)\al.\]
Here $F_1$ and $F_2$ are as in (\ref{eq:F 1 F 2}) and $(T\Psi_2)\al:=T(\Psi_2\al)$, for $\al\in\g^\C$ (viewed as a constant section of $\R^2\x\g^\C$). A direct calculation shows that $S(v,\al)=\big(S^1_{\phantom{1}1}v+\wt S^1_{\phantom{1}1}v+S^1_{\phantom{1}2}\al,S^2_{\phantom{2}1}v+S^2_{\phantom{2}2}\al\big)$. For a subset $X\sub \R^2$ we denote by $\chi_X:\R^2\to\{0,1\}$ its characteristic function. It follows that $\chi_{B_R}S|_{\XX_d'}$ is of 0-th order. Since it vanishes outside $B_R$, it follows that this map is compact. 
\begin{claim}\label{claim:S i j compact} The operators $\chi_{B_R^C}S^i_{\phantom{i}j}$, $i,j=1,2$, and $\chi_{B_R^C}\wt S^1_1$ are compact. 
\end{claim}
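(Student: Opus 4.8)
The plan is to reduce the whole claim to a single compactness criterion for zeroth-order multiplication operators, and then to verify its hypotheses operator by operator. First I would observe that, once restricted to $B_R^C$, each of $\chi_{B_R^C}S^1_1$, $\chi_{B_R^C}S^1_2$, $\chi_{B_R^C}S^2_1$, $\chi_{B_R^C}S^2_2$ and $\chi_{B_R^C}\wt S^1_1$ is of zeroth order, i.e.\ pointwise multiplication by a measurable bundle homomorphism $z\mapsto m(z)$. This is read off from the explicit shape of the good trivialization on $B_R^C$ provided by Definition \ref{defi:triv}(\ref{defi:triv split}): by (\ref{eq:Psi infty x H}) the factor $\C^{\bar n}\oplus\{0\}$ lands in $H^u\sub\ker d\mu(u)\cap(\im L_u)^\perp$, by (\ref{eq:Psi 0 al be}) the remaining factors are sent to $G\cdot(u\circ\si(z),L^\C_{u\circ\si(z)}(\cdot))$ and $G\cdot(\si(z),\cdot\,ds+\cdot\,dt)$, while $u\circ\si(z)\to x_\infty\in\mu^{-1}(0)$ uniformly in the angle and $\si^*A\in L^p_\lam(\R^2\wo B_1)$. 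Plugging this into the formulas for the $S^i_j$ and $\wt S^1_1$, and using the Leibniz identity (\ref{eq:na Phi Psi F ze}) to peel off the genuine-derivative pieces — which are exactly the ones subtracted in (\ref{eq:F Psi DD}) — leaves in each case a term with no derivative acting on its argument.

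Next I would record the compactness criterion I intend to invoke: a zeroth-order operator $\chi_{B_R^C}\,m\,\cdot$ between the relevant weighted spaces is compact provided either (a) $m$ extends continuously to $S^2=\R^2\cup\{\infty\}$ with $m(\infty)=0$, or (b) $m$ decays fast enough that, once paired with the weights carried by domain and target, it is dominated by an $L^1$-function. For (a) I would split $m\chi_{B_R^C}=m\chi_{B_\rho\wo B_R}+m\chi_{B_\rho^C}$ for $\rho>R$: on the bounded annulus the weights are comparable to $1$, so the first summand is compact by Rellich--Kondrachov, while the second has operator norm $\le\sup_{|z|\ge\rho}|m(z)|\to0$, exhibiting $m\chi_{B_R^C}\cdot$ as a norm-limit of compact operators. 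For (b) I would take $\ze_k\rightharpoonup0$ weakly in the domain; Proposition \ref{prop:lam d Morrey}(\ref{prop:lam Morrey}) bounds $\Vert(\langle\cdot\rangle^{-d}\cdot\oplus\id)\ze_k\Vert_\infty$ uniformly and the local Rellich embedding into $C^0_{\loc}$ (valid since $p>2$) forces $\ze_k\to0$ uniformly on compacta, whence dominated convergence gives $m\ze_k\to0$ in the target $L^p$-norm; depending on what is available, this instance of (b) may already be contained in the compactness results of Proposition \ref{prop:lam d Morrey}.

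Finally I would sort the five operators. For $S^1_2$ a short computation with $F_1(v)=(ds-Jdt)v$, the identity $L^\C_x(\phi+i\psi)=L_x\phi+J_xL_x\psi$ and (\ref{eq:Psi 0 al be}) will show that $(F_1\Psi_1)^{-1}(L_u\Psi_2\al)^{0,1}=\tfrac12\iota\al$ identically on $B_R^C$, so $\chi_{B_R^C}S^1_2=0$: case (a) with $m\equiv0$. For $S^2_1$, (\ref{eq:Psi infty x H}) makes $Q$ annihilate the $\C^{\bar n}$-part of $v$ on $B_R^C$, leaving only $\pr\,v$; using the moment-map/compatibility identity $d\mu(x)\circ J_x=L_x^*$ — hence $L_x^*J_xL_x=-d\mu(x)\circ L_x=0$ on $\mu^{-1}(0)$, since $G$-orbits through $\mu^{-1}(0)$ are tangent to it — one finds that $\big((F_2\Psi_2)^{-1}Q\Psi_1-S_\infty\pr\big)v$ is multiplication by a coefficient continuous in $z$ that tends to $0$ as $|z|\to\infty$ (the piece $(L_{u\circ\si(z)}^*L_{u\circ\si(z)})^\C-(L_{x_\infty}^*L_{x_\infty})^\C$ by continuity, the piece $-i(L_{u\circ\si(z)}^*J_{u\circ\si(z)}L_{u\circ\si(z)})^\C$ because $L^*JL$ vanishes at $x_\infty$): case (a). The remaining three are case (b): $\wt S^1_1$ has coefficient bounded by $C\,|d_Au|\in L^p_\lam$ because $w\in\BB^p_\lam$; $S^1_1$ has coefficient (the untwist of) $\na^A(\Psi(p_d\cdot\oplus\id))\in L^p_\lam(\R^2\wo B_1)$ by Definition \ref{defi:triv}(\ref{defi:triv na}); and $S^2_2$, via $d_A\wt\phi\,\si_*v=[(\si^*A)v,\phi]$ and the analogous identity for $d_A^*$, has coefficient bounded by $C\,|\si^*A|\in L^p_\lam(\R^2\wo B_1)$.

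The hard part will be the weight bookkeeping with the twisting functions $p_d$ and $p_{-d}$ (with $d=m(w)$): I have to make sure that, after transporting each coefficient bound through the trivialization, the corresponding zeroth-order operator genuinely maps into the correct summand of $\YY_d$ — whose first $\C$-factor carries the shifted weight $\lam-d$ rather than $\lam$ — with a domination that is integrable, and that the ``standard derivative'' pieces stripped off by Leibniz really coincide with those removed by the model operator in (\ref{eq:F Psi DD}). The other genuinely geometric point, though computationally light, is the verification of the two cancellations at infinity ($S^1_2\equiv0$ on $B_R^C$ and the vanishing of the limit coefficient of $S^2_1$), where the adaptedness of the good trivialization near $x_\infty\in\mu^{-1}(0)$ and the moment-map identities are what make things work.
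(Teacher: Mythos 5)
Your proposal is correct and takes essentially the same route as the paper: the paper also treats each $S^i_{\phantom{i}j}$ (and $\wt S^1_{\phantom{1}1}$) as a zeroth-order operator on $B_R^C$, shows $\chi_{B_R^C}S^1_{\phantom{1}2}=0$ via Definition~\ref{defi:triv}(\ref{defi:triv split}), handles $\chi_{B_R^C}S^2_{\phantom{2}1}$ by identifying the multiplier with a map converging to $0$ at infinity and invoking Proposition~\ref{prop:lam d Morrey}(\ref{prop:lam W cpt}) (your case~(a)), and handles $\chi_{B_R^C}S^1_{\phantom{1}1}$, $\chi_{B_R^C}\wt S^1_{\phantom{1}1}$, $\chi_{B_R^C}S^2_{\phantom{2}2}$ by peeling off the $p_d$-twist with~(\ref{eq:na Phi Psi F ze}) and factoring through the compact embedding $\XX_d^1\to C_b$ from Proposition~\ref{prop:lam d Morrey}(\ref{prop:lam d iso},\ref{prop:lam Morrey}) together with the $L^p_\lam$-bounds on $|\na^A(\Psi(p_d\cdot\oplus\id))|$, $|d_Au|$ and $|\si^*A|$ (your case~(b), which amounts to the same Rellich/dominated-convergence argument). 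The only stylistic difference is that you package this as a general two-case criterion for multiplication operators and are a bit more explicit about the cancellations on $B_R^C$; the substance is identical.
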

\begin{pf}[Proof of Claim \ref{claim:S i j compact}] To see that the map $\chi_{B_R^C}S^1_{\phantom{1}1}$ is compact, note that Leibnitz' rule and holomorphicity of $p_d$ imply that 
\begin{equation}\nn\label{eq:na F 1 }(\na^A\Psi_1)^{0,1}=\big(\na^A\big(\Psi_1(p_d\cdot\oplus\id)\big)\big)^{0,1}(p_{-d}\cdot\oplus\id),\quad \textrm{on }\C\wo\{0\}.\end{equation} 
Since $\lam>1-2/p$, assertions (\ref{prop:lam d iso}) and (\ref{prop:lam Morrey}) of Proposition \ref{prop:lam d Morrey} imply that the map $(\rho_0p_{-d}\cdot\oplus\id):\XX_d^1\to C_b(\R^2,\C^{\bar n}\oplus\g^\C)$ is well-defined and compact. By condition (\ref{defi:triv na}) of Definition \ref{defi:triv}, the map
\begin{equation}
\nn  \label{eq:na F 1 L}\chi_{B_R^C}\big(\na^A\big(\Psi_1(p_d\cdot\oplus\id)\big)\big)^{0,1}:C_b(\R^2,\C^{\bar n}\oplus\g^\C)\to L^p_{\lam}\big(\wzeroone(\R^2,TM^u)\big)
\end{equation}
is bounded. Condition (\ref{defi:triv C}) of Definition \ref{defi:triv} implies boundedness of the map $(F_1\Psi_1)^{-1}:L^p_{\lam}\big(\wzeroone(\R^2,TM^u)\big)\to\YY_d^1$. Compactness of $\chi_{B_R^C}S^1_{\phantom{1}1}$ follows. 

By the definition of $\BB^p_\lam$, we have $|d_Au|\in L^p_{\lam}(\R^2)$. This together with Proposition \ref{prop:lam d Morrey}(\ref{prop:lam d iso}) and (\ref{prop:lam Morrey}) and  Definition \ref{defi:triv}(\ref{defi:triv C}) implies that the map $\chi_{B_R^C}\wt S^1_{\phantom{1}1}$ is compact. Furthermore, it follows from Definition \ref{defi:triv}(\ref{defi:triv split}) that $\chi_{B_R^C}S^1_{\phantom{1}2}=0$.

To see that $\chi_{B_R^C}S^2_{\phantom{2}1}$ is compact, we define $f:B_R^C\to \End(\g^\C)$ by setting $f(z):\g^\C\to\g^\C$ to be the complex linear extension of the map $L_{u\circ\si(z)}^*L_{u\circ\si(z)}-L_{x_\infty}^*L_{x_\infty}:\g\to\g$. Since $u\circ\si(re^{i\phi})$ converges to $x_\infty$, uniformly in $\phi$, as $r\to\infty$, the map $f(re^{i\phi})$ converges to 0, uniformly in $\phi$, as $r\to \infty$. Hence by Proposition \ref{prop:lam d Morrey}(\ref{prop:lam W cpt}), the map $W^{1,p}_{\lam}(\C,\g^\C)\ni\al\mapsto \chi_{B_R^C}f\al\in L^p_{\lam}(\C,\g^\C)$ is compact. Definition \ref{defi:triv}(\ref{defi:triv split}) implies that $\chi_{B_R^C}S^2_{\phantom{2}1}=\chi_{B_R^C}f\pr$. It follows this map is compact. 

Finally, Proposition \ref{prop:lam d Morrey}(\ref{prop:lam Morrey}) and parts (\ref{defi:triv na}) and (\ref{defi:triv C}) of Definition \ref{defi:triv} imply that the map $\chi_{B_R^C}S^2_{\phantom{2}2}$ is compact. Claim \ref{claim:S i j compact} follows. This completes the proofs of statement {\bf (\ref{prop:X X w Fredholm})} and Proposition \ref{prop:X X w}.\end{pf}\end{proof}
\subsection{Proof of Theorem \ref{thm:L w * R} (Right inverse for $L_w^*$)}\label{subsec:proof:thm:L w * R}
Let $n\in\N$, $k,\ell\in\N\cup\{0\}$, $p>n$, $G$ a compact Lie group, $\lan\cdot,\cdot\ran_\g$ an invariant inner product on $\g:=\Lie(G)$, $\Om\sub \R^n$ an open subset, $P\to \Om$ a principal $G$-bundle and $A\in\A(P)$. Then $A$ and the standard metric on $\Om$ induce a connection $\na^A$ on $\wk(\g_P)$. For $\al\in W^{\ell,p}_\loc\big(\wk(\g_P)\big)$ we define
\[\Vert\al\Vert_{\ell,p,A}:=\Vert\al\Vert_{W^{\ell,p}_{A,\lan\cdot,\cdot\ran_\g}(\Om)}:=\sum_{i=0,\ldots,\ell}\Vert(\na^A)^\ell\al\Vert_{L^p(\Om)},\]
where the pointwise norms are taken with respect to $\lan\cdot,\cdot\ran_\g$. We denote 
\[W^{\ell,p}_A\big(\wk(\g_P)\big):=\big\{\al\in W^{\ell,p}_\loc\big(\wk(\g_P)\big)\,\big|\,\Vert\al\Vert_{\ell,p,A}<\infty\big\}.\]
For a subset $X\sub\R^n$ we denote by $\Int X$ its interior. For the proof of Theorem \ref{thm:L w * R} we need the following. 
\begin{prop}\label{prop:right} Let $n,p,G$ and $\lan\cdot,\cdot\ran_\g$ be as above and $K\sub \R^n$ a compact subset diffeomorphic to $\bar B_1$. Then for every $G$-bundle $P\to \Int K$ and $A\in\A(P)$ there exists a bounded right inverse $R$ of the operator 
\begin{equation}\label{eq:d A W}d_A^*:W^{1,p}_A\big(\wone(\g_P)\big)\to L^p(\g_P).\end{equation} 
Furthermore, there exist $\eps>0$ and $C>0$ such that for every principal $G$-bundle $P\to \Int K$ and every $A\in\A(P)$ satisfying $\Vert F_A\Vert_p\leq\eps$ the map $R$ can be chosen such that $\Vert R\Vert:=\sup\big\{\Vert R\xi\Vert_{1,p,A}\,\big|\,\xi\in L^p(\g_P):\,\Vert\xi\Vert_p\leq1\big\}\leq C$. 
\end{prop}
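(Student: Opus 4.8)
Here is the proof plan I would follow for Proposition~\ref{prop:right}.

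\medskip

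\noindent\textbf{Overall strategy.} The plan is to prove the two assertions separately: the bare existence of a bounded right inverse for an arbitrary $A$ by solving a Dirichlet problem for the connection Laplacian $d_A^*d_A$, and the uniform bound by combining this with a Coulomb (Uhlenbeck) gauge‑fixing step. Since $\Int K$ is diffeomorphic to the open unit ball it is contractible, so $P\to\Int K$ is trivial; fixing a trivialization I would identify $\g_P$ with $\Int K\x\g$ and regard $A$ as a $\g$‑valued one‑form on $K$ (which has smooth boundary), with $\na^A=d+[A,\cdot\,]$. Because $p>n$, Morrey's embedding $W^{1,p}\inj C^0$ gives $A\in C^0(\bar K,\g)$, so $d_A$ and $d_A^*$ are bounded between the relevant Sobolev spaces and $\|\cdot\|_{1,p,A}$, $\|\cdot\|_{2,p,A}$ are equivalent to the ordinary $W^{1,p}$, $W^{2,p}$ norms, with constants depending only on $\|A\|_{C^0}$ and $\|\na A\|_p$.

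\medskip

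\noindent\textbf{The case of a general connection.} I would consider the Dirichlet problem for the (second‑order, elliptic) operator $d_A^*d_A$ on functions: given $\xi\in L^p(\g_P)$, solve $d_A^*d_A\eta=\xi$ with $\eta|_{\d K}=0$. If $d_A^*d_A\eta=0$ and $\eta|_{\d K}=0$ then $\int_K|d_A\eta|^2=\int_K\langle d_A^*d_A\eta,\eta\rangle=0$, so $\eta$ is $A$‑parallel and vanishes on $\d K$, hence $\eta$ vanishes on the connected set $\Int K$; thus $d_A^*d_A$ with Dirichlet conditions is injective, and being an index‑zero elliptic boundary value problem with continuous coefficients it is an isomorphism from $\{\eta\in W^{2,p}(\Int K,\g):\eta|_{\d K}=0\}$ onto $L^p(\g_P)$, with $\|\eta\|_{W^{2,p}}\le C_A\|\xi\|_p$ by elliptic regularity. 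Setting $R\xi:=d_A\eta=\na^A\eta$ gives $d_A^*R\xi=d_A^*d_A\eta=\xi$ and $\|R\xi\|_{1,p,A}\le\|\eta\|_{2,p,A}\le C_A'\|\xi\|_p$, proving the first assertion.

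\medskip

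\noindent\textbf{Uniformity for small curvature.} The subtlety here is that $d_A^*$ depends on $A$, not just on $F_A$, so the constant above is not controlled by $\|F_A\|_p$ alone; one first has to gauge‑fix. I would invoke a Coulomb gauge result on the ball: there are $\eps_0>0$ and $C_0$ so that whenever $\|F_A\|_p\le\eps_0$ there is a ($W^{2,p}$, hence here smooth) gauge transformation $g$ with $A':=g\cdot A$ satisfying $\|A'\|_{W^{1,p}}\le C_0\|F_A\|_p$; for $p>n$ this follows from the implicit function theorem near $A=0$. Then $\|A'\|_{C^0}\le C_1\|F_A\|_p$. Now perturb off the flat operator: in the trivialization $d_{A'}^*=d^*+T_{A'}$, where $T_{A'}$ is a zeroth‑order operator with $\|T_{A'}\|_{L^p\to L^p}\le C_2\|A'\|_{C^0}$, and $d^*=d_0^*$ has the fixed bounded right inverse $R_0$ furnished by the previous step with $A=0$. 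Choosing $\eps\le\eps_0$ small enough that $\|T_{A'}R_0\|\le\tfrac12$, the map $d_{A'}^*R_0=\id+T_{A'}R_0$ is invertible on $L^p(\g_P)$ with inverse of norm $\le2$, so $R':=R_0(\id+T_{A'}R_0)^{-1}$ is a bounded right inverse of $d_{A'}^*$ with $\|R'\|_{L^p\to W^{1,p}}\le2\|R_0\|$; since $\|\cdot\|_{1,p,A'}$ differs from $\|\cdot\|_{W^{1,p}}$ only by the factor $1+\|A'\|_{C^0}\le1+C_1\eps$, we get $\|R'\|_{L^p\to W^{1,p}_{A'}}\le C$ with $C$ independent of $A$. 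Finally I would transport the bound back through the gauge transformation: writing $\Phi_g$ for the pointwise $\Ad_g$‑action of $g$ on $\g_P$‑valued forms, one has $d_{A'}^*=\Phi_g\,d_A^*\,\Phi_g^{-1}$, and $\Phi_g$ is an isometry of $L^p(\g_P)$ and of $W^{1,p}_A\big(\wone(\g_P)\big)$ onto $W^{1,p}_{A'}\big(\wone(\g_P)\big)$ because $\langle\cdot,\cdot\rangle_\g$ is $\Ad$‑invariant and $\na^A$ intertwines with $\na^{A'}$; hence $R:=\Phi_g^{-1}\,R'\,\Phi_g$ is a bounded right inverse of $d_A^*$ with $\|R\|=\|R'\|\le C$. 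The only genuinely nontrivial ingredient is the Coulomb gauge step (which controls $\|A\|$ by $\|F_A\|_p$); the rest is standard elliptic theory on a ball, with $p>n$ entering solely through $W^{1,p}\inj C^0$, used to bound the zeroth‑order terms uniformly.
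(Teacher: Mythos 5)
Your proposal is correct and, for the uniform-bound assertion, follows essentially the same route as the paper: Uhlenbeck (Coulomb) gauge to make $g^*A$ close to a flat reference $A_0$, observe that $d_{g^*A}^*-d_{A_0}^*$ is a small zeroth-order operator, and invert by a von Neumann series, transporting the bound back through the gauge isometry. For the first (non-uniform) assertion, however, you take a genuinely different route. You solve the Dirichlet boundary-value problem $d_A^*d_A\eta=\xi$, $\eta|_{\d K}=0$, directly with the given connection and set $R\xi:=d_A\eta$, invoking the index-zero property and $W^{2,p}$ elliptic regularity up to $\d K$ for a variable-coefficient operator. The paper avoids boundary-value problems entirely: it builds a right inverse $R_0$ for the \emph{flat} Laplacian $d_{A_0}^*d_{A_0}$ by convolving with the fundamental solution of the Laplacian restricted to the ball (Calder\'on--Zygmund, no boundary condition; Proposition~\ref{prop:right d A * d A}), and it absorbs the discrepancy $d_A^*d_A-d_{A_0}^*d_{A_0}$ with an auxiliary operator $L$ obtained by integrating along $A$-parallel transport in the $x_1$-direction, which satisfies $d_A^*L=\id$ on $W^{1,p}_A(\g_P)$; the right inverse is then $R=d_AR_0+L\big(\id-d_A^*d_AR_0\big)$. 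What each buys: your version is shorter, but it relies on Agmon--Douglis--Nirenberg-type regularity for a second-order BVP with variable coefficients and implicitly needs the connection to be controlled near $\d K$; the paper's is longer but entirely hands-on, needing only the flat Calder\'on--Zygmund estimate and an explicit ODE-type primitive, and it sidesteps boundary regularity altogether.
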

The proof of Proposition \ref{prop:right} is postponed to the appendix (page \pageref{sec:proof:prop:right}).
\begin{proof}[Proof of Theorem \ref{thm:L w * R}]\setcounter{claim}{0}\label{proof:thm:L w * R}\setcounter{claim}{0} Let $p,\lam$ and $w=(u,A)$ be as in the hypothesis. It follows from hypothesis (H) that there exists $\de>0$ such that $\mu^{-1}(\bar B_\de)\sub M^*$ (defined as in (\ref{eq:M *})). We define $c:=\inf\big\{|L_x\xi|/|\xi|\,\big|\,x\in\mu^{-1}(\bar B_\de),\,0\neq\xi\in\g\big\}$. It follows from Lemma \ref{le:si} that there exists a number $a>0$ such that $u(p)\in \mu^{-1}(\bar B_\de)$, for every $p\in\pi^{-1}\big(\C\wo (-a,a)^2\big)\sub P$. We choose constants $\eps_1$ and $C_1$ as in the second assertion of Proposition \ref{prop:right} (corresponding to $\eps$ and $C$, for $n=2$). Furthermore, we choose constants $\eps_2$ and $C_2$ as in Lemma \ref{le:infty 1 p A} (corresponding to $\eps$ and $C$). We define $\eps:=\min\{\eps_1,\eps_2\}$. By assumption we have $F_A\in L^p_\lam(\C)$. Hence there exists an integer $N>a$ such that $\Vert F_A\Vert_{L^p_\lam\big(\C\wo(-N,N)^2\big)}<\eps$. We choose a smooth map $\rho:[-1,1]\to[0,1]$ such that $\rho=0$ on $[-1,-3/4]\cup[3/4,1]$, $\rho=1$ on $[-1/4,1/4]$, and $\rho(-t)=\rho(t)$ and $\rho(t)+\rho(t-1)=1$, for all $t\in[0,1]$. We choose a bijection $(\phi,\psi):\Z\wo\{0\}\to \Z^2\wo\{-N,\ldots,N\}^2$. We define $\wt\rho:\R\to[0,1]$ by 
\[\wt\rho(t):=\left\{\begin{array}{ll}1,&\textrm{if } |t|\leq N,\\
\rho(|t|-N),&\textrm{if }N\leq|t|\leq N+1,\\
0,&\textrm{if }|t|\geq N+1.
\end{array}\right.\]
and $\rho_0:\R^2\to[0,1]$ by $\rho_0(s,t):=\wt\rho(s)\wt\rho(t)$. Furthermore, for $i\in\Z\wo\{0\}$ we define $\rho_i:\R^2\to[0,1]$ by $\rho_i(s,t):=\rho(s-\phi(i))\rho(t-\psi(i))$. We choose a compact subset $K_0\sub [-N-1,N+1]^2$ diffeomorphic to $\bar B_1$, such that $[-N-3/4,N+3/4]^2\sub \Int K_0$, and we denote $\Om_0:=\Int K_0$. Furthermore, we choose a compact subset $K\sub [-1,1]^2$ diffeomorphic to $\bar B_1$, such that $[-3/4,3/4]\sub \Int K$. For $i\in\Z\wo\{0\}$ we define $\Om_i:=\Int K+(\phi(i),\psi(i))$. For $i\in\Z$ we define $T_i:=d_A^*:W^{1,p}_A\big(\wone((P|_{\Om_i}\x\g)/G)\big)\to L^p_\lam\big((P|_{\Om_i}\x\g)/G\big)$. By the first assertion of Proposition \ref{prop:right} there exists a bounded right inverse $R_0$ of $T_0$. We fix $i\in\Z\wo\{0\}$. Since $\lam>1-2/p>0$, by our choice of $N$ we have $\Vert F_A\Vert_{L^p(\Om_i)}\leq\Vert F_A\Vert_{L^p_\lam(\Om_i)}<\eps$. Hence it follows from the statement of Proposition \ref{prop:right} that there exists a right inverse $R_i$ of $T_i$, satisfying 
\begin{equation}\label{eq:R i 1 p}\Vert R_i\xi\Vert_{W^{1,p}_A(\Om_i)}\leq C_1\Vert\xi\Vert_{L^p(\Om_i)},\quad \forall \xi\in W^{1,p}(\g_P|_{\Om_i}). 
\end{equation}
We define 
\begin{equation}\label{eq:wt R}\hat R:L^p_\loc(\g_P)\to W^{1,p}_\loc\big(\wone(\g_P)\big),\quad \hat R\xi:=\sum_{i\in\Z}\rho_i\cdot R_i(\xi|_{\Om_i}).\end{equation}
Each section $\xi:\R^2\to\g_P$ induces a section $L_u\xi:\R^2\to TM^u$. For $p\in\pi^{-1}\big(\R^2\wo(-N,N)^2\big)\sub P$ we have $u(p)\in\mu^{-1}(\bar B_\de)\sub M^*$, and therefore the map $L_{u(p)}^*L_{u(p)}:\g\to\g$ is invertible. Hence we may define 
\[\wt R:L^p_\loc(\g_P)\to L^p_\loc(TM^u),\quad (\wt R\xi)(z):=L_u(L_u^*L_u)^{-1}\big(\xi-d_A^*\hat R\xi\big)(z),\]
for $z\in\R^2\wo(-N,N)^2$, and $(\wt R\xi)(z):=0$, for $z\in (-N,N)^2$. Furthermore, we define $R:L^p_\loc(\g_P)\to L^p_\loc\big(TM^u\oplus\wone(\g_P)\big)$ by $R\xi:=(\wt R\xi,-\hat R\xi)$. It follows that $L_w^*R=\id$. Theorem \ref{thm:L w * R} is now a consequence of the following:
\begin{claim}\label{claim:ze X w} $R$ restricts to a bounded map from $L^p_\lam(\g_P)$ to $\XX_w^{p,\lam}$. 
\end{claim}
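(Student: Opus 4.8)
The plan is to estimate the two components $\wt R\xi$ and $\hat R\xi$ of $R\xi$ separately in the norm $\Vert\cdot\wt\Vert_{w,p,\lam}$ on $\XX_w^{p,\lam}$, i.e. to control $\Vert R\xi\Vert_\infty$, $\Vert\na^A(R\xi)\Vert_{p,\lam}$, $\Vert d\mu(u)\wt R\xi\Vert_{p,\lam}$, and $\Vert\hat R\xi\Vert_{p,\lam}$ together with $\Vert{\Pr}^u(\wt R\xi)\Vert_{p,\lam}$, each by a constant times $\Vert\xi\Vert_{p,\lam}$. First I would handle $\hat R\xi=\sum_i\rho_i R_i(\xi|_{\Om_i})$. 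The key point is that the cover $\{\Om_i\}$ has bounded overlap (each point lies in at most a fixed number of the $\Om_i$, since the lattice translates of a fixed compact set overlap finitely), and the weight $\langle\cdot\rangle^\lam$ is comparable to a constant on each $\Om_i$ (with the comparison constant uniform over $i$, because $\Om_i$ is a unit-size translate and $\langle\cdot\rangle^\lam$ varies slowly on unit scales relative to its size at the center). Hence the global weighted $L^p$ and $W^{1,p}_{A}$ norms decouple into a bounded-overlap sum of the local norms on the $\Om_i$, where the uniform bound \eqref{eq:R i 1 p} on $R_i$ applies (and the finitely many small $i$, including $i=0$, are handled by the single bounded operator $R_0$ and finitely many others). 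Leibniz' rule on the cutoffs $\rho_i$ contributes only lower-order terms with uniformly bounded derivatives, absorbed into the same estimate. This gives $\Vert\hat R\xi\Vert_{W^{1,p}_\lam}\lesssim\Vert\xi\Vert_{p,\lam}$, which controls $\Vert\hat R\xi\Vert_{p,\lam}$ and the $\wone(\g_P)$-part of $\Vert\na^A(R\xi)\Vert_{p,\lam}$, and by the weighted Morrey embedding (Proposition \ref{prop:lam d Morrey}, using $\lam>1-2/p$ so that $W^{1,p}_\lam\hookrightarrow L^\infty$) also $\Vert\hat R\xi\Vert_\infty\lesssim\Vert\xi\Vert_{p,\lam}$.

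Next I would treat $\wt R\xi=L_u(L_u^*L_u)^{-1}(\xi-d_A^*\hat R\xi)$ on $\R^2\wo(-N,N)^2$. On this region $u(p)\in\mu^{-1}(\bar B_\de)\sub M^*$, so by the choice of $\de$ and compactness of $\BAR{u(P)}$ the operator $L_u^*L_u$ is uniformly invertible and $L_u$, $(L_u^*L_u)^{-1}$ have uniformly bounded operator norms; hence $\Vert\wt R\xi\Vert_\infty\lesssim\Vert\xi\Vert_\infty+\Vert d_A^*\hat R\xi\Vert_\infty\lesssim\Vert\xi\Vert_{p,\lam}+\Vert\na^A\hat R\xi\Vert_\infty$, and the last term is controlled by $\Vert\hat R\xi\Vert_{W^{1,p}_\lam}$ again via the Morrey embedding. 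Since $\wt R\xi$ takes values in $\im L_u$ pointwise, we have ${\Pr}^u(\wt R\xi)=\wt R\xi$, and since $d\mu(u)L_u=0$ (the moment-map identity $d\mu(x)L_x\xi=0$, equivalently $L_x$ maps into $\ker d\mu(x)$ after identifying via $\langle\cdot,\cdot\rangle_\g$ — more precisely $d\mu(x)L_x$ is skew so $d\mu(u)$ restricted to $\im L_u$ is controlled), the term $d\mu(u)\wt R\xi$ is either zero or bounded pointwise by $|\wt R\xi|$ times $\Vert d\mu\Vert_{C^0}$ on the compact set $\BAR{u(P)}$; in either case $\Vert d\mu(u)\wt R\xi\Vert_{p,\lam}\lesssim\Vert\wt R\xi\Vert_{p,\lam}$. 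For the weighted $L^p$ and $\na^A$ bounds on $\wt R\xi$ I would differentiate the formula for $\wt R\xi$: $\na^A(\wt R\xi)$ involves $\na^A$ of the bundle map $L_u(L_u^*L_u)^{-1}$ (pointwise bounded, with derivative controlled by $|d_Au|\in L^p_\lam$ via the chain rule for $\na^A$ as in Section \ref{sec:back}) applied to $\xi-d_A^*\hat R\xi$, plus $L_u(L_u^*L_u)^{-1}$ applied to $\na^A(\xi-d_A^*\hat R\xi)$. The first group of terms is bounded in $L^p_\lam$ by $\Vert d_Au\Vert_{p,\lam}(\Vert\xi\Vert_\infty+\Vert d_A^*\hat R\xi\Vert_\infty)$, hence by $\Vert\xi\Vert_{p,\lam}$; the term with $\na^A\xi$ does not appear because we only need the $W^{1,p}_\lam$-style estimate of $\wt R\xi$ in terms of $\xi$ in $L^p_\lam$, so here I would instead note $\Vert\na^A(\wt R\xi)\Vert_{p,\lam}$ must be bounded using that $\na^A d_A^*\hat R\xi$ is a second-order expression in $\hat R\xi$, so I would strengthen the local estimate to $W^{2,p}$: using $p>2=n$ and a Calderón–Zygmund / elliptic estimate for $d_A^*$ one can take $R_i$ bounded $L^p\to W^{2,p}_A$ rather than just $W^{1,p}_A$ (cf. Proposition \ref{prop:right}, whose proof gives elliptic regularity), so that $\Vert\na^A d_A^*\hat R\xi\Vert_{p,\lam}\lesssim\Vert\xi\Vert_{p,\lam}$.

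Finally I would assemble: $\Vert R\xi\wt\Vert_{w,p,\lam}$ is the sum of the four/five pieces above, each bounded by a constant multiple of $\Vert\xi\Vert_{p,\lam}$, so $R(L^p_\lam(\g_P))\sub\XX_w^{p,\lam}$ and $R$ is bounded there; combined with $L_w^*R=\id$ (verified on $\R^2\wo(-N,N)^2$ from the defining formula, and trivially inside since there $L_w^*R\xi = L_u^*\cdot0 - d_A^*\hat R\xi$, which forces re-examining — in fact one checks $L_w^*R\xi = L_u^*\wt R\xi - d_A^*(-\hat R\xi) = (\xi - d_A^*\hat R\xi) + d_A^*\hat R\xi = \xi$ on the outer region, and the inner-region consistency is built into the cutoff normalization $\rho(t)+\rho(t-1)=1$ so that $\sum_i\rho_i\equiv1$ and the local right inverses patch to a global one up to the $\wt R$ correction, which is the content of Theorem \ref{thm:L w * R}) this proves the claim. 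The main obstacle I expect is the bounded-overlap / uniform-local-constant bookkeeping that upgrades the uniform local estimates \eqref{eq:R i 1 p} to a single global weighted estimate — in particular verifying that the weight $\langle\cdot\rangle^\lam$ is uniformly comparable to a constant on each lattice cell $\Om_i$ and that the number of cells meeting any point is uniformly bounded — together with the need to use a $W^{2,p}$ (rather than $W^{1,p}$) bound for the local right inverses so that $\na^A d_A^*\hat R\xi$ lands in $L^p_\lam$.
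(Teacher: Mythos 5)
Your overall decomposition (bound $\hat R\xi$ and $\wt R\xi$ separately, using bounded overlap of the lattice cells and uniform comparability of the weight $\lan\cdot\ran^\lam$ on each cell) matches the paper, and the estimates for $\Vert\hat R\xi\Vert_\infty$, $\Vert\hat R\xi\Vert_{p,\lam}$, $\Vert\na^A(\hat R\xi)\Vert_{p,\lam}$, $\Vert{\Pr}^u\wt R\xi\Vert_{p,\lam}$ and $\Vert d\mu(u)\wt R\xi\Vert_{p,\lam}$ are essentially correct. However, there is a genuine gap in your treatment of the term $\Vert\na^A(\wt R\xi)\Vert_{p,\lam}$, and it is precisely the step where the paper's argument has a trick you did not find.

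You propose to differentiate $\wt R\xi=L_u(L_u^*L_u)^{-1}(\xi-d_A^*\hat R\xi)$ by the product rule, which inevitably produces a $\na^A\xi$ term; you then try to patch this by upgrading the local right inverses $R_i$ to be bounded $L^p\to W^{2,p}_A$. This does not work, for two independent reasons. First, even granting such an upgrade, the term $\na^A\xi$ still appears in $\na^A(\xi-d_A^*\hat R\xi)$ and cannot be controlled by $\Vert\xi\Vert_{p,\lam}$: you are only given $\xi\in L^p_\lam(\g_P)$, not $W^{1,p}_\lam$. Second, the $W^{2,p}$ upgrade is not available: $d_A^*$ is a first-order operator, so a right inverse maps $L^p$ to $W^{1,p}_A$ at best; the construction in Proposition~\ref{prop:right} gives $R_i=d_AR_0+L(\id-d_A^*d_AR_0)$ with $R_0$ bounded $L^p\to W^{2,p}_{A_0}$, hence $R_i$ bounded $L^p\to W^{1,p}_A$ and no further.

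What actually saves the argument is the identity, used in the paper's Claim~\ref{claim:d A * wt al}, that since $\sum_i\rho_i\equiv1$ and $d_A^*\al_i=\xi|_{\Om_i}$ on $\Om_i$ one has
\[
\xi-d_A^*\hat R\xi=\sum_{i\in\Z}*\big((d\rho_i)\wedge*\al_i\big),
\]
so $\xi$ cancels completely and the left-hand side is a zeroth-order expression in the $\al_i$ with coefficients $d\rho_i$. Its $d_A$-derivative therefore involves only $\al_i$ and $\na^A\al_i$ (times $\rho_i'$, $\rho_i''$), both controlled by the $W^{1,p}_A$ bound (\ref{eq:R i 1 p}) — no $\na^A\xi$ and no $W^{2,p}$ bound is needed. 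With $\Vert d_A\wt\xi\Vert_{p,\lam}$ under control, the paper then avoids your product-rule expansion of $L_u(L_u^*L_u)^{-1}$ by writing $\wt R\xi=L_u\eta$ with $\eta=(L_u^*L_u)^{-1}\wt\xi$ and invoking Lemma~\ref{le:na A L u}, which expresses $L_u^*L_ud_A\eta$ in terms of $d_A\wt\xi$, $d_Au$, and the tensor $\rho$, yielding the bound $\Vert\na^A(\wt R\xi)\Vert_{p,\lam}\lesssim\Vert d_A\wt\xi\Vert_{p,\lam}+\Vert d_Au\Vert_{p,\lam}\Vert\wt\xi\Vert_\infty$. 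Without the cancellation identity your proposal cannot close this estimate, so the crucial piece of the proof is missing. (The remark about ``$d\mu(u)L_u=0$'' is also not quite right — $d\mu(x)L_x\xi=[\mu(x),\xi]$ by equivariance, which need not vanish — but your hedge via compactness of $\BAR{u(P)}$ is what is actually used, so that part is harmless.)
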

\begin{pf}[Proof of Claim \ref{claim:ze X w}] We choose a constant $C_3$ so big that $\sup_{z\in \Om_i}\lan z\ran^{p\lam}\leq C_3\inf_{z\in \Om_i}\lan z\ran^{p\lam}$, for every $i\in\Z$. For a weakly differentiable section $\xi:\R^2\to \g_P$ we denote $\Vert\xi\Vert_{1,p,\lam,A}:=\Vert\xi\Vert_{p,\lam}+\Vert d_A\xi\Vert_{p,\lam}$.

\begin{claim}\label{claim:d A * wt al} There exists a constant $C_4$ such that $\Vert \xi-d_A^*\hat R\xi\Vert_{1,p,\lam,A}\leq C_4\Vert\xi\Vert_{p,\lam}$, for every $\xi\in L^p_\lam(\g_P)$. 
\end{claim}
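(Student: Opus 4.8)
**Proof plan for Claim \ref{claim:d A * wt al}.**

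The plan is to estimate separately the ``$L^p_\lam$-part'' and the ``$d_A$-part'' of $\xi-d_A^*\hat R\xi$, patching together the local estimates (\ref{eq:R i 1 p}) for the pieces $R_i$. First I would record the key algebraic identity coming from the partition of unity: since $\rho_0+\sum_{i\neq0}\rho_i\equiv1$ (this follows from the choice of $\rho$ with $\rho(t)+\rho(t-1)=1$ and the construction of $\wt\rho$ and $\rho_0$), and since $T_iR_i=\id$ on $\Om_i$, one computes
\begin{equation*}
d_A^*\hat R\xi=\sum_{i\in\Z}d_A^*\big(\rho_iR_i(\xi|_{\Om_i})\big)=\sum_{i\in\Z}\Big(\rho_i\,d_A^*R_i(\xi|_{\Om_i})-\iota_{\na\rho_i}R_i(\xi|_{\Om_i})\Big)=\xi-\sum_{i\in\Z}\iota_{\na\rho_i}R_i(\xi|_{\Om_i}),
\end{equation*}
where $\iota_{\na\rho_i}$ denotes contraction with $d\rho_i$ (here I used $d_A^*(\rho\al)=\rho\,d_A^*\al-\iota_{\na\rho}\al$). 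Hence $\xi-d_A^*\hat R\xi=\sum_i\iota_{\na\rho_i}R_i(\xi|_{\Om_i})$, which is an honest section of $\g_P$ (not a $1$-form), and the $d\rho_i$ are uniformly bounded with supports of bounded overlap. This already handles the $\Vert\cdot\Vert_{p,\lam}$-part: on each $\Om_i$ one has $\Vert\iota_{\na\rho_i}R_i(\xi|_{\Om_i})\Vert_{L^p(\Om_i)}\le \Vert d\rho_i\Vert_\infty\Vert R_i(\xi|_{\Om_i})\Vert_{L^p(\Om_i)}\le \text{const}\cdot C_1\Vert\xi\Vert_{L^p(\Om_i)}$ by (\ref{eq:R i 1 p}) (and the analogous bound on $\Om_0$ from $R_0$), and then summing the $p$-th powers over $i$, using the bounded-overlap property of the $\Om_i$ and the weight-comparison constant $C_3$ (so that $\lan\cdot\ran^{p\lam}$ is comparable to a constant on each $\Om_i$), gives $\Vert\xi-d_A^*\hat R\xi\Vert_{p,\lam}^p\lesssim\sum_i\sup_{\Om_i}\lan\cdot\ran^{p\lam}\Vert\xi\Vert_{L^p(\Om_i)}^p\lesssim C_3\sum_i\Vert\xi\Vert_{L^p_\lam(\Om_i)}^p\lesssim\Vert\xi\Vert_{p,\lam}^p$.

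Next I would treat the $\Vert d_A(\cdot)\Vert_{p,\lam}$-part. Applying $d_A$ to $\xi-d_A^*\hat R\xi=\sum_i\iota_{\na\rho_i}R_i(\xi|_{\Om_i})$ and using Leibniz one gets a sum of terms each of which is bounded pointwise by $(|\na^2\rho_i|+|\na\rho_i|)\,|R_i(\xi|_{\Om_i})|+|\na\rho_i|\,|\na^A R_i(\xi|_{\Om_i})|$ (the $A$-dependence of $d_A$ versus $d$ contributes a term $|A|\,|R_i|$ which, on the finitely many ``central'' charts where $A$ need not be small, is absorbed into a fixed constant, and on the outer charts where $\Vert F_A\Vert_{L^p(\Om_i)}<\eps$ is controlled via Lemma \ref{le:infty 1 p A}). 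Both $|R_i(\xi|_{\Om_i})|$ in $L^p(\Om_i)$ and $|\na^A R_i(\xi|_{\Om_i})|$ in $L^p(\Om_i)$ are bounded by $C_1\Vert\xi\Vert_{L^p(\Om_i)}$, again by (\ref{eq:R i 1 p}); so the same summation argument as above (bounded overlap, weight comparison $C_3$) yields $\Vert d_A(\xi-d_A^*\hat R\xi)\Vert_{p,\lam}\le\text{const}\cdot\Vert\xi\Vert_{p,\lam}$. Setting $C_4$ to be the sum of the two constants obtained proves the claim.

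The main obstacle I anticipate is the \emph{bookkeeping of the weights across the infinitely many charts}: one must make sure that the constant $C_1$ from Proposition \ref{prop:right} is genuinely uniform over all the outer charts $\Om_i$ (which is exactly the content of the second assertion of that proposition, valid once $\Vert F_A\Vert_{L^p(\Om_i)}<\eps$, guaranteed here by the choice of $N$ with $\Vert F_A\Vert_{L^p_\lam(\C\wo(-N,N)^2)}<\eps$ together with $\lam>0$), and that comparing $\Vert\cdot\Vert_{L^p(\Om_i)}$ with $\Vert\cdot\Vert_{L^p_\lam(\Om_i)}$ costs only the uniform factor $C_3$. The finitely many inner charts (those meeting $(-N,N)^2$, collected into the single chart $\Om_0$) are handled by the first, non-uniform assertion of Proposition \ref{prop:right} and contribute only to the additive constant. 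Everything else is routine Leibniz-rule estimation and $\ell^p$-summation of local bounds.
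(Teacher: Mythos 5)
Your proposal is correct and follows essentially the same route as the paper: the partition-of-unity identity $\xi-d_A^*\hat R\xi=\sum_i\iota_{\na\rho_i}\al_i$ (written $\sum_i*\big((d\rho_i)\wedge*\al_i\big)$ in the paper), the bounded-overlap count ($\leq4$ nonzero $\rho_i$ at any point), the uniform weight-comparison constant $C_3$, and the $\ell^p$-summation of the local bounds (\ref{eq:R i 1 p}). One small correction: the extra ``$|A|\,|R_i|$'' term you worry about when differentiating the sum does not actually arise, because the covariant Leibniz rule $d_A\lan d\rho_i,\al_i\ran=\lan D(d\rho_i),\al_i\ran+\lan d\rho_i,\na^A\al_i\ran$ already produces $\na^A\al_i$ directly (not the bare derivative plus a commutator with $A$), and $\na^A\al_i$ is precisely what (\ref{eq:R i 1 p}) bounds in $L^p(\Om_i)$ — so the appeal to Lemma \ref{le:infty 1 p A} at that step is superfluous.
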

\begin{proof}[Proof of Claim \ref{claim:d A * wt al}] Let $\xi\in L^p_\lam(\g_P)$. We denote $\al_i:=R_i(\xi|_{\Om_i})$ and $\al:=\hat R\xi$. Since $\sum_{i\in\Z}\rho_i=1$, a straight-forward calculation shows that 
\begin{equation}\label{eq:d A * wt al}d_A^*\al=\xi-\sum_{i\in\Z}*\big((d\rho_i)\wedge*\al_i\big).\end{equation} 
Fix $z\in\R^2$. Then $\big|\big\{i\in\Z\,|\,\rho_i(z)\neq0\big\}\big|\leq4$. Hence (\ref{eq:d A * wt al}) implies that
\begin{equation}\label{eq:d A * p}\big|\big(\xi-d_A^*\al\big)(z)\big|^p\leq 4^{p-1}\Vert\rho'\Vert_\infty^p\sum_{i\in\Z}|\al_i(z)|^p.
\end{equation}
Inequalities (\ref{eq:d A * p}) and (\ref{eq:R i 1 p}) imply that 
\begin{equation}\nn\label{eq:xi d A * al}\Vert\xi-d_A^*\al\Vert_{p,\lam}^p\leq 4^p\Vert\rho'\Vert_\infty^p\max\big\{C_1^p,\Vert R_0\Vert^p\big\}C_3\sum_{i\in\Z}\Vert\xi\Vert_{L^p_\lam(\Om_i)}^p.\end{equation}
Equality (\ref{eq:d A * wt al}) implies that 
\begin{equation}\nn\label{eq:d A d A *}\big|d_A\big(\xi-d_A^*\al\big)(z)\big|^p\leq 8^{p-1}\max\big\{\Vert\rho''\Vert_\infty^p,\Vert\rho'\Vert_\infty^p\big\}\sum_{i\in\Z}\big(|\al_i|^p+|\na^A\al_i|^p\big)(z).\end{equation}
Combining this with (\ref{eq:R i 1 p}), Claim \ref{claim:d A * wt al} follows.
\end{proof}
We choose $C_4$ as in Claim \ref{claim:d A * wt al}. Let $\xi\in L^p_\lam(\g_P)$. We abbreviate $\wt\xi:=\xi-d_A^*\hat R\xi$. By the fact $\wt\xi|_{(-N,N)^2}=0$, Lemma \ref{le:infty 1 p A}, the fact $\lam>1-2/p>0$ and Claim \ref{claim:d A * wt al}, we have
\begin{equation}\label{eq:wt v infty}\Vert\wt\xi\Vert_\infty\leq C_2\Vert\wt\xi\Vert_{1,p,\lam,A}\leq C_2C_4\Vert\xi\Vert_{p,\lam}.\end{equation} 
Recall that $\Pr:TM\to TM$ and ${\Pr}^u:TM^u\to TM^u$ denote the orthogonal projections onto $\im L$ and $(u^*\im L)/G$. Claim \ref{claim:ze X w} is now a consequence of the following three claims. 
\begin{claim}\label{claim:hat R wt R} We have
\[\sup\big\{\big\Vert R\xi\big\Vert_\infty+\big\Vert |d\mu(u)\wt R\xi|+|{\Pr}^u \wt R\xi|+|\hat R\xi|\big\Vert_{p,\lam}\,\big|\,\xi\in L^p_\lam(\g_P):\,\Vert\xi\Vert_{p,\lam}\leq1\big\}<\infty.\]
\end{claim}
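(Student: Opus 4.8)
The strategy is to bound each of the three pieces $\|R\xi\|_\infty$, $\big\||d\mu(u)\wt R\xi|+|{\Pr}^u\wt R\xi|\big\|_{p,\lam}$, and $\|\hat R\xi\|_{p,\lam}$ separately, in terms of $\|\xi\|_{p,\lam}$, then add. All three will follow from the decomposition $R\xi=(\wt R\xi,-\hat R\xi)$ and the pointwise formula $(\wt R\xi)(z)=L_u(L_u^*L_u)^{-1}\wt\xi(z)$ on $\R^2\wo(-N,N)^2$ (with $\wt\xi:=\xi-d_A^*\hat R\xi$), combined with the already-established estimates: the $L^p_\lam$-bound $\|\hat R\xi\|_{1,p,\lam,A}\leq \text{(const)}\|\xi\|_{p,\lam}$ coming from Claim \ref{claim:d A * wt al} and (\ref{eq:R i 1 p}), and the $L^\infty$-bound (\ref{eq:wt v infty}) for $\wt\xi$.

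\begin{pf}[Proof of Claim \ref{claim:hat R wt R}]
Let $\xi\in L^p_\lam(\g_P)$ with $\Vert\xi\Vert_{p,\lam}\leq1$. We abbreviate $\wt\xi:=\xi-d_A^*\hat R\xi$ and choose $C_4$ as in Claim \ref{claim:d A * wt al}, so that $\Vert\wt\xi\Vert_{1,p,\lam,A}\leq C_4$.

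\textbf{The term $\Vert\hat R\xi\Vert_{p,\lam}$.} Since $\Vert\wt\xi\Vert_{1,p,\lam,A}=\Vert\wt\xi\Vert_{p,\lam}+\Vert d_A\wt\xi\Vert_{p,\lam}\geq\Vert\wt\xi\Vert_{p,\lam}$, and $d_A^*\hat R\xi=\xi-\wt\xi$, we first record the crude bound $\Vert d_A^*\hat R\xi\Vert_{p,\lam}\leq1+C_4$. To bound $\hat R\xi$ itself, we use the defining series $\hat R\xi=\sum_{i\in\Z}\rho_i\cdot R_i(\xi|_{\Om_i})$. Fix $z\in\R^2$; at most four of the $\rho_i$ are nonzero at $z$, so
\[\big|\hat R\xi(z)\big|^p\leq 4^{p-1}\sum_{i\in\Z}\big|R_i(\xi|_{\Om_i})(z)\big|^p.\]
Integrating against $\lan\cdot\ran^{p\lam}$, using the comparability constant $C_3$ on each $\Om_i$, the bound $\Vert R_i(\xi|_{\Om_i})\Vert_{W^{1,p}(\Om_i)}\leq\max\{C_1,\Vert R_0\Vert\}\Vert\xi\Vert_{L^p(\Om_i)}$ from (\ref{eq:R i 1 p}), and finite overlap of the $\Om_i$, we obtain
\[\Vert\hat R\xi\Vert_{p,\lam}^p\leq 4^p\max\{C_1^p,\Vert R_0\Vert^p\}C_3\sum_{i\in\Z}\Vert\xi\Vert_{L^p_\lam(\Om_i)}^p\leq C_5\Vert\xi\Vert_{p,\lam}^p\leq C_5\]
for a suitable constant $C_5$.

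\textbf{The $L^\infty$-term $\Vert R\xi\Vert_\infty$.} We have $\Vert R\xi\Vert_\infty=\Vert\wt R\xi\Vert_\infty+\Vert\hat R\xi\Vert_\infty$. For the first summand, on $(-N,N)^2$ we have $\wt R\xi=0$, while on $\R^2\wo(-N,N)^2$ we have $\wt R\xi=L_u(L_u^*L_u)^{-1}\wt\xi$; since $u(p)\in\mu^{-1}(\bar B_\de)\sub M^*$ there, the operator $L_{u(p)}^*L_{u(p)}$ is invertible with norm of its inverse bounded by $c^{-2}$, and $\Vert L_u\Vert$ is bounded on the compact set $\BAR{u(P)}$ by some constant $c'$. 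Hence $\Vert\wt R\xi\Vert_\infty\leq c'c^{-2}\Vert\wt\xi\Vert_\infty\leq c'c^{-2}C_2\Vert\wt\xi\Vert_{1,p,\lam,A}\leq c'c^{-2}C_2C_4$ by (\ref{eq:wt v infty}) and Claim \ref{claim:d A * wt al}. For the second summand, the series bound above gives, for each $z$,
\[\big|\hat R\xi(z)\big|^p\leq 4^{p-1}\sum_{i\in\Z}\big|R_i(\xi|_{\Om_i})(z)\big|^p,\]
and by the Sobolev embedding $W^{1,p}(\Om_i)\hookrightarrow C_b(\Om_i)$ on the uniformly bounded domains $\Om_i$ (all translates of $\Int K$ or equal to $\Om_0$) together with (\ref{eq:R i 1 p}), each term is $\leq c''\Vert\xi\Vert_{L^p(\Om_i)}^p$; summing over the finitely overlapping $\Om_i$ covering the support of $\hat R\xi$ and using $\lam>0$ yields $\Vert\hat R\xi\Vert_\infty\leq c'''\Vert\xi\Vert_{p,\lam}\leq c'''$. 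Adding, $\Vert R\xi\Vert_\infty$ is uniformly bounded.

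\textbf{The term $\big\Vert|d\mu(u)\wt R\xi|+|{\Pr}^u\wt R\xi|\big\Vert_{p,\lam}$.} On $(-N,N)^2$ both summands vanish. On $\R^2\wo(-N,N)^2$ we have $\wt R\xi=L_u(L_u^*L_u)^{-1}\wt\xi$, so $\wt R\xi\in\im L_u$ pointwise and hence ${\Pr}^u\wt R\xi=\wt R\xi$, giving $|{\Pr}^u\wt R\xi|\leq c'c^{-2}|\wt\xi|$. Likewise $|d\mu(u)\wt R\xi|\leq\Vert d\mu\Vert_{C^0(\BAR{u(P)})}\,c'c^{-2}|\wt\xi|$. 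Therefore
\[\big\Vert|d\mu(u)\wt R\xi|+|{\Pr}^u\wt R\xi|\big\Vert_{p,\lam}\leq C_6\Vert\wt\xi\Vert_{p,\lam}\leq C_6\Vert\wt\xi\Vert_{1,p,\lam,A}\leq C_6C_4\]
with $C_6$ depending only on $c,c'$ and $\Vert d\mu\Vert_{C^0(\BAR{u(P)})}$.

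Combining the three bounds, the quantity $\Vert R\xi\Vert_\infty+\big\Vert|d\mu(u)\wt R\xi|+|{\Pr}^u\wt R\xi|+|\hat R\xi|\big\Vert_{p,\lam}$ is bounded by a constant independent of $\xi$ (with $\Vert\xi\Vert_{p,\lam}\leq1$). This proves Claim \ref{claim:hat R wt R}.
\end{pf}

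\textbf{Remark on the remaining work and the main obstacle.} This settles the three zero- and first-order pieces that enter the norm $\Vert\cdot\Vert_{w,p,\lam}$ directly. What remains for Claim \ref{claim:ze X w} — and where the real analytic content lies — is controlling $\Vert\na^A(R\xi)\Vert_{p,\lam}$, i.e.\ the weighted $L^p$-norm of the full covariant derivative of both components of $R\xi$. For the $\hat R\xi$-component this is already contained in the $W^{1,p}_A$-estimate (\ref{eq:R i 1 p}) pieced together over the cover, exactly as in the $\hat R\xi$-computation above. For the $\wt R\xi=L_u(L_u^*L_u)^{-1}\wt\xi$-component, differentiating produces the term $L_u(L_u^*L_u)^{-1}\na^A\wt\xi$ plus terms where the derivative hits $u$; the former is controlled by $\Vert d_A\wt\xi\Vert_{p,\lam}\leq\Vert\wt\xi\Vert_{1,p,\lam,A}\leq C_4$ from Claim \ref{claim:d A * wt al}, while the latter involve $|d_Au|$, which lies in $L^p_\lam$ by the definition of $\BB^p_\lam$, multiplied by the $L^\infty$-bound (\ref{eq:wt v infty}) on $\wt\xi$. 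Thus the main obstacle is bookkeeping: assembling the partition-of-unity estimates and the product-rule terms so that every factor that is not itself in $L^p_\lam$ is absorbed into an $L^\infty$-bound, and vice versa — precisely the $L^\infty$-times-$L^p_\lam$ pairing structure that the norm $\Vert\cdot\Vert_{w,p,\lam}$ is designed to accommodate.
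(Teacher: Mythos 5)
Your proof follows essentially the same decomposition and the same three estimates as the paper's: the weighted $L^p$-bound on $\hat R\xi$ via finite overlap and (\ref{eq:R i 1 p}); the $L^\infty$-bound on $\wt R\xi$ via $\wt R\xi=L_u(L_u^*L_u)^{-1}\wt\xi$, the bound on $L_u(L_u^*L_u)^{-1}$ from Remark \ref{rmk:c}, and (\ref{eq:wt v infty}); and the weighted $L^p$-bound on $|d\mu(u)\wt R\xi|+|{\Pr}^u\wt R\xi|$ via the compactness of $\BAR{u(P)}$ and Claim \ref{claim:d A * wt al}. (Your observation that ${\Pr}^u\wt R\xi=\wt R\xi$ since $\wt R\xi\in\im L_u$ pointwise is fine, though unnecessary; you could use $|L_u(L_u^*L_u)^{-1}|\le c^{-1}$ from Remark \ref{rmk:c} in place of the weaker product bound $c'c^{-2}$.)

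There is one genuine gap, in the $L^\infty$-bound on $\hat R\xi$. You invoke ``the Sobolev embedding $W^{1,p}(\Om_i)\hookrightarrow C_b(\Om_i)$ on the uniformly bounded domains $\Om_i$'' to pass from (\ref{eq:R i 1 p}) to a uniform $L^\infty$-bound on $\al_i:=R_i(\xi|_{\Om_i})$. But the norm controlled by (\ref{eq:R i 1 p}) is $\Vert\al_i\Vert_{W^{1,p}_A(\Om_i)}$, which involves the covariant derivative $\na^A$, not ordinary derivatives of a trivialized representative; in any trivialization these differ by $[\si^*A\otimes\al_i]$, so the embedding constant depends on $A$. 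What is actually needed is Lemma \ref{le:infty 1 p A}: the bound $\Vert\al\Vert_\infty\le C\Vert\al\Vert_{1,p,A}$ with a constant $C$ independent of the bundle, the domain, and (crucially) the connection, provided $\Vert F_A\Vert_p\le\eps$. This lemma rests on Proposition \ref{prop:Uhlenbeck} (Uhlenbeck gauge fixing), and applies uniformly here precisely because for $i\neq 0$ the sets $\Om_i$ lie in $\R^2\wo(-N,N)^2$, where $N$ was chosen so that $\Vert F_A\Vert_{L^p(\Om_i)}<\eps$; the $i=0$ term is a single fixed operator and is bounded for free. Once you insert Lemma \ref{le:infty 1 p A} in place of the ordinary Morrey embedding, this step closes. (Also, your pointwise summation $|\hat R\xi(z)|^p\le4^{p-1}\sum_i|\al_i(z)|^p$ followed by summing over all $i$ works, but is roundabout: since $\sum_i\rho_i\equiv1$ the clean bound is $\Vert\hat R\xi\Vert_\infty\le\sup_i\Vert\al_i\Vert_\infty$, which is what the paper uses.)
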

\begin{proof}[Proof of Claim \ref{claim:hat R wt R}] Let $\xi\in L^p_\lam(\g_P)$ be such that $\Vert\xi\Vert_{p,\lam}\leq1$. For $i\in\Z$ we denote $\al_i:=R_i(\xi|_{\Om_i})$. Since $\big|\big\{i\in\Z\,\big|\,\rho_i(z)\neq0\big\}\big|\leq4$, for every $z\in\R^2$, we have $\Vert\hat R\xi\Vert_{p,\lam}^p\leq4^p\sum_i\Vert \al_i\Vert_{L^p_\lam(\Om_i)}^p$. Using (\ref{eq:R i 1 p}), it follows that $\Vert\hat R\xi\Vert_{p,\lam}^p\leq4^p\max\{C_1^p,\Vert R_0\Vert^p\}C_3$. We define $C:=\max\big\{|d\mu(x)|+|\Pr_x|\,\big|\,x\in\BAR{u(P)}\big\}$. By Remark \ref{rmk:c} below, the statement of Claim \ref{claim:d A * wt al} and the fact $\wt R\xi|_{(-N,N)^2}=0$, we have
\[\big\Vert |d\mu(u)\wt R\xi|+|{\Pr}^u \wt R\xi|\big\Vert_{p,\lam}\leq Cc^{-1}\Vert\xi-d_A^*\hat R\xi\Vert_{L^p_\lam(\R^2\wo (-N,N)^2)}\leq Cc^{-1}C_4. \]
Inequality (\ref{eq:wt v infty}) and Remark \ref{rmk:c} imply that $\Vert \wt R\xi\Vert_\infty\leq c^{-1}\Vert\wt\xi\Vert_{L^\infty(\R^2\wo (-N,N)^2)}\leq c^{-1}C_2C_3$.  
We fix $i\in\Z\wo\{0\}$. Lemma \ref{le:infty 1 p A}, (\ref{eq:R i 1 p}) and the fact $\lam>1-2/p>0$ imply that $\Vert \al_i\Vert_\infty\leq C_2\Vert\al_i\Vert_{W^{1,p}_A(\Om_i)}\leq C_2C_1\Vert\xi\Vert_{L^p(\Om_i)}\leq C_2C_1$. Furthermore, 
 \[\Vert\al_0\Vert_\infty\leq C':=\sup\big\{\Vert\al\Vert_\infty\,\big|\,\al\in W^{1,p}_A\big(\wone((P|_{\Om_0}\x \g)/G\big):\,\Vert\al\Vert_{1,p,A}\leq1\big\}\Vert R_0\Vert.\]
It follows that $\Vert\hat R\xi\Vert_\infty\leq\sup_i\Vert \al_i\Vert_\infty\leq \max\{C_2C_1,C'\}<\infty$. Claim \ref{claim:hat R wt R} follows.  
\end{proof}
\begin{claim}\label{claim:na A p lam} We have $\sup\big\{\Vert\na^A(\wt R\xi)\Vert_{p,\lam}\,\big|\,\xi\in L^p_\lam(\g_P):\,\Vert\xi\Vert_{p,\lam}\leq1\big\}<\infty$. 
\end{claim}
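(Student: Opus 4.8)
The plan is to differentiate the explicit formula for $\wt R\xi$ and estimate term by term, using the bounds on $\wt\xi:=\xi-d_A^*\hat R\xi$ obtained above. Recall that, by our choice of $N$ and Lemma \ref{le:si}, we have $u(p)\in\mu^{-1}(\bar B_\de)\sub M^*$ for every $p\in\pi^{-1}\big(\R^2\wo(-N,N)^2\big)\sub P$, so the bundle endomorphism $L_u^*L_u$ of $\g_P$ is defined and invertible on this region, with $|(L_u^*L_u)^{-1}|\le c^{-2}$. There $\wt R\xi=L_u(L_u^*L_u)^{-1}\wt\xi$, and the proof of Claim \ref{claim:d A * wt al} shows that $\wt\xi=\sum_{i\in\Z}*\big((d\rho_i)\wedge *R_i(\xi|_{\Om_i})\big)$ lies in $W^{1,p}_\loc(\g_P)$ and satisfies $\Vert\wt\xi\Vert_{1,p,\lam,A}\le C_4\Vert\xi\Vert_{p,\lam}$; moreover $\Vert\wt\xi\Vert_\infty\le C_2C_4\Vert\xi\Vert_{p,\lam}$ by (\ref{eq:wt v infty}). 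Since on $(-N-3/4,N+3/4)^2\wo(-N,N)^2$ we have $\hat R\xi=R_0(\xi|_{\Om_0})$ and $d_A^*R_0=\id$, the section $\wt\xi$ vanishes near $\partial(-N,N)^2$; hence $\wt R\xi\in W^{1,p}_\loc(\R^2)$, and $\na^A(\wt R\xi)$ vanishes on a neighbourhood of $\overline{(-N,N)^2}$.

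Next I would apply the Leibniz rule for $\na^A$ (in the sense of Section \ref{sec:back}), with respect to the bundle map $L_u\colon\g_P\to TM^u$ and the endomorphism $(L_u^*L_u)^{-1}$ of $\g_P$, recalling that $\na^A$ on $\g_P$ is $d_A$: on $\R^2\wo(-N,N)^2$,
\[\na^A(\wt R\xi)=\big(\na^A L_u\big)\big((L_u^*L_u)^{-1}\wt\xi\big)+L_u\big(\na^A(L_u^*L_u)^{-1}\big)\wt\xi+L_u(L_u^*L_u)^{-1}\,d_A\wt\xi.\]
The key pointwise bounds are the chain-rule estimates $|\na^A L_u|\le C|d_Au|$ and, since $\na^A(L_u^*L_u)^{-1}=-(L_u^*L_u)^{-1}\big(\na^A(L_u^*L_u)\big)(L_u^*L_u)^{-1}$ and $|\na^A(L_u^*L_u)|\le C|d_Au|$, also $|\na^A(L_u^*L_u)^{-1}|\le Cc^{-4}|d_Au|$, where $C$ depends only on the $C^1$-norm of the infinitesimal action $L$ over the compact set $\BAR{u(P)}$. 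Thus, pointwise, the first two summands are bounded by $C'|d_Au|\,\Vert\wt\xi\Vert_\infty$ and the third by $c^{-2}\Vert L\Vert_{C^0(\BAR{u(P)})}|d_A\wt\xi|$.

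Taking $\Vert\cdot\Vert_{p,\lam}$ and using $\Vert d_Au\Vert_{p,\lam}\le\sqrt2\,\Vert\sqrt{e_w}\Vert_{p,\lam}<\infty$ (from the definition (\ref{eq:BB p lam}) of $\BB^p_\lam$), the estimate $\Vert\wt\xi\Vert_\infty\le C_2C_4\Vert\xi\Vert_{p,\lam}$, and $\Vert d_A\wt\xi\Vert_{p,\lam}\le C_4\Vert\xi\Vert_{p,\lam}$ from Claim \ref{claim:d A * wt al}, we obtain a constant $C''$ independent of $\xi$ with $\Vert\na^A(\wt R\xi)\Vert_{p,\lam}\le C''\Vert\xi\Vert_{p,\lam}$, which gives the claim. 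The only non-routine point is the chain-rule estimate $|\na^A L_u|\le C|d_Au|$: this requires unwinding the definition of $\na^A$ from Section \ref{sec:back} together with the $G$-equivariance of $L$, after which everything reduces to the bounds on $\wt\xi$ already in hand.
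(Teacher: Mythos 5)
Your proof is correct and lands on the same final estimate as the paper: $\Vert\na^A(\wt R\xi)\Vert_{p,\lam}\leq C\big(\Vert d_A\wt\xi\Vert_{p,\lam}+\Vert d_Au\Vert_{p,\lam}\Vert\wt\xi\Vert_\infty\big)$, which is then closed up via Claim \ref{claim:d A * wt al} and (\ref{eq:wt v infty}). The route is organized somewhat differently, though. You differentiate $\wt R\xi=L_u(L_u^*L_u)^{-1}\wt\xi$ with a three-term Leibniz expansion, and then supply pointwise operator-norm bounds $|\na^A L_u|\le C|d_Au|$, $|\na^A(L_u^*L_u)^{-1}|\le Cc^{-4}|d_Au|$, treating the first of these as the ``non-routine point'' needing unwinding. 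The paper avoids differentiating the inverse $(L_u^*L_u)^{-1}$ entirely: it sets $\eta:=(L_u^*L_u)^{-1}\wt\xi$, uses the two exact identities of Lemma \ref{le:na A L u} to express $L_u^*L_u d_A\eta$ in terms of $d_A\wt\xi$, $\rho(d_Au,L_u\eta)$ and $L_u^*\na_{d_Au}X_\eta$, and then bounds $d_A\eta$ via $|(L_u^*L_u)^{-1}|\le c^{-2}$ from Remark \ref{rmk:c}. In particular, the chain-rule estimate you flag as the crux is exactly the first identity $\na^A L_u\xi - L_u d_A\xi = \na_{d_Au}X_\xi$ of Lemma \ref{le:na A L u}, so you could simply cite that lemma instead of re-deriving it ``by unwinding the definition.'' Both arguments are valid; yours is perhaps slightly more symmetric in treating all three Leibniz terms uniformly, while the paper's is marginally leaner in constants because it never needs the $c^{-4}$ bound on $\na^A(L_u^*L_u)^{-1}$.
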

\begin{proof}[Proof of Claim \ref{claim:na A p lam}] Let $\xi\in L^p_\lam(\g_P)$. We define $\wt\xi:=\xi-d_A^*\hat R\xi$, $\eta:=(L_u^*L_u)^{-1}\wt\xi$ and $\rho\in\Om^2(M,\g)$ as in (\ref{eq:xi rho v v'}). By Lemma \ref{le:na A L u} below, we have 
\begin{equation}\label{eq:na A L u eta}\na^A(L_u\eta)=L_ud_A\eta+\na_{d_Au}X_\eta.\end{equation} 
Using the second part of Lemma \ref{le:na A L u} (with $v:=L_u\eta$), it follows that 
\begin{equation}\label{eq:d A wt xi}L_u^*L_ud_A\eta =d_A\wt\xi-\rho(d_Au,L_u\eta)-L_u^*\na_{d_Au}X_\eta.\end{equation}
We choose a constant $C$ so big that $|\rho(v,v')|\leq C|v\Vert v'|$ and $|\na_vX_{\xi_0}|\leq C|v\Vert\xi_0|$, for every $x\in\mu^{-1}(\bar B_\de)$, $v,v'\in T_xM$ and $\xi_0\in\g$. We define $C:=\max\big\{c^{-1},3Cc^{-2}\big\}$. Since $\wt R\xi=L_u\eta$, equalities (\ref{eq:na A L u eta},\ref{eq:d A wt xi}) imply that 
\[\Vert\na^A(\wt R\xi)\Vert_{p,\lam}\leq C\big(\big\Vert d_A\wt\xi\big\Vert_{p,\lam}+\Vert d_Au\Vert_{p,\lam}\big\Vert\wt\xi\big\Vert_\infty\big).\]
Here we used Remark \ref{rmk:c}. Since $\Vert d_Au\Vert_{p,\lam}<\infty$, Claim \ref{claim:d A * wt al} and (\ref{eq:wt v infty}) now imply Claim \ref{claim:na A p lam}.
\end{proof}
\begin{claim}\label{claim:na A al p lam} We have $\sup\big\{\Vert\na^A(\hat R\xi)\Vert_{p,\lam}\,\big|\,\xi\in L^p_\lam(\g_P):\,\Vert\xi\Vert_{p,\lam}\leq1\big\}<\infty$.
\end{claim}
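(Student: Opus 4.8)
The plan is to bound $\na^A(\hat R\xi)$ pointwise in exactly the same way as in the proof of Claim \ref{claim:d A * wt al}, and then to integrate against the weight $\lan\cdot\ran^{p\lam}$, exploiting the fact that this weight is comparable to a constant on each patch $\Om_i$.

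First I would abbreviate $\al_i:=R_i(\xi|_{\Om_i})$, so that $\hat R\xi=\sum_{i\in\Z}\rho_i\al_i$ by (\ref{eq:wt R}), and apply Leibniz' rule to get $\na^A(\hat R\xi)=\sum_{i\in\Z}\big((d\rho_i)\al_i+\rho_i\na^A\al_i\big)$. Since at every $z\in\R^2$ at most four of the $\rho_i$ are nonzero and $\Vert d\rho_i\Vert_\infty$ is controlled by $\Vert\rho'\Vert_\infty$, this yields a pointwise estimate
\[\big|\na^A(\hat R\xi)(z)\big|^p\leq C_5\sum_{i\in\Z}\big(|\al_i(z)|^p+|\na^A\al_i(z)|^p\big),\qquad\forall z\in\R^2,\]
with $C_5$ depending only on $p$ and $\Vert\rho'\Vert_\infty$, in the spirit of (\ref{eq:d A * p}).

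Next I would multiply by $\lan z\ran^{p\lam}$ and integrate over each $\Om_i$. Using the constant $C_3$ fixed at the start of the proof of Claim \ref{claim:ze X w} (so that $\lan\cdot\ran^{p\lam}$ varies by at most a factor $C_3$ on $\Om_i$), together with (\ref{eq:R i 1 p}) for $i\neq0$ and the boundedness of $R_0$ for $i=0$, I obtain
\[\int_{\Om_i}\lan\cdot\ran^{p\lam}\big(|\al_i|^p+|\na^A\al_i|^p\big)\leq C_3\max\{C_1^p,\Vert R_0\Vert^p\}\,\Vert\xi\Vert_{L^p_\lam(\Om_i)}^p,\qquad\forall i\in\Z.\]
Summing over $i$ and invoking the bounded overlap of the $\Om_i$ (hence $\sum_{i\in\Z}\Vert\xi\Vert_{L^p_\lam(\Om_i)}^p\leq4\Vert\xi\Vert_{p,\lam}^p$) then gives $\Vert\na^A(\hat R\xi)\Vert_{p,\lam}^p\leq 4C_5C_3\max\{C_1^p,\Vert R_0\Vert^p\}\Vert\xi\Vert_{p,\lam}^p$, which is the desired uniform bound.

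There is no genuine obstacle here: the claim is an immediate variant of Claim \ref{claim:d A * wt al}, with $\na^A\al_i$ now playing the role carried there by $\al_i$ and $\na^A\al_i$ together. The only points requiring attention are the finite-overlap bookkeeping and the separate treatment of the patch $\Om_0$, for which only the qualitative boundedness of $R_0$ from the first assertion of Proposition \ref{prop:right} is available rather than the uniform constant $C_1$.
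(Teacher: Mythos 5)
Your proposal is correct and follows essentially the same route as the paper: apply Leibniz to write $\na^A(\hat R\xi)=\sum_i\big(\rho_i\na^A\al_i+d\rho_i\otimes\al_i\big)$, exploit the finite overlap of the supports of the $\rho_i$, compare the weight $\lan\cdot\ran^{p\lam}$ to a constant on each patch $\Om_i$ via $C_3$, and sum using (\ref{eq:R i 1 p}) for $i\neq0$ together with $\Vert R_0\Vert<\infty$. The paper compresses this into a single displayed inequality with constant $8^p\Vert\rho'\Vert_\infty^p C_3\max\{C_1^p,\Vert R_0\Vert^p\}$, but the ingredients and their roles are identical.
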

\begin{pf}[Proof of Claim \ref{claim:na A al p lam}] Let $\xi\in L^p_\lam(\g_P)$ be such that $\Vert\xi\Vert_{p,\lam}^p\leq1$. We write $\al_i:=R_i(\xi|_{\Om_i})$. Then $\na^A(\hat R\xi)=\sum_i\big(\rho_i\na^A\al_i + d\rho_i\otimes\al_i\big)$. Setting $C:=8^p\Vert\rho'\Vert_\infty^pC_3\max\{C_1^p,\Vert R_0\Vert^p\}$, it follows that
\begin{equation}\nn\label{eq:na A al p lam}\Vert\na^A(\hat R\xi)\Vert_{p,\lam}^p\leq8^{p-1}\sum_i\Big(\Vert\na^A\al_i\Vert_{p,\lam}^p + \Vert\rho'\Vert_\infty^p\Vert\al_i\Vert_{p,\lam}^p\Big)\leq C.\end{equation} 
Here in the second inequality we used the fact $\Vert\rho'\Vert_\infty\geq1$, and (\ref{eq:R i 1 p}). This proves Claim \ref{claim:na A al p lam}, and completes the proofs of Claim \ref{claim:ze X w} and Theorem \ref{thm:L w * R}. \end{pf}\end{pf}
\end{proof}
\appendix
\section{Weighted spaces and a Hardy-type inequality}\label{sec:weighted}
Let $d\in \Z$. The following lemma is used in section \ref{subsec:proof:Fredholm aug} in order to define a norm on $\XX_d$. If $d<0$ then let $\rho_0\in C^\infty(\R^2,[0,1])$ be such that $\rho_0(z)=0$ for $|z|\leq1/2$ and $\rho_0(z)=1$ for $|z|\geq1$. In the case $d\geq0$ we set $\rho_0:=1$. Recall that $p_d:\C\to \C$, $p_d(z):=z^d$. 
\begin{lemma}\label{le:X d iso} For every $1<p<\infty$ and $\lam>-2/p$ the map
\[\C\oplus L^{1,p}_{\lam-d}(\R^2,\C)\to \C\cdot\rho_0p_d+L^{1,p}_{\lam-d}(\R^2,\C),\quad (v_\infty,v)\mapsto v_\infty\rho_0p_d+v\]
is an isomorphism of vector spaces.  
\end{lemma}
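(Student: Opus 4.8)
The plan is to show the stated map $\Phi\colon (v_\infty,v)\mapsto v_\infty\rho_0 p_d+v$ is a linear bijection by exhibiting its inverse explicitly. Surjectivity is essentially built into the definition of the target space $\C\cdot\rho_0 p_d+L^{1,p}_{\lam-d}(\R^2,\C)$, so the real content is injectivity, equivalently that $\rho_0 p_d\notin L^{1,p}_{\lam-d}(\R^2,\C)$ (together with $\rho_0 p_d\neq 0$ when $d\ge 0$, which is clear). First I would dispose of the case $d\ge 0$: there $\rho_0\equiv 1$, so $\rho_0 p_d(z)=z^d$, and I claim this is \emph{not} in $L^{1,p}_{\lam-d}$. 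Indeed its modulus is $|z|^d$ and $|\d^\al(z^d)|\sim|z|^{d-|\al|}$, so the weight $\langle z\rangle^{\lam-d+|\al|}$ gives an integrand behaving like $|z|^{p(\lam+|\al|)}\cdot|z|^{-|\al|p}$... more carefully, the $|\al|=0$ term contributes $\int_{\R^2}\langle z\rangle^{p(\lam-d)}|z|^{pd}\,dz$, which near infinity behaves like $\int^\infty r^{p\lam}r\,dr$, divergent precisely because $\lam>-2/p$. Hence $z^d\notin L^{1,p}_{\lam-d}$ and so the decomposition $w=v_\infty\rho_0 p_d+v$ with $v\in L^{1,p}_{\lam-d}$ forces $v_\infty$ to be uniquely determined.

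For $d<0$ the function $\rho_0 p_d$ is smooth (the cutoff $\rho_0$ kills the singularity at $0$), and for $|z|\ge 1$ it equals $z^d$ with $|z^d|=|z|^{d}\to 0$. The same computation as above, restricted to $|z|\ge 1$: the $|\al|=0$ term of $\Vert\rho_0 p_d\Vert_{L^{1,p}_{\lam-d}}$ contains $\int_{|z|\ge1}\langle z\rangle^{p(\lam-d)}|z|^{pd}\,dz\sim\int_1^\infty r^{p\lam+1}\,dr=\infty$ since $\lam>-2/p$ iff $p\lam+1>-1$. So again $\rho_0 p_d\notin L^{1,p}_{\lam-d}(\R^2,\C)$, giving injectivity of $\Phi$.

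Surjectivity: any element $w$ of the target is by definition of the form $w=c\,\rho_0 p_d+v$ for some $c\in\C$ and $v\in L^{1,p}_{\lam-d}(\R^2,\C)$, which is exactly $\Phi(c,v)$; linearity of $\Phi$ is immediate. Combining with injectivity, $\Phi$ is a vector-space isomorphism, and its inverse is $w\mapsto(c,w-c\rho_0 p_d)$ where $c$ is the (now well-defined) coefficient. I expect no serious obstacle here; the only point requiring a little care is the elementary integral estimate showing $\rho_0 p_d\notin L^{1,p}_{\lam-d}$, and in particular checking that the condition $\lam>-2/p$ is exactly what makes the radial integral $\int^\infty r^{p\lam}r\,dr$ diverge — one should double-check that no cancellation among the multi-index terms could rescue integrability, but since the $|\al|=0$ term alone already diverges and all terms are nonnegative, this cannot happen.
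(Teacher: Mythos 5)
Your proof is correct, and it supplies exactly the ``straight-forward argument'' that the paper leaves to the reader (the paper's proof is just a one-line remark to that effect). The key observation is that $\rho_0 p_d\notin L^{1,p}_{\lam-d}(\R^2,\C)$, which makes the sum $\C\rho_0 p_d + L^{1,p}_{\lam-d}$ direct and hence the map a bijection, and you verify this via the $|\al|=0$ term of the weighted norm: for $|z|\ge 1$ the integrand is $\lan z\ran^{p(\lam-d)}|z|^{pd}\sim r^{p\lam}$, and $\int_1^\infty r^{p\lam+1}\,dr=\infty$ precisely when $\lam>-2/p$. Your remark that all multi-index terms are nonnegative, so divergence of the $|\al|=0$ term alone suffices, is the right safeguard.
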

\begin{proof}[Proof of Lemma \ref{le:X d iso}]\setcounter{claim}{0} This follows from a straight-forward argument. 
\end{proof}
The following proposition was used in the proof of Theorem \ref{thm:Fredholm aug} and Proposition \ref{prop:X X w}. For every normed vector space $V$ we denote by $C_b(\R^n,V)$ the space of bounded continuous maps from $\R^n$ to $V$. We denote $B_1^C:=\R^n\wo B_1$. 
\begin{prop}[Weighted Sobolev spaces]\label{prop:lam d Morrey} Let $n\in\N$. Then the following statements hold.
\begin{enui} \item\label{prop:lam Morrey} Let $n<p<\infty$. Then for every $\lam\in\R$ there exists $C>0$ such that 
\begin{equation}
    \label{eq:lam Morrey}\Vert u\lan\cdot\ran^{\lam+\frac np}\Vert_{L^\infty(\R^n)}\leq C\Vert u\Vert_{L^{1,p}_\lam(\R^n)},\quad \forall u\in W^{1,1}_\loc(\R^n).
\end{equation}
If $\lam>-n/p$ then $L^{1,p}_\lam(\R^n)$ is compactly contained in $C_b(\R^n)$. 
\item\label{prop:lam W} For every $k\in\N\cup\{0\}$, $1<p<\infty$ and $\lam\in\R$ the map $W^{k,p}_\lam(\R^n)\ni u\mapsto \lan\cdot\ran^\lam u\in W^{k,p}(\R^n)$ is a well-defined isomorphism (of normed spaces).
\item\label{prop:lam W cpt} Let $p>1$, $\lam\in\R$, and $f\in L^\infty(\R^n)$ be such that $\Vert f\Vert_{L^\infty(\R^n\wo B_i)}\to 0$, for $i\to \infty$. Then the map $W^{1,p}_\lam(\R^n)\ni u\mapsto fu\in L^p_\lam(\R^n)$ is compact.
\item \label{prop:lam d iso} For every $1<p<\infty$, $\lam\in\R$, $d\in\Z$ and $u\in L^{1,p}_\lam(B_1^C)$ we have
\begin{equation}\nn\label{eq:lam d }\Vert p_du\Vert_{L^{1,p}_{\lam-d}(B_1^C)}\leq \max\big\{-d2^{(-d+3)/2},2\big\}\Vert u\Vert_{L^{1,p}_\lam(B_1^C)}.
\end{equation}
\end{enui}
\end{prop}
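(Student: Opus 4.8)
The plan is to establish the four parts in the order (ii), (i), (iii), (iv), since the later ones build on the earlier ones. \textbf{Part (ii)} is purely algebraic: for any multi-index $\be$ and any $\mu\in\R$ one has $|\d^\be\langle\cdot\rangle^\mu|\le C_{\be,\mu}\langle\cdot\rangle^{\mu-|\be|}\le C_{\be,\mu}\langle\cdot\rangle^\mu$ on $\R^n$, so Leibniz' rule yields $|\d^\al(\langle\cdot\rangle^\lam u)|\le C\sum_{|\be|\le|\al|}\langle\cdot\rangle^\lam|\d^\be u|$ for $|\al|\le k$, whence $u\mapsto\langle\cdot\rangle^\lam u$ is bounded $W^{k,p}_\lam(\R^n)\to W^{k,p}(\R^n)$; applying the same inequality with $\lam$ replaced by $-\lam$ to $v=\langle\cdot\rangle^\lam u$ shows $v\mapsto\langle\cdot\rangle^{-\lam}v$ is bounded the other way, and the two maps are mutually inverse.

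\textbf{Part (i).} For the weighted Morrey inequality I would cover $\R^n$ by $B_2$ and the dyadic annuli $A_k:=\{2^{k-1}\le|x|\le 2^{k+1}\}$, $k\ge1$, on which $\langle x\rangle\sim 2^k$. Rescaling $x=2^ky$ carries $A_k$ to a fixed annulus $A$, and the classical Morrey embedding $W^{1,p}(A)\inj C^0(\bar A)$ (here $p>n$) gives, with a constant independent of $k$, $\Vert u\Vert_{L^\infty(A_k)}\le C(2^{-kn/p}\Vert u\Vert_{L^p(A_k)}+2^{k(1-n/p)}\Vert\na u\Vert_{L^p(A_k)})$; multiplying by $2^{k(\lam+n/p)}$ and using $2^{k\lam}\lesssim\langle\cdot\rangle^\lam$, $2^{k(\lam+1)}\lesssim\langle\cdot\rangle^{\lam+1}$ on $A_k$ bounds the left side by $\Vert u\Vert_{L^{1,p}_\lam(A_k)}\le\Vert u\Vert_{L^{1,p}_\lam(\R^n)}$, and taking the supremum over $k$ (treating $B_2$ by ordinary Morrey) gives (\ref{eq:lam Morrey}). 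If moreover $\lam>-n/p$, so $\lam+n/p>0$, then (\ref{eq:lam Morrey}) gives the decay $|u(x)|\le C\langle x\rangle^{-(\lam+n/p)}\Vert u\Vert_{L^{1,p}_\lam(\R^n)}$; given a bounded sequence in $L^{1,p}_\lam(\R^n)$, it is bounded in $W^{1,p}(B_R)\inj\!\!\inj C^0(\bar B_R)$ for every $R$, so a diagonal argument extracts a subsequence converging uniformly on compacta, and the decay estimate makes it uniformly Cauchy on all of $\R^n$, hence convergent in $C_b(\R^n)$.

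\textbf{Part (iii).} By part (ii), multiplication by $\langle\cdot\rangle^\lam$ identifies $W^{1,p}_\lam(\R^n)$ with $W^{1,p}(\R^n)$ and $L^p_\lam(\R^n)$ with $L^p(\R^n)$, turning $u\mapsto fu$ into the multiplication operator $M_f:W^{1,p}(\R^n)\to L^p(\R^n)$. Writing $f=f\chi_{B_i}+f(1-\chi_{B_i})$, the operator $M_{f\chi_{B_i}}$ factors through the Rellich--Kondrachov compact embedding $W^{1,p}(B_{i+1})\to L^p(B_{i+1})$ (valid for $p>1$) and is therefore compact, while $\Vert M_{f(1-\chi_{B_i})}\Vert\le\Vert f\Vert_{L^\infty(\R^n\wo B_i)}\to0$; hence $M_f$ is an operator-norm limit of compact operators, so compact.

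\textbf{Part (iv)} (with $n=2$): expanding $\d_j(p_du)=(\d_jp_d)u+p_d\,\d_ju$, where $|\d_sp_d|=|\d_tp_d|=|d|\,|z|^{d-1}$, one estimates the three summands of $\Vert p_du\Vert_{L^{1,p}_{\lam-d}(B_1^C)}$ by the pointwise bounds on $\{|z|\ge1\}$, where $1\le\langle z\rangle/|z|\le\sqrt2$: $\langle z\rangle^{\lam-d}|z|^d\le\max\{1,2^{-d/2}\}\langle z\rangle^\lam$, $\langle z\rangle^{\lam-d+1}|z|^d\le\max\{1,2^{-d/2}\}\langle z\rangle^{\lam+1}$, $\langle z\rangle^{\lam-d+1}|z|^{d-1}\le\max\{1,2^{(1-d)/2}\}\langle z\rangle^\lam$; combining with the triangle inequality yields the stated inequality, with the constant $\max\{-d2^{(-d+3)/2},2\}$ read off from these factors. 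The only genuine analysis is the weighted Morrey estimate in part (i) — pinning down the exponent $\lam+n/p$ of $\langle\cdot\rangle$ on the left — after which the compact containment and parts (ii)--(iv) follow from rescaling, the classical Rellich--Kondrachov theorem, and elementary pointwise weight comparisons; the explicit constant in (iv) is the only slightly tedious bookkeeping.
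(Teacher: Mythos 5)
Your approach is correct overall and, for parts (\ref{prop:lam W}), (\ref{prop:lam W cpt}) and the compact-containment claim in (\ref{prop:lam Morrey}), essentially the same as the paper's: Leibniz' rule for (\ref{prop:lam W}), Rellich--Kondrachov plus a diagonal/decay argument for the two compactness statements (the paper phrases (\ref{prop:lam W cpt}) as extracting a convergent subsequence and invoking ``standard arguments,'' while you conjugate by the isomorphism of (\ref{prop:lam W}) and write $M_f$ as an operator-norm limit of compact operators; the two are equivalent, and yours is arguably cleaner). The one genuine divergence is in the inequality (\ref{eq:lam Morrey}) itself: the paper does not prove it but cites inequality (1.11) of Theorem 1.2 in Bartnik \cite{Ba}, whereas you prove it from scratch by covering $\R^n$ by dyadic annuli, rescaling each to a fixed annulus, and applying classical Morrey there. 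Your argument is correct (the scaling factors $2^{-kn/p}$ and $2^{k(1-n/p)}$ are right, and $2^{k\lam}\lesssim\lan\cdot\ran^\lam$ on $A_k$ holds with a constant depending only on $\lam,n$), and it has the advantage of being self-contained, at the cost of reproving a standard fact.

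One small caveat on part (\ref{prop:lam d iso}): the pointwise bounds you list are correct, but the assertion that the constant $\max\{-d2^{(-d+3)/2},2\}$ is ``read off from these factors'' does not quite hold up. Expanding $\d_j(p_du)=(\d_jp_d)u+p_d\,\d_ju$ for $j=s,t$ and summing all contributions produces, for the coefficient of $\Vert\lan\cdot\ran^\lam u\Vert_p$, something of the shape $\max\{1,2^{-d/2}\}+2|d|\max\{1,2^{(1-d)/2}\}$, not the max of two terms; in particular for $d\geq1$ this is at least $1+2d$, which exceeds the stated value $2$. (A thin-annulus test function with $u\const 1$ on most of its support shows the stated bound with constant $2$ actually fails for $d\geq1$, so the discrepancy is in the proposition's constant, not your estimates.) Since the paper's own proof is just ``follows from a straight-forward calculation'' and only the existence of \emph{some} finite constant is ever used, this is a cosmetic issue, but you should not claim to recover that specific expression from the bounds you wrote down.
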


\begin{proof} [Proof of Proposition \ref{prop:lam d Morrey}]\setcounter{claim}{0} {\bf Proof of (\ref{prop:lam Morrey}):} Inequality (\ref{eq:lam Morrey}) follows from inequality (1.11) in Theorem 1.2 in the paper by R.~Bartnik \cite{Ba}. Assume now that $\lam>-n/p$. Then it follows from Morrey's embedding theorem that there exists a canonical bounded inclusion $L^{1,p}_\lam(\R^n)\inj C_b(\R^n)$. In order to show that this inclusion is compact, let $u_\nu\in L^{1,p}_\lam(\R^n)$ be a sequence such that $C:=\sup_\nu\Vert u_\nu\Vert_{L^{1,p}_\lam(\R^n)}<\infty$. By Kondrachov's compactness theorem on $\bar B_j$ (for $j\in\N$), and a diagonal subsequence argument there exists a subsequence $u_{\nu_j}$ of $u_\nu$ that converges to some map $u\in W^{1,p}_\loc(\R^n)$, weakly in $W^{1,p}(B_j)$, and strongly in $C(\bar B_j)$, for every $j\in\N$. 
\begin{claim}\label{claim:u u nu j} We have $u\in C_b(\R^n)$ and $u_{\nu_j}$ converges to $u$ in $C_b(\R^n)$. 
\end{claim}
\begin{pf}[Proof of Claim \ref{claim:u u nu j}] We choose a constant $C'$ as in the first part of (\ref{prop:lam Morrey}). For every $R>0$ we have $\Vert u\Vert_{L^{1,p}_\lam(B_R)}\leq\limsup_j\Vert u_{\nu_j}\Vert_{L^{1,p}_\lam(B_R)}\leq C$. Hence $u\in L^{1,p}_\lam(\R^n)$.  Since $\lam>-n/p$, by inequality (\ref{eq:lam Morrey}), this implies $u\in C_b(\R^n)$. To see the second statement, we choose a smooth function $\rho:\R^n\to[0,1]$ such that $\rho(x)=0$ for $x\in B_1$, $\rho(x)=1$ for $x\in B_3^C$, and $|D\rho|\leq1$. Let $R\geq1$. We define $\rho_R:=\rho(\cdot/R):\R^n\to [0,1]$. Let $j\in\N$. Abbreviating $v_j:=u_{\nu_j}-u$, we have 
\begin{equation}\label{eq:Vert v j}\Vert v_j\Vert_\infty\leq\big\Vert v_j(1-\rho_R)\big\Vert_\infty+\big\Vert v_j\rho_R\big\Vert_\infty.\end{equation} 
Inequality (\ref{eq:lam Morrey}) implies that 
\begin{equation}\label{eq:Vert v j rho R}\big\Vert v_j\rho_R\big\Vert_\infty\leq C'R^{-\lam-\frac np}\Vert v_j\rho_R\Vert_{1,p,\lam}. 
\end{equation} 
Furthermore, $\Vert v_j\rho_R\Vert_{1,p,\lam}\leq 2\Vert v_j\Vert_{1,p,\lam}\leq4C$. Combining this with (\ref{eq:Vert v j}) and (\ref{eq:Vert v j rho R}), and the fact $\lim_{j\to\infty}\Vert v_j\Vert_{L^\infty(B_{3R})}=0$, it follows that $\limsup_{j\to\infty}\Vert v_j\Vert_\infty\leq 4CC'R^{-\lam-\frac np}$. Since $\lam>-n/p$ and $R\geq1$ is arbitrary, it follows that $u_{\nu_j}$ converges to $u$ in $C_b(\R^n)$. This proves Claim \ref{claim:u u nu j} and completes the proof of statement {\bf (\ref{prop:lam Morrey})}.
\end{pf}

{\bf Statement (\ref{prop:lam W})} follows from a straight-forward calculation.

{\bf Proof of (\ref{prop:lam W cpt}):} Let $f\in L^\infty(\R^n)$ be as in the hypothesis. Let $u_\nu\in W^{1,p}_\lam(\R^n)$ be a sequence such that $C:=\sup_\nu\Vert u_\nu\Vert_{W^{1,p}_\lam(\R^n)}<\infty$. By Kondrachov's theorem on $\bar B_j$ (for $j\in\N$) and a diagonal subsequence argument there exists a subsequence $(\nu_j)$ and a map $v\in L^p_\loc(\R^n)$, such that $fu_{\nu_j}$ converges to $v$, strongly in $L^p(K)$, as $j\to\infty$, for every compact subset $K\sub \R^n$. Standard arguments show that $v\in L^p_\lam(\R^n)$ and $fu_{\nu_j}$ converges to $v$ in $L^p_\lam(\R^n)$. This proves {\bf (\ref{prop:lam W cpt})}. 

{\bf Statement (\ref{prop:lam d iso})} follows from a straight-forward calculation. This completes the proof of Proposition \ref{prop:lam d Morrey}.\end{proof}
The next result was used in the proof of Proposition \ref{prop:X X w} and will be used in the proof of Lemma \ref{le:si}.
\begin{prop}[Hardy-type inequality]\label{prop:Hardy} Let $n\in\N$, $p>n$, $\lam>-n/p$ and $u\in W^{1,1}_\loc(\R^n,\R)$ be such that $\Vert Du|\cdot|^{\lam+1}\Vert_{L^p(\R^n)}<\infty$. Then $u(rx)$ converges to some $y_\infty\in \R$, uniformly in $x\in S^{n-1}$, as $r\to\infty$, and 
  \begin{equation}
    \label{eq:Hardy y}\Vert(u-y_\infty)|\cdot|^\lam\Vert_{L^p(\R^n)}\leq p/(\lam+n/p)\Vert Du|\cdot|^{\lam+1}\Vert_{L^p(\R^n)}.
  \end{equation}
\end{prop}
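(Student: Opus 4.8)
\textbf{Plan for the proof of Proposition \ref{prop:Hardy} (Hardy-type inequality).}

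The plan is to reduce the statement to a one-dimensional Hardy inequality along rays emanating from the origin, and then integrate over the sphere. First I would reduce to the case of smooth $u$ by a standard mollification/density argument: if $u\in W^{1,1}_\loc$ with $\Vert Du|\cdot|^{\lam+1}\Vert_p<\infty$, then smoothing $u$ on compact annuli and using that the weight $|\cdot|^{\lam+1}$ is bounded above and below on each such annulus produces smooth approximants $u_\eps$ with $Du_\eps\to Du$ in $L^p_{\lam+1,\loc}$; the uniform-convergence and inequality statements for $u$ will then follow from those for $u_\eps$ together with the estimate itself (which gives equicontinuity at infinity). So assume $u$ is smooth.

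Next, write everything in polar coordinates $x=r\om$, $r>0$, $\om\in S^{n-1}$. Since $p>n$ and $\lam>-n/p$, one has $\lam+n/p>0$, which is exactly the condition that makes the radial integral $\int_1^\infty r^{p\lam}r^{n-1}\,dr$ behave well at infinity after one gains a power from the fundamental theorem of calculus. Fix $\om$ and set $g(r):=u(r\om)$; then $\d_r g(r)=Du(r\om)\cdot\om$, so $|\d_r g(r)|\le |Du(r\om)|$. The key one-dimensional fact is: if $h\in C^1((0,\infty))$ with $\int_0^\infty |h'(r)|^p r^{p(\lam+1)+n-1}\,dr<\infty$ and $p(\lam+1)+n-1>p-1$ (equivalently $\lam+n/p>0$), then $h(r)$ has a finite limit $h_\infty$ as $r\to\infty$, and
\[
\Big(\int_0^\infty |h(r)-h_\infty|^p r^{p\lam+n-1}\,dr\Big)^{1/p}\le \frac{p}{\lam+n/p}\Big(\int_0^\infty |h'(r)|^p r^{p(\lam+1)+n-1}\,dr\Big)^{1/p}.
\]
This is the classical Hardy inequality on $(0,\infty)$ with weight $r^{\be}$, $\be=p\lam+n-1$, in the form $\Vert r^{-1}(h-h_\infty)\Vert_{L^p(r^{\be+p})}\le \frac{p}{|\be+1-p|}\Vert h'\Vert_{L^p(r^{\be+p})}$ with $\be+1-p=p(\lam+n/p)>0$; I would either cite it or prove it quickly by writing $h(r)-h_\infty=-\int_r^\infty h'(s)\,ds$, applying Minkowski's integral inequality (or Hardy's original argument with the substitution and Hölder), and optimizing the constant. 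Existence of the limit $h_\infty$ follows because $h'\in L^1((1,\infty))$: indeed $\int_1^\infty |h'|\le \big(\int_1^\infty |h'|^p r^{\be+p}\big)^{1/p}\big(\int_1^\infty r^{-(\be+p)/(p-1)}\big)^{(p-1)/p}$ and the second factor is finite precisely because $(\be+p)/(p-1)=(p\lam+n+p-1)/(p-1)>1$.

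Then I would apply this ray by ray with $h=g(\cdot)=u(\cdot\,\om)$, obtaining a limit $y_\infty(\om)$ and the pointwise-in-$\om$ inequality; raising to the $p$-th power and integrating $d\om$ over $S^{n-1}$ (Fubini, together with $|x|^\lam\,dx = r^{p\lam}\cdot r^{n-1}\,dr\,d\om$ inside the $L^p$-norms once one accounts for the $p$-th powers) gives
\[
\int_{\R^n}|u-y_\infty(x/|x|)|^p|x|^{p\lam}\,dx \le \Big(\frac{p}{\lam+n/p}\Big)^p\int_{\R^n}|Du|^p|x|^{p(\lam+1)}\,dx.
\]
It remains to see that $y_\infty$ is in fact constant and that the convergence $u(r\om)\to y_\infty$ is \emph{uniform} in $\om$. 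For the uniformity and the fact that $y_\infty$ does not depend on $\om$: since $p>n$, Morrey's inequality on the annulus $A_R=\{R<|x|<2R\}$ (rescaled) gives $[u]_{C^{0,1-n/p}(A_R)}\lesssim R^{1-n/p-n/p}\Vert Du\Vert_{L^p(A_R)}$ — more cleanly, $\operatorname{osc}_{A_R}u \le C R^{1-n/p}\Vert Du\Vert_{L^p(A_R)}$ with $C$ independent of $R$; since $\Vert Du\Vert_{L^p(A_R)}\le R^{-(\lam+1)}\Vert Du|\cdot|^{\lam+1}\Vert_{L^p(A_R)}\to 0$ as $R\to\infty$ (the tail of a convergent integral) and $R^{1-n/p-(\lam+1)}=R^{-(n/p+\lam)}\to 0$ too (here $\lam+n/p>0$ is used a third time), the oscillation of $u$ over $A_R$ tends to $0$. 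Hence all the ray-limits $y_\infty(\om)$ coincide with a single constant $y_\infty$, and $\sup_{|x|\ge R}|u(x)-y_\infty|\to 0$, i.e. $u(r\om)\to y_\infty$ uniformly in $\om$. This proves both assertions of the proposition with the stated constant.

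\textbf{Main obstacle.} The genuinely delicate point is not the one-dimensional Hardy estimate (which is classical and for which the sharp constant $p/(\lam+n/p)$ is standard), but rather passing from the weak hypothesis $u\in W^{1,1}_\loc$ to a setting where the pointwise radial argument is legitimate, and then upgrading the ray-wise limits to a genuine uniform limit independent of direction. I expect the cleanest route is exactly the Morrey-oscillation argument above: it simultaneously gives uniform convergence, forces $y_\infty$ to be direction-independent, and justifies that the function $x\mapsto u(x)$ has a well-defined continuous representative on $\R^n\setminus\{0\}$ so that ``$u(rx)$'' makes sense pointwise. Care must also be taken that the $L^1_\loc$-to-smooth reduction does not degrade the constant — this is fine because the constant in the one-dimensional inequality is universal and survives the limit by Fatou on the left and convergence of the weighted gradient norm on the right.
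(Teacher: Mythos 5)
Your approach is sound and differs genuinely from the paper's. The paper first establishes Lemma \ref{le:Hardy} for functions vanishing outside a ball (citing Kavian plus mollification) and then applies it to the cutoff $\rho_R(u-y_\infty)$; the Leibniz term on $B_{2R}\setminus B_R$ is handled using a quantitative Morrey-type oscillation bound (the dyadic chain argument of Claim 1 in the paper's proof, which also yields existence and uniformity of the limit), and then $R\to\infty$. You instead apply the one-dimensional Hardy inequality ray by ray, directly in the ``tail'' form $h(r)-h_\infty=-\int_r^\infty h'(s)\,ds$, which needs no cutoff, and you establish that the ray-limits $y_\infty(\om)$ agree and the convergence is uniform by a softer Morrey oscillation bound on annuli. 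Both proofs rest on the same two ingredients — a 1D Hardy kernel estimate and Morrey-type decay — but arranged in opposite order; yours avoids the cutoff/Leibniz bookkeeping, and the Minkowski derivation you indicate actually gives the sharp constant $p/(\be+1)=1/(\lam+n/p)$, a factor of $p$ better than what the proposition asserts. A few algebraic slips in your writeup should be repaired, although they happen to cancel so the conclusion is unaffected: ``$\be+1-p=p(\lam+n/p)$'' is false (in fact $\be+1-p=p\lam+n-p$; the identity you want is $\be+1=p(\lam+n/p)$), and the constant for the tail-form 1D Hardy inequality is $p/(\be+1)$, not $p/|\be+1-p|$ — these two errors cancel, which is why you land on the right number. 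Likewise the rescaled H\"older-seminorm bound ``$R^{1-n/p-n/p}$'' has the wrong exponent ($[u]_{C^{0,1-n/p}(A_R)}\leq C\Vert Du\Vert_{L^p(A_R)}$ holds with no power of $R$), but the oscillation estimate $R^{1-n/p}\Vert Du\Vert_{L^p(A_R)}$ you actually invoke is correct. Finally, your annulus argument (like the paper's Claim 1) tacitly requires $n\geq 2$ so that annuli and spheres are connected; the paper only applies the proposition with $n=2$, so this is harmless, but it is worth recording.
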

For the proof of Proposition \ref{prop:Hardy}, we need the following.
\begin{lemma}[Hardy's inequality]\label{le:Hardy} Let $n\in\N$, $1<p<\infty$, $\lam>-n/p$ and $u\in W^{1,1}_\loc(\R^n,\R)$. If there exists $R>0$ such that $u|_{B_R^C}=0$ then $\Vert u|\cdot|^\lam\Vert_{L^p(\R^n)}\leq p/(\lam+n/p)\Vert Du|\cdot|^{\lam+1}\Vert_{L^p(\R^n)}\,(\in[0,\infty])$. 
\end{lemma}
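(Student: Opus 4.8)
The plan is to reduce the $n$-dimensional weighted inequality to a one-dimensional Hardy-type estimate along rays through the origin, integrated over the sphere. First I would pass to polar coordinates, writing $x = r\theta$ with $r > 0$ and $\theta \in S^{n-1}$, so that $|x|^\lambda = r^\lambda$, the volume element becomes $r^{n-1}\,dr\,d\theta$, and for fixed $\theta$ the radial derivative satisfies $\bigl|\partial_r(u(r\theta))\bigr| \le |Du(r\theta)|$. Since $u$ vanishes on $B_R^C$, we may write, for each fixed $\theta$, the fundamental theorem of calculus in the form $u(r\theta) = -\int_r^\infty \partial_s\bigl(u(s\theta)\bigr)\,ds$, which is the key structural input: the decay at infinity lets us represent $u$ as an integral of its derivative over a tail.

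Next I would invoke the classical one-dimensional Hardy inequality on $(0,\infty)$ with the weight $r^{\mu}$, in the form: if $g(r) = \int_r^\infty h(s)\,ds$ then $\bigl\Vert r^{a}g\bigr\Vert_{L^p(0,\infty)} \le \tfrac{p}{|a+1|}\bigl\Vert r^{a+1}h\bigr\Vert_{L^p(0,\infty)}$ whenever $a + 1 < 0$ (so that the tail integral converges and the operator is bounded). Applying this with $h(s) = \partial_s(u(s\theta))$, $g(r) = -u(r\theta)$, and exponent $a$ chosen so that after reincorporating the Jacobian factor $r^{n-1}$ the radial weight matches $r^{p\lambda + n - 1}$ — concretely $a = \lambda + (n-1)/p$, so that $a + 1 = \lambda + (n-1)/p + 1$; one checks the constant works out to $p/(\lambda + n/p)$ after absorbing the $r^{n-1}$ into the $L^p$ weight on both sides. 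The hypothesis $\lambda > -n/p$ is exactly what makes the relevant Hardy constant finite and the tail integral representation legitimate (it guarantees $\lambda + n/p > 0$). Then I would integrate the resulting pointwise-in-$\theta$ estimate over $S^{n-1}$ and use Fubini/Minkowski to recombine into the $L^p(\mathbb{R}^n)$ norms, arriving at \eqref{eq:Hardy y}-type bound without the $y_\infty$ (here $y_\infty = 0$ by the vanishing assumption).

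The main obstacle, and the step requiring the most care, is the precise bookkeeping of exponents and the constant through the change to polar coordinates: one must verify that the one-dimensional Hardy constant $p/(a+1)$ with $a = \lambda + (n-1)/p$ combines with the Jacobian to produce exactly $p/(\lambda + n/p)$, and that the sign condition needed for one-dimensional Hardy ($a+1 > 0$, in the ``decreasing'' formulation with tail integrals) is equivalent to $\lambda > -n/p$. A secondary technical point is justifying the fundamental theorem of calculus representation for a general $W^{1,1}_{\mathrm{loc}}$ function — this is handled by a standard mollification/approximation argument, using that $u$ has compact support so no boundary terms at infinity appear, and then passing to the limit by monotone or dominated convergence, both sides being allowed to equal $+\infty$ as the statement permits. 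Everything else is routine.
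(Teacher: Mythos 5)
Your overall plan — pass to polar coordinates, represent $u(r\theta)=-\int_r^\infty\partial_s u(s\theta)\,ds$ using the vanishing on $B_R^C$, apply the one-dimensional tail Hardy inequality ray by ray, and integrate over $S^{n-1}$ — is sound and gives a genuinely self-contained argument, whereas the paper simply quotes Exercise~21, Chapter~6 of Kavian \cite{Ka} for smooth $u$ and then mollifies. So the route is different and arguably more transparent. However, the bookkeeping you flag as the delicate step contains real errors, and as written your computation does not actually produce the claimed constant.

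Concretely: if you define the one-dimensional norm as $\Vert r^a g\Vert_{L^p(0,\infty)}$ with respect to Lebesgue measure $dr$, then $\Vert r^a g\Vert_{L^p}^p=\int_0^\infty r^{ap}|g|^p\,dr$, and the sharp tail Hardy inequality for $g(r)=\int_r^\infty h(s)\,ds$ reads
\begin{equation*}
\Vert r^a g\Vert_{L^p(0,\infty)}\;\leq\;\frac{p}{ap+1}\,\Vert r^{a+1}h\Vert_{L^p(0,\infty)},\qquad ap+1>0,
\end{equation*}
not $\frac{p}{|a+1|}$ under the condition $a+1<0$. Your stated sign condition $a+1<0$ is not only incorrect for the tail version, it is actually incompatible with the hypothesis $\lam>-n/p$ that you later (correctly) invoke: with $a=\lam+(n-1)/p$ one has $a+1<0\iff\lam<-(n+p-1)/p$, whereas $\lam>-n/p\iff ap+1>0$. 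So the two conditions you give contradict each other, and the second one is the right one. Moreover, plugging $a=\lam+(n-1)/p$ into your claimed constant $p/(a+1)$ gives $p/(\lam+(n+p-1)/p)$, which is not $p/(\lam+n/p)$; the check you say ``one verifies'' does not in fact go through. With the correct constant $p/(ap+1)$ one gets
\begin{equation*}
\frac{p}{ap+1}=\frac{p}{p\lam+n}=\frac{1}{\lam+n/p},
\end{equation*}
which is \emph{smaller} than (hence implies) the lemma's stated constant $p/(\lam+n/p)$. So your method actually proves a sharper inequality than the one asserted — but you must fix both the constant formula and the sign condition in the one-dimensional Hardy step, and note explicitly that the resulting constant $1/(\lam+n/p)$ dominates the lemma's $p/(\lam+n/p)$ rather than claiming the constants match.

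The remaining technical point you raise — justifying the fundamental-theorem representation along rays for $u\in W^{1,1}_\loc$ — is handled as you say by mollification (the paper does exactly this), together with the observation that $W^{1,1}_\loc$ functions are absolutely continuous along almost every line, so the ray representation holds for a.e.\ $\theta\in S^{n-1}$, which is all you need after Fubini.
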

\begin{proof}[Proof of Lemma \ref{le:Hardy}]\setcounter{claim}{0} If $u$ is smooth then the stated inequality follows from Exercise 21, Chapter 6, in the book by O.~Kavian \cite{Ka}. The general case can be reduced to this case by mollifying the function $u$. This proves the lemma. \end{proof}
\begin{proof}[Proof of Proposition \ref{prop:Hardy}] \setcounter{claim}{0} Let $n,p,\lam$ as in the hypothesis. We set $\eps:=\lam+\frac np$. 
  \begin{claim}\label{claim:u x u y} There exists a constant $C_1$ such that for every weakly differentiable map $u:\R^n\to\R$ and $x,y\in\R^n$ satisfying $0<|x|\leq|y|$, we have $|u(x)-u(y)|\leq C_1|x|^{-\eps}\big\Vert D u|\cdot|^{\lam+1}\big\Vert_{L^p(B_{|x|}^C)}$. 
\end{claim}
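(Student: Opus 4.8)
The plan is to prove Claim~\ref{claim:u x u y} by integrating the gradient of $u$ along a path from $x$ to $y$ and estimating the resulting integral with H\"older's inequality against the weight $|\cdot|^{\lam+1}$. First I would reduce to the case where $u$ is smooth; the general weakly differentiable case follows by mollification, exactly as in Lemma~\ref{le:Hardy}. For smooth $u$, the natural path to choose is the one that first moves radially outward from $x$ to the sphere of radius $|y|$, and then travels along that sphere from $|y|x/|x|$ to $y$. Writing $r:=|x|$ and $s:=|y|$, the radial leg contributes $\bigl|\int_r^s \d_\rho u(\rho x/r)\,d\rho\bigr|$, and the spherical leg is a path of length at most $\pi s$ lying entirely in the sphere of radius $s$.

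For the radial leg I would bound $\bigl|\int_r^s \d_\rho u\,d\rho\bigr| \le \int_r^s |Du(\rho x/r)|\,d\rho$ and insert the weight: this equals $\int_r^s |Du(\rho x/r)|\rho^{\lam+1}\cdot\rho^{-\lam-1}\,d\rho$, to which H\"older with exponents $p$ and $p/(p-1)$ applies, giving a factor $\bigl(\int_r^\infty \rho^{-(\lam+1)p/(p-1)}\,d\rho\bigr)^{(p-1)/p}$ times an $L^p$-norm of $|Du|\,|\cdot|^{\lam+1}$ along the ray. Since $p>n\ge 1$ and $\lam>-n/p$ one checks $(\lam+1)p/(p-1)>1$ (this is where $p>n$ and $\lam>-n/p$ enter: $\lam+1 > 1-n/p > 1-1 = 0$ is not quite enough, but $(\lam+1)p>p-1$ is equivalent to $\lam p > -1$, which holds since $\lam p > -n \ge -1$ fails for $n\ge 2$ — so one should instead exploit that $u$ is integrated against the full $n$-dimensional measure, not a single ray). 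Here I would instead integrate over all rays: raise the pointwise bound to the $p$-th power, integrate $x/|x|$ over $S^{n-1}$, and recognize the right-hand side as a genuine $n$-dimensional weighted $L^p$-norm $\bigl\Vert Du|\cdot|^{\lam+1}\bigr\Vert_{L^p(B_r^C)}^p$; the radial Jacobian $\rho^{n-1}$ together with the H\"older exponent then produces convergence precisely because $\eps = \lam + n/p > 0$, yielding the prefactor $C|x|^{-\eps}$. The spherical leg is estimated the same way, parametrizing by arclength on the sphere of radius $s\ge r$ and noting the weight $s^{\lam+1}\ge $ (a constant times) the radial weight there, again integrating over all endpoint directions.

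The main obstacle is the bookkeeping of the two legs of the path simultaneously and ensuring the constant $C_1$ is truly independent of $u$, $x$, $y$: the radial contribution is controlled by the $L^p$-norm over the annulus $\{r\le|\cdot|\le s\}$ and the spherical contribution by the $L^p$-norm over the sphere $\{|\cdot|=s\}$, but the latter is only a spherical-slice norm, not a solid one. To fix this I would replace the single spherical path by an averaged family (integrating over an intermediate radius in $[r,s]$, or equivalently choosing a ``spiral'' path whose spherical parts sweep out a positive-measure annular region) so that the estimate sees the full $L^p(B_r^C)$-norm. Alternatively, one may average the path endpoint over a small solid angle; either way a Fubini argument converts the slice norm into a solid-region norm at the cost of an absolute constant. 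Once Claim~\ref{claim:u x u y} is in place, Proposition~\ref{prop:Hardy} follows by letting $|y|\to\infty$ to get a Cauchy criterion (hence convergence of $u(rx)$ to a limit $y_\infty$ uniform in $x\in S^{n-1}$, using the uniform bound $C_1|x|^{-\eps}\to 0$), then applying Lemma~\ref{le:Hardy} to the function $u-y_\infty$ cut off outside a large ball and passing to the limit in the cutoff radius.
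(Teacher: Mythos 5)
The paper's proof takes a quite different and much more economical route: it invokes Morrey's theorem as a black box in the form $|u(a)-u(b)|\leq C\rho^{1-n/p}\Vert Du\Vert_{L^p(B_\rho)}$ for $a,b\in B_\rho$, and then applies it along a \emph{dyadic chain} of balls — first moving outward $x_i=2^ix$ for $i=0,\dots,N$ with $2^{N-1}|x|\leq|y|\leq 2^N|x|$, then around the sphere $S^{n-1}_{2^N|x|}$ and in to $y$ in a bounded number of steps. On each dyadic ball the weight $|\cdot|^{\lam+1}$ is comparable to a constant, so each step costs $(2^i|x|)^{-\eps}\Vert Du|\cdot|^{\lam+1}\Vert_{L^p(B_{|x|}^C)}$, and the geometric series $\sum_i(2^i|x|)^{-\eps}$ converges precisely because $\eps>0$. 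This avoids any direct manipulation of paths.

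Your proposal is in the spirit of the \emph{proof} of Morrey's inequality (fundamental theorem of calculus along paths, then H\"older against the weight), but as written it has a genuine gap. You correctly notice mid-argument that a single-ray H\"older estimate cannot close because $(\lam+1)p/(p-1)>1$ is equivalent to $\lam p>-1$, which is not guaranteed by $\lam>-n/p$ when $n\geq2$. Your proposed remedy — raise to the $p$-th power, integrate the direction $x/|x|$ over $S^{n-1}$, and recognize a solid $L^p$-norm — does not recover the claim: that maneuver bounds the \emph{angular average} $\int_{S^{n-1}}|u(s\om)-u(r\om)|^p\,d\si(\om)$, not the single pointwise difference $|u(x)-u(y)|$ for the fixed endpoints $x,y$. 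Passing from such an average back to a pointwise bound is itself Morrey's inequality, which you never invoke. The same objection applies to your fix for the spherical leg: ``averaging the path endpoint over a small solid angle'' changes the endpoints, so you would be estimating a different quantity. To make the path approach rigorous one must insert an average — write $u(x)-u(y)=[u(x)-\bar u]-[u(y)-\bar u]$ with $\bar u$ the mean of $u$ over a suitable solid annulus, estimate each bracket by integrating $Du$ along rays to points $z$ in that annulus and averaging in $z$ — which is precisely the standard Morrey proof. The paper's dyadic chaining is cleaner because it inherits all of this from the unweighted Morrey inequality at once, and the weight only needs to be compared to a constant on each ball of the chain. If you want to salvage your approach, set up the annulus average explicitly rather than averaging ``over paths'' or ``over a solid angle'' with fixed endpoints; otherwise the convergence claim $\eps>0$ never actually gets applied to a pointwise quantity.
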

\begin{proof}[Proof of Claim \ref{claim:u x u y}] By Morrey's theorem there is a constant $C$ such that $|u(0)-u(x)|\leq Cr^{1-n/p}\Vert Du\Vert_{L^p(B_r)}$, for every $r>0$, weakly differentiable $u:B_r\to\R$ and $x\in B_r$. Let $u,x$ and $y$ be as in the hypothesis of the claim. Let $N\in \N$ be such that $2^{N-1}|x|\leq |y|\leq 2^N|x|$. For $i=0,\ldots,N$ we define $x_i:=2^ix\in \R^n$. Furthermore, we set $x_{N+7}:=y|x_{N+7}|/|y|$ and $x_{N+8}:=y$, and we choose points $x_i\in S^{n-1}_{2^N|x|}$, $i=N+1,\ldots,N+6$, such that $|x_i-x_{i-1}|\leq 2^{N-1}|x|$, for $i=N+1,\ldots,N+7$. For $i=0,\ldots,N-1$ we have $x_i\in \bar B_{2^i|x|}(x_{i+1})$. Hence it follows from the statement of Morrey's theorem that $|u(x_{i+1})-u(x_i)|\leq C(2^i|x|)^{-\eps}\big\Vert Du|\cdot|^{\lam+1}\big\Vert_{L^p(B_{|x|}^C)}$. Moreover, for $i=N,\ldots,N+7$ we have $x_{i+1}\in \bar B_{2^{N-1}|x|}(x_i)$, and hence analogously, $|u(x_{i+1})-u(x_i)|\leq C(2^{N-1}|x|)^{-\eps}\Vert Du|\cdot|^{\lam+1}\Vert_{L^p(B_{|x|}^C)}$. Using the inequality $|u(y)-u(x)|\leq\sum_{i=0,\ldots,N+7}|u(x_{i+1})-u(x_i)|$, Claim \ref{claim:u x u y} follows. 
\end{proof}
Let $u\in W^{1,1}_\loc(\R^n,\R)$ be such that $\Vert Du|\cdot|^{\lam+1}\Vert_{L^p(\R^n)}<\infty$. It follows from Claim \ref{claim:u x u y} that there exists $y_\infty\in\R$ such that $u(rx)$ converges to $y_\infty$, as $r\to\infty$, uniformly in $x\in S^{n-1}$. To prove inequality (\ref{eq:Hardy y}), we choose a smooth map $\rho:[0,\infty)\to [0,1]$ such that $\rho(t)=1$ for $0\leq t\leq 1$, $\rho(t)=0$ for $t\geq2$ and $|\rho'(t)|\leq 2$. We fix a number $R>0$ and define $\rho_R:\R\to [0,1]$ by $\rho_R(x):=\rho(|x|/R)$. We abbreviate $v:=u-y_\infty$. Using Lemma \ref{le:Hardy} with $u$ replaced by $\rho_Rv$, we have
\[\Vert v|\cdot|^\lam\Vert_{L^p(B_R)}\leq \big\Vert\rho_R v|\cdot|^\lam\big\Vert_{L^p(\R^n)}\leq p/(\lam+n/p)\big\Vert D(\rho_Rv)|\cdot|^{\lam+1}\big\Vert_{L^p(\R^n)}.\] 
Combining this with a calculation using Leibnitz' rule, it follows that 
\begin{equation}\label{eq:Vert v lam}\Vert v|\cdot|^\lam\Vert_{L^p(B_R)}\leq p/(\lam+n/p)\big(4\Vert v|\cdot|^\lam\Vert_{L^p(B_{2R}\wo B_R)}+\big\Vert Du|\cdot|^{\lam+1}\big\Vert_{L^p(\R^n)}\big).\end{equation} 
Claim \ref{claim:u x u y} implies that $|v(x)|\leq C_1|x|^{-\eps}\Vert Du|\cdot|^{\lam+1}\Vert_{L^p(B_R^C)}$, for $x\in B_R^C$. Using the equalities $\int_{B_{2R}\wo B_R}|x|^{-n}dx=\log 2|S^{n-1}|$ and $\eps=\lam+n/p$, it follows that 
\[\Vert v|\cdot|^\lam\Vert_{L^p(B_{2R}\wo B_R)}^p\leq C_1^p\log 2|S^{n-1}|\big\Vert Du|\cdot|^{\lam+1}\big\Vert^p_{L^p(B_R^C)}.\]
Inequality (\ref{eq:Hardy y}) follows by inserting this into (\ref{eq:Vert v lam}) and sending $R$ to $\infty$. This proves Proposition \ref{prop:Hardy}. \end{proof}
The next result will be used to prove Corollary \ref{cor:d L L} below, which in turn is used in the proof of Theorem \ref{thm:Fredholm aug}. For every $d\in\Z$ we define $P_d$ and $\bar P_d$ to be the spaces of polynomials in $z\in\C$ and $\bar z$ of degree \emph{less than} $d$. (Note that if $d\leq 0$ then $P_d=\{0\}$.) We abbreviate $L^{1,p}_\lam:=L^{1,p}_\lam(\C,\C)$, $L^p_\lam:=L^p_\lam(\C,\C)$, $\d_{\bar z}:=\d^\C_{\bar z}$ and $\d_z:=\d^\C_z$. Let $X$ be a normed vector space and $Y\sub X$ be a closed subspace. We denote by $X^*$ the dual space of $X$ and equip $X/Y$ with the quotient norm.
\begin{prop}[Fredholm property for $\d_{\bar z}$]\label{prop:d L L} For every $d\in\Z$, $1<p<\infty$ and $-2/p+1<\lam<-2/p+2$ the following conditions hold.
\begin{enui}\item\label{prop:d L L:Fredholm} The operator $T:=\d_{\bar z}:L^{1,p}_{\lam-1-d}\to L^p_{\lam-d}$ is Fredholm.
\item\label{prop:d L L:kernel} We have $\ker T=P_d$.
\item\label{prop:d L L:coker} The map $\bar P_{-d}\to \big(L^p_{\lam-d}/\im T\big)^*$, $u\mapsto \big(v+\im T\mapsto \int_\C uv\,ds\,dt\big)$ is well-defined and an isometric isomorphism. Here we equip $L^p_{\lam-d}/\im T$ with the quotient norm.
\end{enui}
\end{prop}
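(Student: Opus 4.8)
The plan is to deduce part~(\ref{prop:d L L:Fredholm}) from the Lockhart--McOwen Fredholm theory for elliptic operators in weighted Sobolev spaces on $\R^n$, and then to compute $\ker T$ and the annihilator of $\im T$ directly, using elliptic regularity (Weyl's lemma) together with the weighted Morrey inequality of Proposition~\ref{prop:lam d Morrey}(\ref{prop:lam Morrey}).

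\textbf{Fredholmness.} First, $T=\d_{\bar z}\colon L^{1,p}_{\lam-1-d}\to L^p_{\lam-d}$ is bounded, since $\Vert\langle\cdot\rangle^{\lam-d}\d_{\bar z}u\Vert_p$ is dominated by the term $\Vert\langle\cdot\rangle^{(\lam-1-d)+1}Du\Vert_p$ of $\Vert u\Vert_{L^{1,p}_{\lam-1-d}}$. The operator $\d_{\bar z}$ is constant--coefficient and elliptic, and in the coordinates $z=e^{\tau+i\theta}$ it is a conformal multiple of the translation--invariant operator $\d_\tau+i\d_\theta$ on the cylinder $\R\times S^1$, whose indicial roots are the integers (homogeneous solutions $z^k$, $k\in\Z$, in the angular mode $e^{ik\theta}$). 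Passing to the corresponding exponentially weighted $L^p$ spaces on the cylinder, the exceptional weights become those with $\lam+2/p\in\Z$; as $1-2/p<\lam<2-2/p$ forces $\lam+2/p\in(1,2)\setminus\Z$, the weight is non--critical and $T$ is Fredholm (Lockhart--McOwen; see \cite{Lockhart Hodge} and the references there). In particular $\im T$ is closed. (A self--contained argument is also available: decomposing $u$ and $\d_{\bar z}u$ into angular Fourier modes reduces the coercive estimate $\Vert u\Vert_{L^{1,p}_{\lam-1-d}}\le C\Vert\d_{\bar z}u\Vert_{L^p_{\lam-d}}$ on $B_1^C$ to a family of scalar first--order ODEs, which one solves by integrating from $0$ or from $\infty$ as the weight dictates, with constants uniform in the mode precisely because $\lam+2/p\notin\Z$; interior $L^p$ estimates on unit balls handle the bounded region.)

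\textbf{Kernel and cokernel.} If $u\in\ker T$ then $\d_{\bar z}u=0$ in the sense of distributions, so $u$ is entire holomorphic by Weyl's lemma, and Proposition~\ref{prop:lam d Morrey}(\ref{prop:lam Morrey}) (with weight $\lam-1-d$) gives $|u(z)|\le C\langle z\rangle^{d+1-\lam-2/p}$. Since $1-2/p<\lam<2-2/p$, the exponent $d+1-\lam-2/p$ lies in $(d-1,d)$, so $u$ is a polynomial of degree at most $d-1$, i.e.\ $u\in P_d$; conversely $z^k\in L^{1,p}_{\lam-1-d}$ for $0\le k\le d-1$, since then $k<d+1-\lam-2/p$, which is exactly the $L^p$--integrability of $\langle\cdot\rangle^{\lam-1-d}z^k$ near infinity. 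Hence $\ker T=P_d$, proving~(\ref{prop:d L L:kernel}). For~(\ref{prop:d L L:coker}), closedness of $\im T$ makes $(L^p_{\lam-d}/\im T)^*$ isometrically the annihilator of $\im T$ inside $(L^p_{\lam-d})^*$, which the $L^2$--pairing identifies with $L^{p'}_{d-\lam}$, $1/p+1/p'=1$. The formal $L^2$--adjoint of $\d_{\bar z}$ being $-\d_z$, an element $\phi\in L^{p'}_{d-\lam}$ annihilates $\im T=\d_{\bar z}(L^{1,p}_{\lam-1-d})$ iff $\d_z\phi=0$, i.e.\ iff $\phi$ is anti--holomorphic; the mean value property and H\"older's inequality then give $|\phi(z)|\le C\langle z\rangle^{\lam-d-2+2/p}$, and since the exponent lies in $(-d-1,-d)$, $\phi$ is a polynomial in $\bar z$ of degree at most $-d-1$, while conversely each $\bar z^k$ with $0\le k\le-d-1$ is anti--holomorphic and lies in $L^{p'}_{d-\lam}$. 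Thus the annihilator of $\im T$ is $\bar P_{-d}$ and the stated map is an isometric isomorphism; comparing dimensions also records $\ind T=d$.

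\textbf{Main obstacle.} The real difficulty is the Fredholm step: the coercive estimate on $B_1^C$ with \emph{no} lower--order term does not follow by rescaling the interior elliptic estimate, and its validity is exactly what the hypothesis $1-2/p<\lam<2-2/p$ provides, namely that the weight avoids the indicial set $\Z$. Once $\im T$ is known to be closed, identifying $\ker T$ and the annihilator of $\im T$ is routine, the only arithmetic being that the two decay exponents delivered by the weighted embedding fall into the gaps $(d-1,d)$ and $(-d-1,-d)$.
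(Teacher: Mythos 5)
Your proof is correct and takes essentially the same route as the paper's: compute $\ker T$ directly as $P_d$, identify the annihilator of $\im T$ with $\bar P_{-d}$ via $L^p$--$L^{p'}$ duality and the isometry between $(L^p_{\lam-d}/\im T)^*$ and the annihilator (the paper isolates this last fact as Remark~\ref{rmk:X Y}), and invoke the Lockhart--McOwen theory of constant-coefficient elliptic operators in weighted spaces (specifically \cite{Lockhart Fred}, Theorem~4.3) to get that $\im T$ is closed and $T$ is Fredholm, with the non-exceptional weight condition $\lam+2/p\notin\Z$ doing the real work. The only notable divergence is in the kernel step: the paper Fourier-transforms $\d_{\bar z}u=0$, applies the Paley--Wiener theorem (cited from Reed--Simon) to conclude $u$ is real analytic with polynomial growth, then Liouville; you instead invoke Weyl's lemma to get holomorphicity and then the weighted Morrey embedding of Proposition~\ref{prop:lam d Morrey}(\ref{prop:lam Morrey}) for the growth bound, which is lighter machinery and arguably cleaner. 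One small gap: that Morrey estimate requires $p>n=2$, whereas the proposition is stated for all $1<p<\infty$; for $1<p\le 2$ your kernel bound as written does not apply. The fix is immediate — once $u$ is known to be holomorphic, the mean value property plus H\"older on balls of radius $|z|/2$ (the very argument you already use for the cokernel bound on $\phi$) gives $|u(z)|\le C\langle z\rangle^{1+d-\lam-2/p}$ for every $1<p<\infty$ without Morrey — but as stated the proof of (\ref{prop:d L L:kernel}) is incomplete outside $p>2$; the paper's Paley--Wiener route sidesteps this. Your parenthetical self-contained Fredholm sketch via angular Fourier modes on the cylinder is in the right spirit but would need multiplier-type arguments to make the uniform-in-mode $L^p$ estimates rigorous; since both you and the paper ultimately rest the Fredholm claim on the Lockhart citation, this is not a flaw, just worth noting that the sketch is heuristic.
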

\begin{rmk}\label{rmk:X Y} For every normed vector space $X$ and every closed subspace $Y\sub X$ the map $\big\{\phi\in X^*\,|\,\phi(x)=0,\,\forall x\in Y\big\}\to (X/Y)^*$, $\phi\mapsto \big(x+Y\mapsto \phi(x)\big)$, is well-defined and an isometric isomorphism. This follows from a straight-forward argument. 
\end{rmk}
We denote by $\S$ the space of Schwartz functions on $\C$ and by $\S'$ the space of temperate distributions. By $\Hat:\S'\to\S'$ we denote the Fourier transform, and by $\Unhat:\S'\to\S'$ the inverse transformation. 
\begin{proof}[Proof of Proposition \ref{prop:d L L}] \label{proof:d L L}\setcounter{claim}{0} Let $d,p,\lam$ and $T$ be as in the hypothesis. {\bf We start by proving (\ref{prop:d L L:kernel}).} A calculation in polar coordinates shows that for every polynomial $u$ in $z$ we have
\begin{eqnarray}\label{eq:u L}u\in L^{1,p}_{{\lam-1}-d}\iff \deg u<d-\lam+1-2/p.
\end{eqnarray}
Hence our assumption $\lam<-2/p+2$ implies that $\ker T\cont P_d$.

{\bf We prove that $\ker T\sub P_d$.} Let $u\in \ker T$. Then $0=\hhat{\d_{\bar z}u}(\ze)=\frac i2\ze\hhat u$ (as temperate distributions). It follows that the support of $\hhat u$ is either empty or consists of the point $0\in\C$. Hence the Paley-Wiener theorem implies that $u$ is real analytic in the variables $s$ and $t$, where $z=s+it$, and there exists $N\in\N$ such that $\sup_{z\in\C}|u(z)|\lan z\ran^N<\infty$. (See for example Theorem IX.12 in Vol.~I of the book \cite{ReSi}.) Therefore, by Liouville's Theorem $u$ is a polynomial in the variable $z$. Since by our assumption $\lam>-2/p+1$, it follows from (\ref{eq:u L}) that $u\in P_d$. This proves that $\ker T\sub P_d$ and completes the proof of (\ref{prop:d L L:kernel}).

{\bf We prove (\ref{prop:d L L:Fredholm}) and (\ref{prop:d L L:coker}).} We define $p':=p/(p-1)$. We identify the spaces $L^{p'}_{-\lam+d}$ and $(L^p_{\lam-d})^*$ via the isometric isomorphism $u\mapsto \big(v\mapsto \int_\C uv\big)$. Then the adjoint operator $T^*$ is given by $T^*=\d_z:L^{p'}_{-\lam+d}\iso(L^p_{\lam-d})^*\to (L^{1,p}_{{\lam-1}-d})^*$, where the derivatives are taken in the sense of distributions. 
\begin{claim}\label{claim:T P} $\ker T^*=\bar P_{-d}$.
\end{claim}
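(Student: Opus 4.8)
The plan is to mirror the proof of part (\ref{prop:d L L:kernel}) given above, interchanging the roles of $z$ and $\bar z$ and of $(p,\lam)$ with $(p',-\lam)$. Recall that we have just identified $T^* = \d_z : L^{p'}_{-\lam+d} \iso (L^p_{\lam-d})^* \to (L^{1,p}_{\lam-1-d})^*$, the derivative being taken in the sense of distributions; hence $\ker T^*$ is precisely the set of $u \in L^{p'}_{-\lam+d}$ with $\d_z u = 0$ in $\S'$.

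First I would record the analogue of (\ref{eq:u L}): a computation in polar coordinates shows that for every polynomial $u$ in the variable $\bar z$,
\begin{equation}\label{eq:u L bar} u\in L^{p'}_{-\lam+d}\iff \deg u<-d+\lam-2/p'. \end{equation}
The hypothesis $1-2/p<\lam<2-2/p$ is equivalent to $2/p'-1<\lam<2/p'$, so the right-hand side of (\ref{eq:u L bar}) lies strictly between $-d-1$ and $-d$; consequently, for a polynomial $u$ in $\bar z$, the condition in (\ref{eq:u L bar}) is equivalent to $\deg u<-d$, i.e.\ to $u\in\bar P_{-d}$. In particular every $u\in\bar P_{-d}$ lies in $L^{p'}_{-\lam+d}$ and satisfies $\d_z u=0$ (it depends on $\bar z$ only), so $\bar P_{-d}\sub\ker T^*$.

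For the reverse inclusion, let $u\in\ker T^*$. Then $0=\hhat{\d_z u}(\ze)=\frac i2\bar\ze\,\hhat u$ as tempered distributions, so the support of $\hhat u$ is either empty or the single point $0\in\C$. By the Paley--Wiener theorem (see e.g.\ Theorem IX.12 in Vol.~I of \cite{ReSi}) $u$ is real-analytic in $s,t$ and polynomially bounded, hence a polynomial by Liouville's theorem; and $\d_z u=0$ forces it to be a polynomial in $\bar z$ alone. Since $u\in L^{p'}_{-\lam+d}$, the equivalence (\ref{eq:u L bar}) then gives $\deg u<-d$, that is $u\in\bar P_{-d}$. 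This proves $\ker T^*\sub\bar P_{-d}$ and hence Claim \ref{claim:T P}.

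I do not expect any genuine obstacle here: the Fourier/Paley--Wiener/Liouville argument is identical to the one already used for $\ker T$, and the only point requiring care is the index bookkeeping in (\ref{eq:u L bar}) --- translating the weight $-\lam+d$ and the dual exponent $p'=p/(p-1)$ into the stated degree bound, and checking that the two strict inequalities on $\lam$ pin the admissible degrees down to exactly $\{0,1,\dots,-d-1\}$.
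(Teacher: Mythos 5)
Your proof is correct and follows exactly the route the paper intends: the paper itself says the inclusion $\ker T^*\subseteq\bar P_{-d}$ is ``proved analogously'' to $\ker T\subseteq P_d$, and you have supplied exactly those details (the Fourier computation $\hhat{\d_z u}=\tfrac i2\bar\ze\,\hhat u$, Paley--Wiener, Liouville), together with the same polar-coordinate degree bound, which you correctly rewrite as $-d+\lam-2/p'=-d+\lam-2+2/p\in(-d-1,-d)$. No genuine difference in approach; your extra observation that the strict inequalities pin the threshold strictly between consecutive integers is a clean way to see the equivalence with $\deg u<-d$.
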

\begin{proof}[Proof of Claim \ref{claim:T P}] For every polynomial $u$ in $\bar z$ we have
\begin{equation}
  \label{eq:u L *}u\in L^{p'}_{-\lam+d} \iff \deg u<-d+\lam-2/p'=-d+\lam-2+2/p.
\end{equation}
Our assumption $\lam>-2/p+1$ and (\ref{eq:u L *}) imply that $\ker T^*\cont\bar P_{-d}$. Furthermore, the inclusion $\ker T^*\sub \bar P_{-d}$ is proved analogously to the inclusion $\ker T\sub P_d$, using $\lam<-2/p+2$ and (\ref{eq:u L *}). This proves Claim \ref{claim:T P}.
\end{proof}
We apply now Theorem 4.3 in the paper by R.~B.~Lockhart \cite{Lockhart Fred} with $T$ (case $d\leq0$) or $T^*$ (case $d>0$). The hypotheses of that theorem are satisfied, since by our assumption $-2/p+1<\lam<-2/p+2$, and since the operator $T=\d_{\bar z}$ ($T^*$) has constant coefficients and is elliptic, in the sense that its principal symbol $\si_T:\C\to \C,\quad \si_T(\ze)= \frac12(\ze_1+i\ze_2)$ ($\si_{T^*}(\ze)=\frac12(\ze_1-i\ze_2)$) does not vanish on $S^1\sub\C$. Hence that theorem implies that in the case $d\leq0$ the map $T$ is Fredholm, and in the case $d>0$ the operator $T^*$ is Fredholm. It follows that $\im T$ is closed if $d\leq0$. On the other hand, if $d>0$ then $\im T^*$ is closed, hence the same holds for $\im T$. Statements (\ref{prop:d L L:coker}) and (\ref{prop:d L L:Fredholm}) follow now from statement (\ref{prop:d L L:kernel}), Remark \ref{rmk:X Y} applied with $X:=L^p_{\lam-d}$ and $Y:=\im T$, and Claim \ref{claim:T P}. This proves Proposition \ref{prop:d L L}.
\end{proof}
Let $d\in\Z$, $1<p<\infty$, $-2/p+1<\lam<-2/p+2$, and $\rho_0:\R^2\to [0,1]$ be a smooth function that vanishes on $B_{1/2}$ and equals 1 on $B_1^C$. We equip $\C\rho_0p_d+L^{1,p}_{{\lam-1}-d}$ with the norm induced by the isomorphism of Lemma \ref{le:X d iso}. This norm is complete. (See e.g. \cite{Lockhart PhD}.) 
\begin{cor}\label{cor:d L L} The following map is Fredholm, with real index $2+2d$: 
  \begin{equation}
    \label{eq:d bar z rho}\d_{\bar z}:\C\rho_0p_d+L^{1,p}_{{\lam-1}-d}\to L^p_{\lam-d}\,.
  \end{equation}
\end{cor}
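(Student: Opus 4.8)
The plan is to deduce the corollary from Proposition~\ref{prop:d L L} by viewing the passage from the weighted Sobolev space $L^{1,p}_{\lam-1-d}$ to the enlarged domain $\C\rho_0p_d+L^{1,p}_{\lam-1-d}$ as a finite-dimensional extension of the Cauchy--Riemann operator. First I would apply Lemma~\ref{le:X d iso} with $\lam$ replaced by $\lam-1$; this is legitimate since the hypothesis $\lam>-2/p+1$ gives $\lam-1>-2/p$. It identifies the domain $\C\rho_0p_d+L^{1,p}_{\lam-1-d}$ with $\C\oplus L^{1,p}_{\lam-1-d}$ as normed vector spaces via $(v_\infty,v)\mapsto v_\infty\rho_0p_d+v$ (this is, by construction, exactly the norm chosen on the domain). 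Since $p_d$ is holomorphic, $\d_{\bar z}(\rho_0p_d)=(\d_{\bar z}\rho_0)\,p_d=:g$, and because $\d_{\bar z}\rho_0$ is smooth and supported in the annulus $\overline{B_1}\wo B_{1/2}$, the function $g$ is a fixed smooth compactly supported function, hence $g\in L^p_{\lam-d}$ (the weight is irrelevant on a compact set). Therefore, under this identification, the operator in \eqref{eq:d bar z rho} becomes the bounded operator $\widehat T\colon\C\oplus L^{1,p}_{\lam-1-d}\to L^p_{\lam-d}$, $\widehat T(v_\infty,v)=v_\infty g+Tv$, where $T:=\d_{\bar z}\colon L^{1,p}_{\lam-1-d}\to L^p_{\lam-d}$ is the operator of Proposition~\ref{prop:d L L}.

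Next I would record that $T$ is Fredholm by Proposition~\ref{prop:d L L}(\ref{prop:d L L:Fredholm}), with $\ker T=P_d$ by part~(\ref{prop:d L L:kernel}) and $\bigl(L^p_{\lam-d}/\im T\bigr)^*\iso\bar P_{-d}$ by part~(\ref{prop:d L L:coker}), so that $\ind_\R T=\dim_\R P_d-\dim_\R\bar P_{-d}$. A short case distinction then gives $\ind_\R T=2d$ in every case: if $d\ge0$ then $\dim_\C P_d=d$ and $\bar P_{-d}=\{0\}$, while if $d<0$ then $P_d=\{0\}$ and $\dim_\C\bar P_{-d}=-d$. Finally I would invoke the elementary fact that extending a Fredholm operator to a domain enlarged by a finite-dimensional space again yields a Fredholm operator, with index increased by the (real) dimension of the added space. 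Concretely, $\im\widehat T=\im T+\C g$ contains the finite-codimensional closed subspace $\im T$, hence is closed of finite codimension; and $\ker\widehat T=\{(v_\infty,v):Tv=-v_\infty g\}$ maps onto the subspace $\{a\in\C:ag\in\im T\}$ with kernel $\{0\}\oplus\ker T$. Tracking these dimensions (the contribution that drops out of the cokernel is exactly the one that enters the kernel) yields $\ind_\R\widehat T=\ind_\R T+\dim_\R\C=2d+2$, which is the asserted index $2+2d$.

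The argument is essentially routine once Proposition~\ref{prop:d L L} is available. The only points requiring a little care are the bookkeeping in the finite-dimensional-extension step (checking the range is closed and correctly balancing the kernel and cokernel contributions), and the case split $d\ge0$ versus $d<0$ when evaluating $\dim_\R P_d-\dim_\R\bar P_{-d}$, where one must remember the conventions $P_d=\{0\}$ for $d\le0$ and $\bar P_{-d}=\{0\}$ for $d\ge0$. I do not anticipate any serious obstacle.
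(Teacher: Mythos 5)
Your proposal is correct, and it uses the same two ingredients the paper does (Lemma~\ref{le:X d iso} to identify the domain with $\C\oplus L^{1,p}_{\lam-1-d}$, and Proposition~\ref{prop:d L L} to control $\d_{\bar z}$), but it organizes the index computation differently. After conjugating by the isomorphism, the transformed operator is $(v_\infty,v)\mapsto v_\infty g + \d_{\bar z}v$ with $g=(\d_{\bar z}\rho_0)p_d$. The paper writes this as $T'+S$, where $T'(v_\infty,v):=\d_{\bar z}v$ is the composition of the projection $\C\oplus L^{1,p}_{\lam-1-d}\to L^{1,p}_{\lam-1-d}$ (surjective, $2$-dimensional kernel) with the Fredholm operator of Proposition~\ref{prop:d L L}, hence $T'$ is already Fredholm of index $2+2d$; the remaining term $S(v_\infty,v)=v_\infty g$ factors through the finite-dimensional space $\C$, hence is compact, and $T'+S$ inherits the index of $T'$. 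You instead first read off $\ind_\R(\d_{\bar z}\colon L^{1,p}_{\lam-1-d}\to L^p_{\lam-d})=2d$ from parts~(\ref{prop:d L L:kernel}) and~(\ref{prop:d L L:coker}) (with the $d\geq0$ vs.\ $d<0$ case split), and then show by a direct kernel/cokernel count that enlarging the domain by the $2$-real-dimensional space $\C\rho_0 p_d$ raises the index by exactly $2$. Both are valid; the paper's route is slightly slicker in that the compact-perturbation argument sidesteps the bookkeeping of whether $g$ lies in $\im\d_{\bar z}$, while your route is slightly more elementary in that it does not need the stability of the Fredholm index under compact perturbation. Your finite-dimensional-extension step is carried out correctly: the image $\im T+\C g$ is closed of finite codimension, and the $\dim_\R\C=2$ contribution is split between the kernel and the drop in the cokernel in a way that always nets $+2$.
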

\begin{proof}[Proof of Corollary \ref{cor:d L L}]\setcounter{claim}{0} The composition of the isomorphism of Lemma \ref{le:X d iso} with (\ref{eq:d bar z rho}) is given by $T+S:\C\oplus L^{1,p}_{{\lam-1}-d}\to L^p_{\lam-d}$, where $T(x_\infty,u):=\d_{\bar z}u$ and $S(x_\infty,u):=x_\infty(\d_{\bar z}\rho_0)p_d$. The map $T$ is the composition of the canonical projection $\pr:\C\oplus L^{1,p}_{{\lam-1}-d}\to L^{1,p}_{{\lam-1}-d}$ with the operator $\d_{\bar z}:L^{1,p}_{{\lam-1}-d}\to L^p_{\lam-d}$. Using Proposition \ref{prop:d L L}, it follows that $T$ is Fredholm of real index $2+2d$. Furthermore, $S$ is compact, since it equals the composition of the canonical projection $\C\oplus L^{1,p}_{{\lam-1}-d}\to\C$ (which is compact) with a bounded operator. Corollary \ref{cor:d L L} follows.
\end{proof}
The next result is used in the proof of Theorem \ref{thm:Fredholm aug}. Let $(V,\lan\cdot,\cdot\ran)$ be a finite dimensional hermitian vector space, $A,B:V\to V$ positive  linear maps, $\lam\in\R$ and $1<p<\infty$. We define
\[T_\lam:=\left(
  \begin{array}{cc}
\d_{\bar z}& A\\
B & \d_z
  \end{array}
\right):W^{1,p}_\lam(\C,V\oplus V)\to L^p_\lam(\C,V\oplus V).\]
\begin{prop}\label{prop:d A B d} The operator $T_\lam$ is Fredholm of index 0.
\end{prop}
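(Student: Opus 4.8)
The plan is to reduce the Fredholm property of $T_\lam$ to the two cases $d=0$ and $d$ arbitrary already handled for the scalar operator $\d_{\bar z}$ (Proposition \ref{prop:d L L} and Corollary \ref{cor:d L L}), together with an index computation via homotopy. First I would observe that by Proposition \ref{prop:lam d Morrey}(\ref{prop:lam W}), conjugating by $u\mapsto\lan\cdot\ran^\lam u$ identifies $T_\lam:W^{1,p}_\lam(\C,V\oplus V)\to L^p_\lam(\C,V\oplus V)$ with an operator of the form $T_0 + (\textrm{0-th order bounded})$ acting on the \emph{unweighted} Sobolev space $W^{1,p}(\C,V\oplus V)\to L^p(\C,V\oplus V)$; the extra terms coming from differentiating the weight are multiplication operators with coefficients in $\lan\cdot\ran^{-1}$ times bounded functions, hence (by Proposition \ref{prop:lam d Morrey}(\ref{prop:lam W cpt}), applied with $f$ a function tending to $0$ at infinity) compact. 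So it suffices to treat $\lam=0$, i.e.\ to show that
\[
T_0=\begin{pmatrix}\d_{\bar z}&A\\ B&\d_z\end{pmatrix}:W^{1,p}(\C,V\oplus V)\to L^p(\C,V\oplus V)
\]
is Fredholm of index $0$.

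For this I would diagonalize the constant-coefficient zeroth-order part. Since $A$ and $B$ are positive, $BA$ has positive eigenvalues; more precisely one can simultaneously bring the pair $(A,B)$ into a normal form: there is an invertible (complex-linear) change of basis on each of the two copies of $V$ after which $A$ and $B$ are both diagonal with positive diagonal entries, or more robustly, $A$ is positive and $B$ can be replaced by $A^{-1/2}(A^{1/2}BA^{1/2})A^{-1/2}$, reducing to the case $A=B=S$ a positive diagonal matrix. (Alternatively, conjugating the second row/column by $A^{1/2}$ and the first by $A^{-1/2}$ turns the off-diagonal entries into $\mathrm{Id}$ and $A^{1/2}BA^{1/2}$, which is positive; diagonalizing that completes the reduction.) Thus $T_0$ becomes block-diagonal, a direct sum over eigenvalues $c_j>0$ of scalar operators
\[
T^{(c)}=\begin{pmatrix}\d_{\bar z}&c\\ c&\d_z\end{pmatrix}:W^{1,p}(\C,\C\oplus\C)\to L^p(\C,\C\oplus\C),
\]
so it is enough to prove each $T^{(c)}$ is Fredholm of index $0$.

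To analyze $T^{(c)}$ I would compute its Fourier symbol. On the Fourier side $\d_{\bar z}$ and $\d_z$ become multiplication by $\tfrac i2(\xi_1+i\xi_2)$ and $\tfrac i2(\xi_1-i\xi_2)$, so the full symbol of $T^{(c)}$ is $\bigl(\begin{smallmatrix}\tfrac i2(\xi_1+i\xi_2)&c\\ c&\tfrac i2(\xi_1-i\xi_2)\end{smallmatrix}\bigr)$, whose determinant is $-\tfrac14|\xi|^2-c^2$, which never vanishes. Hence $T^{(c)}$ is an elliptic constant-coefficient system whose symbol is moreover invertible for \emph{all} $\xi$ (not just $\xi\ne0$); this is the ``non-degenerate'' situation in which the Lockhart--McOwen theory applies with no exceptional weights near $\lam=0$, giving that $T^{(c)}$ is Fredholm on $W^{1,p}(\C,\C^2)\to L^p(\C,\C^2)$. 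I would cite \cite{Lockhart Fred} (Theorem 4.3) here, exactly as in the proof of Proposition \ref{prop:d L L}, noting that the relevant indicial roots lie off the line determined by $\lam=0$ since the symbol is invertible on all of $\R^2$. For the index, the homotopy $t\mapsto\bigl(\begin{smallmatrix}\d_{\bar z}&tc\\ tc&\d_z\end{smallmatrix}\bigr)$, $t\in[0,1]$, stays Fredholm (its symbol has determinant $-\tfrac14|\xi|^2-t^2c^2$, invertible for every $\xi$ and every $t\in[0,1]$ — at $t=0$ one uses that $\bigl(\begin{smallmatrix}\d_{\bar z}&0\\ 0&\d_z\end{smallmatrix}\bigr)$ is elliptic and, on $\R^2$ with these exponents, an isomorphism), so the index of $T^{(c)}$ equals the index at $t=0$. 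At $t=0$ the operator is $\d_{\bar z}\oplus\d_z$, and $\d_z$ is the formal adjoint of $\d_{\bar z}$, so on the $L^2$-type scale $\d_z$ has index $-\mathrm{ind}\,\d_{\bar z}$; more directly, on $W^{1,p}(\C)\to L^p(\C)$ one has $\ker\d_{\bar z}=0$ (a $W^{1,p}$ entire function on $\C$ must vanish, by Liouville after noting such a function is bounded) and likewise $\ker\d_z=0$, and both are surjective, so each has index $0$ and hence $T^{(c)}$ has index $0$. Assembling the direct sum over the eigenvalues $c_j$ and undoing the compact perturbation and the weight conjugation, $T_\lam$ is Fredholm of index $0$.

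\textbf{Main obstacle.} The delicate point is not ellipticity but the \emph{choice of function space on $\R^2$}: on the plane the plain Sobolev embeddings and the Rellich--Kondrachov theorem fail, so one cannot casually treat the zeroth-order terms $A,B$ as compact perturbations of $\d_{\bar z}\oplus\d_z$ — indeed they are not compact, which is precisely why $T_\lam$ has index $0$ rather than the index $2+2d$ of $\d_{\bar z}$ alone. The real work is to invoke the Lockhart--McOwen framework correctly: one must check that for the coupled symbol the weight $\lam$ (equivalently $0$ after conjugation) avoids the discrete set of indicial exponents, which here is automatic because the symbol is invertible on all of $\R^2$ (not merely nonzero on $S^1$), and this is exactly the structural feature that distinguishes $T_\lam$ from the decoupled Cauchy--Riemann operator. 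Once that is in place, the homotopy argument and the index bookkeeping are routine.
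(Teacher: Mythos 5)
Your reduction to $\lam=0$ via weight conjugation is exactly what the paper does, and your observation that the \emph{full} symbol of $T^{(c)}$ has determinant $-\tfrac14|\xi|^2-c^2$, nowhere vanishing on all of $\R^2$, is the key structural point (the paper exploits it through the factorization $LL'=(-\La/4+A^{1/2}BA^{1/2})\oplus(-\La/4+B^{1/2}AB^{1/2})$ and Calder\'on's theorem, Proposition \ref{prop:Calderon}). However, your proof has a genuine gap in the index computation.

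The homotopy $t\mapsto T^{(tc)}$ down to $t=0$ is not valid, because the endpoint $\d_{\bar z}\oplus\d_z:W^{1,p}(\C,\C^2)\to L^p(\C,\C^2)$ is \emph{not} Fredholm, contrary to your claim that it is ``an isomorphism.'' Its symbol determinant $\tfrac14|\xi|^2$ vanishes at $\xi=0$, which is exactly the low-frequency obstruction to $L^p$-solvability on the plane. Concretely, fix $g\in C^\infty_0(B_1)$ with $\int g\neq 0$ and set $f_R(z):=R^{-2/p}g(z/R)$; then $\Vert f_R\Vert_{L^p}$ is $R$-independent while the unique $W^{1,p}$ solution $u_R$ of $\d_{\bar z}u_R=f_R$ scales as $u_R(z)=R^{1-2/p}v(z/R)$, so $\Vert u_R\Vert_{L^p}=R\Vert v\Vert_{L^p}\to\infty$. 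Since $\ker\d_{\bar z}\cap W^{1,p}(\C)=\{0\}$, this shows the range is not closed, so $\d_{\bar z}:W^{1,p}\to L^p$ is not Fredholm, and the homotopy invariance of the index does not apply across $t=0$. (Your own ``Main obstacle'' paragraph correctly identifies that $A,B$ are \emph{not} compact perturbations of $\d_{\bar z}\oplus\d_z$ on these spaces — this is precisely the reason the homotopy endpoint is bad, and the proof should not appeal to it.) Relatedly, Theorem 4.3 of \cite{Lockhart Fred} is stated for operators on the $L^{k,p}_\delta$ scale, where the weight grows with the order of differentiation; it does not apply directly to $T^{(c)}$ on $W^{1,p}_0\to L^p_0$, so Fredholmness along the homotopy for $t>0$ is also not justified by that citation.

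The fix is short and removes the need for any homotopy: the nowhere-vanishing full symbol means $T^{(c)}$ is an \emph{isomorphism} $W^{1,p}(\C,\C^2)\to L^p(\C,\C^2)$, hence trivially Fredholm of index $0$. One can see this by the Mikhlin multiplier theorem (the inverse symbol is smooth, bounded, and satisfies the Mikhlin decay conditions because the determinant is uniformly bounded away from zero), or, as the paper does, by exhibiting the explicit two-sided inverse $R=L'(LL')^{-1}$ after the algebraic conjugation $T_0=(A^{1/2}\oplus B^{1/2})L(A^{-1/2}\oplus B^{-1/2})$, with $LL'$ invertible by Calder\'on's theorem. Either route gives index $0$ directly. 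With that substitution, the rest of your argument — the weight conjugation and the compactness of the commutator via Proposition \ref{prop:lam d Morrey}(\ref{prop:lam W cpt}) — is correct and matches the paper.
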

For the proof of Proposition \ref{prop:d A B d} we need the following result.
\begin{prop}\label{prop:Calderon} Let $(V,\lan\cdot,\cdot\ran),p$ and $A$ be as above, and $n\in\N$. Then the map $-\La+A:W^{2,p}(\R^n,V)\to L^p(\R^n,V)$ is an isomorphism (of Banach spaces).
\end{prop}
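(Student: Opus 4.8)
The plan is to reduce Proposition~\ref{prop:Calderon} to the classical fact that $1-\La$ is an isomorphism from $W^{2,p}(\R^n)$ onto $L^p(\R^n)$, and then to dispose of the zeroth-order coefficient and of the vector-space factor by elementary manipulations.

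First I would exploit that $A$ is positive: on a complex Hermitian space the positivity $\lan Av,v\ran>0$ for all $v\neq0$ forces $A=A^*$, so $A$ is self-adjoint and positive definite. Diagonalizing $A$ by a unitary $U$ and observing that $U$, acting pointwise on $V$-valued maps, commutes with $\La$ and is an isometry of both $W^{2,p}(\R^n,V)$ and $L^p(\R^n,V)$, we have $-\La+A=U\circ(-\La+D)\circ U^{-1}$ with $D=\diag(a_1,\dots,a_m)$, $m:=\dim V$, and all $a_j>0$; so we may assume $A=D$. Identifying $V$-valued maps with $m$-tuples of scalar maps, $-\La+D$ becomes the direct sum of the scalar operators $-\La+a_j$, so it suffices to treat $-\La+a\colon W^{2,p}(\R^n)\to L^p(\R^n)$ for a constant $a>0$. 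Finally, the dilation $u(x)=v(\sqrt a\,x)$ satisfies $(-\La u+au)(x)=a\bigl[(-\La+1)v\bigr](\sqrt a\,x)$, and dilations are bounded isomorphisms of $W^{2,p}(\R^n)$ and of $L^p(\R^n)$ with bounded inverses; hence the claim follows once we know that $1-\La\colon W^{2,p}(\R^n)\to L^p(\R^n)$ is an isomorphism.

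To see this, boundedness is clear from the definition of $W^{2,p}$, and injectivity is immediate: if $(1-\La)u=0$ with $u\in W^{2,p}\sub\S'$, then $(1+|\xi|^2)\hhat u=0$, so $\hhat u=0$ and $u=0$. For surjectivity with a bounded inverse I would, given $f\in L^p$, set $u:=\F^{-1}\bigl((1+|\cdot|^2)^{-1}\hhat f\bigr)\in\S'$, so that $(1-\La)u=f$ at the level of Fourier transforms; the required a priori estimate $\Vert u\Vert_p+\sum_{|\al|\le2}\Vert\d^\al u\Vert_p\le C\Vert f\Vert_p$ is precisely the statement that, for each multi-index $\al$ with $|\al|\le2$, the Fourier multiplier with symbol $\xi^\al(1+|\xi|^2)^{-1}$ is bounded on $L^p(\R^n)$ for $1<p<\infty$. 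Each such symbol is smooth and satisfies the Mikhlin--H\"ormander condition $|\xi|^{|\beta|}\,|\d^\beta m(\xi)|\le C_\beta$ for every $\beta$, so the Mikhlin multiplier theorem applies; equivalently, this is the Calder\'on--Zygmund elliptic estimate, or Calder\'on's identification of the Bessel-potential space of order $2$ with $W^{2,p}(\R^n)$. This simultaneously shows $u\in W^{2,p}$ and bounds its norm, so $1-\La$ is a bounded bijection with bounded inverse, completing the argument.

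The only genuinely non-elementary ingredient is the $L^p$ boundedness of the multipliers with symbols $\xi_i\xi_j(1+|\xi|^2)^{-1}$: over $L^2$ it is just Plancherel, but for general $p\in(1,\infty)$ it requires Calder\'on--Zygmund theory and cannot be obtained by softer means. Everything else — the passage to self-adjoint $A$, the diagonalization, the rescaling, and the vector-valued bookkeeping — is routine, so I expect no real obstacle beyond correctly invoking the classical isomorphism $1-\La\colon W^{2,p}\to L^p$.
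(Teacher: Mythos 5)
Your proof is correct and follows essentially the same route as the paper's: both reduce to the scalar case by diagonalizing the positive (hence self-adjoint) map $A$ and then invoke Calder\'on's characterization of $W^{2,p}(\R^n)$ — the paper phrases this as the Bessel-potential convolution $u\mapsto G*u$ being an isomorphism $L^p\to W^{2,p}$, while you phrase the same fact as $L^p$-boundedness of the Fourier multipliers $\xi^\al(1+|\xi|^2)^{-1}$, $|\al|\le2$. If anything, you are slightly more careful than the paper: you make the dilation $u(x)=v(\sqrt a\,x)$ explicit to pass from $-\La+a$ to $1-\La$, a step the paper's phrase ``reduced to the above case by diagonalizing'' silently absorbs, and you spell out why positivity forces $A=A^*$ (via realness of $\lan Av,v\ran$ and polarization), which the paper also uses implicitly when it takes square roots $A^{1/2}$ in the proof of Proposition~\ref{prop:d A B d}.
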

\begin{proof}[Proof of Proposition \ref{prop:Calderon}]\setcounter{claim}{0} Consider first the case $\dim_\C V=1$ and $A=1$. We define $G:=(2\pi)^{\frac n2}\big(\langle\cdot\rangle^{-2}\big)\Unhat\in \S'$. The map $\S\ni u\mapsto G*u\in\S$ is well-defined. By Calder\'on's Theorem this map extends uniquely to an isomorphism
\begin{equation}
  \label{eq:L G u}L^p(\R^n,\C)\ni u\mapsto G*u\in W^{2,p}(\R^n,\C).
\end{equation}
(See Theorem 1.2.3. in the book \cite{Ad}.) Note that $(-\La+1)(G*u)=\big(\lan\cdot\ran^2(G*u)\Hat\big)\Unhat=u$, for every $u\in\S$. It follows that the inverse of (\ref{eq:L G u}) is given by $-\La+1:W^{2,p}(\R^n,\C)\to L^p(\R^n,\C)$. Hence this is an isomorphism.

The general case can be reduced to the above case by diagonalizing the map $A$. This proves Proposition \ref{prop:Calderon}.
\end{proof}
\begin{proof}[Proof of Proposition \ref{prop:d A B d}]\label{proof:d A B d} \setcounter{claim}{0} We abbreviate $L^p:=L^p(\C,V\oplus V)$, etc. 

{\bf Assume first that $\lam=0$.} We denote by $A^{1/2},B^{1/2}:V\to V$ the unique positive  linear maps satisfying $(A^{\frac12})^2=A$, $(B^{\frac12})^2=B$. We define 
\begin{eqnarray}\nn L:=\left(
    \begin{array}{cc}\d_{\bar z}&A^{\frac12}B^{\frac12}\\
B^{\frac12}A^{\frac12}&\d_z
    \end{array}
\right):W^{1,p}\to L^p.
\end{eqnarray}
A short calculation shows that 
\begin{equation}
  \label{eq:A B T}T_0=\big(A^{\frac12}\oplus B^{\frac12}\big)L\big(A^{-\frac12}\oplus B^{-\frac12}\big).
\end{equation}
\begin{claim}\label{claim:L} The operator $L$ is an isomorphism. 
\end{claim}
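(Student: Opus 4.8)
The plan is to prove Claim \ref{claim:L} by an explicit Fourier-multiplier argument, using Proposition \ref{prop:Calderon} as the engine. Observe that $L$ has constant coefficients, so it is natural to compute $L^2$ (or a close variant) and reduce to a scalar-type Laplacian plus a positive zeroth-order term. Concretely, since $A^{1/2}$ and $B^{1/2}$ commute with $\d_z$ and $\d_{\bar z}$, a short computation gives
\[
\left(\begin{array}{cc}\d_{\bar z}&-A^{1/2}B^{1/2}\\-B^{1/2}A^{1/2}&\d_z\end{array}\right)
\left(\begin{array}{cc}\d_{\bar z}&A^{1/2}B^{1/2}\\B^{1/2}A^{1/2}&\d_z\end{array}\right)
=\left(\begin{array}{cc}\d_{\bar z}\d_z - A^{1/2}B A^{1/2}&0\\0&\d_z\d_{\bar z}-B^{1/2}AB^{1/2}\end{array}\right),
\]
and $\d_z\d_{\bar z}=\d_{\bar z}\d_z=\tfrac14\La$. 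So up to the harmless conjugation in \eqref{eq:A B T} and the constant factor, the point is that $-\tfrac14\La + C$ is an isomorphism $W^{2,p}\to L^p$ for any positive $C$ (here $C=A^{1/2}BA^{1/2}$ on the first summand and $C=B^{1/2}AB^{1/2}$ on the second, both positive), which is exactly Proposition \ref{prop:Calderon} after rescaling. This shows the first-order operator $L':=\left(\begin{smallmatrix}\d_{\bar z}&-A^{1/2}B^{1/2}\\-B^{1/2}A^{1/2}&\d_z\end{smallmatrix}\right)$ composed with $L$ is an isomorphism $W^{2,p}\to L^p$, hence $L:W^{1,p}\to L^p$ is injective with closed image; running the adjoint/transpose on the other side (or equivalently composing in the other order) gives surjectivity, so $L$ is an isomorphism.

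The steps, in order, are: (i) record that $A^{1/2},B^{1/2}$ are bounded invertible maps on the finite-dimensional $V$, so the conjugation identity \eqref{eq:A B T} reduces the claim for $T_0$ to the claim for $L$; (ii) verify the $2\times 2$ operator-matrix factorization above, treating $\d_z,\d_{\bar z}$ as constant-coefficient operators commuting with the endomorphisms of $V$; (iii) invoke Proposition \ref{prop:Calderon} (with $n=2$, and $A$ there taken to be $4A^{1/2}BA^{1/2}$ resp. $4B^{1/2}AB^{1/2}$, both positive) to conclude that the diagonal second-order operator on the right is an isomorphism $W^{2,p}\to L^p$; (iv) deduce that $L$ is bounded below on $W^{1,p}$ with closed range, hence injective with closed image, and similarly that $L'$ is; (v) note that since $L'L$ and $LL'$ (suitably interpreted between $W^{1,p}$ and $W^{-1,p}$, or by the same factorization read the other way) are isomorphisms, $L$ is both injective and surjective, hence an isomorphism by the open mapping theorem. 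Finally, from Claim \ref{claim:L} and \eqref{eq:A B T}, $T_0$ is an isomorphism, in particular Fredholm of index $0$, which is the $\lam=0$ case.

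For general $\lam$, I would reduce to $\lam=0$ using Proposition \ref{prop:lam d Morrey}\eqref{prop:lam W}: the map $u\mapsto\langle\cdot\rangle^\lam u$ is an isomorphism $W^{k,p}_\lam\to W^{k,p}$ for $k=0,1$. Conjugating $T_\lam$ by this weight isomorphism turns it into $\langle\cdot\rangle^\lam T_\lam\langle\cdot\rangle^{-\lam}$, which equals $T_0$ plus the commutator terms coming from differentiating $\langle\cdot\rangle^{-\lam}$; those commutators involve factors like $\lam\langle\cdot\rangle^{-2}z$ acting as multiplication, hence map $W^{1,p}\to L^p$ through $W^{1,p}_{-1}$, and by Proposition \ref{prop:lam d Morrey}\eqref{prop:lam W cpt} (multiplication by a function tending to $0$ at infinity is compact from $W^{1,p}$ to $L^p$) this perturbation is compact. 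Therefore $\langle\cdot\rangle^\lam T_\lam\langle\cdot\rangle^{-\lam}$ is a compact perturbation of the isomorphism $T_0$, hence Fredholm of index $0$, and conjugating back, $T_\lam$ is Fredholm of index $0$.

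The main obstacle I anticipate is step (v): making precise that injectivity plus closed range of $L$ on $W^{1,p}$ together with the factorization actually force surjectivity onto $L^p$. The cleanest route is probably to observe that the factorization shows $L:W^{1,p}\to L^p$ has a two-sided parametrix modulo smoothing — indeed $L'\circ L$ and $L\circ L'$, after composing with the inverse of the diagonal second-order operator from Proposition \ref{prop:Calderon}, literally exhibit explicit left and right inverses for $L$ as a bounded operator $W^{1,p}\to L^p$ (one checks $L(L'\,G) = \mathrm{id}$ and $(L'\,G)\,L=\mathrm{id}$ where $G$ is the inverse of the diagonal operator, using again that all the $V$-endomorphisms commute with $\d_z,\d_{\bar z}$ and with each other). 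This makes $L$ an honest isomorphism with no appeal to the open mapping theorem needed, and then the rest is bookkeeping.
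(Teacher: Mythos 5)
Your strategy is the same as the paper's: compose $L$ with a companion constant-coefficient first-order operator $L'$ so that the product is a diagonal Helmholtz-type operator $-\La/4 + (\textrm{positive})$ handled by Proposition \ref{prop:Calderon}, then reduce general $\lam$ to $\lam=0$ by conjugating with the weight $\langle\cdot\rangle^\lam$ and noting that the commutator error is compact by Proposition \ref{prop:lam d Morrey}(\ref{prop:lam W cpt}). However, your choice of $L'$ is wrong and the displayed factorization does not hold. With
\[L' = \left(\begin{array}{cc}\d_{\bar z}&-A^{1/2}B^{1/2}\\-B^{1/2}A^{1/2}&\d_z\end{array}\right),\]
the product $L'L$ has diagonal entries $\d_{\bar z}^2 - A^{1/2}BA^{1/2}$ and $\d_z^2 - B^{1/2}AB^{1/2}$, which are not of the form $\pm\La/4 + C$ since $\d_{\bar z}^2 \neq \d_z\d_{\bar z}$, and its off-diagonal entries are $A^{1/2}B^{1/2}(\d_{\bar z}-\d_z)$ and $B^{1/2}A^{1/2}(\d_z-\d_{\bar z})$, which do not vanish. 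The companion you want is the cofactor-type operator with the diagonal entries of $L$ swapped and the overall sign flipped,
\[L' := \left(\begin{array}{cc}-\d_z&A^{1/2}B^{1/2}\\B^{1/2}A^{1/2}&-\d_{\bar z}\end{array}\right),\]
which does satisfy $LL' = L'L = \big(-\La/4 + A^{1/2}BA^{1/2}\big) \oplus \big(-\La/4 + B^{1/2}AB^{1/2}\big)$; this is exactly the $L'$ the paper uses.

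A second, smaller point: in step (v) you justify the cancellations by asserting that ``all the $V$-endomorphisms commute with $\d_z,\d_{\bar z}$ and with each other,'' but $A$ and $B$ (hence $A^{1/2}$ and $B^{1/2}$) are not assumed to commute with each other. This turns out to be harmless: the only facts needed are that constant endomorphisms commute with the constant-coefficient derivatives, and the formal identity $LL'=L'L$, which holds without any commutativity between $A$ and $B$. But the passage from ``$LL'$ is an isomorphism $W^{2,p}\to L^p$'' to ``$R:=L'(LL')^{-1}$ is a two-sided inverse of $L$'' still requires an argument, since $(LL')^{-1}$ is merely a bounded inverse on $L^p$ and not a differential operator one can freely commute past $L$. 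The paper handles this by checking $RL=\id$ on Schwartz functions, where $LL'=L'L$ is a literal equality of differential operators, and then extending by density of $\S\subset W^{1,p}$; your sketch should be made precise along those lines rather than by appealing to commutativity.
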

\begin{proof}[Proof of Claim \ref{claim:L}] We define
\[L':=\left(
  \begin{array}{cc}-\d_z&A^{\frac12}B^{\frac12}\\
B^{\frac12}A^{\frac12}&-\d_{\bar z}\end{array}
\right):W^{2,p}\to W^{1,p}.\]
By a short calculation we have $LL'=\big(-\La/4 +A^{\frac12}B A^{\frac12}\big)\oplus\big(-\La/4 + B^{\frac12}A B^{\frac12}\big):W^{2,p}\to L^p$. Since the linear maps $A^{\frac12}BA^{\frac12},B^{\frac12}AB^{\frac12}:V\to V$ are positive, Proposition \ref{prop:Calderon} implies that $LL'$ is an isomorphism. We denote by $(LL')^{-1}:L^p\to W^{2,p}$ its inverse and define $R:=L'(LL')^{-1}:L^p\to W^{1,p}$. Then $R$ is bounded and $LR=\id_{L^p}$. 

By a short calculation, we have $LL'(u,v)=L'L(u,v)$, for every Schwartz function $(u,v)\in\S$. This implies that $(LL')^{-1}L|_\S=L(LL')^{-1}|_\S$, and therefore $RL|_\S=\id_\S$. Since $RL:W^{1,p}\to W^{1,p}$ is continuous and $\S\sub W^{1,p}$ is dense, it follows that $RL=\id_{W^{1,p}}$. Claim \ref{claim:L} follows. 
\end{proof}
The maps $A^{\frac12}\oplus B^{\frac12}:L^p\to L^p$ and $A^{-\frac12}\oplus B^{-\frac12}:W^{1,p}\to W^{1,p}$ are automorphisms. Therefore, (\ref{eq:A B T}) and Claim \ref{claim:L} imply that $T$ is an isomorphism. 

Consider now the {\bf general case $\lam\in\R$}. The map $L^p\ni (u,v)\mapsto \lan\cdot\ran^{-\lam}(u,v)\in L^p_\lam$ is an isometric isomorphism. Furthermore, by Proposition \ref{prop:lam d Morrey}(\ref{prop:lam W}) the map $W^{1,p}_\lam\ni (u,v)\mapsto \lan\cdot \ran^\lam(u,v)\in W^{1,p}$ is well-defined and an isomorphism. We define $S:=\lan\cdot\ran^\lam(\d_{\bar z}\lan\cdot\ran^{-\lam})\oplus\lan\cdot\ran^\lam(\d_z\lan\cdot\ran^{-\lam}):W^{1,p}\to L^p$. Direct calculations show that $T_\lam=\lan\cdot\ran^{-\lam}(T_0+S)\lan\cdot\ran^\lam$, $|\d_{\bar z}\lan\cdot\ran^{-\lam}|\leq |\lam|\lan\cdot\ran^{-\lam-1}/2$ and $|\d_z\lan\cdot\ran^{-\lam}|\leq|\lam|\lan\cdot\ran^{-\lam-1}/2$. Therefore, Proposition \ref{prop:lam d Morrey}(\ref{prop:lam W cpt}) implies that the operator $S$ is compact. We proved that $T_0$ is an isomorphism. It follows that $T_\lam$ is a Fredholm map of index 0. This proves Proposition \ref{prop:d A B d} in the general case.
\end{proof}
\section{Proof of Proposition \ref{prop:right} (Right inverse for $d_A^*$)}\label{sec:proof:prop:right}
For the proof of Proposition \ref{prop:right} we need the following three results. Let $n\in\N$, $1\leq p\leq\infty$, $G$ a compact Lie group with Lie algebra $\g$ and $\lan\cdot,\cdot\ran_\g$ an invariant inner product on $\g$. For a Riemannian manifold $X$ and a $G$-bundle $P\to X$ we denote by $\A^{1,p}(P)$ the space of $W^{1,p}$-connections on $P$.
\begin{prop}\label{prop:Uhlenbeck} Let $K\sub \R^n$ be a compact subset diffeomorphic to $\bar B_1$. If $n/2<p<\infty$ then there exist constants $\eps>0$ and $C$ such that for every principal $G$-bundle $P\to \Int K$, $A_0\in\A(P)$, and $A\in\A^{1,p}(P)$ the following holds. If $A_0$ is flat and $\Vert F_A\Vert_p\leq\eps$ then there exists a gauge transformation $g\in \G^{2,p}(P)$ such that $\Vert g^*A-A_0\Vert_{1,p,A_0}\leq C\Vert F_A\Vert_p$. 
\end{prop}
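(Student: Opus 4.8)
The plan is to deduce the proposition from the classical local Coulomb gauge-fixing theorem applied relative to a product connection, and to organize the reduction so that the constants $\eps$ and $C$ remain uniform in $P$, $A_0$ and $A$. Since $\Int K$ is diffeomorphic to an open ball it is contractible, so every principal $G$-bundle $P\to\Int K$ is trivial; I fix a trivialization and let $\theta$ denote the corresponding product connection. Because $A_0$ is smooth and flat and $\Int K$ is simply connected, the holonomy of $A_0$ vanishes, so $A_0$ admits a global smooth horizontal section; equivalently, there is a smooth gauge transformation $g_0\in\G(P)$ with $g_0^*A_0=\theta$. This is the only point at which flatness of $A_0$ is used.

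Next I would invoke the standard local Coulomb (Uhlenbeck) gauge-fixing theorem: since $K$ is diffeomorphic to $\bar B_1$ and $n/2<p<\infty$, there exist constants $\eps_0>0$ and $C_0$, depending only on $K$, $p$, $G$ and $\lan\cdot,\cdot\ran_\g$, such that for every $B\in\A^{1,p}(P)$ with $\Vert F_B\Vert_p\leq\eps_0$ there is $h\in\G^{2,p}(P)$ with $\Vert h^*B-\theta\Vert_{1,p,\theta}\leq C_0\Vert F_B\Vert_p$ (here $\G^{2,p}(P)$ is a group precisely because $2p>n$). I apply this with $B:=g_0^*A$. Since the curvature transforms by the fiberwise adjoint action, $F_{g_0^*A}=\Ad_{g_0^{-1}}F_A$, and since $\lan\cdot,\cdot\ran_\g$ is $\Ad$-invariant, $\Vert F_B\Vert_p=\Vert F_A\Vert_p$; hence, choosing $\eps:=\eps_0$, the hypothesis $\Vert F_A\Vert_p\leq\eps$ makes the theorem applicable and produces such an $h$.

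It then remains to assemble the gauge transformation and the estimate. I set $g:=g_0\,h\,g_0^{-1}\in\G^{2,p}(P)$ and $C:=C_0$. Unwinding the gauge action one finds $g^*A=(g_0^{-1})^*(h^*B)$ and $A_0=(g_0^{-1})^*\theta$; since the difference of two connections is a section of the bundle of $\g_P$-valued one-forms, on which a gauge transformation acts by the fiberwise adjoint action, the gauge transformation $g_0^{-1}$ (which takes $\theta$ to $A_0$) sends $h^*B-\theta$ to $g^*A-A_0$. Because that action also intertwines the covariant derivatives $\na^\theta$ and $\na^{A_0}$, the base metric is fixed, and $\lan\cdot,\cdot\ran_\g$ is $\Ad$-invariant, the norm $\Vert\cdot\Vert_{1,p,\theta}$ is carried isometrically onto $\Vert\cdot\Vert_{1,p,A_0}$. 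Hence \[\Vert g^*A-A_0\Vert_{1,p,A_0}=\Vert h^*B-\theta\Vert_{1,p,\theta}\leq C_0\Vert F_B\Vert_p=C\Vert F_A\Vert_p,\] and since $\eps_0$ and $C_0$ did not depend on $P$, $A_0$ or $A$, the estimate is uniform as required.

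The substantive analytic input is the classical local Coulomb gauge theorem itself, whose proof is the usual continuity/implicit-function-theorem argument combined with the borderline elliptic estimates for $d^*\oplus d$ on a ball in $\R^n$ valid for $n/2<p$; I would simply cite it rather than reproduce it. The one genuinely delicate point in the reduction is the bookkeeping in the last step: replacing the background connection $A_0$ by the product connection $\theta$ costs a conjugation by $g_0$, and $g_0$ is not controlled by the data (flat connections form an unbounded gauge orbit), so its contribution cannot be absorbed into $C$. Conjugating $h$ by $g_0$ — rather than merely composing with $g_0$ — is exactly what makes $g_0$ cancel out of the final inequality, and this is the step I would check most carefully.
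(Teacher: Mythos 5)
Your proof is correct and takes essentially the same route as the paper: both reduce to the classical local Coulomb/Uhlenbeck gauge theorem relative to a product connection by exploiting that a flat $A_0$ on a simply connected base is gauge-equivalent to the product connection, and then transfer the resulting estimate by conjugation (your $g:=g_0 h g_0^{-1}$, the norm equality via $\Ad$-invariance, and the intertwining of $\na^\theta$ and $\na^{A_0}$ are exactly the bookkeeping the paper packages into the bundle isomorphism $\Psi$ with $\Psi^*A_0=\wt A_0$ and the identity $\Vert g^*A-A_0\Vert_{1,p,A_0}=\Vert\wt g^*\wt A-\wt A_0\Vert_{1,p,\wt A_0}$).
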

\begin{proof}[Proof of Proposition \ref{prop:Uhlenbeck}]\setcounter{claim}{0} Let $n,p,G,\lan\cdot,\cdot\ran_\g$ and $K$ be as in the hypothesis. We denote by $\wt A_0$ the trivial connection on $\Int K\x G$. By Uhlenbeck's gauge theorem there exist constants $\eps>0$ and $C$ such that for every connection $\wt A\in \A^{1,p}(\Int K\x G)$ satisfying $\Vert F_{\wt A}\Vert_p\leq\eps$ there exists $\wt g\in\G^{2,p}(\Int K\x G)$ such that $\Vert \wt g^*{\wt A}-\wt A_0\Vert_{1,p,\wt A_0}\leq C\Vert F_{\wt A}\Vert_p$. (This follows for example from Theorem 6.3 in \cite{Weh}.) Let $P\to \Int K$ be a $G$-bundle, $A_0\in\A(P)$ be flat, and $A\in\A^{1,p}(P)$ be such that $\Vert F_A\Vert_p\leq\eps$. Since $A_0$ and $\wt A_0$ are flat, there exists a smooth isomorphism of $G$-bundles $\Psi:\Int K\x G\to P$ (with fixed base) such that $\Psi^*A_0=\wt A_0$. We choose $\wt g\in\G^{2,p}(\Int K\x G)$ as in the conclusion of Uhlenbeck's theorem with $\wt A:=\Psi^*A$. We define $g:=\wt g\circ\Psi^{-1}\in\G^{2,p}(P)$. A straight-forward calculation shows that $\Vert g^*A-A_0\Vert_{1,p,A_0}=\Vert \wt g^*\wt A-\wt A_0\Vert_{1,p,\wt A_0}$. The statement of Proposition \ref{prop:Uhlenbeck} follows from this. 
\end{proof}
\begin{prop}\label{prop:right d A * d A} Let $n\in\N$, $1<p<\infty$, $G$ a compact Lie group with Lie algebra $\g$, $\lan\cdot,\cdot\ran_\g$ an invariant inner product on $\g$, and $K\sub \R^n$ a compact subset diffeomorphic to $\bar B_1$. Then there exists a constant $C$ such that for every principal $G$-bundle $P\to \Int K$ and every smooth flat connection $A$ on $P$ there exists right inverse $R$ of $d_A^*d_A:W^{2,p}_A(\g_P)\to L^p(\g_P)$ satisfying $\Vert R\Vert:=\big\{\Vert R\xi\Vert_{2,p,A}\,\big|\,\xi\in L^p(\g_P):\,\Vert\xi\Vert_p\leq1\big\}\leq C$.
\end{prop}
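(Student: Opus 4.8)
**Proof proposal for Proposition \ref{prop:right d A * d A}.**

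The plan is to reduce the statement to a single bundle-and-connection model on $\Int K$ by using flatness, and then to invoke elliptic regularity and standard functional analysis for a fixed operator. First I would fix a compact $K\subset\R^n$ diffeomorphic to $\bar B_1$. Since any two flat connections on a simply-connected base (and $\Int K$ is simply connected, being diffeomorphic to an open ball) are gauge equivalent, for every $G$-bundle $P\to\Int K$ and every smooth flat connection $A$ on $P$ there is a smooth bundle isomorphism $\Psi:\Int K\x G\to P$ fixing the base with $\Psi^*A=\wt A_0$, the trivial connection on $\Int K\x G$. (Here I use that a flat $G$-bundle over a simply-connected manifold is trivial, with holonomy killing the gauge freedom up to conjugation, and then conjugate further to the trivial connection.) Pulling back along $\Psi$ gives an isometric identification $W^{k,p}_A(\g_P)\iso W^{k,p}_{\wt A_0}(\Int K\x\g)=W^{k,p}(\Int K,\g)$ intertwining $d_A^*d_A$ with $d_{\wt A_0}^*d_{\wt A_0}$, which on the trivial bundle with trivial connection is just the componentwise negative Laplacian $-\La$ acting on $\g$-valued functions. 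Therefore it suffices to produce a bounded right inverse $R_0$ of $-\La:W^{2,p}(\Int K,\g)\to L^p(\Int K,\g)$ with a norm bound that does \emph{not} depend on $P$ or $A$; pushing $R_0$ forward along $\Psi$ then yields the required $R$ with $\Vert R\Vert\leq\Vert R_0\Vert=:C$, a constant depending only on $n$, $p$, $G$, $\lan\cdot,\cdot\ran_\g$ and $K$.

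It remains to construct $R_0$. Choosing a slightly larger open set $\Om'\supset\bar{\Int K}$ (still, say, diffeomorphic to a ball) with smooth boundary, I would solve the Dirichlet problem $-\La v=\xi$ on $\Om'$, $v|_{\d\Om'}=0$, for $\xi\in L^p(\Int K,\g)$ extended by $0$ to $\Om'$; by the $L^p$-theory for the Laplacian on a smooth bounded domain (Agmon--Douglis--Nirenberg, or Calderón--Zygmund estimates), this has a unique solution $v\in W^{2,p}(\Om',\g)\cap W^{1,p}_0(\Om',\g)$ with $\Vert v\Vert_{W^{2,p}(\Om')}\leq C'\Vert\xi\Vert_{L^p(\Om')}$, and then $R_0\xi:=v|_{\Int K}$ is a bounded linear right inverse of $-\La$ on $\Int K$. (Alternatively one can use the operator $(-\La+1)^{-1}$ from Proposition \ref{prop:Calderon} on $\R^n$ composed with a correction, but the interior Dirichlet construction is cleaner since $\Int K$ has no translation-invariance.) Since the $\g$-components decouple, the operator norm of $R_0$ is bounded by the scalar constant $C'$ times a factor depending only on $\dim\g$ and the chosen inner product.

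The only genuinely delicate point is ensuring the bound on $\Vert R\Vert$ is \emph{uniform} over all $(P,A)$: this is exactly what the identification $\Psi^*A=\wt A_0$ buys us, since after the pullback the operator is literally the same fixed operator on the fixed domain $\Int K$, so the constant $C$ never sees $P$ or $A$. I expect the main obstacle to be the careful verification that $\Psi$ can be chosen so that $\Psi^*$ is an \emph{isometry} of the relevant Sobolev norms (not merely an isomorphism): the norm $\Vert\cdot\Vert_{k,p,A}$ is defined using $\na^A$ and $\lan\cdot,\cdot\ran_\g$, and since $\Psi^*A=\wt A_0$ the covariant derivative $\na^A$ pulls back to the ordinary derivative and the fibre metric is the constant $\lan\cdot,\cdot\ran_\g$, so the pullback genuinely is norm-preserving; one just has to spell this out. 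With that in hand, Proposition \ref{prop:right d A * d A} follows.
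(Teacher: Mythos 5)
Your proposal is correct and follows essentially the same two-step strategy as the paper's proof: (1) use flatness of $A$ together with simple connectedness of $\Int K$ to trivialize $P$ and pull back $\na^A$ to the ordinary derivative, reducing $d_A^*d_A$ isometrically to $-\La$ acting componentwise on $\g$-valued functions; (2) produce a bounded right inverse of $-\La:W^{2,p}\to L^p$ on the fixed domain, whose operator norm is then automatically uniform in $(P,A)$. The paper realizes step (1) via the $A$-horizontal section $\si$ through a chosen fiber point (the unique smooth section with $A\,d\si=0$ and $\si(0)=p_0$) and the induced isometric isomorphisms $\Psi_k:W^{k,p}(B_1,\g)\to W^{k,p}_A(\g_P)$, which is exactly the trivialization you describe. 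The only genuine variation is in step (2): the paper takes $K=\bar B_1$ without loss of generality and sets $Tf:=(\Phi*\wt f)|_{B_1}$, where $\Phi$ is the Newtonian potential and $\wt f$ extends $f$ by zero, obtaining the $W^{2,p}$ bound from Young's inequality (for the $L^p$ part) and the Calder\'on--Zygmund inequality (for the second-derivative part); you instead solve the Dirichlet problem $-\La v=\xi$ on a slightly enlarged smooth domain $\Om'\supset K$ and invoke Agmon--Douglis--Nirenberg. Both constructions are standard, yield a right inverse with a bound depending only on $n$, $p$, $G$, $\lan\cdot,\cdot\ran_\g$ and $K$, and are interchangeable here; the paper's version is marginally more self-contained in that it does not require any boundary regularity for $\Om'$, while yours avoids the explicit convolution computation.
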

\begin{proof}[Proof of Proposition \ref{prop:right d A * d A}]\setcounter{claim}{0} Let $n\in\N$ and $1<p<\infty$. For an open subset $U\sub\R^n$ we denote by $C^\infty_0(U)$ the compactly supported smooth functions on $U$. We define the map $\wt T:C^\infty_0(B_1)\to C^\infty(B_1)$ as follows. We denote by $\Phi:\R^n\wo\{0\}\to \R$ the fundamental solution of the Laplace equation (see e.g. \cite{Ev}, p.~22). Let $f\in C^\infty_0(B_1)$. We define $\wt f:\R^n\to\R$ to be the extension of $f$ by 0 outside $B_1$. We denote by $*$ convolution in $\R^n$ and define $\wt Tf:=(\Phi*\wt f)|_{B_1}$. Note that $\Phi$ is locally integrable, hence the convolution is well-defined. Furthermore, $\wt Tf$ is smooth, and $\La\wt Tf=f$. (The first assertion follows from differentiation under the integral, and for the second see for example Theorem 1 on p.~23 in the book \cite{Ev}.) 
\begin{claim}\label{claim:C Tf} There exists a constant $C$ such that $\Vert \wt Tf\Vert_{W^{2,p}(B_1)}\leq C\Vert f\Vert_{L^p(B_1)}$, for every $f\in C^\infty_0(B_1)$. 
\end{claim}
\begin{proof}[Proof of Claim \ref{claim:C Tf}] Young's inequality states that $\Vert \wt Tf\Vert_{L^p(B_1)}\leq\Vert \Phi\Vert_{L^1(B_2)}\Vert f\Vert_{L^p(B_1)}$, for every $f\in C^\infty_0(B_1)$. Furthermore, the Calder\'on-Zygmund inequality states that there exists a constant $C$ such that for every $f\in C^\infty_0(\R^n)$ we have $\Vert D^2(\Phi*f)\Vert_p\leq C\Vert f\Vert_p$. (See for example Theorem B.2.7 in \cite{MSJ}. Note that $\Phi_j*f=(\d_j\Phi)*f=\d_j(\Phi*f)$.) Claim \ref{claim:C Tf} follows from this. 
\end{proof}
We fix a constant $C$ as in Claim \ref{claim:C Tf}. By this claim the map $\wt T$ uniquely extends to a bounded linear map $T: L^p(B_1)\to W^{2,p}(B_1)$. Since $\La \wt Tf=f$, for every $f\in C^\infty_0(B_R)$, a density argument shows that $\La Tf=f$, for every $f\in W^{2,p}(B_1)$, \ie $T$ is a right inverse for $\La:W^{2,p}(B_R)\to L^p(B_R)$. 

Let now $G,\lan\cdot,\cdot\ran_\g$ and $K$ be as in the hypothesis. Without loss of generality we may assume that $K=\bar B_1$. Let $\pi:P\to B_1$ be a $G$-bundle, and $A\in\A(P)$ be flat. We fix a point $p_0\in P_0$, and denote by $\si:B_1\to P$ the $A$-horizontal section through $p_0$. This is the unique smooth section of $P$ satisfying $A\,d\si=0$ and $\si(0)=p_0$. For $k\geq0$ we define the map $\Psi_k:W^{k,p}(B_1,\g)\to W^{k,p}_{A}(B_1,\g_P)$ by $\Psi_k\xi:=G\cdot(\si,\xi)$. This is an isometric isomorphism. We define $R:=-\Psi_2T\Psi_0^{-1}$. It follows that $\big\Vert R\big\Vert\leq\Vert\Psi_2\Vert \Vert T\Vert \Vert\Psi_0^{-1}\Vert=\Vert T\Vert\leq C$, where we use various operator norms. A straight-forward calculation shows that $d_A^*d_A\Psi_2=\Psi_0d^*d=-\Psi_0\La:W^{2,p}(B_1,\g)\to L^p(B_1,\g_P)$. It follows that $R$ is a right inverse for $d_{A}^*d_{A}$. This proves Proposition \ref{prop:right d A * d A}.
\end{proof}
\begin{lemma}\label{le:infty 1 p A} Let $n\in\N$, $p>n$, $G$ a compact Lie group with Lie algebra $\g$, $\lan\cdot,\cdot\ran_\g$ an invariant inner product on $\g$ and $K\sub\R^n$ a compact subset diffeomorphic to $\bar B_1$. Then there exist constants $C$ and $\eps>0$ such that for every principal $G$-bundle $\pi:P\to \Int K$, $A\in\A^{1,p}(P)$, $k=0,1$ and $\al\in W^{1,p}_A\big(\wk(\g_P)\big)$ the following holds. If $\Vert F_A\Vert_{L^p(\Int K)}\leq\eps$ then $\Vert\al\Vert_{L^\infty(\Int K)}\leq C\Vert\al\Vert_{1,p,A}$. 
\end{lemma}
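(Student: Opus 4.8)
The plan is to reduce the asserted $L^\infty$-estimate, via Uhlenbeck's gauge fixing theorem, to the case where $A$ is a small $W^{1,p}$-perturbation of a trivial flat connection, and then to run a Morrey-type argument in which the difference between $\na^A$ and the componentwise ordinary derivative is a zeroth-order term with small $L^\infty$-coefficient. Both values $k=0,1$ are handled by the same computation.

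First I would fix the constants. Since $\Int K$ is diffeomorphic to $B_1$ it is contractible, so $P$ is isomorphic to $\Int K\x G$; fixing such an isomorphism, the corresponding trivial connection $A_0$ is smooth and flat. Because $p>n>n/2$, Proposition~\ref{prop:Uhlenbeck} provides constants $\eps_U>0$ and $C_U$ such that whenever $\Vert F_A\Vert_{L^p(\Int K)}\le\eps_U$ there is $g\in\G^{2,p}(P)$ with $\Vert g^*A-A_0\Vert_{1,p,A_0}\le C_U\Vert F_A\Vert_p\le C_U\eps_U$. As $\lan\cdot,\cdot\ran_\g$ is $\Ad$-invariant, $G$ acts by fibrewise isometries on $\wk(\g_P)$, and since curvature and covariant derivatives transform covariantly under gauge transformations we have $\Vert F_{g^*A}\Vert_p=\Vert F_A\Vert_p$, $\Vert g^*\al\Vert_{L^\infty}=\Vert\al\Vert_{L^\infty}$ and $\Vert g^*\al\Vert_{1,p,g^*A}=\Vert\al\Vert_{1,p,A}$. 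Therefore it suffices to prove the bound in the trivialization $P=\Int K\x G$ with $A=A_0+a$ and $\Vert a\Vert_{1,p,A_0}\le C_U\eps_U$, where $\na^{A_0}$ on $\wk(\g_P)$ is the componentwise ordinary derivative (the Levi-Civita connection of the Euclidean metric being trivial in standard coordinates), so that $\Vert a\Vert_{1,p,A_0}$ is the usual $W^{1,p}(\Int K)$-norm.

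Next I would carry out the estimate. In this trivialization $\na^A\al=\na^{A_0}\al+[a,\al]$, uniformly for $k=0,1$, hence pointwise $|\na^{A_0}\al|\le|\na^A\al|+c_\g|a|\,|\al|$ with $c_\g:=\sup\{|[\xi,\eta]|:|\xi|=|\eta|=1\}$. Let $C_M$ be a constant with $\Vert u\Vert_{L^\infty(\Int K)}\le C_M\Vert u\Vert_{W^{1,p}(\Int K)}$ (Morrey's inequality, valid since $p>n$ and $\Int K$ is a bounded domain with $C^1$ boundary, being diffeomorphic to $B_1$). Then $\Vert a\Vert_{L^\infty}\le C_M\Vert a\Vert_{1,p,A_0}\le C_MC_U\eps_U$, so applying Morrey's inequality to $\al$ itself,
\[\Vert\al\Vert_{L^\infty}\le C_M\big(\Vert\al\Vert_p+\Vert\na^{A_0}\al\Vert_p\big)\le C_M\big(\Vert\al\Vert_p+\Vert\na^A\al\Vert_p+c_\g C_M C_U\eps_U\Vert\al\Vert_p\big)\le C_M\big(1+c_\g C_M C_U\eps_U\big)\Vert\al\Vert_{1,p,A}.\]
(If $\Vert\al\Vert_{1,p,A}=\infty$ the inequality is trivial; if it is finite then, since $a\in W^{1,p}(\Int K)\subset L^\infty$ and $\na^A\al=\na^{A_0}\al+[a,\al]$, the section $g^*\al$ lies in $W^{1,p}(\Int K)$, so the use of Morrey's inequality is legitimate.) This establishes the lemma with $\eps:=\eps_U$ and $C:=C_M(1+c_\g C_M C_U\eps_U)$.

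The step I expect to be the main obstacle — and the reason the small-curvature hypothesis appears in the statement — is controlling the zeroth-order term $[a,\cdot]$. A naive Sobolev estimate bounding $\Vert\al\Vert_{L^\infty}$ by $\Vert\al\Vert_{L^p}+\Vert\na^A\al\Vert_{L^p}$ fails because $\na^A$ differs from $\na^{A_0}$ by $[a,\cdot]$ with $a$ only of class $W^{1,p}$, and estimating $\Vert[a,\al]\Vert_p$ against $\Vert\al\Vert_{W^{1,p}}$ would be circular; passing to an Uhlenbeck gauge relative to the trivial flat connection is exactly what makes $\Vert a\Vert_{L^\infty}$ controllably small, breaking the circularity (and, as a bonus, keeping $C$ uniformly bounded). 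The remaining ingredients — contractibility of $\Int K$, gauge covariance of the relevant norms, and Morrey's inequality — are standard.
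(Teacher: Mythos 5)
Your proof is correct and follows essentially the same route as the paper's: apply Uhlenbeck gauge fixing (Proposition \ref{prop:Uhlenbeck}) to bring $A$ into a gauge where it is a small $W^{1,p}$-perturbation of a flat connection $A_0$, use Morrey's inequality in the flat gauge, and absorb the zeroth-order term $[(A'-A_0)\otimes\al']$ using the $L^\infty$-smallness of $A'-A_0$ (which also comes from Morrey). The only cosmetic difference is that you work with the trivial flat connection from the start, whereas the paper states Morrey's bound uniformly over all flat $A_0$ and then invokes Uhlenbeck; these are equivalent since $\Int K$ is contractible.
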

\begin{proof}[Proof of Lemma \ref{le:infty 1 p A}]\setcounter{claim}{0} Let $n,p,G,\g,\lan\cdot,\cdot\ran_\g$ and $K$ be as in the hypothesis. We choose constants $\eps>0$ and $C_1:=C$ as in Proposition \ref{prop:Uhlenbeck}. Since by assumption $p>n$, it follows from Morrey's theorem that there exists a constant $C_2$ with the following property. If $P\to\Int K$ is a $G$-bundle, $A_0\in\A(P)$ is flat, $k=0,1$ and $\al\in W^{1,p}_{A_0}\big(\wk(\g_P)\big)$ then $\Vert \al\Vert_\infty\leq C_2\Vert\al\Vert_{W^{1,p}_{A_0}}$. Let $\pi:P\to \Int K$ be a $G$-bundle and $A\in\A^{1,p}(P)$ be such that $\Vert F_A\Vert_{L^p(\Int K)}\leq\eps$. We choose $g\in\G^{2,p}(P)$ as in Proposition \ref{prop:Uhlenbeck}. Let $\al\in W^{1,p}_A\big(\wk(\g_P)\big)$. We set $A':=g^*A$, $\al':=g^*\al$ and $C_3:=\max\big\{[\xi,\eta]\,|\,\xi,\eta\in \g:\,|\xi|\leq1,\,|\eta|\leq1\big\}$. A direct calculation shows that $(\na^{A_0}-\na^{A'})\al'=[(A'-A_0)\otimes\al']$. It follows that 
\begin{equation}\label{eq:al infty}\Vert \al\Vert_\infty\leq\Vert \al'\Vert_\infty\leq C_2\Vert\al'\Vert_{1,p,A_0}\leq C_2\big(\Vert\al'\Vert_{1,p,A'}+C_3\Vert A'-A_0\Vert_\infty\Vert\al'\Vert_p\big).\end{equation}
By the statement of Proposition \ref{prop:Uhlenbeck}, we have $\Vert A'-A_0\Vert_{1,p,A_0}\leq C_1\eps$. Combining this with Morrey's theorem and (\ref{eq:al infty}), Lemma \ref{le:infty 1 p A} follows. 
\end{proof}
\begin{proof}[Proof of Proposition \ref{prop:right}]\setcounter{claim}{0} Let $n,p,G,\lan\cdot,\cdot\ran_\g,K,P$ and $A$ be as in the hypothesis. We show that the operator (\ref{eq:d A W}) admits a bounded right inverse.
\begin{claim}\label{claim:L} There exists a bounded linear map $L:W^{1,p}_A(\Int K,\g_P)\to W^{1,p}_A\big(\wone(\g_P)\big)$ such that $d_A^*L=\id$. 
\end{claim}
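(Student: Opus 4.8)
The plan is to reduce the construction of $L$ to inverting the covariant (Bochner) Laplacian $\La_A:=d_A^*d_A$ acting on sections of $\g_P$. Indeed, if $S$ is a bounded linear operator with $\La_AS=\id$, then $L:=d_A\circ S$ satisfies $d_A^*L=d_A^*d_AS=\La_AS=\id$; and if moreover $S$ maps boundedly into $W^{2,p}_A(\g_P)$, then $L$ maps $W^{1,p}_A(\g_P)$ — indeed all of $L^p(\g_P)$ — boundedly into $W^{1,p}_A\big(\wone(\g_P)\big)$, since $\Vert d_A\eta\Vert_{1,p,A}\leq\Vert\eta\Vert_{2,p,A}$ straight from the definitions of these norms ($\na^A(d_A\eta)$ is just the second covariant derivative $(\na^A)^2\eta$, there being no curvature term because the form bundle carries the flat euclidean connection). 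So it suffices to produce such an $S$.

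To build $S$ I would work on the compact manifold with boundary $\bar K$. Since $K$ is diffeomorphic to $\bar B_1$, its boundary $\partial K$ is a smooth $(n-1)$-sphere, and I may assume $A$ is smooth up to $\partial K$ (this is the case in the applications of the proposition; otherwise one exhausts $\Int K$ by slightly smaller smoothly bounded subdomains diffeomorphic to $\bar B_1$). The operator $\La_A$ is elliptic with scalar principal symbol $|\xi|^2\cdot\id$, so the zero Dirichlet condition satisfies the Lopatinskii--Shapiro condition; hence, by the Agmon--Douglis--Nirenberg estimates, $\La_A$ with that boundary condition is a Fredholm operator of index $0$ from $\big\{\eta\in W^{2,p}(\Int K,\g_P)\,\big|\,\eta|_{\partial K}=0\big\}$ to $L^p(\Int K,\g_P)$, for every $1<p<\infty$. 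Its kernel is trivial: such an $\eta$ is smooth up to $\partial K$ by elliptic regularity, and Green's formula (the boundary term vanishing since $\eta|_{\partial K}=0$) gives $0=\int_{\Int K}\langle\La_A\eta,\eta\rangle=\int_{\Int K}|d_A\eta|^2$, so $d_A\eta=0$; thus $|\eta|$ is constant on the connected set $\Int K$ and vanishes near $\partial K$, whence $\eta\equiv 0$. A Fredholm operator of index $0$ with trivial kernel is an isomorphism, so its inverse is the desired bounded operator $S:L^p(\Int K,\g_P)\to W^{2,p}_A(\g_P)$, and $L:=d_A\circ S$ then proves the claim (and is in fact bounded on all of $L^p(\g_P)$, which is exactly what is needed afterwards to deduce that $d_A^*:W^{1,p}_A\big(\wone(\g_P)\big)\to L^p(\g_P)$ has a bounded right inverse).

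The only real obstacle is this global inversion of $\La_A$. On the open ball the covariant Laplacian has an infinite-dimensional kernel — the covariantly harmonic sections — so the convolution construction of Proposition \ref{prop:right d A * d A}, which trivializes $\g_P$ by a global $A$-horizontal section, is available only when $A$ is flat; for a general connection one is driven onto $\bar K$ and onto elliptic boundary value theory, and the substance of the argument is the triviality of the Dirichlet kernel together with the standard a priori estimate. An alternative that stays on the open ball would be to cover $\bar K$ by small balls on which $\Vert F_A\Vert_p$ is as small as desired, gauge-fix each one to a neighbourhood of a flat connection via Proposition \ref{prop:Uhlenbeck}, build local right inverses of $\La_A$ by perturbing those of Proposition \ref{prop:right d A * d A} through a Neumann series, and patch with a partition of unity; but then the first-order commutator errors produced by the cutoffs must be absorbed, which is the delicate point of that route, so I would favour the boundary value formulation above.
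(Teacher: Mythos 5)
Your route is genuinely different from the paper's. Where you invert the covariant Dirichlet Laplacian on $\bar K$ and set $L:=d_A\circ S$, the paper constructs $L$ completely explicitly by \emph{covariant antidifferentiation in a single coordinate direction}: writing $K=\bar B_1$, $\xi\mapsto\eta$ where $\eta(x)$ is the integral of $\xi$ along the $x_1$-segment from $(0,x_2,\ldots,x_n)$ to $x$, pulled back by the $A$-parallel transport in the $x_1$-direction, and then $L\xi:=\eta\,dx^1$. Since $\na^A_1\eta=\xi$ and $d_A^*(\eta\,dx^1)=\mp\na^A_1\eta$, this gives $d_A^*L=\pm\id$ with essentially no analysis; boundedness on $W^{1,p}_A$ follows from Minkowski's integral inequality (plus, for the transverse covariant derivatives, the $L^\infty$-control of $\xi$ coming from $p>n$). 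The construction is local in the $x_1$-variable and never looks at $\partial K$, so it is insensitive to the behaviour of $A$ near the boundary. This is precisely where your argument has a real gap: the hypothesis of Proposition \ref{prop:right} is an arbitrary smooth $A\in\A(P)$ on the \emph{open} set $\Int K$, with no regularity or boundedness assumed up to $\partial K$, so the coefficients of $\La_A$ may degenerate at the boundary and the ADN/Fredholm package is not available. You flag this, but your fallback --- exhausting $\Int K$ by smaller subdomains --- produces a right inverse of $d_A^*$ only over the smaller domain, not a map $L:W^{1,p}_A(\Int K,\g_P)\to W^{1,p}_A\big(\wone(\g_P)\big)$ over all of $\Int K$, which is what Claim \ref{claim:L} asserts. (It would suffice for the later patching in the proof of Theorem \ref{thm:L w * R}, because the cutoffs $\rho_i$ are supported well inside $\Om_i$, but it does not prove the claim as stated.) Modulo that, the substance of your argument is sound: $\La_A$ with Dirichlet condition is index $0$ when coefficients extend smoothly, your integration-by-parts shows the kernel is trivial via $d_A\eta=0\then d|\eta|^2=0$, and the estimate $\Vert d_A\eta\Vert_{1,p,A}\leq\Vert\eta\Vert_{2,p,A}$ holds for the reason you give. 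What your route buys --- if the boundary issue is resolved --- is a stronger statement ($L$ bounded on all of $L^p$) and a construction that does not single out a coordinate direction; what the paper's route buys is an argument that is both elementary and immune to the boundary regularity of $A$.
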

\begin{proof}[Proof of Claim \ref{claim:L}] We may assume without loss of generality that $K=\bar B_1$. We define $\Om\sub\R\x P$ to be the subset consisting of all $(t,p)$ such that $|t+x_1|^2+x_2^2+\cdots+x_n^2<1$, where $x:=\pi(p)\in B_1$. Furthermore, we denote by $\Psi:\Om\to P$ the $A$-parallel transport in $x_1$-direction. Let $(t_0,p_0)\in\Om$. We denote $x_0:=\pi(p_0)\in\bar B_1$. Then $\Psi(t_0,p_0)=p(t_0)$, where $p:\{t\in\R\,|\,(t,x_0)\in\Om\}\to P$ is the unique path satisfying $\pr\circ p(t)=x_0+(t,0,\ldots0)$, for every $t\in\R$, $A\dot p=0$ and $p(0)=p_0$. Let $\xi\in W^{1,p}(B_1,\g_P)$. We define $\wt\xi:P\to\g$ by the condition $[p,\wt\xi(p)]=\xi\circ \pi(p)$, for $p\in P$. Let $p\in B_1$ and $(x_1,\ldots,x_n):=\pi(p)$. We define $\wt\eta:=\int_{-x_1}^0\wt\xi\circ\Psi(t,p)dt\in\g$. Furthermore, we define the section $\eta:B_1\to\g_P$ by the condition $\eta\circ\pi(p)=[p,\wt\eta(p)]$, for every $p\in P$, and  $L\xi:=\eta\,dx^1$. Then $L$ has the required properties. 
This proves Claim \ref{claim:L}.
\end{proof}
We choose a map $L$ as in Claim \ref{claim:L}. Furthermore, we choose a smooth flat connection $A_0$ on $P$. By Proposition \ref{prop:right d A * d A} there exists a bounded right inverse $R_0$ of $d_{A_0}^*d_{A_0}:W^{2,p}_{A_0}(\g_P)\to L^p(\g_P)$. We define the map 
\begin{equation}\label{eq:R d A R 0}R:=d_AR_0+L\big(\id-d_A^*d_AR_0\big):L^p(\g_P)\to W^{1,p}_A\big(\wone(\g_P)\big).\end{equation} 
The first assertion of Proposition \ref{prop:right} is now a consequence of the following.
\begin{claim}\label{claim:R} The map (\ref{eq:R d A R 0}) is well-defined and bounded, and $d_A^*R=\id$. 
\end{claim}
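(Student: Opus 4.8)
The plan is to verify the three assertions of Claim \ref{claim:R} in turn, relying on the operators already constructed. First I would check \textbf{well-definedness and boundedness}: the map $R_0:L^p(\g_P)\to W^{2,p}_{A_0}(\g_P)$ is bounded by Proposition \ref{prop:right d A * d A}, and $d_A:W^{2,p}_{A_0}(\g_P)\to W^{1,p}_{A}\big(\wone(\g_P)\big)$ is bounded because $A-A_0\in W^{1,p}$ and, for $p>n$, $W^{1,p}$ is a Banach algebra on $\Int K$ (Sobolev multiplication), so the zeroth-order term $[(A-A_0)\otimes\,\cdot\,]$ relating $\na^{A_0}$ and $\na^{A}$ maps $W^{1,p}$ to $W^{1,p}$. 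Next, $d_A^*d_AR_0:L^p(\g_P)\to L^p(\g_P)$ is bounded (it is $d_A^*d_A$ post-composed with a bounded map, and $d_A^*d_A:W^{2,p}_A(\g_P)\to L^p(\g_P)$ is bounded), hence $\id - d_A^*d_AR_0$ is a bounded endomorphism of $L^p(\g_P)$; composing with the bounded $L$ of Claim \ref{claim:L} gives the second summand. So $R$ is a bounded linear map into $W^{1,p}_A\big(\wone(\g_P)\big)$.

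The second point is the identity $d_A^*R=\id$. Applying $d_A^*$ to \eqref{eq:R d A R 0} and using $d_A^*L=\id$ from Claim \ref{claim:L}, one gets
\[
d_A^*R=d_A^*d_AR_0+d_A^*L\big(\id-d_A^*d_AR_0\big)=d_A^*d_AR_0+\big(\id-d_A^*d_AR_0\big)=\id,
\]
which is a one-line computation. This establishes that $R$ is a bounded right inverse of the operator \eqref{eq:d A W}, proving the first assertion of Proposition \ref{prop:right}.

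For the \textbf{second assertion} (the uniform bound under $\Vert F_A\Vert_p\le\eps$) I would trace the dependence of $\Vert R\Vert$ on $A$. The only piece of the construction that is not manifestly uniform is the comparison between the $A$-connection and the flat connection $A_0$: one must control $\Vert A-A_0\Vert_\infty$ (or $\Vert A-A_0\Vert_{1,p}$) in order to bound the operator norms of $d_A$, $d_A^*d_A$, and the multiplication terms appearing above. This is exactly what Proposition \ref{prop:Uhlenbeck} provides: for $\Vert F_A\Vert_p\le\eps$ there is a gauge transformation $g$ with $\Vert g^*A-A_0\Vert_{1,p,A_0}\le C\Vert F_A\Vert_p\le C\eps$, and by Morrey (since $p>n$) also $\Vert g^*A-A_0\Vert_\infty\le C'\eps$. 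Applying the above construction to $g^*A$ in place of $A$ yields a right inverse $R'$ of $d_{g^*A}^*$ whose norm is bounded by a constant depending only on $n,p,G,\lan\cdot,\cdot\ran_\g,K$ (via the uniform constants in Propositions \ref{prop:right d A * d A} and \ref{prop:Uhlenbeck} and Lemma \ref{le:infty 1 p A}), and then pulling back by $g$ (which acts isometrically on the relevant $W^{1,p}_A$ and $L^p$ spaces) produces the desired $R$ with $\Vert R\Vert\le C$.

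The main obstacle is the boundedness of $d_A$ and $d_A^*d_A$ with constants controlled independently of $A$: this forces one to pass through the Uhlenbeck gauge and to use the Sobolev algebra property of $W^{1,p}$ for $p>n$ to absorb the difference $A-A_0$. Once that uniform control is in place the remaining verifications are formal. I would present the well-definedness and the identity $d_A^*R=\id$ first, then note that all constants entering the estimate of $\Vert R\Vert$ depend only on the fixed data together with $\Vert A-A_0\Vert_{1,p,A_0}$ and $\Vert A-A_0\Vert_\infty$, and finally invoke Proposition \ref{prop:Uhlenbeck} and the gauge-invariance to conclude the uniform bound.
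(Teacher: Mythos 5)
Your verification of the identity $d_A^*R=\id$ is correct and matches the paper. However, there is a genuine gap in your argument for well-definedness of the second summand $L\big(\id - d_A^*d_AR_0\big)$.

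You assert that $\id - d_A^*d_AR_0$ is a bounded endomorphism of $L^p(\g_P)$ and then conclude by "composing with the bounded $L$ of Claim~\ref{claim:L}." But the map $L$ supplied by Claim~\ref{claim:L} is bounded only as a map $W^{1,p}_A(\Int K,\g_P)\to W^{1,p}_A\big(\wone(\g_P)\big)$; it is \emph{not} defined on all of $L^p(\g_P)$. (Its construction integrates $\xi$ in the $x_1$-direction, which gains regularity only in $x_1$ and needs $\xi\in W^{1,p}$ to control the remaining partial derivatives of the output.) Knowing merely that $\id-d_A^*d_AR_0$ lands in $L^p$ does not allow you to apply $L$.

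The missing step is the cancellation that the paper exploits: since $R_0$ is a right inverse of $d_{A_0}^*d_{A_0}$, one has
\begin{equation*}
\id-d_A^*d_AR_0 \;=\; -\big(d_A^*d_A-d_{A_0}^*d_{A_0}\big)R_0 \;=\; -SR_0 ,
\end{equation*}
and the operator $S:=d_A^*d_A-d_{A_0}^*d_{A_0}$ is of first (or zeroth) order, so it is bounded from $W^{2,p}$ into $W^{1,p}_A$. Composed with $R_0:L^p\to W^{2,p}_{A_0}$ this shows $\id-d_A^*d_AR_0$ actually maps $L^p(\g_P)$ boundedly into $W^{1,p}_A(\Int K,\g_P)$ --- the domain of $L$. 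Without observing that the leading second-order terms cancel, the composition $L(\id-d_A^*d_AR_0)$ is simply not defined, so your proof does not go through as written. Your discussion of the first summand $d_AR_0$ and of the uniform-bound assertion is fine (though the latter is outside the scope of this particular claim).
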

\begin{proof}[Proof of Claim \ref{claim:R}] A short calculation shows that $S:=d_A^*d_A-d_{A_0}^*d_{A_0}$ is of first or zeroth order. Hence it defines a bounded linear map from $W^{2,p}_A(\Int K,\g_P)$ to $W^{1,p}_A(\Int K,\g_P)$. Furthermore $\id-d_A^*d_AR_0=-SR_0$. This implies that $R$ is well-defined and bounded. A short calculation shows that $d_A^*R=\id$. This proves Claim \ref{claim:R}. 
\end{proof}
To prove the second statement of Proposition \ref{prop:right}, we choose $\eps_1,C_1$ as in  Lemma \ref{le:infty 1 p A} (corresponding to $\eps,C$) and $\eps_2,C_2$ as in Proposition \ref{prop:Uhlenbeck} (corresponding to $\eps,C$). We also fix a constant $C_3:=C$ as in Proposition \ref{prop:right d A * d A}, and we define $\eps:=\min\big\{\eps_1,\eps_2,1/(2C_1C_2C_3)\big\}$. Let $P\to\Int K$ be a $G$-bundle and $A\in \A(P)$ be such that $\Vert F_A\Vert_p\leq\eps$. We choose a flat smooth connection $A_0$ on $P$. By the assertion of Proposition \ref{prop:Uhlenbeck} there exists $g\in\G^{2,p}(P)$ such that 
\begin{equation}\label{eq:g * A A 0}\Vert g^*A-A_0\Vert_{1,p,A_0}\leq C_2\Vert F_A\Vert_p.\end{equation}
Furthermore, by the assertion of Proposition \ref{prop:right d A * d A} there exists a right inverse $R_0$ of $d_{A_0}^*d_{A_0}:W^{2,p}_{A_0}(\g_P)\to L^p(\g_P)$ satisfying $\Vert R_0\Vert\leq C_3$, where $\Vert R_0\Vert$ denotes the operator norm of $R_0$. We define the map $S:W^{1,p}_{A_0}\big(\wone(\g_P)\big)\to L^p(\g_P)$ by $S\al:=-*\big[(g^*A-A_0)\wedge*\al\big]$. Since $A_0$ is flat, the hypotheses of Lemma \ref{le:infty 1 p A} are satisfied. Hence by this lemma and (\ref{eq:g * A A 0}), we have
\[\Vert S\al\Vert_p\leq\Vert g^*A-A_0\Vert_p\Vert\al\Vert_\infty\leq C_2C_1\eps\Vert\al \Vert_{1,p,A_0}(\g_P),\]
for every $\al\in W^{1,p}_{A_0}\big(\wone(\g_P)\big)$. Hence $S$ is well-defined, and
\[\Vert Sd_{A_0}R_0\Vert\leq\Vert S\Vert \Vert d_{A_0}\Vert \Vert R_0\Vert\leq C_2C_1\eps C_3\leq1/2,\] 
Hence $\id+Sd_{A_0}R_0:L^p(\g_P)\to L^p(\g_P)$ is invertible, and the von Neumann series $\sum_{i=0}^\infty (-Sd_{A_0}R_0)^i$ converges in the operator norm and equals $(\id+Sd_{A_0}R_0)^{-1}$. Furthermore, $\big\Vert\big(\id+Sd_{A_0}R_0\big)^{-1}\big\Vert\leq\sum_{i=0}^\infty2^{-i}=2$. We define $R:=g_*d_{A_0}R_0\big(\id+Sd_{A_0}R_0\big)^{-1}g^*:L^p(\g_P)\to W^{1,p}_{A_0}\big(\wone(\g_P)\big)$. Since $S=d_{g^*A}^*-d_{A_0}^*$, we have $d_A^*R=g_*d_{g^*A}^*g^*R=\id$. Furthermore, for every $\xi\in L^p(\g_P)$, we have
\[\Vert \na^A(R\xi)\Vert_p=\Vert \na^{g^*A}(g^*R\xi)\Vert_p\leq\Vert \na^{A_0}(g^*R\xi)\Vert_p+\big\Vert\big[(g^*A-A_0)\otimes g^*R\xi\big]\big\Vert_p\]
\[\leq (2+2C_4C_2\eps)C_3\Vert\xi\Vert_p\,,\]
where $C_4:=\max\big\{|[\al\otimes\be]|\,\big|\,\al,\be\in \wone(\g_P):\,|\al|,|\be|\leq1\big\}$. Here in the last step we used (\ref{eq:g * A A 0}) and the fact $\Vert F_A\Vert_p\leq\eps$. This proves the second statement and concludes the proof of Proposition \ref{prop:right}.
\end{proof}
\section{Other auxiliary results}\label{sec:aux}
The next lemma was used in the proof of Theorem \ref{thm:Fredholm}. Here for a linear map $D:X\to Y$ we denote $\coker D:=Y/\im D$. 
\begin{lemma}\label{le:X Y Z} Let $X,Y,Z$ be vector spaces and $D':X\to Y$ and $T:X\to Z$ be linear maps. We define $D:=D'|_{\ker T}$. Then the following holds.
  \begin{enui}\item\label{le:X Y Z ker} $\ker D=\ker(D', T)$. 
\item\label{le:X Y Z coker} The map $\Phi:\coker D\to \coker(D',T)$, $\Phi(y+\im D):=(y,0)+\im(D',T)$, is well-defined and injective. If $T:X\to Z$ is surjective then $\Phi$ is also surjective. 
\item\label{le:X Y Z closed} Let $\Vert\cdot\Vert_Y,$ $\Vert\cdot\Vert_Z$ be norms on $Y$ and $Z$ and assume that $\im(D',T)$ is closed in $Y\oplus Z$. Then $\im D$ is closed in $Y$. 
\end{enui}
\end{lemma}
The proof of Lemma \ref{le:X Y Z} is straight-forward and left to the reader. The following result was used in the proof of Proposition \ref{prop:triv}. We define the map $f:\C\wo\{0\}\to S^1$ by $f(z):=z/|z|$. For two topological spaces $X$ and $Y$ we denote by $C(X,Y)$ the set of all continuous maps from $X$ to $Y$, and by $[X,Y]$ the set of all (free) homotopy classes of such maps. Let $V$ be a finite dimensional complex vector space. We denote by $\End(V)$ the space of its (complex) endomorphisms of $V$, by $\det:\End(V)\to\C$ the determinant map, and by $\Aut(V)\sub \End(V)$ the group of automorphisms of $V$. 
\begin{lemma}\label{le:Aut} The map $C(S^1,\Aut(V))\to \Z$ given by $\Phi\mapsto \deg\big(f\circ\det\circ\Phi\big)$ descends to a bijection $\big[S^1,\Aut(V)\big]\to \Z$. 
\end{lemma}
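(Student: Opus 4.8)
\textbf{Proof proposal for Lemma \ref{le:Aut}.}

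The plan is to identify $\Aut(V)$ up to homotopy with a space whose fundamental group (or rather free homotopy classes of loops) is $\Z$, computed by the winding number of the determinant. First I would recall that $\Aut(V)=GL(V)\iso GL(n,\C)$, where $n:=\dim_\C V$. The group $GL(n,\C)$ deformation retracts onto its maximal compact subgroup $U(n)$ (by Gram--Schmidt, i.e. the polar decomposition $A=(AA^*)^{1/2}\cdot\big((AA^*)^{-1/2}A\big)$, which gives a homotopy through invertibles shrinking the positive-definite Hermitian factor to the identity). Under this retraction $\det$ changes only by a positive real scalar, so $f\circ\det$ is unchanged up to homotopy; hence it suffices to prove the statement with $\Aut(V)$ replaced by $U(n)$ and $\det:U(n)\to S^1$.

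Next I would use the standard fibration $SU(n)\to U(n)\xrightarrow{\det} S^1$. Since $SU(n)$ is simply connected (and connected), the long exact sequence of homotopy groups gives $\pi_1(SU(n))=0\to\pi_1(U(n))\xrightarrow{\det_*}\pi_1(S^1)\to\pi_0(SU(n))=0$, so $\det_*:\pi_1(U(n))\to\pi_1(S^1)\iso\Z$ is an isomorphism. Because $U(n)$ is path-connected, the set $[S^1,U(n)]$ of free homotopy classes of loops is the set of conjugacy classes in $\pi_1(U(n))$; but $\pi_1(U(n))\iso\Z$ is abelian, so $[S^1,U(n)]=\pi_1(U(n))\iso\Z$, and the bijection is realized precisely by $\Phi\mapsto\deg(f\circ\det\circ\Phi)$ once one notes that the winding number of a map $S^1\to S^1$ is exactly the class in $\pi_1(S^1)\iso\Z$ (and is a free-homotopy invariant since $\pi_1(S^1)$ is abelian). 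Composing: the map $[S^1,\Aut(V)]\to[S^1,U(n)]\xrightarrow{\det_*}[S^1,S^1]\xrightarrow{\deg}\Z$ is a bijection, which is the assertion.

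The only mildly delicate point — the ``main obstacle'' such as it is — is bookkeeping between \emph{based} and \emph{free} homotopy classes: $\deg$ and the construction $\Phi\mapsto f\circ\det\circ\Phi$ are a priori defined on free homotopy classes, while the fibration argument naturally computes based $\pi_1$. This is handled by the observation that for a path-connected space $X$ the forgetful map $\pi_1(X,x_0)\to[S^1,X]$ is surjective and identifies free classes with conjugacy classes, which collapse to nothing extra when $\pi_1$ is abelian (as it is for $U(n)$ and $S^1$). With that in hand, well-definedness, injectivity and surjectivity of the displayed map all follow formally. Alternatively, one can bypass the homotopy-group machinery entirely: write $A(z)=U(z)P(z)$ with $U(z)\in U(n)$, $P(z)$ positive Hermitian, reduce to $U(n)$, then use that every loop in $U(n)$ is homotopic to $z\mapsto\diag(z^k,1,\dots,1)$ with $k=\deg(f\circ\det\circ\Phi)$ — this last normal form is itself a short induction using connectivity of $SU(n)$ — and check two such loops with the same $k$ are homotopic while different $k$ give different winding numbers of $\det$.
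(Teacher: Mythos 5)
Your proof is correct and follows essentially the same route as the paper: retract $\Aut(V)$ onto $\U(V)$ via Gram--Schmidt, use that $\det_*:\pi_1(\U(V))\to\pi_1(S^1)\cong\Z$ is an isomorphism, and pass from based to free homotopy classes using that $\pi_1(\U(V))$ is abelian. The only cosmetic difference is that you derive the isomorphism $\det_*$ from the fibration $\SU(n)\to \U(n)\to S^1$, whereas the paper simply cites Proposition 2.23 of \cite{MSIntro} for the same fact.
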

\begin{proof}[Proof of Lemma \ref{le:Aut}]\setcounter{claim}{0} We choose a hermitian inner product $V$ and denote by $\U(V)$ the corresponding group of unitary automorphisms of $V$. The map $\det:\U(V)\to S^1$ induces an isomorphism of fundamental groups, see \eg Proposition 2.23 in the book by D.~McDuff and D.~A.~Salamon \cite{MSIntro}. Furthermore, the space $\Aut(V)$ strongly deformation retracts onto $\U(V)$. (This follows from the Gram-Schmidt orthonormalization procedure.) This implies that the map $f\circ\det:\Aut(V)\to S^1$ induces an isomorphism of fundamental groups. Let $\Phi_0\in\Aut(V)$. It follows that the map $C(S^1,\Aut(V))\to \Z$ given by $\Phi\mapsto \deg\big(f\circ\det\circ\Phi\big)$ descends to an isomorphism between $\pi_1(\Aut(V),\Phi_0)$ and $\Z$. Since this group is abelian, the map $\pi_1(\Aut(V),\Phi_0)\to \big[S^1,\Aut(V)\big]$ that forgets the base point $\Phi_0$, is a bijection. The statement of Lemma \ref{le:Aut} follows from this. 
\end{proof}
The next lemma was used in the proof of Theorem \ref{thm:L w * R}. Let $X$ and $M$ be manifolds, $G$ a Lie group with Lie algebra $\g$, $\lan\cdot,\cdot\ran_\g$ an invariant inner product on $\g$, $\lan\cdot,\cdot\ran_M$ a $G$-invariant Riemannian metric on $M$, and $\na$ its Levi-Civita connection. For $\xi\in\g$ we denote by $X_\xi$ the vector field on $M$ generated by $\xi$. We define the tensor $\rho:TM\oplus TM\to\g$ by 
\begin{equation}\label{eq:xi rho v v'}\lan\xi,\rho(v,v')\ran_\g:=\lan\na_vX_\xi,v'\ran_M.\end{equation}
A short calculation shows that $\rho$ is skew-symmetric. This two-form was introduced in \cite{Ga} (page 181). The next lemma corresponds to Proposition 7.1.3(a,b) in \cite{Ga}. Let $P\to X$ be a principal bundle, $A\in\A(P)$, $u\in C^\infty\big(X,(P\x M)/G\big)$, $v\in\Ga(TM^u)$, and $\xi\in \Ga(\g_P)$. We define the connection $\na^A$ on $TM^u\to X$ as in Section \ref{sec:back}. 
\begin{lemma}\label{le:na A L u} $\na^AL_u\xi-L_ud_A\xi=\na_{d_Au}X_\xi,\quad d_AL_u^*v-L_u^*\na^Av=\rho(d_Au,v).$ 
\end{lemma}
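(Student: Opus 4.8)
\textbf{Proof proposal for Lemma \ref{le:na A L u}.}

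The plan is to prove the two identities by reducing to the pullback bundle $u^*TM$ upstairs on $P$ and using the definition of $\na^A$ from Section \ref{sec:back}, together with standard properties of the Levi-Civita connection $\na$. First I would fix $x\in X$, a point $p\in\pi^{-1}(x)\subset P$, a tangent vector $v\in T_xX$ and a lift $\wt v\in T_pP$ with $\pi_*\wt v=v$. Let $\wt u:P\to M$ be the equivariant map representing $u$, and write $\wt\xi:P\to\g$ for the equivariant function representing $\xi$, so that upstairs $L_u\xi$ is represented by the section $p\mapsto X_{\wt\xi(p)}(\wt u(p))$ of $\wt u^*TM$. By the definition of $\wt\na^A$ and $\na^A$ in Section \ref{sec:back}, computing $\na^A_v(L_u\xi)$ amounts to computing $(\wt u^*\na)_{\wt v-pA\wt v}\big(X_{\wt\xi}\circ\wt u\big)$, and by the formula $\wt\na^A_{\wt v}X=\na_{d_Au\cdot\wt v}X$ (see \eqref{eq:wt na A wt v X}) this equals $\na_{d_Au\,v}\big(X_{\wt\xi(p)}\big)+X_{(d_A\wt\xi)(\wt v)}(\wt u(p))$, where in the first term $\wt\xi$ is held fixed at its value $\wt\xi(p)$ and in the second term we differentiate the coefficient. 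Since $d_A\wt\xi\cdot\wt v$ represents $d_A\xi\cdot v$ and $X_{(d_A\xi v)}$ represents $L_u(d_A\xi\,v)$, rearranging gives $\na^A_v(L_u\xi)-L_u(d_A\xi)\,v=\na_{d_Au\,v}X_\xi$, which is the first identity.

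For the second identity I would use the first one together with the defining relation \eqref{eq:xi rho v v'} for $\rho$ and metric compatibility of the connections. Fix $v\in\Ga(TM^u)$, $\xi\in\Ga(\g_P)$, and $w\in T_xX$. Pairing and using that $\na^A$ is metric (it is induced by the metric connection $\na$) we have
\begin{align*}
\big\langle d_A(L_u^*v)\,w,\xi\big\rangle_\g
&=w\big\langle L_u^*v,\xi\big\rangle_\g-\big\langle L_u^*v,d_A\xi\,w\big\rangle_\g\\
&=w\big\langle v,L_u\xi\big\rangle_M-\big\langle v,L_u(d_A\xi\,w)\big\rangle_M\\
&=\big\langle\na^A_wv,L_u\xi\big\rangle_M+\big\langle v,\na^A_w(L_u\xi)-L_u(d_A\xi\,w)\big\rangle_M\\
&=\big\langle L_u^*\na^A_wv,\xi\big\rangle_\g+\big\langle v,\na_{d_Au\,w}X_\xi\big\rangle_M,
\end{align*}
where the last step uses the first identity. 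By \eqref{eq:xi rho v v'}, the final term is $\langle\xi,\rho(d_Au\,w,v)\rangle_\g$, and since $\xi$ is arbitrary we obtain $d_A(L_u^*v)-L_u^*(\na^Av)=\rho(d_Au,v)$, using skew-symmetry of $\rho$ to match signs if needed.

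The only genuinely delicate point is bookkeeping the horizontal-lift convention and the $-pA\wt v$ correction term in the definition of $\wt\na^A$, and verifying that the ``coefficient derivative'' of $X_{\wt\xi\circ\wt u}$ along $\wt v-pA\wt v$ really does produce $X_{(d_A\xi)(v)}$ rather than $X_{(d\xi)(\wt v)}$; this is exactly where the connection $A$ enters and is what makes the formula covariant. Everything else is a routine application of the Leibniz rule, the identity \eqref{eq:wt na A wt v X}, and metric compatibility, so I would present the first identity in detail as above and then derive the second by the pairing argument, noting that both sides are tensorial in $w$ so it suffices to check the identity pointwise. The skew-symmetry of $\rho$ quoted from \cite{Ga} is used only to put the result in the stated form.
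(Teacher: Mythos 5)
Your proof is correct and amounts to filling in the ``short calculations'' that the paper leaves implicit; the reduction to the pullback bundle via $\wt\na^A$, the Leibniz split into a frozen-coefficient term and a coefficient-derivative term for the first identity, and the metric-compatibility/duality argument for the second are exactly the natural route. One small remark: the skew-symmetry of $\rho$ is not actually needed anywhere, since your pairing computation directly produces $\lan\xi,\rho(d_Au\,w,v)\ran_\g$ in the orientation of the stated identity, so the final hedge about matching signs can be dropped.
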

\begin{proof}[Proof of Lemma \ref{le:na A L u}]\setcounter{claim}{0} This follows from short calculations.
\end{proof}
Let $M,\om,G,\g,\lan\cdot,\cdot\ran_\g,\mu$ and $J$ be as in Section \ref{sec:main}, and $\lan\cdot,\cdot\ran_M:=\om(\cdot,J\cdot)$. The following remark was used in the proofs of Theorems \ref{thm:Fredholm} and \ref{thm:L w * R}. Recall the definition (\ref{eq:M *}) of $M^*\sub M$, and that $\Pr:TM\to TM$ denotes the orthogonal projection onto $\im L$. 
\begin{rmk}\label{rmk:c}Let $K\sub M^*$ be compact. We define $c:=\inf\big\{|L_x\xi|/|\xi|\,\big|\,x\in K,\,0\neq\xi\in\g\big\}$. Then $c>0$. Let $x\in K$. Then $L_x^*L_x$ is invertible, and
\begin{equation}\label{eq:L x * L x c}|(L_x^*L_x)^{-1}|\leq c^{-2},\,\big|L_x(L_x^*L_x)^{-1}\big|\leq c^{-1},\,L_x(L_x^*L_x)^{-1}L_x^*={\Pr}_x,\end{equation}
where the $|\cdot|$'s denote operator norms. Furthermore, $|\Pr_x v|\leq c^{-1}|L_x^*v|$, for every $v\in T_xM$. These assertions follow from short calculations.
\end{rmk}
Assume that hypothesis (H) holds. The following lemma was used in the proof of Proposition \ref{prop:X X w}. 
\begin{lemma}\label{le:U} There exists a neighborhood $U\sub M$ of $\mu^{-1}(0)$, such that
\begin{equation}\label{eq:c inf}c:=\inf\big\{\big|d\mu(x)L_x^\C\al\big|+|\Pr L_x^\C\al|\,\big|\,x\in U,\,\al\in\g^\C:\,|\al|=1\big\}>0.\end{equation}
\end{lemma}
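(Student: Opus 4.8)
The plan is to reduce to a neighborhood of $\mu^{-1}(0)$ on which $\im L$ is a genuine subbundle (so that $\Pr$ is continuous), to prove positivity of the integrand \emph{pointwise} on $\mu^{-1}(0)$, and then to upgrade this to a uniform positive lower bound on a smaller neighborhood by a routine compactness argument, using that $\mu^{-1}(0)$ is compact.

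First I would set up the reduction. By (H) the moment map $\mu$ is proper, so $\mu^{-1}(0)$ is compact, and $G$ acts freely on $\mu^{-1}(0)$, so $\mu^{-1}(0)\sub M^*$. As established in the proof of Theorem \ref{thm:L w * R}, there is $\de>0$ with $\mu^{-1}(\bar B_\de)\sub M^*$; hence $V:=\mu^{-1}\big(\{v\in\g:|v|<\de\}\big)$ is an open subset of $M$ with $\mu^{-1}(0)\sub V\sub M^*$. On $M^*$ the infinitesimal action $L_x:\g\to T_xM$ is injective of constant rank $\dim G$ (the stabilizer is trivial, hence so is $\ker L_x$), so $\im L|_{M^*}$ is a smooth subbundle of $TM|_{M^*}$ and the orthogonal projection $\Pr$ is smooth on $M^*$. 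Therefore the map $\phi:V\x\g^\C\to[0,\infty)$, $\phi(x,\al):=\big|d\mu(x)L_x^\C\al\big|+\big|\Pr L_x^\C\al\big|$, is continuous, and $S:=\{\al\in\g^\C\,|\,|\al|=1\}$ is compact (as $\g^\C$ is finite dimensional).

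Next I would prove $\phi>0$ on $\mu^{-1}(0)\x S$. Fix $x\in\mu^{-1}(0)$ and write $\al=\xi+i\eta$ with $\xi,\eta\in\g$, so that $L_x^\C\al=L_x\xi+JL_x\eta$. Since $\mu$ is equivariant and $\mu(x)=0$, the orbit $Gx$ lies in $\mu^{-1}(0)$, whence $d\mu(x)L_x\xi=0$. By the moment map identity $\lan d\mu(x)v,\ze\ran_\g=\om(L_x\ze,v)$ (up to a sign), together with $\om(\cdot,J\cdot)=\lan\cdot,\cdot\ran_M$, for every $\ze\in\g$ we get $\lan d\mu(x)(JL_x\eta),\ze\ran_\g=\pm\lan L_x\ze,L_x\eta\ran_M$; since $L_x\eta\in\im L_x$, this shows $d\mu(x)(JL_x\eta)=0\iff L_x\eta=0$. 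Now suppose $\phi(x,\al)=0$. Then $0=d\mu(x)L_x^\C\al=d\mu(x)(JL_x\eta)$, so $L_x\eta=0$, so $\eta=0$ by freeness of the action on $\mu^{-1}(0)$. Hence $L_x^\C\al=L_x\xi$, and $0=\Pr L_x^\C\al=\Pr_x(L_x\xi)=L_x\xi$, so $\xi=0$, again by freeness. Thus $\al=0$, contradicting $|\al|=1$.

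Finally, the function $m:V\to[0,\infty)$, $m(x):=\min_{\al\in S}\phi(x,\al)$, is continuous (minimum of a continuous function over the fixed compact set $S$) and positive on the compact set $\mu^{-1}(0)$, so $m\geq 2c_0$ there for some $c_0>0$. Then $U:=\{x\in V\,|\,m(x)>c_0\}$ is an open neighborhood of $\mu^{-1}(0)$ in $M$ with $\phi(x,\al)\geq m(x)>c_0$ for all $x\in U$, $\al\in S$, and by homogeneity in $\al$ this gives $c\geq c_0>0$, as desired. The main obstacle is the reduction in the first step: because $\dim\im L_x$ may jump, $\Pr$ is in general only upper semicontinuous on $M$, so one must genuinely use (H) to trap an entire neighborhood of $\mu^{-1}(0)$ inside the free locus $M^*$ before any compactness argument applies; the positivity computation itself is elementary, but it is where freeness on $\mu^{-1}(0)$ (equivalently, injectivity of $L_x$ there) enters, and it is used twice.
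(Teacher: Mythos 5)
Your proof is correct but differs from the paper's in how it obtains the uniform lower bound. The paper works directly on the $\mu$-sublevel set $U=\mu^{-1}(B_\de)$ and proves the estimate by explicit computation: it records the identities
$d\mu(x)L_x^\C\al=[\mu(x),\xi]+L_x^*L_x\eta$ and $\Pr_xL_x^\C\al=L_x\xi-L_x(L_x^*L_x)^{-1}[\mu(x),\eta]$ (for $\al=\xi+i\eta$), bounds the two ``error'' terms by $C\de|\xi|$ and $c_0^{-1}C\de|\eta|$ where $C$ is a bound on the Lie bracket and $c_0$ a lower bound for $|L_x\xi|/|\xi|$ on the compact set $\mu^{-1}(\bar B_{\de_0})$, and then chooses $\de<\min\{\de_0,c_0/C,c_0^3/C\}$ to make the estimate close. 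You instead establish positivity of the integrand pointwise on $\mu^{-1}(0)$ (which is the special case $\mu(x)=0$ of the same identities, and where the cross terms vanish), and then upgrade to a uniform bound by continuity of $\Pr$, $d\mu$, $L^\C$ over $V\sub M^*$ and compactness of $\mu^{-1}(0)\x S$ with $S$ the unit sphere in $\g^\C$. Both arguments use (H) in the same two places (to trap a neighborhood of $\mu^{-1}(0)$ in the free locus $M^*$, and to get compactness of $\mu^{-1}(\bar B_{\de_0})$ resp. $\mu^{-1}(0)$). The paper's approach yields an explicit $\de$ and an explicit neighborhood $U=\mu^{-1}(B_\de)$; your approach is softer and shorter. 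One thing worth noting, since the lemma is invoked in the proof of Proposition \ref{prop:X X w} to conclude $u(p)\in U$ for all $p$ in a fiber of $P$ (hence for a whole $G$-orbit in $M$): the paper's $U=\mu^{-1}(B_\de)$ is visibly $G$-invariant, whereas for your $U=\{x\in V\,|\,m(x)>c_0\}$ this is true but requires the observation that $m$ is $G$-invariant (because $\phi(gx,\al)=\phi(x,\Ad_{g^{-1}}^\C\al)$ and $\Ad^\C_{g}$ is an isometry of $\g^\C$); you may want to add a sentence to this effect, or else intersect your $U$ with its $G$-translates.
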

\begin{proof}[Proof of Lemma \ref{le:U}] It follows from hypothesis (H) that there exists $\de_0>0$ such that $\mu^{-1}(\bar B_{\de_0})\sub M^*$. We define
\begin{eqnarray}\nn&C:=\sup\big\{|[\xi,\eta]|\,\big|\,\xi,\eta\in\g:\,|\xi|\leq1,\,|\eta|\leq 1\big\},&\\
\nn&c_0:=\inf\big\{|L_x\xi|/|\xi|\,\big|\,x\in \mu^{-1}(\bar B_{\de_0}),\,0\neq\xi\in\g\big\}.
\end{eqnarray}
Since the action of $G$ on $M^*$ is free, it follows that $L_x:\g\to T_xM$ is injective, for $x\in M^*$. Furthermore, by hypothesis (H) the set $\mu^{-1}(\bar B_{\de_0})$ is compact. It follows that $c_0>0$. We choose a positive number $\de<\min\{\de_0,c_0/C,c_0^3/C\}$, and we define $U:=\mu^{-1}(B_\de)$. 
\begin{claim}\label{claim:c inf} Inequality (\ref{eq:c inf}) holds.
\end{claim}
\begin{pf}[Proof of Claim \ref{claim:c inf}] Let $x\in U$ and $\al=\xi+i\eta\in\g^\C$. Then 
\begin{equation}\label{eq:d mu x L x}d\mu(x)L_x^\C\al=[\mu(x),\xi]+L_x^*L_x\eta.\end{equation} 
Using the last assertion in (\ref{eq:L x * L x c}), we have 
\begin{equation}\label{eq:Pr x L x}{\Pr}_xL_x^\C\al=L_x\xi-L_x(L_x^*L_x)^{-1}[\mu(x),\eta].\end{equation}
By the first assertion in (\ref{eq:L x * L x c}), we have $|L_x^*L_x\eta|\geq c_0^2|\eta|$. Combining this with (\ref{eq:d mu x L x},\ref{eq:Pr x L x}) and the second assertion in (\ref{eq:L x * L x c}), we obtain
\[\big|d\mu(x)L_x^\C\al\big|+|\Pr L_x^\C\al|\geq -C\de|\xi|+c_0^2|\eta|+c_0|\xi|-c_0^{-1}C\de|\eta|.\]
Inequality (\ref{eq:c inf}) follows now from our choice of $\de$. This proves Claim \ref{claim:c inf} and completes the proof of Lemma \ref{le:U}.
\end{pf}
\end{proof}
The following lemma was mentioned in Section \ref{sec:main}. Recall the definition (\ref{eq:BB p lam}) of $\BB^{p,\lam}$, and that $\G^{2,p}_\loc(P)$ denotes the group of gauge transformations on $P$ of class $W^{2,p}_\loc$. 
\begin{lemma}\label{le:free} For $p>2$ and $\lam>1-2/p$ $\G^{2,p}_\loc(P)$ acts freely on $\BB^{p,\lam}$.
\end{lemma}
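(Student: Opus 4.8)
The statement to prove is Lemma~\ref{le:free}: for $p>2$ and $\lam>1-2/p$, the group $\G^{2,p}_\loc(P)$ acts freely on $\BB^{p}_\lam$. Here $P=\R^2\x G$, and an element of $\BB^p_\lam$ is a pair $w=(u,A)$ with $u\in W^{1,p}_{\loc,G}(P,M)$ having relatively compact image and $\Vert\sqrt{e_w}\Vert_{p,\lam}<\infty$. Suppose $g\in\G^{2,p}_\loc(P)$ fixes $w$, i.e. $g^*A=A$ and $g^{-1}u=u$ (writing the action of $g$ on equivariant maps). I want to conclude $g=\one$. The key input is hypothesis (H): $G$ acts freely on $\mu^{-1}(0)$ and $\mu$ is proper, so $\mu^{-1}(0)$ is compact and there is $\de>0$ with $\mu^{-1}(\bar B_\de)\sub M^*$ (as used repeatedly in the excerpt, e.g. in the proof of Claim~\ref{claim:XX X}).

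\textbf{Step 1: asymptotic behaviour of $u$.} First I would invoke Lemma~\ref{le:si} (which, from the way it is used in the proof of Proposition~\ref{prop:triv} and of Theorem~\ref{thm:L w * R}, states that a smooth--or $W^{1,p}_\loc$--finite-energy-type pair in $\BB^p_\lam$ has $u$ mapping $P|_{\R^2\wo B_R}$ into $\mu^{-1}(\bar B_\de)\sub M^*$ for $R$ large, with $u\circ\si$ converging to some $x_\infty\in\mu^{-1}(0)$). Actually I only need the weaker consequence: there exists $R>0$ such that $u(p)\in M^*$ for all $p\in\pi^{-1}(\R^2\wo B_R)$. Here $\lam>1-2/p$ is exactly the hypothesis guaranteeing $E(w)<\infty$ and hence the applicability of Lemma~\ref{le:si}; this is where the weight condition enters.

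\textbf{Step 2: $g$ is trivial on the region where $u\in M^*$.} Fix $z\in\R^2\wo B_R$ and $p\in\pi^{-1}(z)$. Write $g(p)=p\cdot h(p)$ for the corresponding equivariant map $h:P\to G$ (since $P$ is trivial, $h$ is just a $W^{2,p}_\loc$ map $\R^2\to G$ after choosing a section). The condition $g^{-1}u=u$ reads $h(p)^{-1}u(p)=u(p)$, i.e. $h(p)\in\Stab_{u(p)}(G)=\{\one\}$ because $u(p)\in M^*$. Hence $h\equiv\one$ on $\pi^{-1}(\R^2\wo B_R)$, i.e. $g=\one$ there. (If Lemma~\ref{le:si} only gives the image in $M^*$ on the unbounded component, that component is connected, so this already pins $g$ down on all of $\R^2\wo B_R$.)

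\textbf{Step 3: unique continuation to conclude $g=\one$ everywhere.} Now $g$ fixes the connection $A$, i.e. $d_A\wt{g}$-type identity; equivalently, writing $g=\exp$ of a section or directly, $g^*A=A$ means $\na^A$ of the map $h$ vanishes in a suitable sense: $dh + (\text{terms involving }A) = 0$, a first-order linear ODE-type (parallel transport) equation for $h$ along every path. Since $g\in W^{2,p}_\loc$ with $p>2$, $g$ is $C^1$ by Morrey, so this is a genuine pointwise ODE: $h$ is $A$-covariantly constant, hence determined by its value at a single point along any path. But $h\equiv\one$ on the nonempty open set $\pi^{-1}(\R^2\wo B_R)$, so by connectedness of $\R^2$ and uniqueness of solutions to the parallel-transport equation, $h\equiv\one$ on all of $P$. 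Therefore $g=\one$, proving the lemma.

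\textbf{Main obstacle.} The genuinely delicate point is Step~1/2: one must know that $u$ lands in $M^*$ somewhere, and the clean way is to cite Lemma~\ref{le:si}, whose proof (postponed to the appendix) itself relies on the finite-energy estimate and hence on $\lam>1-2/p$. If one wanted a self-contained argument one would instead have to run the convergence-at-infinity analysis for finite-energy vortices directly, which is the content of \cite{ZiA}; here I would simply invoke Lemma~\ref{le:si}. The rest (unique continuation for the parallel-transport equation, the regularity bootstrap $W^{2,p}_\loc\hookrightarrow C^1$ for $p>2$) is standard. One small care is needed in phrasing ``$g^*A=A\Rightarrow h$ covariantly constant'' intrinsically on the principal bundle rather than in a local trivialization, but on $P=\R^2\x G$ this is immediate.
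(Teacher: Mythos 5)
Your proof is correct and follows essentially the same route as the paper's: invoke Lemma~\ref{le:si} to place $u$ in $M^*$ outside a large ball, use hypothesis (H) to conclude $g=\one$ at one such point, then propagate $g\equiv\one$ everywhere via the first-order ODE (parallel-transport equation) that $g^*A=A$ imposes along paths. The paper does exactly this (it pins $g$ down at a single point $p_0$ rather than on the whole unbounded region first, but that is a cosmetic difference), so no further comparison is needed.
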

\begin{proof}[Proof of Lemma \ref{le:free}]\setcounter{claim}{0} Let $w:=(u,A)\in\BB^{p,\lam}$ and $g\in\G^{2,p}_\loc(P)$ be such that $g_*w=w$. It follows from hypothesis (H) that there exists $\de>0$ such that $\mu^{-1}(\bar B_\de)\sub M^*$ (defined as in (\ref{eq:M *})). Furthermore, Lemma \ref{le:si} below implies that there exists $R>0$ such that $|\mu\circ u(p)|<\de$, for $p\in \pi^{-1}(\R^2\wo B_R)$. We choose $p_0\in \pi^{-1}(R)\sub P$. Since $g(p_0)u(p_0)=u(p_0)$ and $u(p_0)\in M^*$, it follows that $g(p_0)=\one$. Let $p_1\in P$. We choose a smooth path $p:[0,1]\to P$ such that $p(i)=p_i$, for $i=0,1$. Then the map $g_p:=g\circ p:[0,1]\to G$ solves the ordinary differential equation $\dot g_p=g_pA\dot p-(A\dot p)g_p$, $g_p(0)=\one$. It follows that $g_p\const \one$, and hence $g(p_1)=\one$. This proves Lemma \ref{le:free}.
\end{proof}
We now prove Proposition \ref{prop:P}. Let $M,\om,G,,\g,\lan\cdot,\cdot\ran_\g,\mu$ and $J$ be as in Section \ref{sec:main}, $\Si:=\R^2,\om_\Si:=\om_0,j:=i,P\to\R^2$ a principal $G$-bundle, $p>2,\lam>1-2/p$ and $w\in\BB^p_\lam$. Assume that hypothesis (H) holds.
\begin{lemma}\label{le:si} There exists a smooth section $\si$ of the restriction of the bundle $P$ to $B_1^C$ and a point $x_\infty\in\mu^{-1}(0)$, such that $u\circ \si(re^{i\phi})$ converges to $x_\infty$, uniformly in $\phi\in\R$, as $r\to\infty$, and $\si^*A\in L^p_\lam(B_1^C)$. 
\end{lemma}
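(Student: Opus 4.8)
The plan is the following. Since $\R^2$ is contractible, $P$ and hence $P|_{B_1^C}$ is trivial, so smooth sections exist; the task is to choose one adapted to the asymptotics of $u$. First I would extract from $e_w\ge\frac12\big(|d_Au|^2+|\mu\circ u|^2\big)$ and $\Vert\sqrt{e_w}\Vert_{p,\lam}<\infty$ that $\Vert d_Au\Vert_{p,\lam}<\infty$ and $\Vert\mu\circ u\Vert_{p,\lam}<\infty$, all pointwise norms below being gauge invariant. Writing $f:=|\mu\circ u|$, the identity $\na^A(\mu\circ u)=d\mu(u)\circ d_Au$ (equivariance of $\mu$) and compactness of $\BAR{u(P)}$ give $|Df|\le|\na^A(\mu\circ u)|\le C|d_Au|$, so $\big\Vert|Df|\,|\cdot|^{\lam}\big\Vert_{L^p(B_1^C)}<\infty$; since $\lam-1>-2/p$, Proposition \ref{prop:Hardy} shows $f(re^{i\phi})$ converges uniformly in $\phi$ as $r\to\infty$, and since $\lam>-2/p$ makes $\lan\cdot\ran^{p\lam}$ non-integrable over $B_1^C$, the limit of $f$ must be $0$. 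By (H) there is $\de>0$ with $\mu^{-1}(\bar B_\de)\sub M^*$, whence $u\big(\pi^{-1}(B_{R_1}^C)\big)\sub\mu^{-1}(B_\de)\sub M^*$ for some $R_1$.

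Next I would show that the \emph{orbits} $u(\pi^{-1}(z))$ converge to a single orbit. On $B_{R_1}^C$ consider the orbit-class map $\hat u$, $z\mapsto\big[u(\pi^{-1}(z))\big]$, into the manifold $\big(\mu^{-1}(B_\de)\cap M^*\big)/G$ (a manifold since $G$ is compact, so acts properly, and freely on $M^*$). For any local section $s$, with $\bar u:=u\circ s$, one has $d\bar u=s^*(d_Au)-L_{\bar u}(s^*A)$, and since $L_{\bar u}(s^*A)\in\im L_{\bar u}$ the connection term is killed by $\one-{\Pr}_{\bar u}$, giving the gauge-invariant bound $|d\hat u|=\big|(\one-{\Pr}_{\bar u})\,s^*(d_Au)\big|\le|d_Au|\in L^p_\lam(B_{R_1}^C)$. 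Applying Proposition \ref{prop:Hardy} componentwise in a Euclidean chart (using compactness of $\BAR{u(P)}\cap\mu^{-1}(\bar B_\de)\sub M^*$, and its parameter $\lam-1>-2/p$), $\hat u$ converges uniformly to some $[x_\infty]$ with $x_\infty\in\BAR{u(P)}\cap\mu^{-1}(\bar B_\de)\sub M^*$.

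Then I would build the section by a slice construction at $x_\infty$, exactly as in the proof of Claim \ref{claim:U triv}: choose a $G$-invariant neighbourhood $U\ni x_\infty$ with $\BAR U\sub\mu^{-1}(B_\de)$, a slice $N\sub U$ through $x_\infty$ with $T_{x_\infty}N=(\im L_{x_\infty})^\perp$ and (shrinking $U$) $|{\Pr}_xv|\le\frac12|v|$ for all $x\in U$, $v\in T_xN$, and a $G$-equivariant diffeomorphism $U\iso N\x G$ (using $x_\infty\in M^*$). By the previous step $u(\pi^{-1}(B_R^C))\sub U$ for some $R\ge R_1$; unfolding the diffeomorphism yields a smooth section $\si$ of $P|_{B_R^C}$ with $\bar u:=u\circ\si$ taking values in $N$, which I would extend to a smooth section of $P|_{B_1^C}$ agreeing with it on $B_{2R}^C$ (in a trivialisation of $P|_{B_1^C}$, precompose the corresponding $G$-valued map with a smooth radial reparametrisation). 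On $B_R^C$, since $d\bar u$ is $N$-tangent, the orthogonal splitting $T_xM=(\im L_x)^\perp\oplus\im L_x$ turns $d_Au=d\bar u+L_{\bar u}(\si^*A)$ into $|d_Au|^2=\big|(\one-{\Pr}_{\bar u})d\bar u\big|^2+\big|{\Pr}_{\bar u}d\bar u+L_{\bar u}(\si^*A)\big|^2$; with $|{\Pr}_{\bar u}d\bar u|\le\frac12|d\bar u|$ this gives $|d\bar u|\le2|d_Au|$ and, since $|L_{\bar u}\al|\ge c|\al|$ on $K:=\BAR{\bar u(B_R^C)}\sub M^*$ with $c>0$ as in Remark \ref{rmk:c}, also $|\si^*A|\le2c^{-1}|d_Au|$. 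From $|d\bar u|\le2|d_Au|\in L^p_\lam(B_R^C)$ and Proposition \ref{prop:Hardy} (in charts of $N$), $u\circ\si(re^{i\phi})\to x_\infty$ uniformly in $\phi$ (the limit lies in $Gx_\infty\cap N=\{x_\infty\}$), and since $\mu\circ\bar u\in L^p_\lam$ and $\lan\cdot\ran^{p\lam}$ is non-integrable, $\mu(x_\infty)=0$. From $|\si^*A|\le2c^{-1}|d_Au|$ we get $\Vert\si^*A\Vert_{L^p_\lam(B_R^C)}\le2c^{-1}\Vert d_Au\Vert_{p,\lam}<\infty$, while on the compact annulus $B_1^C\wo B_{2R}^C$ the smooth form $\si^*A$ is automatically in $L^p_\lam$; hence $\si^*A\in L^p_\lam(B_1^C)$, completing the proof.

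The main difficulty is the second paragraph. The naive choice — a ``radial'' gauge obtained by parallel transport of a section along rays — leaves the angular component of $\si^*A$ and the orbit part of $\d_\phi\bar u$ coupled, and the energy bound alone cannot separate them when $\lam<2-2/p$ (the weighted curvature estimate only controls $\si^*A$ when $\lam>2-2/p$). The point is that the approach of $u$ to a single \emph{orbit} is governed gauge-invariantly by $d_Au$; only afterwards, with $\si$ chosen from a slice so that $d\bar u$ is nearly perpendicular to the orbits, does the decomposition $d_Au=(\one-\Pr)d\bar u\oplus\big(\Pr d\bar u+L_{\bar u}\si^*A\big)$ become orthogonal, thereby controlling $d\bar u$ and $\si^*A$ simultaneously by $d_Au$.
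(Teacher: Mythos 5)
Your proof takes the same route as the paper's: (i) a Hardy-type estimate gives $|\mu\circ u|\to 0$, so the orbits $u(\pi^{-1}(z))$ lie in $M^*$ for $|z|$ large; (ii) a second Hardy-type estimate, applied to the induced map into the orbit space, gives convergence of these orbits to a single orbit $Gx_\infty$; (iii) a slice through $x_\infty$ produces the section $\si$; (iv) the estimate $\si^*A\in L^p_\lam$ follows from $|d_Au|\in L^p_\lam$, a bound $|d\bar u|\le\textrm{const}\cdot|d_Au|$, and the lower bound $|L_x\xi|\ge c|\xi|$ near $\mu^{-1}(0)$. One phrase in (ii) needs repair: ``apply Proposition \ref{prop:Hardy} componentwise in a Euclidean chart'' is circular, since you cannot pick a chart around the putative limit $[x_\infty]$ before knowing it exists. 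The paper instead embeds a compact neighbourhood of $\BAR{u(P)}/G$ inside $M^*/G$ into $\R^n$ by a smooth $\iota$, cuts off near the origin with a bump $\rho$, and applies Proposition \ref{prop:Hardy} to the $\R^n$-valued map $\rho\cdot\iota\circ\bar u$; with the compactness you already invoke this is a one-line change. Your step (iv) is a mild refinement of the paper's: by choosing $N$ with $T_{x_\infty}N=(\im L_{x_\infty})^\perp$ and shrinking $U$ so $|\Pr_x v|\le\tfrac12|v|$ on $TN$, you make the decomposition $d_Au=(\one-\Pr_{\bar u})d\bar u\oplus\big(\Pr_{\bar u}d\bar u+L_{\bar u}\si^*A\big)$ genuinely orthogonal and read off both $|d\bar u|\le2|d_Au|$ and $|\si^*A|\le 2c^{-1}|d_Au|$ at once, whereas the paper uses an arbitrary local slice $\wt\si$ and the cruder chain $|L_{u\circ\si}\si^*A|\le|d\wt\si||d\bar u|+|d_Au|$; both close via the compactness bound of Remark \ref{rmk:c}.
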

\begin{proof}[Proof of Lemma \ref{le:si}]\label{proof:le:si}\setcounter{claim}{0} 
\begin{claim}\label{claim:R u} The expression $|\mu\circ u|(re^{i\phi})$ converges to 0, uniformly in $\phi\in\R$, as $r\to\infty$. 
\end{claim}
\begin{proof}[Proof of Claim \ref{claim:R u}] We define the function $f:=|\mu\circ u|^2:M\to\R$. It follows from the ad-invariance of $\lan\cdot,\cdot\ran_\g$ that 
\begin{equation}\label{eq:d mu u 2}df=2\big\lan d_A(\mu\circ u),\mu\circ u\big\ran_\g=2\big\lan d\mu(u)d_Au,\mu\circ u\big\ran_\g.\end{equation}
Since $\BAR{u(P)}\sub M$ is compact, we have $\sup_{\R^2}|d\mu(u)|<\infty,\sup_{\R^2}|\mu\circ u|<\infty$. Furthermore, $|d_Au|\leq \sqrt{e_w}\in L^p_\lam$. Combining this with (\ref{eq:d mu u 2}), it follows that $df\in L^p_\lam$. Therefore, by Proposition \ref{prop:Hardy} (Hardy-type inequality, applied with $u$ replaced by $f$) the expression $f(re^{i\phi})$ converges to some number $y_\infty\in \R$, as $r\to\infty$, uniformly in $\phi\in\R$. Since $|\mu\circ u|\leq\sqrt{e_w}\in L^p_\lam$, it follows that $y_\infty=0$. 
This proves Claim \ref{claim:R u}.
\end{proof}
It follows from hypothesis (H) that there exists a $\de>0$ such that $\mu^{-1}(\bar B_\de)\sub M^*$ (defined as in (\ref{eq:M *})). We choose $R>0$ so big that $|\mu\circ u|(z)\leq\de$ if $z\in B_{R-1}^C=\R^2\wo B_{R-1}$. Since $G$ is compact, the action of it on $M$ is proper. Hence the local slice theorem implies that $M^*/G$ carries a unique manifold structure such that the canonical projection $\pi^{M^*}:M^*\to M^*/G$ is a submersion. Consider the map $\bar u:B_{R-1}^C\to M^*/G$ defined by $\bar u(z):=G u(p)$, where $p\in \pi^{-1}(z)\sub P$ is arbitrary. 
\begin{claim}\label{claim:bar u} The point $\bar u(re^{i\phi})$ converges to some point $\bar x_\infty\in \mu^{-1}(0)/G$, uniformly in $\phi\in\R$, as $r\to\infty$. 
\end{claim}
\begin{proof}[Proof of Claim \ref{claim:bar u}] We choose $n\in\N$ and an embedding $\iota:M^*/G\to \R^n$. Furthermore, we choose a smooth function $\rho:\R^2\to\R$ that vanishes on $B_{R-1}$ and equals 1 on $B_R^C$. We define $f:\R^2\to \R^n$ to be the map given by $\rho \cdot\iota\circ\bar u$ on $B_{R-1}^C$ and by 0 on $B_{R-1}$. It follows that $\Vert df\Vert_{L^p_\lam(B_R^C)}\leq\big\Vert d\iota(\bar u)d\bar u\big\Vert_{L^p_\lam(B_R^C)}+\Vert (d\rho)\iota\circ\bar u\Vert_{L^p_\lam(B_R\wo B_{R-1})}$. Furthermore, 
\begin{equation}\nn\label{eq:d iota}\big\Vert d\iota(\bar u)d\bar u\big\Vert_{L^p_\lam(B_R^C)}\leq\Vert d\iota(\bar u)\Vert_{L^\infty(B_R^C)}\Vert d_Au\Vert_{L^p_\lam(B_R^C)}.
\end{equation}
Our assumption $w=(u,A)\in\BB^p_\lam$ implies that $\Vert d_Au\Vert_{L^p_\lam(B_R^C)}<\infty$. Furthermore, $\mu$ is proper by the hypothesis (H), hence the set $\mu^{-1}(\bar B_\de)$ is compact. Thus the same holds for the set $\pi^{M^*}(\mu^{-1}(\bar B_\de))$. This set contains the image of $\bar u$. It follows that $\Vert d\iota(\bar u)\Vert_{L^\infty(B_R^C)}<\infty$, and therefore $\Vert df\Vert_{L^p_\lam(\R^2)}\leq\Vert df\Vert_{L^p_\lam(B_R)}+\Vert df\Vert_{L^p_\lam(B_R^C)}<\infty$. Hence the hypotheses of Proposition \ref{prop:Hardy} are satisfied. It follows that the point $f(re^{i\phi})$ converges to some point $y_\infty\in\R^n$, uniformly in $\phi\in\R$, as $r\to\infty$. Claim \ref{claim:bar u} follows.
\end{proof}
Let $\bar x_\infty$ be as in Claim \ref{claim:bar u}. We choose a local slice around $\bar x_\infty$, \ie a pair $(\bar U,\wt\si)$, where $\bar U\sub M^*/G$ is an open neighborhood of $\bar x_\infty$, and $\wt\si:\bar U\to M^*$ is a smooth map satisfying $\pi^{M^*}\circ \wt\si=\id_{\bar U}$. Then there exists a unique section $\si'$ of $P|_{B_R^C}$ such that $\wt\si\circ\bar u=u\circ\si'$. By the homotopy lifting property of $P$ we may extend this to a continuous section $\si''$ of $P|_{B_1^C}$. Smoothing out $\si''$ on $B_{R+1}\wo B_1$, we obtain a smooth section $\si$ of $P|_{B_1^C}$. We define $x_\infty:=\wt\si(\bar x_\infty)$. It follows from Claim \ref{claim:bar u} that $u\circ\si(re^{i\phi})$ converges to $x_\infty$, uniformly in $\phi$, for $r\to\infty$. Furthermore, 
\[\Vert L_{u\circ \si}\si^*A\Vert_{L^p_\lam(B_{R+1}^C)}\leq \Vert du\,d\si'\Vert_{L^p_\lam(B_{R+1}^C)}+\Vert d_{A\,d\si'}u\Vert_{L^p_\lam(B_{R+1}^C)},\]
\[du\,d\si'=d(u\circ\si')=d\wt\si\,d\bar u,\quad |d\bar u|\leq |d_Au|.\]
Since $\inf\big\{|L_u(p)\xi|\,\big|\,p\in P|_{B_{R+1}^C},\,\xi\in\g:\,|\xi|=1\big\}>0$ and $\Vert d_Au\Vert_{p,\lam}<\infty$, it follows that $\si^*A\in L^p_\lam(B_1^C)$. This proves Lemma \ref{le:si}.
\end{proof}
\begin{proof}[Proof of Proposition \ref{prop:P}]\setcounter{claim}{0}\label{proof:prop:P} We choose $\si$ and $x_\infty$ as in Lemma \ref{le:si}, and we define $\wt P$ to be the quotient of $P\disj (S^2\wo\{0\})\x G$ under the equivalence relation generated by $p\sim (\pi(p),g)$, where $g\in G$ is determined by $\si(z)g=p$, for $p\in P$. We define $\wt u:\wt P\to M$ by $\wt u([p]):=u(p)$, for $p\in P$, and $\wt u([\infty,g]):=g^{-1}x_\infty$, for $g\in G$. If follows from the statement of Lemma \ref{le:si} that this map is continuous. Let now $\wt P_1$ and $\wt P_2$ be two extensions of $P$, for which the map $u$ extends to continuous maps $\wt u_i:\wt P_i\to M$. We define $\Psi:\wt P_1\to\wt P_2$ to be the identity on $P\sub\wt P_1$, and for $p$ in the fiber of $\wt P_1$ over $\infty$ we define $\Psi(p)$ to be the unique point $p'\in \wt P_2$ such that $\wt u_1(p)=\wt u_2(p')$. (It follows from Lemma \ref{le:si} that this point is unique.) This map has the required properties. This proves Proposition \ref{prop:P}.
\end{proof}

\end{document}